\def\R{\mathbb{R}}
\def\N{\mathbb{N}}
\newtheorem{Theorem}{Theorem}
\newtheorem{defi}{Definition}
\newtheorem{Lemma}{Lemma}
\newtheorem{Remark}{Remark}
\numberwithin{equation}{section}
\title[Numerical study of a wave equation with internal localized Kelvin-Voigt damping]{Numerical analysis of a finite volume method for a $1$-d wave equation with non smooth wave speed and localized Kelvin-Voigt damping}
\author{St\'ephane Gerbi$^1$}
\address{$^1$Laboratoire de Math\'ematiques UMR 5127 CNRS \& Universit\'e de Savoie Mont Blanc, Campus scientifique, 73376 Le Bourget du Lac Cedex, France}
\email{stephane.gerbi@univ-smb.fr}
\author{Rayan Nasser$^{2,3,4}$}
\address{$^2$Lebanese University, Faculty of sciences 1, Khawarizmi Laboratory of  Mathematics and Applications-KALMA, Hadath-Beirut, Lebanon
}
\address{$^3$PDE's Theoretical and Numerical Studies, Department of Mathematics and Physics, Lebanese International University (LIU), Beirut, Lebanon}
\address{$^4$PDE's Theoretical and Numerical Studies, Department of Mathematics and Physics, The International University of Beirut (BIU), Beirut, Lebanon}
\email{rayan.nasser@liu.edu.lb}
\author{Ali Wehbe$^2$}
\email{ali.wehbe@ul.edu.lb}
\begin{document}

\pagenumbering{arabic}
%     \vspace{-0.5cm} 
\begin{abstract}
In this paper, we study the numerical solution of an elastic/viscoelastic wave equation with non smooth \textcolor{black}{wave speed and internal localized}  distributed Kelvin-Voigt damping acting faraway from the boundary. Our method is based on the Finite Volume Method (FVM) and we are interested in deriving the stability estimates and the convergence of the numerical solution to the continuous one. Numerical experiments are performed to confirm the theoretical study on the decay rate of the
solution to the null one when a localized damping acts.
\end{abstract}

\maketitle

\tableofcontents

\section{Introduction}
\noindent Numerical simulation is a computation that implements a mathematical model for a physical system. It is required to study the behavior of systems representing complicated mathematical models in order to provide good approximations to the analytical solutions. Numerical modeling uses mathematical models to describe the physical conditions necessary especially for engineers. Still, some of their equations, mostly partial differential equations, are somehow impossible to solve directly. With numerical models, the approximate solutions can be obtained; for example, by Finite Difference Method (FDM), Finite Element Method (FEM) or Finite Volume Method (FVM), with each method having its own merits and limitations.  Nevertheless, several results are interpreted within the framework of an engineering process concerning the numerical experiments carried out in these models. However, as mathematicians, numerical studies help us not only in confirming the theoretical results, but also in predicting the solutions of some conjectures. In fact, there are many numerical studies in the literature related to FDM, FEM and FVM. For example, Larsson et al. applied FEM on a strongly damped wave equation (see \cite{Larsson1991}). In \cite{zuazua2003}, Zuazua et al. considered a problem that models the damped vibrations of a string with fixed ends. Assuming that the damping is localized on a subinterval, FDM  with artificial viscosity was applied to show  the exponential decay of the discrete energy and to obtain the convergence of the scheme. However, the error estimates were obtained later on by Rincon et al. after applying a spatial FEM (see \cite{Rincon2013}). Moreover, Gao et al., in \cite{Gao2007}, presented an unconditionally stable finite difference scheme to find the solution of a one-dimensional linear hyperbolic equation with global damping. Also, Wang et al. used high order FDM to solve the wave equation in the second order form in two space dimensions (see \cite{Wang2016}). Furthermore, Xu et al., in \cite{Xin2014}, considered the Euler-Bernoulli beam equation with local Kelvin-Voigt damping \textcolor{black}{acting} via nonsmooth coefficient. Using FEM followed by the control parameterization method, they aim to design a control input numerically distributed locally on a subinterval such that the total energy of the beam and the control on a given time period is minimal. As to FVM, which is based on an integral formulation, it is very popular in solving linear hyperbolic equations. For instance, LeVeque, in \cite{Leveque}, used  FVM for hyperbolic equations. Also, considering that wave equations are important hyperbolic equations that arise in acoustics, numerical studies were carried on wave-based modeling of room acoustics using FDM and FVM (see \cite{HamiltonB}). In addition, in \cite{SATO1994}, explicit numerical simulation has been developed for time dependent viscoelastic flow problems using a combination between FVM  and FEM. Zhang et al., in \cite{WZhang2016}, illustrated a new spectral FVM for a $2$-d and $3$-d elastic wave equations with external sources on unstructured meshes. Zhang et al. also presented a new efficient FVM for $3$-d elastic wave simulation on unstructured tetrahedral meshes (see \cite{ZHANG2017}). We also mention Rie\v{c}anov\'{a} et al. \cite{riecanova2018}, where the authors obtained the stability estimates and convergence of the numerical scheme of a wave equation with Dirichlet boundary condition on a rectangular domain. Their  method was based on FVM in space together with the average of $n + 1$ and $n-1$ time step diffusion. However, in the one-dimensional case, some authors ensured the decay of energy and gave examples that verify its asymptotic behavior by implementing numerical schemes (see \cite{Alves-Rivera-Sepulveda-Villagran-Garay-2013,Alves-Rivera-Sepulveda-Villagran-2014,Raposo-Bastos-Avila-2011,Maryati-Rivera-Rambaud-2018}). Thus, to our knowledge, it seems that there are no results in the literature on the convergence and stability estimates, concerning the transmission problem of  elastic/viscoelastic systems  where there is a discontinuity at the interface, based especially on FVM. 

\noindent The main objective of this research work is to fill this gap. More precisely, we study the numerical solution of the transmission problem of a wave equation with localized Kelvin-Voigt damping acting faraway from the boundary via nonsmooth coefficient \textcolor{black}{as described in Figure \ref{figure-material}
\begin{figure}[H]
\centering{\includegraphics[width=0.75\textwidth]{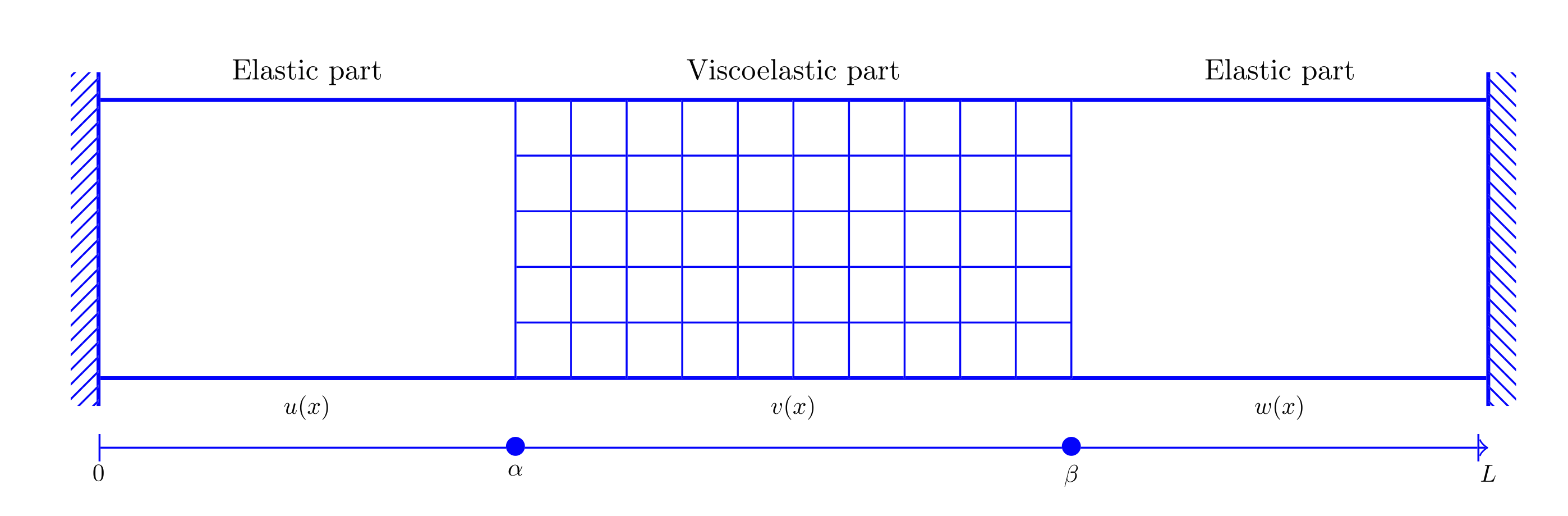}}
\caption{Partial viscoelastic material}
\label{figure-material}
\end{figure}
}

Consider the following transmission problem of a wave equation with localized Kelvin-Voigt type damping
 \begin{eqnarray}
\rho_1 u_{tt} - \kappa_1 u_{xx} = 0, &(x,t) \in  (0,\alpha) \times (0, + \infty), \label{Eqqq(2.1)}\\
\rho_2 v_{tt} - \kappa_2 v_{xx} - \varrho v_{xxt} = 0, &(x,t) \in (\alpha, \beta ) \times (0, + \infty),\label{Eqqq(2.2)}\\
\rho_3 w_{tt} - \kappa_3 w_{xx} = 0, &(x,t) \in  ( \beta, L )  \times (0, + \infty). \label{Eqqq(2.3)}
\end{eqnarray}
Here $\rho_1, \ \rho_2, \ \rho_3, \kappa_1,\ \kappa_2$ and $\kappa_3$ are strictly positive constant numbers representing the density and metric coefficients of Equations \eqref{Eqqq(2.1)}, \eqref{Eqqq(2.2)} and \eqref{Eqqq(2.3)} respectively and the damping constant $\varrho$ is strictly positive. In fact, we divide Equations \eqref{Eqqq(2.1)}-\eqref{Eqqq(2.3)} by $\rho_1, \ \rho_2$ and $\rho_3$ respectively to obtain
 \begin{eqnarray}
u_{tt} - C_1^2 u_{xx} = 0, &(x,t) \in  (0,\alpha) \times (0, + \infty), \label{Eq(2.1)}\\
v_{tt} - C_2^2 v_{xx} - \delta v_{xxt} = 0, &(x,t) \in (\alpha, \beta ) \times (0, + \infty),\label{Eq(2.2)}\\
 w_{tt} - C_3^2 w_{xx} = 0, &(x,t) \in  ( \beta, L )  \times (0, + \infty). \label{Eq(2.3)}
\end{eqnarray}
System \eqref{Eq(2.1)}-\eqref{Eq(2.3)} is subjected to the Dirichlet boundary conditions
%%%%%%%%%%%%%%%%%%%%Equation%%%%%%%%%%%%%%%%%%%%%%
\begin{equation}\label{Eq(2.4)}
u(0,t) = w(L,t) = 0, \quad  t\in (0,+\infty)
\end{equation}
and the following initial conditions
%%%%%%%%%%%%%%%%%%%%Equation%%%%%%%%%%%%%%%%%%%%%%
\begin{equation} \label{initial-condition-u-v-w}
\left\{
\begin{array}{ll}
u(x, 0)=\varphi(x), \quad & x\in  (0,\alpha), \\
v(x, 0)=\eta(x), \quad &x\in  (\alpha,\beta),\\
w(x, 0 )=\gamma(x), \quad &x\in   (\beta,L) \ ,
\end{array}
\right.
\end{equation}
\begin{equation} \label{initial-condition-ut-vt-wt}
\left\{
\begin{array}{ll}
u_t(x, 0)=\psi(x), \quad & x\in  (0,\alpha), \\
v_t(x, 0)=\zeta(x), \quad &x\in  (\alpha,\beta),\\
w_t(x, 0 )=\theta(x), \quad &x\in   (\beta,L) \ .
\end{array}
\right.
\end{equation}
We set $\Phi = (\varphi, \eta, \gamma)$ and $\Psi = (\psi, \zeta, \theta)$; $C_1 = \sqrt{\frac{\kappa_1}{\rho_1}}$, $C_2 = \sqrt{\frac{\kappa_2}{\rho_2}}$ and $C_3 = \sqrt{\frac{\kappa_3}{\rho_3}}$ represent the speed of the wave propagation of  Equations \eqref{Eq(2.1)}-\eqref{Eq(2.3)} respectively. Also, $\delta = \frac{\varrho}{\rho_2}$. \\[0.1in]

The transmission conditions are given by
%%%%%%%%%%%%%%%%%%%%Equation%%%%%%%%%%%%%%%%%%%%%%
\begin{equation}\label{Eq(2.5)}
\left\{
\begin{array}{ll}
\displaystyle{
u(\alpha,t ) = v(\alpha,t), \quad v (\beta , t) = w (\beta , t)}, &t\in  (0, + \infty), \\
\displaystyle{C_2^2 v_x (\alpha , t) + \delta v_{xt} (\alpha,t ) = C_1^2 u_x (\alpha ,t)}, &t\in  (0, + \infty),\\
\displaystyle{C_2^2 v_x (\beta , t) + \delta v_{xt} (\beta,t ) = C_3^2 w_x (\beta ,t),} &t\in  (0, + \infty).
\end{array}
\right.
\end{equation}
\begin{defi}[\textbf{Weak solution}] \label{weak-solution} 
Let us define the functional spaces:
%%%%%%%%%%%%%%%%%%%%Equation%%%%%%%%%%%%%%%%%%%%%%
\begin{equation*}
\begin{array}{ll}
\displaystyle{ \mathbb{H}^1 =  H^1 (0,\alpha) \times H^1 (\alpha, \beta) \times H^1 (\beta, L),  }\\ 
\displaystyle{\mathbb{L}^2 =  L^2 (0,\alpha) \times L^2 (\alpha, \beta) \times L^2(\beta,L)},\\
\displaystyle{  \mathbb{H}^1_L = \{(u,v,w) \in \mathbb{H}^1\ | \ u (0) = w(L) = 0, \ u(\alpha) = v (\alpha), \ v(\beta) = w(\beta) \}.}
\end{array}
\end{equation*}
We say that $U = (u,v,w)$ is a weak solution of the System 
\eqref{Eq(2.1)}-\eqref{Eq(2.5)} 
if for all $T>0$, the following conditions hold:
\begin{itemize}
\item[1.] $U = (u,v,w) \in L^2([0,T];\mathbb{H}^1_L)$ and $U_t=(u_t,v_t,w_t) \in L^2([0,T]; \mathbb{L}^2)$,
\item[2.] $(u (.,0), v(.,0), w(.,0)) = (\varphi(.), \eta(.), \gamma(.))$,
\item[3.] \begin{equation}\label{continuous weak variational problem}
\begin{array}{lll}
-  \int_{0}^{T} \int_{0}^{\alpha} u_t p_t dx dt + C_1^2 \int_{0}^{T} \int_{0}^{\alpha} u_x p_x dx dt -  \int_{0}^{\alpha} \psi(x)p(x,0) dx  \\
-  \int_{0}^{T} \int_{\alpha}^{\beta} v_t q_t dx dt + C_2^2 \int_{0}^{T} \int_{\alpha}^{\beta} v_x q_x dx dt + \delta  \int_{0}^{T} \int_{\alpha}^{\beta} v_{tx}q_x dx dt -  \int_{\alpha}^{\beta} \zeta(x)q(x,0) dx   \\
-  \int_{0}^{T} \int_{\beta}^{L} w_t z_t dx dt + C_3^2 \int_{0}^{T} \int_{\beta}^{L} w_x z_x dx dt -  \int_{\beta}^{L} \theta(x)z(x,0) dx   = 0
\end{array}
\end{equation}
for all $(p,q,z) \in L^2([0,T];\mathbb{H}^1_L)$ and $(p_t,q_t,z_t) \in L^2([0,T]; \mathbb{L}^2)$ such that 
\[
p(T,x)=0 \,,\, q(T,x)=0 \mbox{ and } z(T,x) =0 \quad .
\]
\end{itemize}
\end{defi}
We study our problem under the following hypothesis $\eqref{H'}$:
\begin{equation} \label{H'}\tag{H$^{\prime}$}
\Phi \in \mathbb{H}^1_L, \quad \Psi \in \mathbb{H}^1.
\end{equation}
 \textcolor{black}{The energy of the solutions of  the System \eqref{Eq(2.1)}-\eqref{Eq(2.5)} is defined by 
\begin{equation}\label{energy}
E(t) = \dfrac{1}{2} \int_0^\alpha\vert{u_t}\vert^2 + \dfrac{C_1^2}{2} \int_0^\alpha\vert{u_x}\vert^2 +
\dfrac{1}{2} \int_\alpha^\beta\vert{v_t}\vert^2 + \dfrac{C_2^2}{2} \int_\alpha^\beta\vert{v_x}\vert^2 +
\dfrac{1}{2} \int_\beta^L\vert{w_t}\vert^2 + \dfrac{C_3^2}{2} \int_\beta^L\vert{w_x}\vert^2
\end{equation}
Let $U = (u,v,w) \in L^2([0,T];\mathbb{H}^1_L)$ and $U_t=(u_t,v_t,w_t) \in L^2([0,T]; \mathbb{L}^2)$ be a weak solution of the system
\eqref{Eq(2.1)}-\eqref{Eq(2.5)}. Then we have the following dissipativity estimation:
\begin{equation}\label{dissipative}
	\forall t > 0 \,,\, \dfrac{d E(t)}{d t} = -\delta \int_\alpha^\beta\vert{v_{xt}}\vert^2
\end{equation}
The theoretical study of such systems was done in previous literature (see \cite{GhNAWE, Nasser-Noun-Wehbe-2020, Ghader-Nasser-Wehbe-delay, Ammari-Hassine-Robbiano-2019, Rivera-Villagran-Sepulveda-2018, Ammari-Liu-Shel-2019}). 
In \cite{GhNAWE}, the authors studied the asymptotic behavior of the energy of the continuous case of system \eqref{Eq(2.1)}-\eqref{Eq(2.5)} and obtained an optimal energy decay rate of type $t^{-4}$. In this article, our objective is to study this system numerically. To this aim, in Section \ref{Section Mesh}, we discretize our system using an admissible mesh and set the explicit numerical scheme in 
Subsection \ref{SE} and the semi-implicit numerical scheme with an average of $n+1$ and $n-1$ time step diffusion in Subsection \ref{SI} using FVM in space.
However, in Subsections \ref{SEE} and \ref{S(IE)}, we design a discrete energy that dissipates when the control is acting \textit{i.e.} $\delta \neq 0$, and is conserved during its
absence. 
Sections \ref{Stability section} and \ref{Convergence section} are devoted to stability estimates and convergence of the discrete solution to the continuous one.
Finally, in Section \ref{Numerical Experiments}, we give some numerical examples which demonstrate the theoretical results obtained in \cite{GhNAWE}.\\[0.1in]
We also mention that in \cite{GhNAWE} two of the authors have proved the existence and uniqueness of a strong solution $U = (u,v,w) \in L^2([0,T];\mathbb{H}^1_L)$ and $U_t=(u_t,v_t,w_t) \in L^2([0,T]; \mathbb{L}^2)$. So the existence and uniqueness of the weak solution is thus verified.
}
\section{Construction of the discretization of the problem} \label{Section Mesh}
\noindent In this section, we will firstly define the admissible mesh and the discrete unknowns according to the Finite Volume Method. 

Then we will construct an explicit in time Finite Volume discretization. We will build a discrete energy and will prove that this discrete energy is decreasing in time.

Finally we will construct an implicit in time Finite Volume discretization. Again, we will build a discrete energy and will prove that this discrete energy is decreasing in time.

\subsection{Admissible one-dimensional mesh and defintition of the discrete unknowns} An admissible mesh $\mathcal{T}$ of the interval $(0,L)$ is given by a family $\{ K_i: \ i \in \{1, \cdots, N_{\max} \} ; ~\ N_{\max} \in \N^* \}$ of control volumes such that $K_i = ( x_{i - \frac{1}{2} } , x_{i+  \frac{1}{2}} )$ and a family $( x_i)_{i = 0, \cdots, N_{\max}+1 }$ assumed to be the center of $(K_i)_{i = 1, \cdots, N_{\max} }$ such that
\begin{equation*}
 0 = x_0 = x_{\frac{1}{2}}<  x_1 < x_{\frac{3}{2}} < x_2 < \cdots < x_{i-\frac{1}{2}} < x_i < x_{i+ \frac{1}{2}} < \cdots < x_{N_{\mbox{max}}} < x_{N_{\mbox{max}} + \frac{1}{2}} = x_{N_{\mbox{max}} + 1}  = L.  
 \end{equation*}
We discretize the intervals $[0,\alpha]$, $[\alpha, \beta]$ and $[\beta,L]$ into $N_{\alpha}$, $N$ and $N_{\beta}$ internal points respectively such that $N_{\alpha}, N, N_{\beta} \in \N^*$ and $N_{\max} = N_{\alpha} + N + N_{\beta}$. To be clear, let $h_{\alpha} = \alpha/ N_{\alpha}, \ h = (\beta - \alpha)/N $ and $h_{\beta} = (L - \beta)/ N_ \beta$ and discretize as the following: \\[0.1in]
We discretize $[0, \alpha]$ such that: 
\begin{itemize}
\item For $i = 0, \ldots,  N_{\alpha}$, $x_{i + \frac{1}{2}} = i h_{\alpha},$ which yields, $x_{\frac{1}{2}} = 0$ and $x_{N_{\alpha}+ \frac{1}{2}} = \alpha.$
\item For $i = 1, \ldots, N_{\alpha}$, $x_i = \bigg( i - \frac{1}{2} \bigg) h_{\alpha}$, which yields, $x_1 = \frac{h_{\alpha}}{2}$ and $x_{N_{\alpha}} = \alpha - \frac{h_{\alpha}}{2}$.
\end{itemize}
\noindent We discretize $[\alpha, \beta]$ such that: 
\begin{itemize}
\item For $i = N_{\alpha}+1, \ldots, N_{\alpha}+N$, $x_{i + \frac{1}{2}} = \alpha + (i - N_{\alpha})h,$ which yields, $x_{N_{\alpha}+\frac{3}{2}} =  \alpha + h$ and $x_{N_{\alpha}+ N+ \frac{1}{2}} = \beta$.
\item For $i = N_{\alpha}+1, \ldots, N_{\alpha}+N$, $x_i = \alpha +  \bigg( i - N_{\alpha} -\frac{1}{2}\bigg)h$, which yields, $x_{N_{\alpha}+1} = \alpha + \frac{h}{2}$ and $x_{N_{\alpha}+N} = \beta - \frac{h}{2}$.
\end{itemize}
\noindent We discretize $[\beta,L]$ such that: 
\begin{itemize}
\item For $i = N_{\alpha}+N+1, \ldots, N_{\max}$, $x_{i + \frac{1}{2}} = \beta+ (i - N_{\alpha}-N)h_{\beta},$ which yields, $x_{N_{\alpha}+N+\frac{3}{2}} =  \beta + h_{\beta}$ and $x_{N_{\max} + \frac{1}{2}} =L$.
\item For $i = N_{\alpha}+N+1, \ldots, N_{\max}$, $x_i = \beta+ \bigg(i - N_{\alpha}-N - \frac{1}{2} \bigg)h_{\beta}$, which yields, $x_{N_{\alpha}+N+1} = \beta + \frac{h_{\beta}}{2}$ and $x_{N_{\max}} = L - \frac{h_{\beta}}{2}$.
\end{itemize}
\noindent Now, we set
\begin{itemize}
\item $ h_{i} = x_{i + \frac{1}{2}} - x_{i - \frac{1}{2}}; \ i = 1 , \ldots, N_{\max},$ 
\item $h_{i+ \frac{1}{2}} = x_{i+1} - x_i; \ i = 0 , \ldots, N_{\max}$,
\item $h_{i- \frac{1}{2}} = x_{i} - x_{i-1}; \ i = 1 , \ldots, N_{\max}+1$,
\item $h_i^{+} = x_{i+\frac{1}{2}} - x_i; \ i = 1 , \ldots, N_{\max}$,
\item $h_i^{-} = x_i - x_{i-\frac{1}{2}}; \ i = 1 , \ldots, N_{\max}$,
\item size$(\mathcal{T}) = \max \{ h_i, \ i =1 , \ldots, N_{\max} \} $.
\end{itemize}

%%%%%figure%%%%%%%%%%%%%%%%
%%%%%%%%%%%%%%%%%%%%%%%%%%%%%

\begin{figure}[!tbp]
	\includegraphics[width=\textwidth]{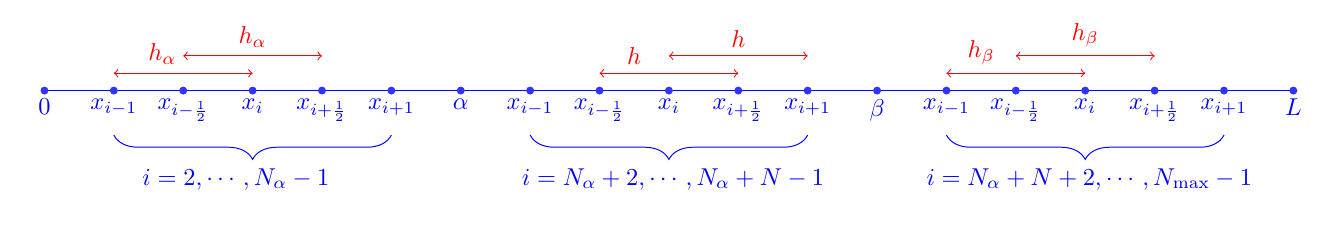}
		\caption{A model representing the admissible one-dimensional mesh}\label{fig1}
\end{figure}

\noindent The interval $(0,L)$ is discretized using the admissible mesh provided above and we define the space discretization mesh as :
\begin{align*}
	h_i = \left\{\begin{array}{ll}
		h_\alpha, & i = 1, \ldots, N_{\alpha},\\
		h, & i = N_{\alpha}+1, \ldots, N_{\alpha}+N,\\
		h_\beta, & i = N_{\alpha}+N+1, \ldots, N_{\max}.
	\end{array} \right. \ .
\end{align*}

The time discretization is performed with a constant time step {\small{$\Delta t = \frac{T}{\mathcal{N}}$}} with $t_{n+1} - t_n = \Delta t$ for all $n=0, 1, \cdots, \mathcal{N}$ so that for a function $v$ defined on $[0,L] \times [0,T]$, $v^n$ represents the value of the function at time $t_n$

We denote the first and second discrete time derivatives following a backward difference and a second-order central difference at time $t_n$ as:
\begin{equation*}
\partial^1 v^n = \frac{v^{n} - v^{n-1}}{\Delta t} \quad \textrm{and} \quad  \partial^2 v^n = \frac{v^{n+1} - 2v^{n} + v^{n-1}}{\Delta t^2} \,,\,
\end{equation*}
and the centered time discretization is denoted by:
\begin{equation}
\partial^{1/2} v^n = \frac{v^{n+1} - v^{n-1}}{2 \Delta t} \label{centered-time} \ .
\end{equation}
Now, for the time step $\Delta t$, we set the following condition 
\begin{equation} \label{TA}\tag{TA}
\textrm{There exists} \ \tau_0 \in [0,T] \ \textrm{such that} \ \Delta t \leq \tau_0.
\end{equation}

We designate the discrete unknowns by $ \{  u_{i}^n; ~\ i = 1, \cdots,  N_{\alpha}  , ~\ n \in \N \}$, $\{  v_{i}^n; ~\ i = N_{\alpha} + 1 , \cdots,  N_{\alpha} + N, ~\ n \in \N \}$, $\{  w_{i}^n; ~\ i = N_{\alpha} + N + 1 , \cdots, N_{\max}, ~\  n \in \N \}$ where they stand for approximation of the mean values of $u, \ v , \ w$ over the control volumes $K_i$ respectively at time $t_n$ defined by:
\begin{eqnarray}
u_i(t) &=& \frac{1}{h_i} \int_{x_i-\frac{1}{2}}^{x_i+ \frac{1}{2}} u(x,t) dx,  \quad i = 1,  \ldots, N_{\alpha}\,,\, t \in [0,T]\label{u} \\ 
v_i(t) &=& \frac{1}{h_i} \int_{x_i-\frac{1}{2}}^{x_i+ \frac{1}{2}} v(x,t) dx,  \quad i =  N_{\alpha}+1, \ldots, N_{\alpha}+N  \,,\, t \in [0,T]\label{v} \\
w_i(t) &=& \frac{1}{h_i} \int_{x_i-\frac{1}{2}}^{x_i+ \frac{1}{2}} w(x,t) dx,  \quad i =  N_{\alpha}+N+1, \ldots, N_{\max}  \,,\, t \in [0,T]\label{w} \\
u_i^n &=& u_i(t_n) \quad i = 1,  \ldots, N_{\alpha}, \nonumber \\ 
v_i^n &=& v_i(t_n) \quad i =  N_{\alpha}+1, \ldots, N_{\alpha}+N,  \nonumber \\
w_i^n &=& w_i(t_n) \quad i =  N_{\alpha}+N+1, \ldots, N_{\max}.  \nonumber
\end{eqnarray}

%%%%%%%%%%%%%%%%%%%%%%%%%%%%%%%%%%%%%%%%%%%%%%%%%%%%%%%
We next define :  
\begin{align*}
U_i^n = \left\{\begin{array}{ll}
u_i^n, & i = 1, \ldots, N_{\alpha},\\
v_i^n, & i = N_{\alpha}+1, \ldots, N_{\alpha}+N,\\
w_i^n, & i = N_{\alpha}+N+1, \ldots, N_{\max}.
\end{array}\right.  \; .
\end{align*}
It is completed by the Dirichlet boundary conditions : $u_0^n = U_0^n = 0 \,,\, w_{N_{\max} +1}^n = U_{N_{\max} +1}^n = 0$.

The reconstructed solution is defined by  $U_{\mathcal{T},\Delta t}$ constant over the control volumes $K_i$ at time $t \in [t_n,t_{n+1})$ as:
\begin{equation}\label{UTN}
\textrm{ for } t \in [t_n,t_{n+1}) \,,\, \textrm{ for } x \in K_i \,,\, U_{\mathcal{T},\Delta t}(x,t) = U_i^n 
\end{equation}
For a function of only one space variable, we denote $U_{\mathcal{T}}$ the reconstructed solution constant over the control volumes $K_i$: 
\begin{equation}\label{UT}
\textrm{ for } x \in K_i \,,\, U_{\mathcal{T}}(x) = U_i 
\end{equation}

The initial solution $U_{\mathcal{T}}^0$ is constructed according to the initial condition \eqref{initial-condition-u-v-w}:
\begin{equation}\label{initialSOLUTION}
\left\{
\begin{array}{ll}
u_i^0 = \varphi_i \quad \textrm{where} ~\ \varphi_i = \frac{1}{h_i} \int_{K_i} \varphi(x) dx, \quad & \textrm{for} \ i = 1, \cdots, N_{\alpha}, \\
v_i^0 = \eta_i \quad \textrm{where} ~\ \eta_i = \frac{1}{h_i} \int_{K_i} \eta(x) dx, \quad & \textrm{for} \ i = N_{\alpha}+1, \cdots, N_{\alpha}+N, \\
w_i^0 = \gamma_i \quad \textrm{where} ~\ \varphi_i = \frac{1}{h_i} \int_{K_i} \gamma(x) dx , \quad & \textrm{for} \ i = N_{\alpha}+N+1, \cdots, N_{\textrm{max}} \\
\Phi_i = (\varphi_i, \eta_i, \gamma_i) .
\end{array}
\right.
\end{equation}
We set $\Phi_{\mathcal{T}} = (\Phi_i )_{i=1,\ldots,N_{\max}}$ so that the initial solution is defined by:
\begin{equation}\label{U0}
U^0 = \Phi_{\mathcal{T}}
\end{equation}

In order to discretize the initial condition \eqref{initial-condition-ut-vt-wt}, we define a ``ghost'' time-boundary (i.e. $t_{-1} = - \Delta t$) and
we use the second-order central difference formula \eqref{centered-time} at time $t=0$. Indeed as we have :
\[
\dfrac{\partial \displaystyle \int_{K_{i}} U(x,t)dx} {\partial t} = \int_{K_{i}} \dfrac{\partial U(x,t)}{\partial t} dx
\]
we define:
\begin{eqnarray}
u_i^{-1} &\textrm{for}& i = 1 , \cdots, N_{\alpha}, \nonumber \\
v_i^{-1}  &\textrm{for}& i = N_{\alpha}+1 , \cdots, N_{\alpha}+N, \nonumber \\
w_i^{-1}&\textrm{for}& i = N_{\alpha}+N+1  \cdots, N_{\textrm{max}}. \nonumber
\end{eqnarray}
and we set $\Psi_i = (\psi_i, \zeta_i, \theta_i)$ such that :
\begin{equation*}
\begin{array}{lll}
 \psi_i = \frac{1}{h_i} \int_{K_i} \psi(x) dx, \quad &\textrm{for} \ i = 1, \cdots, N_{\alpha}, \\
 \zeta_i = \frac{1}{h_i} \int_{K_i} \zeta(x) dx, \quad & \textrm{for} \ i = N_{\alpha}+1, \cdots, N_{\alpha}+N, \\
 \theta_i = \frac{1}{h_i} \int_{K_i} \theta(x) dx , \quad & \textrm{for} \ i = N_{\alpha}+N+1, \cdots, N_{\max}.
\end{array}
\end{equation*}
Therefore, using the above notations, we can approximate the initial conditions of $u_t$, $v_t$ and $w_t$ using centered time approximation:
\[
\Psi(x) = \dfrac{\partial U}{\partial t} (x,0) \simeq \dfrac{U(x,\Delta t) - U(x,-\Delta t)}{ 2 \Delta t}
\]
to get 
\begin{equation*}
\begin{array}{lll}
u_i^{-1} = u_i^1 -2 \Delta t \psi_i , \quad & \textrm{for} \ i = 1, \cdots, N_{\alpha}, \\
v_i^{-1} =  v_i^1 -2 \Delta t \zeta_i, \quad & \textrm{for} \ i = N_{\alpha}+1, \cdots, N_{\alpha}+N, \\
w_i^{-1} = w_i^1 -2 \Delta t \theta_i,  \quad & \textrm{for} \ i = N_{\alpha}+N+1, \cdots, N_{\max} \\
\Psi_i = (\psi_i, \zeta_i, \theta_i) .
\end{array}
\end{equation*}
Consequently, we set $\Psi_{\mathcal{T}} = (\Psi_i )_{i=1,\ldots,N_{\max}}$ so that the solution at the ``ghost'' time-boundary is defined by:
\begin{equation}\label{artificialSOLUTION}
U^{-1} = U^1 -2 \Delta t \Psi_{\mathcal{T}} \ .
\end{equation}

As the speed $C$ is defined by
\begin{equation*}
C=\left\{\begin{array}{lllll}
C_1,& x\in (0,\alpha),\\
C_2,& x\in (\alpha,\beta),\\
C_3, & x\in (\beta, L)
\end{array}\right. 
\end{equation*}
we define the discrete speeds as:
\begin{align*}
c_i = C_1 \quad & \textrm{for} \ i = 1, \cdots, N_{\alpha}, \\
c_i = C_2 \quad & \textrm{for} \ i = N_{\alpha}+1, \cdots, N_{\alpha}+N, \\
c_i = C_3 \quad & \textrm{for} \ i = N_{\alpha}+N+1, \cdots, N_{\max}.
\end{align*}
Now, we will construct the numerical schemes.
%%%%%%%%%%%%%%%%%%%%%%%%%%%%%%%%%%%%%%SUBSECTION EXPLICIT%%%%%%%%%%%%%%%%%%%
\subsection{Construction of the explicit discretized problem.} \label{SE}
In this part, we will construct the numerical scheme to find the solution of System \eqref{Eq(2.1)}-\eqref{Eq(2.5)} using the Finite Volume Method (FVM) in space and finite difference method in time. The principle of FVM is to integrate the equations \eqref{Eq(2.1)}-\eqref{Eq(2.2)}-\eqref{Eq(2.3)}  over the controle volume $K_i$. A flux at the boundaries of the control volume will come up typically $- c_i^2 U_x(x_{i+ \frac{1}{2}},t_n)$.
Thus a reasonable choice for the approximation of the continuous flux $- c_i^2 U_x(x_{i+ \frac{1}{2}},t_n)$ is the differential quotient : 
\begin{equation*} 
F_{i+{\frac{1}{2}}}^n =\left\{\begin{array}{ll}
- C_1^2 \frac{u_1^n}{h_{\frac{1}{2}}}, & i = 0, \\[0.25cm]
- c_i^2 \frac{U_{i+1}^n - U_i^n}{h_{i+\frac{1}{2}}}, & i = 1, \ldots, N_{\max}-1, \\[0.25cm]
C_3^2 \frac{w_{N_{\max}}^n}{h_{N_{\max}+\frac{1}{2}}}, & i = N_{\max}.
\end{array}\right.    
\end{equation*}
As this numerical flux depends only on the value of the unknown $U^n$ at time $t_n$, we decide to call it \textbf{explicit numerical flux} and the discretized problem is called 
\textbf{explicit discretized problem} although as it is mentionned in Remark \ref{remark3}, the discretized problem is not explicit because of the damping term  
$\delta v_{xxt}$ in equation \eqref{Eq(2.2)}.

We denote $\ell_{i+ \frac{1}{2}} = \frac{c_i^2}{h_{i+ \frac{1}{2}}}$ and we obtain :
\begin{align}\label{li}
	\ell_{i+ \frac{1}{2}}= \left\{\begin{array}{ll}
		\frac{C_1^2}{h_{i+ \frac{1}{2}}}=\frac{C_1^2}{h_\alpha} , & i=1,\ldots N_{\alpha}-1,\\[0.25cm]
		\frac{C_2^2}{h_{i+ \frac{1}{2}}}= \frac{C_2^2}{h} , & i = N_{\alpha}+1, \ldots, N_{\alpha}+N-1,\\[0.25cm]
		\frac{C_3^2}{h_{i+ \frac{1}{2}}}= \frac{C_3^2}{h_\beta} , & i = N_{\alpha}+N+1, \ldots, N_{\max}.
	\end{array} \right. \ .
\end{align}
Later in this paper, we will adapt the formula for the numerical fluxes at the inner boundary points \linebreak  $x_{N_\alpha+ \frac{1}{2}}=\alpha$ and $ x_{N_\beta+ \frac{1}{2}}=\beta$ and thus we will
properly define the value of $\ell_{N_\alpha+ \frac{1}{2}}$ and $\ell_{N_\alpha+ N+\frac{1}{2}}$.

\noindent \textbf{Discretization of Equation \eqref{Eq(2.1)}.} 

In this part, we will detail every computations in order to avoid the redaction of the computations for the next parts.

$\Box$ For $i = 1, \cdots, N_{\alpha} - 1$.

\begin{align*} 
  \int_{K_i} u_{tt}(x,t_n) dx - C_1^2  \int_{K_i} u_{xx}(x,t_n) dx &= 0,  \\
  \int_{x_{i-\frac{1}{2}}}^{x_{i+\frac{1}{2}}} u_{tt}(x,t_n) dx - C_1^2  \int_{x_{i-\frac{1}{2}}}^{x_{i+\frac{1}{2}}} u_{xx}(x,t_n) dx &= 0. 
\end{align*}
Let us again remark that
\begin{align*}
\dfrac{\partial^2 \displaystyle \int_{K_{i}} U(x,t)dx} {\partial t^2} &= \int_{K_{i}} \dfrac{\partial^2 U(x,t)}{\partial t^2} dx \mbox{ and }\\
\int_{x_{i-\frac{1}{2}}}^{x_{i+\frac{1}{2}}} u_{xx}(x,t_n) dx &= u_x(x_{i+\frac{1}{2}},t_n) - u_x(x_{i-\frac{1}{2}},t_n) \ .
\end{align*}
So we firstly obtain:
\begin{equation*}
\frac{1}{h_i}  \int_{x_{i-\frac{1}{2}}}^{x_{i+\frac{1}{2}}} u_{tt} (x,t_n) dx  = \frac{1}{h_i} \bigg(  \int_{x_{i-\frac{1}{2}}}^{x_{i+\frac{1}{2}}} u (x,t_n) dx  \bigg)_{tt} =  [u_i(t_n)]_{tt}.
\end{equation*}
A direct calculation gives
\begin{equation*}
\begin{array}{lll}
h_i [u_i(t_n)]_{tt} - C_1^2 [ u_x(x_{i+\frac{1}{2}},t_n) - u_x(x_{i-\frac{1}{2}},t_n)] = 0.
\end{array}
\end{equation*}
Using second-order central time discretization at time $t_n$ and the fluxes approximation defined before, we obtain:
\begin{equation*}\label{Eq-discrete-1}
  h_{i} \frac{u_i^{n+1}- 2 u_i^n + u_i^{n-1}}{\Delta t^2} - \big[ \ell_{i+ \frac{1}{2}} ( u^{n}_{i + 1} - u^{n}_{i }) - \ell_{i- \frac{1}{2}} (u^{n}_{i } - u^{n}_{i - 1 }) \big] = 0.
\end{equation*}
$\Box$ For $i = N_{\alpha}$ that is $x_{N_{\alpha}+\frac{1}{2}} = \alpha$.
\begin{align*} 
  \int_{x_{N_{\alpha}-\frac{1}{2}}}^{x_{N_{\alpha}+\frac{1}{2}}} u_{tt} dx - C_1^2  \int_{x_{N_{\alpha}-\frac{1}{2}}}^{x_{N_{\alpha}+\frac{1}{2}}} u_{xx} dx &= 0, \\
h_{N_{\alpha}} [u_{N_{\alpha}}(t_n)]_{tt} - C_1^2 [ u_x(x_{N_{\alpha}+\frac{1}{2}},t_n) - u_x(x_{N_{\alpha}-\frac{1}{2}},t_n)] &= 0.
\end{align*}
Let $H_{i+\frac{1}{2}}$ represent the approximation of the flux $ c_{i}^2 u_x(x_{i+\frac{1}{2}},t_n)$ for $i = N_{\alpha}$. Using the finite difference principle, and noticing that 
on the cell $K_i$, the unknown is $u$ and on the cell $K_{i+1}$, the unknown is $v$, we have
\begin{equation*}
\begin{array}{lll}
H_{i+\frac{1}{2}} = c_i^2 \frac{u_{i+\frac{1}{2}}^n - u_i^n}{h_i^+}    \quad & \textrm{over} \ K_{i}; \ i = N_{\alpha}, \\
H_{i+\frac{1}{2}} = c_{i+1}^2 \frac{{v^n_{i+1}} - v^n_{{i+\frac{1}{2}}}}{h_{i+1}^-} \quad & \textrm{over} \ K_{i+1}; \ i = N_{\alpha}
\end{array}
\end{equation*}
where $u_{i+\frac{1}{2}}^n$ is unknown and represents an approximation of $u(\alpha,t_n)$ on the left side	and $v_{i+\frac{1}{2}}^n$ is unknown and represents an approximation of $u(\alpha,t_n)$ 
on the right side. Since $x_{N_{\alpha}+\frac{1}{2}} = \alpha$, the transmission condition $u(\alpha,t) =  v(\alpha,t)$ leads to $v^n_{i+\frac{1}{2}} = u^n_{i+\frac{1}{2}}$.
		
Since $x_i$ is the center of $K_i$ for all $i$, then substituting $i = N_{\alpha}$ gives
\begin{equation*}
\begin{array}{lll}
H_{i+\frac{1}{2}} = c_i^2 \frac{u_{i+\frac{1}{2}}^n - u_i^n}{\frac{h_{\alpha}}{2}}  \quad & \textrm{over} \ K_{i}; \ i = N_{\alpha}, \\
H_{i+\frac{1}{2}}  = c_{i+1}^2 \frac{{v^n_{i+1}} - u^n_{{i+\frac{1}{2}}}}{\frac{h}{2}}  \quad & \textrm{over} \ K_{i+1}; \ i = N_{\alpha}
\end{array}
\end{equation*}
The above approximations should be equal to guarantee the conservation of the flux (see \cite{ERG2000}), and thus we get by a direct calculation 
\begin{equation}\label{conservativity of flux}
H_{i+\frac{1}{2}} = \frac{c_{i}^2 c_{i+1}^2}{ \frac{h}{2} c^2_{i}  + \frac{h_{\alpha}}{2} c_{i+1}^2  } (v_{i+1}^n - u_i^n) \quad \textrm{for} \ i = N_{\alpha}.
\end{equation} 
Given that for $i = N_{\alpha}$, $c_{i+1} = C_2$ and $c_i = C_1$, \textbf{we set the value of} $\boldsymbol{\ell_{N_{\alpha}+\frac{1}{2}}}$ as 
$\boldsymbol{\ell_{N_{\alpha}+\frac{1}{2}}= \frac{2 C_1^2 C_2^2}{C_1^2 h +  C_2^2 h_{\alpha}}}$ to obtain :
\begin{equation}\label{eq-interface-alpha}
C_1^2 u_x(x_{N_{\alpha}+\frac{1}{2}}, t_n) \simeq \ell_{N_{\alpha}+\frac{1}{2}} (v^n_{N_{\alpha}+1} - u^n_{N_{\alpha}}) \ .
\end{equation}
%taking into consideration that $u_{N_{\alpha}+1}$ is actually $v_{N_{\alpha}+1}$ due to the continuity of the functions at the interface.
\begin{Remark}
Let us remark that we have : 
\begin{equation}\label{C1-C2}
\min(C_1^2,C_2^2) \leq \ell_{N_{\alpha}+\frac{1}{2}} \times h_{N_{\alpha}+\frac{1}{2}}\leq \max(C_1^2,C_2^2) \ .
\end{equation}
We will use these inequalities to obtain an estimation of the discrete $\mathbb{H}^1$ semi-norm (see lemma \ref{H1estimate})
\end{Remark}
Now, using the second-order central difference in time and forward difference discretization in space, we obtain
\begin{align*}\label{Eq-discrete-2}
  h_{N_{\alpha}}  \frac{u_{N_{\alpha}}^{n+1}- 2 u_{N_{\alpha}}^n + u_{N_{\alpha}}^{n-1}}{\Delta t^2}  -\big [ \ell_{N_{\alpha}+\frac{1}{2}} (v^{n}_{N_{\alpha} + 1} - u^{n}_{N_{\alpha}}) - \ell_{N_{\alpha}-\frac{1}{2}} (u^{n}_{N_{\alpha} } - u^{n}_{N_{\alpha} -1}) \big] &= 0.
\end{align*} 
\textbf{Discretization of Equation \eqref{Eq(2.2)}.} \\
$\Box$ For $i = N_{\alpha} + 1$:
\begin{align*} 
  \int_{x_{N_{\alpha}+\frac{1}{2}}}^{x_{N_{\alpha}+\frac{3}{2}}} v_{tt} dx - C_2^2  \int_{x_{N_{\alpha}+\frac{1}{2}}}^{x_{N_{\alpha}+\frac{3}{2}}} v_{xx} dx - \delta \int_{x_{N_{\alpha}+\frac{1}{2}}}^{x_{N_{\alpha}+\frac{3}{2}}} v_{xxt} dx &= 0, \\
h_{N_{\alpha}+1} [v_{N_{\alpha}+1}(t_n)]_{tt} - C_2^2 [ v_x(x_{N_{\alpha}+\frac{3}{2}},t_n) - v_x(x_{N_{\alpha}+\frac{1}{2}},t_n)]  - \delta [ v_{xt}(x_{N_{\alpha}+ \frac{3}{2}},t_n) - v_{xt}(x_{N_{\alpha} + \frac{1}{2}},t_n)]&= 0.
\end{align*}
First of all, since the position $x_{N_{\alpha}+\frac{1}{2}}$ represents the point $\alpha$, then we use the transmission condition \eqref{Eq(2.5)} at point $\alpha$ and thus
\begin{equation*}
C_2^2 v_x(x_{N_{\alpha}+\frac{1}{2}},t_n) + \delta  v_{xt}(x_{N_{\alpha} + \frac{1}{2}},t_n) = C_1^2 u_x(x_{N_{\alpha}+\frac{1}{2}},t_n).
\end{equation*}
Consequently, we obtain
\begin{equation*}
h_{N_{\alpha}+1} [v_{N_{\alpha}+1}(t_n)]_{tt} + C_1^2 u_x(x_{N_{\alpha}+\frac{1}{2}},t_n)- C_2^2  v_x(x_{N_{\alpha}+\frac{3}{2}},t_n) - \delta v_{xt}(x_{N_{\alpha}+ \frac{3}{2}},t_n) = 0.
\end{equation*}
For the first term of the above equation we apply the second-order central difference in time at $t_n$ and for the third and fourth terms we use spatial forward difference, where the central difference in time is applied only to the fourth term. However, the second term is treated for $i = N_{\alpha}$ similarly as \eqref{conservativity of flux}. Therefore, we obtain
\begin{align}\label{Eq-discrete-3}
h_{N_{\alpha}+1} \frac{v_{N_{\alpha}+1}^{n+1}- 2 v_{N_{\alpha}+1}^n + v_{N_{\alpha}+1}^{n-1}}{\Delta t^2} -\big [ \ell_{N_{\alpha}+\frac{3}{2}} (v^{n}_{N_{\alpha} + 2} - v^{n}_{N_{\alpha}+1}) - \ell_{N_{\alpha}+\frac{1}{2}} (v^{n}_{N_{\alpha}+1 } - u^{n}_{N_{\alpha} }) \big] \\
  = \frac{\delta}{2 \Delta t} \bigg[  \frac{{v^{n+1}_{N_{\alpha}+2} - v^{n+1}_{N_{\alpha}+1}}}{h_{N_{\alpha}+1}} - \frac{{v^{n-1}_{N_{\alpha}+2} - v^{n-1}_{N_{\alpha}+1}}}{h_{N_{\alpha}+1}} \bigg].  \nonumber
\end{align}
$\Box$ For $i = N_{\alpha} + 2, \cdots, N_{\alpha} + N -1 $:

\begin{align*} 
  \int_{x_{i}-\frac{1}{2}}^{x_{i}+\frac{1}{2}} v_{tt} dx - C_2^2  \int_{x_{i}-\frac{1}{2}}^{x_{i}+\frac{1}{2}} v_{xx} dx - \delta \int_{x_{i}-\frac{1}{2}}^{x_{i}+\frac{1}{2}} v_{xxt} dx &= 0,\\  \nonumber
 h_i [v_i(t_n)]_{tt} - C_2^2 [ v_x(x_{i+\frac{1}{2}},t_n) - v_x(x_{i-\frac{1}{2}},t_n)]  - \delta [ v_{xt}(x_{i+\frac{1}{2}},t_n) - v_{xt}(x_{i-\frac{1}{2}},t_n)] &= 0.
\end{align*}

Similarly, applying a second-order central difference to the second time derivative and a forward difference in space to the other terms combined with centered time discretization for the last two terms, we obtain 
\begin{equation*}\label{Eq-discrete-4}
\begin{array}{lll}
  h_i \frac{v_i^{n+1}- 2 v_i^n + v_i^{n-1}}{\Delta t^2} - \big[ \ell_{i+ \frac{1}{2}} ( v^{n}_{i + 1} - v^{n}_{i }) - \ell_{i- \frac{1}{2}} (v^{n}_{i } - v^{n}_{i - 1 }) \big] \\
 = \frac{\delta}{2 \Delta t} \bigg [   \frac{v^{n+1}_{i +1 } - v^{n+1}_{i }}{h_i} -   \frac{v^{n-1}_{i +1 } - v^{n-1}_{i }}{h_i} -  \frac{v^{n+1}_{i } - v^{n+1}_{i - 1 }}{h_i} +  \frac{v^{n-1}_{i } - v^{n-1}_{i - 1 }}{h_i} \bigg].
\end{array}
\end{equation*}
$\Box$ For $i = N_{\alpha} + N$:
\begin{equation*}
\begin{array}{lll} 
  \int_{x_{N_{\alpha}+N-\frac{1}{2}}}^{x_{N_{\alpha}+N+\frac{1}{2}}} v_{tt} dx - C_2^2  \int_{x_{N_{\alpha}+N-\frac{1}{2}}}^{x_{N_{\alpha}+N+\frac{1}{2}}} v_{xx} dx - \delta \int_{x_{N_{\alpha}+N-\frac{1}{2}}}^{x_{N_{\alpha}+N+\frac{1}{2}}} v_{xxt} dx = 0, \\
 h_{N_{\alpha}+N} [v_{N_{\alpha}+N}(t_n)]_{tt} - C_2^2 [ v_x(x_{N_{\alpha}+N+\frac{1}{2}},t_n) - v_x(x_{N_{\alpha}+N-\frac{1}{2}},t_n)] - \delta [ v_{xt}(x_{N_{\alpha}+ N+ \frac{1}{2}},t_n) - v_{xt}(x_{N_{\alpha} + N - \frac{1}{2}},t_n)]= 0.
\end{array}
\end{equation*}
Noting that the position $x_{N_{\alpha}+N+\frac{1}{2}}$ represents the point $\beta$, then we use the transmission condition \eqref{Eq(2.5)} at point $\beta$ and thus
\begin{equation*}
- C_2^2 v_x(x_{N_{\alpha}+N+\frac{1}{2}},t_n) - \delta  v_{xt}(x_{N_{\alpha} + N+ \frac{1}{2}},t_n) = - C_3^2 w_x(x_{N_{\alpha}+N+\frac{1}{2}},t_n).
\end{equation*}
Consequently, we obtain
\begin{equation*}
 h_{N_{\alpha}+N} [v_{N_{\alpha}+N}(t_n)]_{tt} - C_3^2 w_x(x_{N_{\alpha}+N+\frac{1}{2}},t_n) + C_2^2  v_x(x_{N_{\alpha}+N-\frac{1}{2}},t_n)  + \delta  v_{xt}(x_{N_{\alpha} + N - \frac{1}{2}},t_n)   = 0.
\end{equation*}

Similar to the way used for $i = N_{\alpha}+1$ (see \eqref{Eq-discrete-3}), we apply the second-order central difference in time for the first term of the above equation. With respect to the third and fourth terms, we use spatial forward difference, where the central difference in time is applied only to the fourth term. 
Concerning the second term,  we denote the approximation of $c_i^2 w_x(x_{i+ \frac{1}{2}},t_n)$ by $J_{i+ \frac{1}{2}}$ for $i = N_{\alpha}+N$. 
We argue in the same way as in \cite{ERG2000} to obtain $J_{i+ \frac{1}{2}}$by the same way to obtain the value of $H_{i+ \frac{1}{2}}$ (see \eqref{conservativity of flux}). 
Given that for $i = N_{\alpha}+N$, $c_{i+1} = C_3$ and $c_i = C_2$,   \textbf{we set the value of} $\boldsymbol{\ell_{N_{\alpha}+N+\frac{1}{2}}}$ as 
$\boldsymbol{\ell_{N_{\alpha}+N+\frac{1}{2}}= \frac{2 C_2^2 C_3^2}{C_3^2 h +  C_2^2 h_{\beta}}}$ to obtain :
 
\begin{equation}\label{eq-interface-beta}
C_3^2 w_x(x_{N_{\alpha}+N+\frac{1}{2}}, t_n) \simeq \ell_{N_{\alpha}+N+\frac{1}{2}} (w^n_{N_{\alpha}+N+1} - v^n_{N_{\alpha}+N}) \  .
\end{equation}
%where $w^n_{N_{\alpha}+N}$ is $v^n_{N_{\alpha}+N}$ due to the continuity of the functions at the interface.
\begin{Remark}
Let us remark that we have : 
\begin{equation}\label{C2-C3}
\min(C_2^2,C_3^2)\leq \ell_{N_{\alpha}+N+\frac{1}{2}} \times h_{N_{\alpha}+N+\frac{1}{2}} \leq \max(C_2^2,C_3^2)
\end{equation}
We will use these inequalities to obtain an estimation of the discrete $\mathbb{H}^1$ semi-norm (see lemma \ref{H1estimate})
\end{Remark}
Therefore, we obtain
\begin{align}\label{Eq-discrete-5}
  h_{N_{\alpha}+N} \frac{v_{N_{\alpha}+N}^{n+1}- 2 v_{N_{\alpha}+N}^n + v_{N_{\alpha}+N}^{n-1}}{\Delta t^2} -\big [ \ell_{N_{\alpha}+N+\frac{1}{2}} (w^{n}_{N_{\alpha}+N + 1} - v^{n}_{N_{\alpha}+N}) - \ell_{N_{\alpha}+N-\frac{1}{2}} (v^{n}_{N_{\alpha}+N } - v^{n}_{N_{\alpha} +N-1}) \big] \\
=- \frac{\delta}{2 \Delta t} \bigg[  \frac{{v^{n+1}_{N_{\alpha}+N} - v^{n+1}_{N_{\alpha}+N-1}}
}{h_{N_{\alpha}+N}} - \frac{{v^{n-1}_{N_{\alpha}+N} - v^{n-1}_{N_{\alpha}+N-1}}}{h_{N_{\alpha}+N}} \bigg].\nonumber
\end{align}
\textbf{Discretization of Equation \eqref{Eq(2.3)}.} \\
$\Box$ For $i = N_{\alpha} + N + 1$:
\begin{align*} 
  \int_{x_{N_{\alpha}+N+\frac{3}{2}}}^{x_{N_{\alpha}+N+\frac{1}{2}}} w_{tt} dx - C_3^2  \int_{x_{N_{\alpha}+N+\frac{1}{2}}}^{x_{N_{\alpha}+N+\frac{3}{2}}} w_{xx} dx &= 0, \\
 h_{N_{\alpha} + N + 1} [w_{N_{\alpha} + N + 1}(t_n)]_{tt} - C_3^2 [ w_x(x_{N_{\alpha}+N+\frac{3}{2}},t_n) - w_x(x_{N_{\alpha}+N+\frac{1}{2}},t_n)] &= 0.
\end{align*}
Note that the last term of the above equation is treated in the previous part (for $i = N_{\alpha}+N$). Again, using the second-order central difference in time and forward difference discretization in space, we obtain
\begin{align*}
h_{N_{\alpha}+N+1}  \frac{w_{N_{\alpha}+N+1}^{n+1}- 2 w_{N_{\alpha}+N+1}^n + w_{N_{\alpha}+N+1}^{n-1}}{\Delta t^2}  -  \big [ \ell_{N_{\alpha}+N+{\frac{3}{2}}} (w^{n}_{N_{\alpha}+N+2 } - w^{n}_{N_{\alpha} +N+1}) - \ell_{N_{\alpha}+N+{\frac{1}{2}}}  (w^{n}_{N_{\alpha}+N + 1} - v^{n}_{N_{\alpha}+N})  \big] &= 0.
\end{align*} 
$\Box$ For $i =N_{\alpha} + N +2  , \cdots, N_{\max}$: \\[0.1in]
Similar to the treatment of the first equation (for $i = 1, \cdots, N_{\alpha}-1$, see \eqref{Eq-discrete-1}), we do the same thing to obtain
\begin{equation*}\label{Eq-discrete-7}
  h_{i} \frac{w_i^{n+1}- 2 w_i^n + w_i^{n-1}}{\Delta t^2} - \big[ \ell_{i+ \frac{1}{2}} ( w^{n}_{i + 1} - w^{n}_{i }) - \ell_{i- \frac{1}{2}} (w^{n}_{i } - w^{n}_{i - 1 }) \big] = 0.
\end{equation*}
Finally the explicit discrete problem can be written in the compact form, for $i = 1,\ldots,N_{\max}$
\begin{equation} \label{Discrete Explicit}
h_i \frac{U_i^{n+1}-2U_i^n + U_i^{n-1}}{\Delta t^2} - [\ell_{i+{\frac{1}{2}}} (U_{i+1}^n - U_i^n) - \ell_{i-{\frac{1}{2}}} (U_{i}^n - U_{i-1}^n)]
= rhs_i
\end{equation}
where $rhs_i$ is defined by: for
\begin{align*}
&i = 1, \ldots, N_{\alpha}\,,\,rhs_i = 0 \\
&i = N_{\alpha}+1\,,\,rhs_i =\frac{\delta}{2 \Delta t} \bigg[  \frac{{U^{n+1}_{N_{\alpha}+2} - U^{n+1}_{N_{\alpha}+1}}}{h_{N_{\alpha}+1}} - \frac{{U^{n-1}_{N_{\alpha}+2} - U^{n-1}_{N_{\alpha}+1}}}{h_{N_{\alpha}+1}} \bigg],\\
&i = N_{\alpha}+2, \ldots, N_{\alpha}+N-1\,,\,rhs_i =\frac{\delta}{2 \Delta t} \bigg [   \frac{U^{n+1}_{i +1 } - U^{n+1}_{i }}{h_i} -   \frac{U^{n-1}_{i +1 } - U^{n-1}_{i }}{h_i} -  \frac{U^{n+1}_{i } - U^{n+1}_{i - 1 }}{h_i} +  \frac{U^{n-1}_{i } - U^{n-1}_{i - 1 }}{h_i} \bigg],  \\
&i = N_{\alpha}+N\,,\,rhs_i =-\frac{\delta}{2 \Delta t} \bigg[  \frac{{U^{n+1}_{N_{\alpha}+N} - U^{n+1}_{N_{\alpha}+N-1}}
}{h_{N_{\alpha}+N}} - \frac{{U^{n-1}_{N_{\alpha}+N} - U^{n-1}_{N_{\alpha}+N-1}}}{h_{N_{\alpha}+N}} \bigg],  \\
&i = N_{\alpha}+N+1, \ldots, N_{\max}\,,\,rhs_i = 0 .
\end{align*}
Consequently, the discrete explicit problem which represents an approximation of System \eqref{Eq(2.1)}-\eqref{Eq(2.5)} can be written as:
\begin{equation} \label{Discrete Explicit-Matrix}
\mbox{for } n = 1,\ldots,\mathcal{N} \,,\, \bigg(\mathcal{M} +  \frac{\delta \Delta t}{h}\mathcal{A} \bigg)U^{n+1} =\bigg(2 \mathcal{M} + \Delta t^2 \mathcal{B} \bigg)U^n - \bigg(\mathcal{M} -\frac{\delta \Delta t}{h}\mathcal{A} \bigg) U^{n-1},
\end{equation}
where  
$$ U =\begin{pmatrix}
u_{i = 1 , \cdots, N_{\alpha} }\\
v_{i= N_{\alpha}+1, \cdots N_{\alpha}+N}\\
w_{i= N_{\alpha}+N +1, \cdots, N_{\max}}
\end{pmatrix} , \quad  
 \mathcal{A} = \begin{pmatrix}
0 & 0 & 0\\
0 & A_{22} & 0\\
0 & 0 & 0
\end{pmatrix}   , \quad \mathcal{B} = \begin{pmatrix}
B_{11} & B_{12} & 0\\
B_{21} & B_{22} & B_{23}\\
0 & B_{32} & B_{33}
\end{pmatrix}$$
\noindent and $\mathcal{M}$ is a diagonal matrix of size $N_{\max}$ such that $M_{11}=\textrm{diag}(h_i)_{i = 1, \ldots, N_{\alpha}}$, $M_{22} = \textrm{diag}(h_i)_{i= N_{\alpha}+1, \ldots, N_{\alpha}+N}$ and $M_{33} = \textrm{diag}(h_i)_{i= N_{\alpha}+N+1, \ldots, N_{\max}}$. 
{\small{\begin{equation*}
\begin{array}{ll}
B_{11} = \begin{pmatrix}
-\ell_{3/2}-\ell_{1/2} & \ell_{3/2} & 0 & \cdots    & \cdots & 0\\
\ell_{3/2} &  -\ell_{5/2}-\ell_{3/2} & \ell_{5/2} & 0 & \cdots & 0\\
0 & \ddots & \cdots & \ddots \\
0 & \cdots& \cdots & \cdots& \ell_{N_{\alpha}-1/2} & -\ell_{N_{\alpha}+1/2} - \ell_{N_{\alpha}-1/2}
\end{pmatrix}, \\
B_{22} = \begin{pmatrix}
-\ell_{N_{\alpha}+3/2}- \ell_{N_{\alpha}+1/2} &  \ell_{N_{\alpha}+3/2} & 0    & \cdots & 0\\ 
 \ell_{N_{\alpha}+3/2} & -\ell_{N_{\alpha}+5/2} - \ell_{N_{\alpha}+3/2}&  \ell_{N_{\alpha}+5/2} & 0 & \cdots \\
0 & \ddots & \cdots & \ddots \\
0 & \cdots& \cdots &  \ell_{N_{\alpha}+N-1/2} &  -\ell_{N_{\alpha}+N+1/2} - \ell_{N_{\alpha}+N-1/2}
\end{pmatrix}, \\
B_{33} = \begin{pmatrix}
 -\ell_{N_{\alpha}+N+3/2} - \ell_{N_{\alpha}+N+1/2} &  \ell_{N_{\alpha}+N+3/2} & 0 & \cdots    & \cdots & 0\\
\ell_{N_{\alpha}+N+3/2} & - \ell_{N_{\alpha}+N+5/2} - \ell_{N_{\alpha}+N+3/2}&  \ell_{N_{\alpha}+N+5/2} & 0 & \cdots & 0 \\
0 & \ddots & \cdots & \ddots \\
0 & \cdots& \cdots & \cdots&  \ell_{N_{\max}- 1/2} & - \ell_{N_{\max}+ 1/2} - \ell_{N_{\max}- 1/2}
\end{pmatrix},
\end{array}
\end{equation*} 
\begin{equation*}
\begin{array}{ll}
B_{21} = \begin{pmatrix}
0 & \cdots & 0 &  \ell_{N_{\alpha}+1/2}\\
\ddots &\cdots& \ddots & 0 \\
\ddots &\cdots& \ddots & \vdots
\end{pmatrix},  \ B_{12} = \begin{pmatrix}
0 & \cdots & 0 & \cdots \\
\ddots &\cdots& \ddots & \vdots \\
 \ell_{N_{\alpha}+1/2} &\cdots& \ddots & \vdots
\end{pmatrix}, 
B_{32} = \begin{pmatrix}
0 & \cdots & 0 &  \ell_{N_{\alpha}+N+1/2}\\
\ddots &\cdots& \ddots & 0 \\
\ddots &\cdots& \ddots & \vdots
\end{pmatrix}, 
\\
 \ B_{23} = \begin{pmatrix}
0 & \cdots & 0 & \cdots \\
\ddots &\cdots& \ddots & \vdots \\
 \ell_{N_{\alpha}+N+1/2} &\cdots& \ddots & \vdots
\end{pmatrix},  A_{22} = \begin{pmatrix}
1/2 & - 1/2 & 0    & \cdots & 0\\ 
- 1/2 & 1& -1/2 & 0 & \cdots \\
0 & \ddots & \ddots & \ddots \\
0 & \cdots& \ddots &  - 1/2 & 1/2
\end{pmatrix}.
\end{array}
\end{equation*}}}
Writing the numerical scheme for $n=0$ and using the value of $U^{-1}$ defined by \eqref{artificialSOLUTION} we get :
\begin{equation} \label{Discrete-Explicit-Matrix1}
2 \mathcal{M} U^{1} =\bigg(2 \mathcal{M} + \Delta t^2 \mathcal{B} \bigg)U^0 + 2 \Delta t \bigg(\mathcal{M} -\frac{\delta \Delta t}{h}\mathcal{A} \bigg) \Psi_{\mathcal{T}}  
\end{equation}
\begin{Remark} \label{remark3}~
\begin{itemize}
\item The matrix $\bigg(\mathcal{M} +  \frac{\delta \Delta t}{h}\mathcal{A} \bigg)$ is
irreducible diagonal dominant. 
So this matrix is also invertible (see \cite{Golub-Meurant}). Therefore, equations \eqref{Discrete Explicit-Matrix} and \eqref{Discrete-Explicit-Matrix1} are well-posed.
\item Let us also mention that due to the Kelvin-Voigt dissipation term, the numerical scheme \eqref{Discrete Explicit-Matrix} is not fully explicit. 
\end{itemize}
\end{Remark}
\bigskip

\begin{center}\textbf{\textit{Practical implementation}}\end{center}
\hspace*{-0.1cm}\rule[-1mm]{15cm}{0.05cm}\\
\textbf{Initialisation}\\
%\noindent\hspace*{-0.1cm}\rule[-1mm]{10cm}{0.05cm}
$U^0 = \Phi_{\mathcal{T}}$\\
\hspace*{-0.1cm}\rule[-1mm]{15cm}{0.05cm}\\
\textbf{Computation of $\boldsymbol{U^1}$}\\
$2 \mathcal{M} U^{1} =\bigg(2 \mathcal{M} + \Delta t^2 \mathcal{B} \bigg)U^0 
+ 2 \Delta t \bigg(\mathcal{M} -\frac{\delta \Delta t}{h}\mathcal{A} \bigg) \Psi_{\mathcal{T}}$\\
\hspace*{-0.1cm}\rule[-1mm]{15cm}{0.05cm}\\
\textbf{Compute the inverse of the matrix} $\boldsymbol{\bigg(\mathcal{M} +  \frac{\delta \Delta t}{h}\mathcal{A} \bigg)}$.\\
$C =\bigg(\mathcal{M} +  \frac{\delta \Delta t}{h}\mathcal{A} \bigg)^{-1}$.\\
\hspace*{-0.1cm}\rule[-1mm]{15cm}{0.05cm}\\
\textbf{Computation of $\boldsymbol{U^{n+1}}$ for $\boldsymbol{n=1,\ldots,\mathcal{N}}$}\\%\vspace*{-0.3cm}\\
for $n=1 \ldots, \mathcal{N}$ \\
\hspace*{1cm} $RHS = \bigg(2 \mathcal{M} + \Delta t^2 \mathcal{B} \bigg)U^n - \bigg(\mathcal{M} -\frac{\delta \Delta t}{h}\mathcal{A} \bigg) U^{n-1}$ \\
\hspace*{1cm} $U^{n+1} = C \times RHS$\\
endfor\\
\hspace*{-0.1cm}\rule[-1mm]{15cm}{0.05cm}\\

By discrete Fourier analysis, the numerical scheme \eqref{Discrete Explicit-Matrix} is stable if and only if the following Courant-Friedrichs-Lewy; {\it{i.e.}} CFL condition holds:
\begin{equation*}
\begin{array}{lll}
c_i^2 \Delta t^2 \leq (h_i)^2
\end{array}
\end{equation*} 
which is equivalent to
\begin{equation} \tag{CFL condition}
\Delta t \leq \min (c_i^{-1}) \Delta x; \quad \textrm{with} \quad \Delta x = \min \{h_i; \ i=1,\ldots, N_{\max} \}. 
\end{equation}
The number $\min (c_i^{-1})$ stands for the CFL number.

%%%%%%%%%%%%%%%%%%%%%%%%%%%%%%%%%%%%%%%%%%%%%%%%%%%%%%%%%%%%%%%%%%
\subsection{Dissipation of the discrete energy for the explicit discretized problem} \label{SEE}
In this subsection, we plan to design a discrete energy that is conserved when $\delta = 0$ and dissipates when $\delta >0$. 
For this aim, \textcolor{black}{we copy the definition of the continuous energy \eqref{energy} and we define:}
\begin{itemize}
\item[$\bullet$] the discrete kinetic energy for $U$ as: 
\[ E_k(U^{n}) = \frac{1}{2} \sum_{i=1}^{N_{\max}} h_i \bigg(\frac{U_{i}^{n+1}-U_{i}^n}{\Delta t}\bigg)^2
\]
\item[$\bullet$] the discrete potential energy for $U$ as: 
\[E_p(U^{n}) = \frac{1}{2}\sum_{i=0}^{N_{\max}} \ell_{i+{\frac{1}{2}}} (U_{i+1}^{n+1} - U_{i}^{n+1})(U_{i+1}^{n}- U_i^{n}) \ .
\]
\end{itemize}
The total discrete energy is then defined as
\begin{equation}\label{discrete-energy}
\mathcal{E}^{n} = E_k(U^{n}) + E_p(U^{n}).
\end{equation}
Now, we want to prove that the above stated goals for the discrete energy \eqref{discrete-energy} are fulfilled. For this purpose, we aim to prove the following theorem:
\begin{Theorem} \label{theorem explicit dissipation}
The total discrete energy defined by \eqref{discrete-energy} of the explicit numerical scheme \eqref{Discrete Explicit} satisfies the following
 \textcolor{black}{dissipativity estimation}
\begin{equation*}
\mathcal{E}^{n} - \mathcal{E}^ {n-1} = -  \delta \Delta t \textcolor{black}{h} \sum_{i=N_{\alpha}+1}^{N_{\alpha}+N -1}\bigg( \frac{U^{n+1}_{i +1 } - U^{n+1}_{i } - U^{n-1}_{i +1 } + U^{n-1}_{i }}{2 \Delta t \ h} \bigg)^2.
\end{equation*}
\end{Theorem}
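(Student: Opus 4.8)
The plan is to test the scheme against the discrete centered velocity and recognize the resulting identity as a telescoping of the total energy. Write $V_i := U_i^{n+1}-U_i^{n-1}$ for the (unnormalized) centered velocity and $D_{i+\frac12}^{m} := U_{i+1}^m - U_i^m$ for the spatial jump at time level $m$. First I would treat the kinetic part by the difference-of-squares trick: since
\[
E_k(U^{n}) - E_k(U^{n-1}) = \frac{1}{2}\sum_{i=1}^{N_{\max}} h_i\, \frac{U_i^{n+1}-2U_i^n+U_i^{n-1}}{\Delta t^2}\,V_i,
\]
the factor $h_i\,\partial^2 U_i^n$ is exactly the quantity the scheme \eqref{Discrete Explicit} expresses as $\ell_{i+\frac12}D_{i+\frac12}^n-\ell_{i-\frac12}D_{i-\frac12}^n + rhs_i$. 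Substituting this splits the kinetic increment into a \emph{stiffness} contribution plus a \emph{damping} contribution $\tfrac12\sum_i rhs_i V_i$.

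Next I would run a discrete summation by parts (Abel) on the stiffness contribution. Setting $F_{i+\frac12}:=\ell_{i+\frac12}D_{i+\frac12}^n$ for $i=0,\dots,N_{\max}$ (with $U_0^m=U_{N_{\max}+1}^m=0$, hence $V_0=V_{N_{\max}+1}=0$), the Dirichlet boundary conditions make the boundary terms vanish and give
\[
\frac{1}{2}\sum_{i=1}^{N_{\max}}\big(F_{i+\frac12}-F_{i-\frac12}\big)V_i
= -\frac{1}{2}\sum_{i=0}^{N_{\max}}\ell_{i+\frac12}\,D_{i+\frac12}^n\big(D_{i+\frac12}^{n+1}-D_{i+\frac12}^{n-1}\big),
\]
using $V_{i+1}-V_i=D_{i+\frac12}^{n+1}-D_{i+\frac12}^{n-1}$. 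The right-hand side is precisely $-(E_p(U^{n})-E_p(U^{n-1}))$, because $E_p(U^{n})-E_p(U^{n-1})=\tfrac12\sum_{i=0}^{N_{\max}}\ell_{i+\frac12}D_{i+\frac12}^n(D_{i+\frac12}^{n+1}-D_{i+\frac12}^{n-1})$ by the definition of $E_p$. Hence the stiffness term cancels the potential-energy increment, and adding $E_p(U^{n})-E_p(U^{n-1})$ to both sides yields the clean identity $\mathcal{E}^{n}-\mathcal{E}^{n-1}=\tfrac12\sum_{i} rhs_i V_i$.

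It remains to evaluate the damping term, which is supported only on $i=N_{\alpha}+1,\dots,N_{\alpha}+N$. Writing $G_{i+\frac12}:=\tfrac1h\big(D_{i+\frac12}^{n+1}-D_{i+\frac12}^{n-1}\big)$, the explicit forms of $rhs_i$ show that $rhs_i=\tfrac{\delta}{2\Delta t}\big(\tilde F_{i+\frac12}-\tilde F_{i-\frac12}\big)$, where $\tilde F_{i+\frac12}=G_{i+\frac12}$ for the interior edges and $\tilde F_{N_{\alpha}+\frac12}=\tilde F_{N_{\alpha}+N+\frac12}=0$. The vanishing of these two interface fluxes is exactly the content of the transmission conditions \eqref{Eq(2.5)} used when discretizing \eqref{Eq(2.2)} at $i=N_{\alpha}+1$ and $i=N_{\alpha}+N$. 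A second Abel summation then telescopes to
\[
\frac{1}{2}\sum_{i=N_{\alpha}+1}^{N_{\alpha}+N} rhs_i V_i
= -\frac{\delta}{4\Delta t}\sum_{i=N_{\alpha}+1}^{N_{\alpha}+N-1} G_{i+\frac12}\big(V_{i+1}-V_i\big)
= -\frac{\delta}{4\,\Delta t\, h}\sum_{i=N_{\alpha}+1}^{N_{\alpha}+N-1}\big(D_{i+\frac12}^{n+1}-D_{i+\frac12}^{n-1}\big)^2,
\]
since $V_{i+1}-V_i=D_{i+\frac12}^{n+1}-D_{i+\frac12}^{n-1}$ and $G_{i+\frac12}=\tfrac1h(D_{i+\frac12}^{n+1}-D_{i+\frac12}^{n-1})$. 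Factoring out $\delta\,\Delta t\, h$ and absorbing $\tfrac{1}{4\Delta t^2 h^2}=\big(\tfrac{1}{2\Delta t\,h}\big)^2$ into the square gives exactly the claimed expression, which completes the proof.

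The main obstacle, and the only place where the geometry of the problem really enters, is the bookkeeping of the interface fluxes in the second summation by parts: one must verify that the boundary/interface terms at $\alpha$ and $\beta$ drop out because the damping flux $\tilde F$ was defined to vanish there (a consequence of how \eqref{Eq(2.5)} was enforced in \eqref{Eq-discrete-3} and \eqref{Eq-discrete-5}). The stiffness cancellation, by contrast, is routine once the pairing between $E_p$ (summed over $i=0,\dots,N_{\max}$) and the Dirichlet conditions is set up correctly.
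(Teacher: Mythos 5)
Your proposal is correct and follows essentially the same route as the paper's proof: multiplying the scheme by the centered velocity $U_i^{n+1}-U_i^{n-1}$, recovering the kinetic increment by the difference-of-squares identity, cancelling the stiffness term against $E_p(U^n)-E_p(U^{n-1})$ by summation by parts with the Dirichlet conditions, and telescoping the damping terms to the stated quadratic form. Your repackaging of $rhs_i$ as a discrete flux difference $\tilde F_{i+\frac12}-\tilde F_{i-\frac12}$ with vanishing interface fluxes at $\alpha$ and $\beta$ is merely a tidier presentation of the paper's explicit index translations, in which the $i=N_\alpha+1$ and $i=N_\alpha+N$ contributions are carried along separately and cancel at the end.
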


\begin{proof}
 \textcolor{black}{Let us first mention that this estimation is the discrete version of the dissipativity estimation of the continuous solution \eqref{dissipative}.}\\
The proof of Theorem \ref{theorem explicit dissipation} is similar to the continuous case where we multiply the discrete problem by the approximation of $U_t$. To obtain the energy estimates, we multiply the left hand-side and right hand-side of \eqref{Discrete Explicit} by $(U_i^{n+1}- U_i^{n-1})$ and we sum over $i = 1, \ldots, N_{\max}$ to obtain :
\begin{align}
	\sum_{i=1}^{N_{\max}} h_i \bigg(\frac{U_i^{n+1}-2 U_i^n + U_i^{n-1}}{\Delta t^2} \bigg) (U_i^{n+1}-U_i^{n-1}) &- \sum_{i=1}^{N_{\max}}  [\ell_{i+{\frac{1}{2}}} (U_{i+1}^n - U_i^n) - \ell_{i-{\frac{1}{2}}} (U_{i}^n - U_{i-1}^n)](U_i^{n+1}-U_i^{n-1}) \nonumber\\
	&= \sum_{i=1}^{N_{\max}} rhs_{i} (U_i^{n+1}-U_i^{n-1})	\label{Discrete-EQ-1}
\end{align}
\textbf{Estimation of the left hand-side of \eqref{Discrete-EQ-1}}:
%First, we have
%\begin{equation} \label{Discrete-EQ-1}
%sum_{i=1}^{N_{\max}} h_i \bigg(\frac{U_i^{n+1}-2 U_i^n + U_i^{n-1}}{\Delta t^2} \bigg) (U_i^{n+1}-U_i^{n-1}) - \sum_{i=1}^{N_{\max}}  [\ell_{i+{\frac{1}{2}}} (U_{i+1}^n - U_i^n) - \ell_{i-{\frac{1}{2}}} (U_{i}^n - U_{i-1}^n)](U_i^{n+1}-U_i^{n-1}) 
%= sum_{i=1}^{N_{\max}} rhs_{i} (U_i^{n+1}-U_i^{n-1})
%\begin{array}{lll}
%\sum_{i=1}^{N_{\max}} h_i \bigg(\frac{U_i^{n+1}-2 U_i^n + U_i^{n-1}}{\Delta t^2} \bigg) (U_i^{n+1}-U_i^{n-1}) - \sum_{i=1}^{N_{\max}}  [\ell_{i+{\frac{1}{2}}} (U_{i+1}^n - U_i^n) - \ell_{i-{\frac{1}{2}}} (U_{i}^n - U_{i-1}^n)](U_i^{n+1}-U_i^{n-1}).
%\end{array}
%\end{equation}
We will firstly estimate each term of the right hand-side. 
\textbf{Estimation of the first term of \eqref{Discrete-EQ-1}}: 
\begin{equation} \label{result-energy-kinetic}
\begin{array}{lll}
&\sum_{i=1}^{N_{\max}} h_i \bigg(\frac{U_i^{n+1}-2 U_i^n + U_i^{n-1}}{\Delta t^2} \bigg) (U_i^{n+1}-U_i^{n-1}) \\
&=\sum_{i=1}^{N_{\max}} h_i \bigg[\frac{(U_i^{n+1}- U_i^n) -( U_i^n - U_i^{n-1})}{\Delta t^2} \bigg] [(U_i^{n+1}-U_i^n) + (U_i^n -U_i^{n-1})] \\
&=  \sum_{i=1}^{N_{\max}} h_i \bigg(\frac{U_{i}^{n+1}-U_{i}^n}{\Delta t}\bigg)^2 -  \sum_{i=1}^{N_{\max}} h_i \bigg(\frac{U_{i}^{n}-U_{i}^{n-1}}{\Delta t}\bigg)^{2} \\
&= 2(E_k(U^n)-E_k(U^{n-1})).
\end{array}
\end{equation}
\textbf{Estimation of the second term of \eqref{Discrete-EQ-1}}:
\begin{equation*}
\begin{array}{lll}
- \sum_{i=1}^{N_{\max}} [\ell_{i+{\frac{1}{2}}} (U_{i+1}^n - U_i^n) - \ell_{i-{\frac{1}{2}}} (U_{i}^n - U_{i-1}^n)](U_i^{n+1}-U_i^{n-1}) \\
= - \sum_{i=1}^{N_{\max}} \ell_{i+{\frac{1}{2}}} (U_{i+1}^n - U_i^n)(U_i^{n+1}-U_i^{n-1}) + \sum_{i=1}^{N_{\max}} \ell_{i-{\frac{1}{2}}} (U_{i}^n - U_{i-1}^n)(U_i^{n+1}-U_i^{n-1}).
\end{array}
\end{equation*}
By translation of index $i$ of the second term of the right hand-side of the above equation, we obtain:
\begin{equation*}
\begin{array}{lll}
- \sum_{i=1}^{N_{\max}} [\ell_{i+{\frac{1}{2}}} (U_{i+1}^n - U_i^n) - \ell_{i-{\frac{1}{2}}} (U_{i}^n - U_{i-1}^n)](U_i^{n+1}-U_i^{n-1}) \\
= - \sum_{i=1}^{N_{\max}} \ell_{i+{\frac{1}{2}}} (U_{i+1}^n - U_i^n)(U_i^{n+1}-U_i^{n-1}) + \sum_{i=0}^{N_{\max}-1} \ell_{i+{\frac{1}{2}}} (U_{i+1}^n - U_{i}^n)(U_{i+1}^{n+1}-U_{i+1}^{n-1}).
\end{array}
\end{equation*}
Taking into consideration that $U_0 = U_{N_{\max}+1}=0$, we obtain:
\begin{equation*}
\begin{array}{lll}
- \sum_{i=1}^{N_{\max}} [\ell_{i+{\frac{1}{2}}} (U_{i+1}^n - U_i^n) - \ell_{i-{\frac{1}{2}}} (U_{i}^n - U_{i-1}^n)](U_i^{n+1}-U_i^{n-1}) \\
=   \sum_{i=0}^{N_{\max}} \ell_{i+{\frac{1}{2}}} (U_{i+1}^n - U_{i}^n)[(U_{i+1}^{n+1}- U_i^{n+1})-(U_{i+1}^{n-1} -U_i^{n-1})]\\
=  \sum_{i=0}^{N_{\max}} \ell_{i+{\frac{1}{2}}} (U_{i+1}^n - U_{i}^n)(U_{i+1}^{n+1}- U_i^{n+1}) - \sum_{i=0}^{N_{\max}} \ell_{i+{\frac{1}{2}}} (U_{i+1}^n - U_{i}^n)(U_{i+1}^{n-1} -U_i^{n-1})\\
= 2(E_p(U^n)-E_p(U^{n-1})).
\end{array}
\end{equation*}
The left hand-side of \eqref{Discrete-EQ-1} stands for the discrete time derivative of the total discrete energy defined by \eqref{discrete-energy}. It is left to show that this energy provides the needed properties; {\it{i.e.}} the energy is preserved with $\delta=0$ and 
dissipative whenever $\delta >0$\\[0.1in]
\textbf{Estimation of the right hand-side of \eqref{Discrete-EQ-1}}: First of all, we have 
\begin{equation*} 
\begin{array}{lll}
\sum_{i=1}^{N_{\max}} rhs_{i} (U_i^{n+1}-U_i^{n-1})\\
= \frac{\delta}{2 \Delta t} \sum_{i=N_{\alpha}+2}^{N_{\alpha}+N -1} \bigg (  \frac{U^{n+1}_{i +1 } - U^{n+1}_{i }}{h} -   \frac{U^{n-1}_{i +1 } - U^{n-1}_{i }}{h} -  \frac{U^{n+1}_{i } - U^{n+1}_{i - 1 }}{h} +  \frac{U^{n-1}_{i } - U^{n-1}_{i - 1 }}{h} \bigg) (U_{i}^{n+1} - U_{i}^{n-1}) \\
 + \frac{\delta}{2 \Delta t} \bigg(  \frac{U^{n+1} _ {N_{\alpha} + 2 } - U^{n+1} _ {N_{\alpha}  + 1}}{h} - \frac{U^{n-1} _ {N_{\alpha}  + 2} - U^{n-1} _ {N_{\alpha}  + 1}}{h}  \bigg) (U_{N_{\alpha} + 1}^{n+1} - U_{N_{\alpha}+ 1}^{n-1})\\
- \frac{\delta}{2 \Delta t} \bigg(  \frac{U^{n+1} _ {N_{\alpha} +N } - U^{n+1} _ {N_{\alpha} +N -1 }}{h} - \frac{U^{n-1} _ {N_{\alpha} +N } - U^{n-1} _ {N_{\alpha} +N -1 }}{h} \bigg) (  U^{n+1}_{N_{\alpha} +N } - U^{n-1}_{N_{\alpha} +N }) \\
= \frac{\delta}{2 \Delta t} \sum_{i=N_{\alpha}+2}^{N_{\alpha}+N -1} \bigg (  \frac{U^{n+1}_{i +1 } - U^{n+1}_{i }}{h} -  \frac{U^{n+1}_{i } - U^{n+1}_{i - 1 }}{h} \bigg)(U_{i}^{n+1} - U_{i}^{n-1})\\
 - \frac{\delta}{2 \Delta t} \sum_{i=N_{\alpha}+2}^{N_{\alpha}+N -1}  \bigg( \frac{U^{n-1}_{i +1 } - U^{n-1}_{i }}{h} - \frac{U^{n-1}_{i } - U^{n-1}_{i - 1 }}{h} \bigg) (U_{i}^{n+1} - U_{i}^{n-1})  \\
+ \frac{\delta}{2 \Delta t} \bigg(  \frac{U^{n+1} _ {N_{\alpha} + 2 } - U^{n+1} _ {N_{\alpha}  + 1}}{h} - \frac{U^{n-1} _ {N_{\alpha}  + 2} - U^{n-1} _ {N_{\alpha}  + 1}}{h}  \bigg) (U_{N_{\alpha} + 1}^{n+1} - U_{N_{\alpha}+ 1}^{n-1})
 \\
- \frac{\delta}{2 \Delta t} \bigg(  \frac{U^{n+1} _ {N_{\alpha} +N } - U^{n+1} _ {N_{\alpha} +N -1 }}{h} - \frac{U^{n-1} _ {N_{\alpha} +N } - U^{n-1} _ {N_{\alpha} +N -1 }}{h} \bigg) (  U^{n+1}_{N_{\alpha} +N } - U^{n-1}_{N_{\alpha} +N }).
\end{array}
\end{equation*}
By translation of index of the second term of the right hand-side of the above equation, we obtain
{\small{\begin{equation*} 
\begin{array}{lll}
\frac{\delta}{2 \Delta t} \sum_{i=N_{\alpha}+2}^{N_{\alpha}+N -1} \bigg (  \frac{U^{n+1}_{i +1 } - U^{n+1}_{i }}{h} -   \frac{U^{n-1}_{i +1 } - U^{n-1}_{i }}{h} -  \frac{U^{n+1}_{i } - U^{n+1}_{i - 1 }}{h} +  \frac{U^{n-1}_{i } - U^{n-1}_{i - 1 }}{h} \bigg) (U_{i}^{n+1} - U_{i}^{n-1} ) 
\\
 + \frac{\delta}{2 \Delta t} \bigg(  \frac{U^{n+1} _ {N_{\alpha} + 2 } - U^{n+1} _ {N_{\alpha}  + 1}}{h} - \frac{U^{n-1} _ {N_{\alpha}  + 2} - U^{n-1} _ {N_{\alpha}  + 1}}{h}  \bigg) (U_{N_{\alpha} + 1}^{n+1} - U_{N_{\alpha}+ 1}^{n-1})
 \\
- \frac{\delta}{2 \Delta t} \bigg(  \frac{U^{n+1} _ {N_{\alpha} +N } - U^{n+1} _ {N_{\alpha} +N -1 }}{h} - \frac{U^{n-1} _ {N_{\alpha} +N } - U^{n-1} _ {N_{\alpha} +N -1 }}{h} \bigg) (  U^{n+1}_{N_{\alpha} +N } - U^{n-1}_{N_{\alpha} +N })\\
= \frac{\delta}{2 \Delta t} \sum_{i=N_{\alpha}+2}^{N_{\alpha}+N -1} \bigg (  \frac{U^{n+1}_{i +1 } - U^{n+1}_{i }}{h} -   \frac{U^{n-1}_{i +1 } - U^{n-1}_{i }}{h} \bigg) (U_{i}^{n+1} - U_{i}^{n-1}) \\
 - \frac{\delta}{2 \Delta t} \sum_{i=N_{\alpha}+1}^{N_{\alpha}+N -2} \bigg ( \frac{U^{n+1}_{i+1 } - U^{n+1}_{i  }}{h} - \frac{U^{n-1}_{i+1 } - U^{n-1}_{i  }}{h} \bigg) (U_{i+1}^{n+1} - U_{i+1}^{n-1}) \\
  + \frac{\delta}{2 \Delta t} \bigg(  \frac{U^{n+1} _ {N_{\alpha} + 2 } - U^{n+1} _ {N_{\alpha}  + 1}}{h} - \frac{U^{n-1} _ {N_{\alpha}  + 2} - U^{n-1} _ {N_{\alpha}  + 1}}{h}  \bigg) (U_{N_{\alpha} + 1}^{n+1} - U_{N_{\alpha}+ 1}^{n-1})
 \\
- \frac{\delta}{2 \Delta t} \bigg(  \frac{U^{n+1} _ {N_{\alpha} +N } - U^{n+1} _ {N_{\alpha} +N -1 }}{h} - \frac{U^{n-1} _ {N_{\alpha} +N } - U^{n-1} _ {N_{\alpha} +N -1 }}{h} \bigg) (  U^{n+1}_{N_{\alpha} +N } - U^{n-1}_{N_{\alpha} +N }) \\
= \frac{\delta}{2 \Delta t} \sum_{i=N_{\alpha}+1}^{N_{\alpha}+N -1} \bigg (  \frac{U^{n+1}_{i +1 } - U^{n+1}_{i }}{h} -   \frac{U^{n-1}_{i +1 } - U^{n-1}_{i }}{h} \bigg) (U_{i}^{n+1} - U_{i}^{n-1}) 
\\
- \frac{\delta}{2 \Delta t} \sum_{i=N_{\alpha}+1}^{N_{\alpha}+N -1} \bigg (  \frac{U^{n+1}_{i +1 } - U^{n+1}_{i }}{h} -   \frac{U^{n-1}_{i +1 } - U^{n-1}_{i }}{h} \bigg) (U_{i+1}^{n+1} - U_{i+1}^{n-1})  \\
 - \frac{\delta}{2 \Delta t} \bigg(  \frac{U^{n+1} _ {N_{\alpha} + 2 } - U^{n+1} _ {N_{\alpha}  + 1}}{h} - \frac{U^{n-1} _ {N_{\alpha}  + 2} - U^{n-1} _ {N_{\alpha}  + 1}}{h}  \bigg) (U_{N_{\alpha} + 1}^{n+1} - U_{N_{\alpha}+ 1}^{n-1})
 \\
+ \frac{\delta}{2 \Delta t} \bigg(  \frac{U^{n+1} _ {N_{\alpha} +N } - U^{n+1} _ {N_{\alpha} +N -1 }}{h} - \frac{U^{n-1} _ {N_{\alpha} +N } - U^{n-1} _ {N_{\alpha} +N -1 }}{h} \bigg) (U^{n+1}_{N_{\alpha} +N } - U^{n-1}_{N_{\alpha} +N }) \\
 + \frac{\delta}{2 \Delta t} \bigg(  \frac{U^{n+1} _ {N_{\alpha} + 2 } - U^{n+1} _ {N_{\alpha}  + 1}}{h} - \frac{U^{n-1} _ {N_{\alpha}  + 2} - U^{n-1} _ {N_{\alpha}  + 1}}{h}  \bigg) (U_{N_{\alpha} + 1}^{n+1} - U_{N_{\alpha}+ 1}^{n-1})
 \\
- \frac{\delta}{2 \Delta t} \bigg(  \frac{U^{n+1} _ {N_{\alpha} +N } - U^{n+1} _ {N_{\alpha} +N -1 }}{h} - \frac{U^{n-1} _ {N_{\alpha} +N } - U^{n-1} _ {N_{\alpha} +N -1 }}{h} \bigg) (  U^{n+1}_{N_{\alpha} +N } - U^{n-1}_{N_{\alpha} +N }).
\end{array}
\end{equation*}}}
It follows that
\begin{equation} \label{terms dissipation} 
\begin{array}{lll}
\frac{\delta}{2 \Delta t} \sum_{i=N_{\alpha}+2}^{N_{\alpha}+N -1} \bigg (  \frac{U^{n+1}_{i +1 } - U^{n+1}_{i }}{h} -   \frac{U^{n-1}_{i +1 } - U^{n-1}_{i }}{h} -  \frac{U^{n+1}_{i } - U^{n+1}_{i - 1 }}{h} +  \frac{U^{n-1}_{i } - U^{n-1}_{i - 1 }}{h} \bigg) (U_{i}^{n+1} - U_{i}^{n-1}) 
\\
 + \frac{\delta}{2 \Delta t} \bigg(  \frac{U^{n+1} _ {N_{\alpha} + 2 } - U^{n+1} _ {N_{\alpha}  + 1}}{h} - \frac{U^{n-1} _ {N_{\alpha}  + 2} - U^{n-1} _ {N_{\alpha}  + 1}}{h}  \bigg) (U_{N_{\alpha} + 1}^{n+1} - U_{N_{\alpha}+ 1}^{n-1})
 \\
- \frac{\delta}{2 \Delta t} \bigg(  \frac{U^{n+1} _ {N_{\alpha} +N } - U^{n+1} _ {N_{\alpha} +N -1 }}{h} - \frac{U^{n-1} _ {N_{\alpha} +N } - U^{n-1} _ {N_{\alpha} +N -1 }}{h} \bigg) (  U^{n+1}_{N_{\alpha} +N } - U^{n-1}_{N_{\alpha} +N })\\
= - \frac{\delta \; \textcolor{black}{h}}{2 \Delta t} \sum_{i=N_{\alpha}+1}^{N_{\alpha}+N -1}\bigg( \frac{U^{n+1}_{i +1 } - U^{n+1}_{i }}{h} - \frac{U^{n-1}_{i +1 } - U^{n-1}_{i }}{h} \bigg)^2. 
\end{array}
\end{equation}
Thus, setting $\mathcal{E}^{n} = E_k(U^n) + E_p(U^n)$, we get
\begin{equation*}
\mathcal{E}^{n} - \mathcal{E}^ {n-1} = -  \delta \Delta t \; \textcolor{black}{h} \sum_{i=N_{\alpha}+1}^{N_{\alpha}+N -1}\bigg( \frac{U^{n+1}_{i +1 } - U^{n+1}_{i } - U^{n-1}_{i +1 } + U^{n-1}_{i }}{2 \Delta t \ h} \bigg)^2.
\end{equation*}
Therefore, the proof of Theorem \ref{theorem explicit dissipation} is complete and the discrete total energy \eqref{discrete-energy} is non-increasing over time. 
\end{proof}
%%%%%%%%%%%%%%%%%%%%%%%%%%%%%%%%%%%%%%%%%%%%%%%%%%%%%%%%%%%%%%%%% 
\subsection{Construction of the implicit discretized problem.} \label{SI}
In this subsection, we will construct the semi-implicit numerical scheme of System \eqref{Eq(2.1)}-\eqref{Eq(2.5)} using FVM in space combined with an average of the unknown $U$ at time $t_{n+1}$ and $t_{n-1}$ in the formulation of the discrete fluxes. 
The time derivatives will be approximated using FDM. Similarly to Subsection \ref{SE}, a reasonable choice for the approximation of 
the flux $-c_i^2 U_x (x_{i+\frac{1}{2}},t_n)$ is the differential quotient:
\begin{equation} \label{flux-implicit}
F_{i+{\frac{1}{2}}}^{n} =\left\{\begin{array}{lllll}
- \frac{C_1^2}{2} \bigg(\frac{u_1^{n+1}}{h_{\frac{1}{2}}}+\frac{u_1^{n-1}}{h_{\frac{1}{2}}} \bigg), & i = 0, \\
- \frac{c_i^2}{2}  \bigg( \frac{U_{i+1}^{n+1} - U_i^{n+1}}{h_{i+\frac{1}{2}}} + \frac{U_{i+1}^{n-1} - U_i^{n-1}}{h_{i+\frac{1}{2}}} \bigg) , & i = 1, \ldots, N_{\max}-1, \\
\frac{C_3^2}{2} \bigg( \frac{w_{N_{\max}}^{n+1}}{h_{N_{\max}+\frac{1}{2}}} + \frac{w_{N_{\max}}^{n-1}}{h_{N_{\max}+\frac{1}{2}}} \bigg), & i = N_{\max}.
\end{array}\right.    
\end{equation}
As this numerical flux depends on the value of the unknowns $U^n$ and $U^{n+1}$ at time $t_n$ and $t_{n+1}$ we decide to call it \textbf{implicit numerical flux} and the discretized problem is called 
\textbf{implicit discretized problem}.

We will adapt the formula for the implicit numerical fluxes at the inner boundary points \linebreak  $x_{N_\alpha+ \frac{1}{2}}=\alpha$ and $ x_{N_\beta+ \frac{1}{2}}=\beta$ 
using the same argument (continuity of the fluxes) as described in the explicit numerical scheme and the formula for $\ell_{N_{\alpha}+\frac{1}{2}}$ and $\ell_{N_{\alpha}+N+\frac{1}{2}}$ are 
exactly the same. We will therefore not detail the computations.

\bigskip 

\textbf{Discretization of Equation \eqref{Eq(2.1)}.} \\
$\Box$ For $i = 1, \cdots, N_{\alpha} - 1$:
\begin{align*} 
  \int_{K_i} u_{tt} dx - C_1^2  \int_{K_i} u_{xx} dx &= 0,  \\
  \int_{x_{i-\frac{1}{2}}}^{x_{i+\frac{1}{2}}} u_{tt} dx - C_1^2  \int_{x_{i-\frac{1}{2}}}^{x_{i+\frac{1}{2}}} u_{xx} dx &= 0. 
\end{align*}
A direct calculation gives
\begin{equation*}
\begin{array}{lll}
h_i [u_i(t_n)]_{tt} - C_1^2 [ u_x(x_{i+\frac{1}{2}},t_{n}) - u_x(x_{i-\frac{1}{2}},t_n)] = 0.
\end{array}
\end{equation*}
So, using second-order central time discretization at $t_{n}$ and the definition of the numerical fluxes \eqref{flux-implicit}, we obtain:
\begin{equation}\label{Eq-implicit-1}
  h_{i}  \frac{u_i^{n+1}- 2 u_i^n + u_i^{n-1}}{\Delta t^2} - \frac{1}{2} \big[ \ell_{i+ \frac{1}{2}} ( u^{n+1}_{i + 1} - u^{n+1}_{i }) - \ell_{i- \frac{1}{2}} (u^{n+1}_{i } - u^{n+1}_{i - 1 }) \big] - \frac{1}{2} \big[ \ell_{i+ \frac{1}{2}} ( u^{n-1}_{i + 1} - u^{n-1}_{i }) - \ell_{i- \frac{1}{2}} (u^{n-1}_{i } - u^{n-1}_{i - 1 }) \big] = 0.
\end{equation}
$\Box$ For $i = N_{\alpha}$:
\begin{align*} 
  \int_{x_{N_{\alpha}-\frac{1}{2}}}^{x_{N_{\alpha}+\frac{1}{2}}} u_{tt} dx - C_1^2  \int_{x_{N_{\alpha}-\frac{1}{2}}}^{x_{N_{\alpha}+\frac{1}{2}}} u_{xx} dx &= 0, \\
h_{N_{\alpha}} [u_{N_{\alpha}}(t_n)]_{tt} - C_1^2 [ u_x(x_{N_{\alpha}+\frac{1}{2}},t_n) - u_x(x_{N_{\alpha}-\frac{1}{2}},t_n)] &= 0.
\end{align*}
So, using second-order central time discretization at $t_{n}$, the definition of the numerical fluxes \eqref{flux-implicit} and the 
continuity of the flux at the inner boundary $x_{N_{\alpha}+ \frac{1}{2}}= \alpha$, we obtain the same formula for the quantity $\ell_{N_{\alpha}+\frac{1}{2}}$. So we get:
\begin{equation*}\label{Eq-implicit-2}
\begin{array}{lll}
  h_{N_\alpha}  \frac{u_{N_{\alpha}}^{n+1}- 2 u_{N_{\alpha}}^n + u_{N_{\alpha}}^{n-1}}{\Delta t^2}  - \frac{1}{2}\big [ \ell_{N_{\alpha}+\frac{1}{2}} (v^{n+1}_{N_{\alpha} + 1} - u^{n+1}_{N_{\alpha}}) - \ell_{N_{\alpha}-\frac{1}{2}} (u^{n+1}_{N_{\alpha} } - u^{n+1}_{N_{\alpha} -1}) \big] \\
  - \frac{1}{2}\big [ \ell_{N_{\alpha}+\frac{1}{2}} (v^{n-1}_{N_{\alpha} + 1} - u^{n-1}_{N_{\alpha}}) - \ell_{N_{\alpha}-\frac{1}{2}} (u^{n-1}_{N_{\alpha} } - u^{n-1}_{N_{\alpha} -1}) \big] = 0.
  \end{array}
\end{equation*}
\textbf{Discretization of Equation \eqref{Eq(2.2)}.} \\
$\Box$ For $i = N_{\alpha} + 1$:
\begin{align*} 
  \int_{x_{N_{\alpha}+\frac{1}{2}}}^{x_{N_{\alpha}+\frac{3}{2}}} v_{tt} dx - C_2^2  \int_{x_{N_{\alpha}+\frac{1}{2}}}^{x_{N_{\alpha}+\frac{3}{2}}} v_{xx} dx - \delta \int_{x_{N_{\alpha}+\frac{1}{2}}}^{x_{N_{\alpha}+\frac{3}{2}}} v_{xxt} dx &= 0, \\
h_{N_{\alpha}+1} [v_{N_{\alpha}+1}(t_n)]_{tt} - C_2^2 [ v_x(x_{N_{\alpha}+\frac{3}{2}},t_n) - v_x(x_{N_{\alpha}+\frac{1}{2}},t_n)]  - \delta [ v_{xt}(x_{N_{\alpha}+ \frac{3}{2}},t_n) - v_{xt}(x_{N_{\alpha} + \frac{1}{2}},t_n)]&= 0.
\end{align*}
As discussed in Subsection \ref{SE} (numerical scheme for $i = N_{\alpha} + 1$), since the position $x_{N_{\alpha}+\frac{1}{2}}$ represents the point $\alpha$, we use the transmission condition \eqref{Eq(2.5)} at point $\alpha$ and thus
\begin{equation*}
C_2^2 v_x(x_{N_{\alpha}+\frac{1}{2}},t_n) + \delta  v_{xt}(x_{N_{\alpha} + \frac{1}{2}},t_n) = C_1^2 u_x(x_{N_{\alpha}+\frac{1}{2}},t_n).
\end{equation*}
Consequently, we obtain
\begin{equation*}
 h [v_{N_{\alpha}+1}(t_n)]_{tt} + C_1^2 u_x(x_{N_{\alpha}+\frac{1}{2}},t_n)- C_2^2  v_x(x_{N_{\alpha}+\frac{3}{2}},t_n) - \delta v_{xt}(x_{N_{\alpha}+ \frac{3}{2}},t_n) = 0.
\end{equation*}
For the first term of the above equation we apply the second-order central time discretization at $t_n$. However, for the third term, we use spatial forward difference combined with an average of the value of $U$ at time $t_{n+1}$ and $t_{n-1}$. Concerning the fourth term, a forward difference in space with central difference in time is applied. Note that the second term is treated as previously (for $i = N_{\alpha}$). Therefore, we obtain:
\begin{equation}\label{Eq-implicit-3}
\begin{array}{lll}
  h_{N_\alpha +1 }  \frac{v_{N_{\alpha}+1}^{n+1}- 2 v_{N_{\alpha}+1}^n + v_{N_{\alpha}+1}^{n-1}}{\Delta t^2} - \frac{1}{2}\big [ \ell_{N_{\alpha}+\frac{3}{2}} (v^{n+1}_{N_{\alpha} + 2} - v^{n+1}_{N_{\alpha}+1}) - \ell_{N_{\alpha}+\frac{1}{2}} (v^{n+1}_{N_{\alpha}+1 } - u^{n+1}_{N_{\alpha} }) \big]
  \\
  - \frac{1}{2}\big [ \ell_{N_{\alpha}+\frac{3}{2}} (v^{n-1}_{N_{\alpha} + 2} - v^{n-1}_{N_{\alpha}+1}) - \ell_{N_{\alpha}+\frac{1}{2}} (v^{n-1}_{N_{\alpha}+1 } - u^{n-1}_{N_{\alpha} }) \big]
  $ $= \frac{\delta}{2 \Delta t} \bigg[  \frac{{v^{n+1}_{N_{\alpha}+2} - v^{n+1}_{N_{\alpha}+1}}}{h_{N_\alpha +1 }} - \frac{{v^{n-1}_{N_{\alpha}+2} - v^{n-1}_{N_{\alpha}+1}}}{h_{N_\alpha +1 }} \bigg]
\end{array}
\end{equation}
$\Box$ For $i = N_{\alpha} + 2, \cdots, N_{\alpha} + N -1 $:
\begin{align*} 
  \int_{x_{i}-\frac{1}{2}}^{x_{i}+\frac{1}{2}} v_{tt} dx - C_2^2  \int_{x_{i}-\frac{1}{2}}^{x_{i}+\frac{1}{2}} v_{xx} dx - \delta \int_{x_{i}-\frac{1}{2}}^{x_{i}+\frac{1}{2}} v_{xxt} dx &= 0,\\  \nonumber
h_{i} [v_{i}(t_n)]_{tt} - C_2^2 [ v_x(x_{i+\frac{1}{2}},t_n) - v_x(x_{i-\frac{1}{2}},t_n)]  - \delta [ v_{xt}(x_{i+\frac{1}{2}},t_n) - v_{xt}(x_{i-\frac{1}{2}},t_n)]& = 0.
\end{align*}
We proceed exactly as before : we use a second-order central difference on the double time derivative. For the second and third terms we apply a forward difference in space combined with an average of the value of $U$ at time $t_{n+1}$ and $t_{n-1}$. However, we use a forward difference together with centered time discretization for the last two terms and we obtain:
\begin{equation*} \label{Eq-implicit-4}
\begin{array}{lll}
    h_i\frac{v_i^{n+1}- 2 v_i^n + v_i^{n-1}}{\Delta t^2} - \frac{1}{2} \big[ \ell_{i+ \frac{1}{2}} ( v^{n+1}_{i + 1} - v^{n+1}_{i }) - \ell_{i- \frac{1}{2}} (v^{n+1}_{i } - v^{n+1}_{i - 1 }) \big] - \frac{1}{2} \big[ \ell_{i+ \frac{1}{2}} ( v^{n-1}_{i + 1} - v^{n-1}_{i }) - \ell_{i- \frac{1}{2}} (v^{n-1}_{i } - v^{n-1}_{i - 1 }) \big] \\
 = \frac{\delta}{2 \Delta t} \bigg [   \frac{v^{n+1}_{i +1 } - v^{n+1}_{i }}{h_i} -   \frac{v^{n-1}_{i +1 } - v^{n-1}_{i }}{h_i} -  \frac{v^{n+1}_{i } - v^{n+1}_{i - 1 }}{h_i} +  \frac{v^{n-1}_{i } - v^{n-1}_{i - 1 }}{h_i} \bigg]
 \end{array}
\end{equation*}
$\Box$ For $i = N_{\alpha}+N$:
\begin{equation*}
\begin{array}{lll} 
  \int_{x_{N_{\alpha}+N-\frac{1}{2}}}^{x_{N_{\alpha}+N+\frac{1}{2}}} v_{tt} dx - C_2^2  \int_{x_{N_{\alpha}+N-\frac{1}{2}}}^{x_{N_{\alpha}+N+\frac{1}{2}}} v_{xx} dx - \delta \int_{x_{N_{\alpha}+N-\frac{1}{2}}}^{x_{N_{\alpha}+N+\frac{1}{2}}} v_{xxt} dx = 0, \\
h_{N_{\alpha}+N} [v_{N_{\alpha}+N}(t_n)]_{tt} - C_2^2 [ v_x(x_{N_{\alpha}+N+\frac{1}{2}},t_n) - v_x(x_{N_{\alpha}+N-\frac{1}{2}},t_n)]  - \delta [ v_{xt}(x_{N_{\alpha}+ N+ \frac{1}{2}},t_n) - v_{xt}(x_{N_{\alpha} + N - \frac{1}{2}},t_n)]= 0.
 \end{array}
\end{equation*}
Similarly to \eqref{eq-interface-beta}, as the position $x_{N_{\alpha}+N+\frac{1}{2}}$ represents the point $\beta$, then we use the transmission condition \eqref{Eq(2.5)} at point $\beta$ and thus
\begin{equation*}
- C_2^2 v_x(x_{N_{\alpha}+N+\frac{1}{2}},t_n) - \delta  v_{xt}(x_{N_{\alpha} + N+ \frac{1}{2}},t_n) = - C_3^2 w_x(x_{N_{\alpha}+N+\frac{1}{2}},t_n).
\end{equation*}
Consequently, we obtain
\begin{equation*}
h_{N_{\alpha}+N}[v_{N_{\alpha}+N}(t_n)]_{tt} - C_3^2 w_x(x_{N_{\alpha}+N+\frac{1}{2}},t_n) + C_2^2  v_x(x_{N_{\alpha}+N-\frac{1}{2}},t_n)  + \delta  v_{xt}(x_{N_{\alpha} + N - \frac{1}{2}},t_n)   = 0.
\end{equation*}
Similar to the way used for $i = N_{\alpha}+1$ (see \eqref{Eq-implicit-3}), we apply the second-order central difference approximation in time for the second time derivative. For the third term of the above equation, we apply a forward difference in space at position $x_{N_{\alpha}+N+ \frac{1}{2}}$ combined with an average of the value of $U$ at time $t_{n+1}$ and $t_{n-1}$. A central difference in time and a spatial forward difference in space is applied on the fourth term. However, we treat the second term exactly as we treated it in the explicit scheme (see Subsection \ref{SE} equation \eqref{eq-interface-beta}), using the average of the value of $U$ at time $t_{n+1}$ and $t_{n-1}$. Here again as $x_{N_{\alpha}+N+\frac{1}{2}} = \beta$
we write the continuity of the fluxes to obtain the same value of $\ell_{N_{\alpha}+N+\frac{1}{2}}$ as in the explicit discretization.
Therefore, we obtain:
\begin{equation*}\label{Eq-implicit-5}
\begin{array}{lll}
  h_{N_{\alpha}+N}  \frac{v_{N_{\alpha}+N}^{n+1}- 2 v_{N_{\alpha}+N}^n + v_{N_{\alpha}+N}^{n-1}}{\Delta t^2} - \frac{1}{2}\big [ \ell_{N_{\alpha}+N+\frac{1}{2}} (w^{n+1}_{N_{\alpha}+N + 1} - v^{n+1}_{N_{\alpha}+N}) - \ell_{N_{\alpha}+N-\frac{1}{2}} (v^{n+1}_{N_{\alpha}+N } - v^{n+1}_{N_{\alpha} +N-1}) \big] \\
- \frac{1}{2}\big [ \ell_{N_{\alpha}+N+\frac{1}{2}} (w^{n-1}_{N_{\alpha}+N + 1} - v^{n-1}_{N_{\alpha}+N}) - \ell_{N_{\alpha}+N-\frac{1}{2}} (v^{n-1}_{N_{\alpha}+N } - v^{n-1}_{N_{\alpha} +N-1}) \big] \\
= - \frac{\delta}{2 \Delta t} \bigg[  \frac{{v^{n+1}_{N_{\alpha}+N} - v^{n+1}_{N_{\alpha}+N-1}}
}{h_{N_{\alpha}+N}} - \frac{{v^{n-1}_{N_{\alpha}+N} - v^{n-1}_{N_{\alpha}+N-1}}}{h_{N_{\alpha}+N}} \bigg].
\end{array}
\end{equation*}
\textbf{Discretization of Equation \eqref{Eq(2.3)}.} \\
$\Box$ For $i = N_{\alpha} + N + 1$:
\begin{align*} 
  \int_{x_{N_{\alpha}+N+\frac{3}{2}}}^{x_{N_{\alpha}+N+\frac{1}{2}}} w_{tt} dx - C_3^2  \int_{x_{N_{\alpha}+N+\frac{1}{2}}}^{x_{N_{\alpha}+N+\frac{3}{2}}} w_{xx} dx &= 0, \\
h_{N_{\alpha}+N+1} [w_{N_{\alpha}+N+1}(t_n)]_{tt} - C_3^2 [ w_x(x_{N_{\alpha}+N+\frac{3}{2}},t_n) - w_x(x_{N_{\alpha}+N+\frac{1}{2}},t_n)] &= 0.
\end{align*}
The last term of the above equation is treated in the previous part (for $i = N_{\alpha}+N$). Also, using the second-order central difference in time and forward difference discretization in space with an average of $n+1$ and $n-1$ time step diffusion, we obtain
\begin{equation*}\label{Eq-implicit-6}
\begin{array}{lll}
   h_{N_{\alpha}+N+1} \frac{w_{N_{\alpha}+N+1}^{n+1}- 2 w_{N_{\alpha}+N+1}^n + w_{N_{\alpha}+N+1}^{n-1}}{\Delta t^2}  - \frac{1}{2} \big [ \ell_{N_{\alpha}+N+{\frac{3}{2}}} (w^{n+1}_{N_{\alpha}+N+2 } - w^{n+1}_{N_{\alpha} +N+1}) -  \ell_{N_{\alpha}+N+{\frac{1}{2}}}  (w^{n+1}_{N_{\alpha}+N + 1} - v^{n+1}_{N_{\alpha}+N})  \big] \\
   - \frac{1}{2} \big [ \ell_{N_{\alpha}+N+{\frac{3}{2}}} (w^{n-1}_{N_{\alpha}+N+2 } - w^{n-1}_{N_{\alpha} +N+1}) -  \ell_{N_{\alpha}+N+{\frac{1}{2}}}  (w^{n-1}_{N_{\alpha}+N + 1} - v^{n-1}_{N_{\alpha}+N})  \big] = 0.
 \end{array}
\end{equation*}
$\Box$ For $i =N_{\alpha} + N +2  , \cdots, N_{\max}$:        \\[0.1in]
Just like the treatment of the first equation (for $i = 1, \cdots, N_{\alpha}-1$, see \eqref{Eq-implicit-1}), we do the same thing to obtain
\begin{equation*}\label{Eq-implicit-7}
  h_{i}  \frac{w_i^{n+1}- 2 w_i^n + w_i^{n-1}}{\Delta t^2} - \frac{1}{2} \big[ \ell_{i+ \frac{1}{2}} ( w^{n+1}_{i + 1} - w^{n+1}_{i }) - \ell_{i- \frac{1}{2}} (w^{n+1}_{i } - w^{n+1}_{i - 1 }) \big] - \frac{1}{2} \big[ \ell_{i+ \frac{1}{2}} ( w^{n-1}_{i + 1} - w^{n-1}_{i }) - \ell_{i- \frac{1}{2}} (w^{n-1}_{i } - w^{n-1}_{i - 1 }) \big] = 0.
\end{equation*}
Now, the implicit discrete problem combining an average of the value of $U$ at time $t_{n+1}$ and $t_{n-1}$ can be written in the compact form
\begin{equation} \label{DISCRETE-IMPLICIT}
\begin{array}{ll}
h_i \frac{U_i^{n+1}-2U_i^n + U_i^{n-1}}{\Delta t^2} &- \frac{1}{2} [\ell_{i+{\frac{1}{2}}} (U_{i+1}^{n+1} - U_i^{n+1})
 - \ell_{i-{\frac{1}{2}}} (U_{i}^{n+1} - U_{i-1}^{n+1})] \\[0.1in]
&- \frac{1}{2} [\ell_{i+{\frac{1}{2}}} (U_{i+1}^{n-1} - U_i^{n-1}) - \ell_{i-{\frac{1}{2}}} (U_{i}^{n-1} - U_{i-1}^{n-1})] = rhs_i
\end{array}
\end{equation}
where $rhs_i$ is defined by: for
\begin{align*}
&i = 1, \ldots, N_{\alpha}\,,\,rhs_i = 0 \\
&i = N_{\alpha}+1\,,\,rhs_i = \frac{\delta}{2 \Delta t} \bigg[  \frac{{U^{n+1}_{N_{\alpha}+2} - U^{n+1}_{N_{\alpha}+1}}}{h_{N_{\alpha}+1}}	 - \frac{{U^{n-1}_{N_{\alpha}+2} - U^{n-1}_{N_{\alpha}+1}}}{h_{N_{\alpha}+1}} \bigg] \\
&i = N_{\alpha}+2, \ldots, N_{\alpha}+N-1 \,,\, rhs_i =\frac{\delta}{2 \Delta t} \bigg [   \frac{U^{n+1}_{i +1 } - U^{n+1}_{i }}{h_i} -   \frac{U^{n-1}_{i +1 } - U^{n-1}_{i }}{h_i} -  \frac{U^{n+1}_{i } - U^{n+1}_{i - 1 }}{h_i} +  \frac{U^{n-1}_{i } - U^{n-1}_{i - 1 }}{h_i} \bigg] \\
& i = N_{\alpha}+N \,,\, rhs_i = - \frac{\delta}{2 \Delta t} \bigg[  \frac{{U^{n+1}_{N_{\alpha}+N} - U^{n+1}_{N_{\alpha}+N-1}}
}{h_{N_{\alpha}+N}} - \frac{{U^{n-1}_{N_{\alpha}+N} - U^{n-1}_{N_{\alpha}+N-1}}}{h_{N_{\alpha}+N}} \bigg]\\
&i = N_{\alpha}+N+1, \ldots, N_{\max}\,,\,rhs_i = 0 
\end{align*}
Consequently, the discrete implicit problem which represents an approximation of System \eqref{Eq(2.1)}-\eqref{Eq(2.5)} can be written as:
 \begin{equation}\label{DISCRETE-IMPLICIT-Matrix}
\mbox{for } n = 1,\ldots,\mathcal{N} \,,\, \bigg(\mathcal{M} + \frac{\Delta t^2}{2}\mathcal{B}+ \frac{\delta \Delta t}{h}\mathcal{A} \bigg) U^{n+1} = 2 \mathcal{M} U^n - \bigg(\mathcal{M} + \frac{\Delta t^2}{2}\mathcal{B} - \frac{\delta \Delta t}{h}\mathcal{A} \bigg) U^{n-1}
\end{equation}
Writing the numerical scheme \eqref{DISCRETE-IMPLICIT-Matrix} for $n=0$ and using the value of $U^{-1}$ defined by \eqref{artificialSOLUTION} we get :
\begin{equation} \label{DISCRETE-IMPLICIT-Matrix1}
\bigg(2 \mathcal{M} + \Delta t^2 \mathcal{B}\bigg) U^{1}=
2 \mathcal{M} U^0 + 2 \Delta t \bigg(\mathcal{M} + \frac{\Delta t^2}{2}\mathcal{B} - \frac{\delta \Delta t}{h}\mathcal{A} \bigg) \Psi_{\mathcal{T}}  
\end{equation}

\begin{Remark}
Let us remark that the matrices $\bigg(2 \mathcal{M} + \Delta t^2 \mathcal{B}\bigg)$ and $\bigg(\mathcal{M} + \frac{\Delta t^2}{2}\mathcal{B}+ \frac{\delta \Delta t}{h}\mathcal{A} \bigg)$  are irreducible diagonal dominant. 
So these two matrices are also invertible (see \cite{Golub-Meurant}). Therefore, equations \eqref{DISCRETE-IMPLICIT-Matrix} and \eqref{DISCRETE-IMPLICIT-Matrix1} are well-posed.
\end{Remark}

\begin{center}\textbf{\textit{Practical implementation}}\end{center}
\hspace*{-0.1cm}\rule[-1mm]{15cm}{0.05cm}\\
\textbf{Initialisation}\\
%\noindent\hspace*{-0.1cm}\rule[-1mm]{10cm}{0.05cm}
$U^0 = \Phi_{\mathcal{T}}$\\
\hspace*{-0.1cm}\rule[-1mm]{15cm}{0.05cm}\\
\textbf{Computation of $\boldsymbol{U^1}$}\\
$\bigg(2 \mathcal{M} + \Delta t^2 \mathcal{B}\bigg) U^{1}=
2 \mathcal{M} U^0 + 2 \Delta t \bigg(\mathcal{M} + \frac{\Delta t^2}{2}\mathcal{B} - \frac{\delta \Delta t}{h}\mathcal{A} \bigg) \Psi_{\mathcal{T}}$ \\
\hspace*{-0.1cm}\rule[-1mm]{15cm}{0.05cm}\\
\textbf{Compute the inverse of the matrix} $\boldsymbol{\bigg(\mathcal{M} + \frac{\Delta t^2}{2}\mathcal{B}+ \frac{\delta \Delta t}{h}\mathcal{A} \bigg)}$.\\
$C =\bigg(\mathcal{M} + \frac{\Delta t^2}{2}\mathcal{B}+ \frac{\delta \Delta t}{h}\mathcal{A} \bigg)^{-1}$.\\
\hspace*{-0.1cm}\rule[-1mm]{15cm}{0.05cm}\\
\textbf{Computation of $\boldsymbol{U^{n+1}}$ for $\boldsymbol{n=1,\ldots,\mathcal{N}}$}\\%\vspace*{-0.3cm}\\
for $n=1 \ldots, \mathcal{N}$ \\
\hspace*{1cm} $RHS = 2 \mathcal{M} U^n - \bigg(\mathcal{M} + \frac{\Delta t^2}{2}\mathcal{B} - \frac{\delta \Delta t}{h}\mathcal{A} \bigg) U^{n-1}$ \\
\hspace*{1cm} $U^{n+1} = C \times RHS$\\
endfor\\
\hspace*{-0.1cm}\rule[-1mm]{15cm}{0.05cm}\\

%%%%%%%%%%%%%%%%%%%%%%%%%%%%%%%%%%%%%%%%%%%%%%%%%%%%%%%%%%%%%%%%%% 
\subsection{Dissipation of the discrete energy for the implicit discretized problem} \label{S(IE)}
In this subsection, we also seek to design a discrete energy that is preserved when $\delta = 0$, whereas when $\delta >0$, we aim to have a dissipation in the discrete energy. \textcolor{black}{Once again, for this aim, we copy the definition of the continuous energy \eqref{energy} and we define:}
\begin{itemize}
\item[$\bullet$] the discrete kinetic energy for $U$ as: 
\[
E_k(U^n) =  \frac{1}{2} \sum_{i=1}^{N_{\max}} h_i \bigg(\frac{U_{i}^{n+1}-U_{i}^n}{\Delta t}\bigg)^2
\]
\item[$\bullet$] the discrete potential energy for $U$ as:
\[E_p(U^n) = \frac{1}{4}\sum_{i=0}^{N_{\max}} \ell_{i+{\frac{1}{2}}} (U_{i+1}^{n+1} - U_i^{n+1})^2 + \frac{1}{4}\sum_{i=0}^{N_{\max}} \ell_{i+{\frac{1}{2}}} (U_{i+1}^{n} - U_i^{n})^2 \ .
\]
\end{itemize}
The total discrete energy is then defined as
\begin{equation}\label{discrete-energy-implicit}
\mathcal{E}^{n} = E_k(U^n) + E_p(U^n).
\end{equation}
Like in Subsection \ref{SE}, we want to prove that the discrete energy defined by \eqref{discrete-energy-implicit} fulfills the properties mentioned above. To this end, we prove the following theorem:
\begin{Theorem} \label{theorem implicit dissipation}
The total discrete energy defined by \eqref{discrete-energy-implicit} of the semi-implicit numerical scheme \eqref{DISCRETE-IMPLICIT} satisfies the following \textcolor{black}{dissipativity estimation}
\begin{equation*}
\mathcal{E}^{n} - \mathcal{E}^ {n-1} = -  \delta \Delta t \; \textcolor{black}{h} \sum_{i=N_{\alpha}+1}^{N_{\alpha}+N -1}\bigg( \frac{U^{n+1}_{i +1 } - U^{n+1}_{i } - U^{n-1}_{i +1 } + U^{n-1}_{i }}{2 \Delta t \ h} \bigg)^2.
\end{equation*}
\end{Theorem}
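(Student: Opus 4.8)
The plan is to follow, almost verbatim, the energy method used in the proof of Theorem~\ref{theorem explicit dissipation}, adapting it only where the averaged fluxes of the semi-implicit scheme intervene. First I would multiply both sides of \eqref{DISCRETE-IMPLICIT} by $(U_i^{n+1} - U_i^{n-1})$ and sum over $i = 1, \ldots, N_{\max}$. The inertial term is structurally identical to the explicit case, so the computation \eqref{result-energy-kinetic} applies unchanged and produces $2\bigl(E_k(U^n) - E_k(U^{n-1})\bigr)$ with the kinetic energy of \eqref{discrete-energy-implicit}.

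The heart of the argument is the treatment of the two averaged stiffness contributions, one assembled from the $U^{n+1}$ values and one from the $U^{n-1}$ values, each carrying the factor $\tfrac12$. For each block I would perform the same discrete integration by parts (translation of the summation index together with the boundary conditions $U_0 = U_{N_{\max}+1} = 0$) that was carried out in the explicit proof. Writing $D_i^m = U_{i+1}^m - U_i^m$, the $U^{n+1}$ block yields $\tfrac12 \sum_{i=0}^{N_{\max}} \ell_{i+\frac12}\, D_i^{n+1}\bigl(D_i^{n+1} - D_i^{n-1}\bigr)$ and the $U^{n-1}$ block yields $\tfrac12 \sum_{i=0}^{N_{\max}} \ell_{i+\frac12}\, D_i^{n-1}\bigl(D_i^{n+1} - D_i^{n-1}\bigr)$. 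Adding the two and using the factorisation $(D_i^{n+1} + D_i^{n-1})(D_i^{n+1} - D_i^{n-1}) = (D_i^{n+1})^2 - (D_i^{n-1})^2$ collapses the cross terms and leaves $\tfrac12 \sum_{i=0}^{N_{\max}} \ell_{i+\frac12}\bigl[(D_i^{n+1})^2 - (D_i^{n-1})^2\bigr]$. This is exactly $2\bigl(E_p(U^n) - E_p(U^{n-1})\bigr)$ for the symmetrised potential energy of \eqref{discrete-energy-implicit}, since its telescoping difference is $E_p(U^n) - E_p(U^{n-1}) = \tfrac14 \sum_{i=0}^{N_{\max}} \ell_{i+\frac12}\bigl[(D_i^{n+1})^2 - (D_i^{n-1})^2\bigr]$, directly from the definition.

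Finally, the right-hand side $\sum_{i} rhs_i\,(U_i^{n+1} - U_i^{n-1})$ is unchanged from the explicit scheme, because the damping terms $rhs_i$ have the identical form in \eqref{DISCRETE-IMPLICIT} and in \eqref{Discrete Explicit}. I would therefore invoke the computation leading to \eqref{terms dissipation}, obtaining $-\tfrac{\delta h}{2\Delta t}\sum_{i=N_\alpha+1}^{N_\alpha+N-1}\bigl(\tfrac{D_i^{n+1} - D_i^{n-1}}{h}\bigr)^2$. Combining the three estimates, the left-hand side of the summed identity equals $2(\mathcal{E}^n - \mathcal{E}^{n-1})$ and the right-hand side equals this single dissipation term; dividing by $2$ and rewriting the summand as $\bigl(\tfrac{D_i^{n+1} - D_i^{n-1}}{2\Delta t\,h}\bigr)^2$ with the prefactor $\delta\Delta t\,h$ yields precisely the claimed identity.

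I expect the only genuinely new step to be verifying that the symmetric potential energy chosen in \eqref{discrete-energy-implicit} is exactly the one dictated by the averaged fluxes. The algebraic factorisation $(D^{n+1}+D^{n-1})(D^{n+1}-D^{n-1}) = (D^{n+1})^2 - (D^{n-1})^2$ is what lets the two half-weighted stiffness blocks combine into a clean difference of squares, so that the non-dissipative part of the scheme still telescopes. Keeping track of the index ranges and the vanishing boundary contributions in the summation by parts is the main bookkeeping hurdle, but it is identical in spirit to the explicit case and requires no new idea.
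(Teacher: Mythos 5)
Your proposal is correct and follows essentially the same route as the paper's proof: multiplying \eqref{DISCRETE-IMPLICIT} by $(U_i^{n+1}-U_i^{n-1})$, reusing \eqref{result-energy-kinetic} for the kinetic part, performing the same summation by parts on the two half-weighted stiffness blocks with the factorisation $(D_i^{n+1}+D_i^{n-1})(D_i^{n+1}-D_i^{n-1})=(D_i^{n+1})^2-(D_i^{n-1})^2$ to recover $2\bigl(E_p(U^n)-E_p(U^{n-1})\bigr)$, and invoking \eqref{terms dissipation} verbatim for the right-hand side. Your telescoping identity for the symmetrised potential energy is exactly the step the paper carries out by adding and subtracting $\tfrac{1}{2}\sum_{i}\ell_{i+\frac{1}{2}}(U_{i+1}^{n}-U_i^{n})^2$, so no gap remains.
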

\begin{proof}
 \textcolor{black}{Once again, let us first mention that this estimation is the discrete version of the dissipativity estimation of the continuous solution \eqref{dissipative}.}\\Similarly to the technique used in Subsection \ref{SE}, to obtain the energy estimates, we multiply the left and right hand-sides of \eqref{DISCRETE-IMPLICIT} by $(U_i^{n+1}-U_i^{n-1})$ and we sum over $i=1, \ldots, N_{\max}$.\\[0.1in]
\textbf{Estimation of the left hand-side of \eqref{DISCRETE-IMPLICIT}}$\times (U_i^{n+1}-U_i^{n-1})$: \\
\noindent Concerning the first term, we get exactly the same result as \eqref{result-energy-kinetic}.  \\[0.1in]
\textbf{Estimation of the second term of the left hand-side of \eqref{DISCRETE-IMPLICIT}}$\times (U_i^{n+1}-U_i^{n-1})$: First of all, we have
\begin{equation}
\begin{array}{lll}
- \frac{1}{2}\sum_{i=1}^{N_{\max}}[\ell_{i+{\frac{1}{2}}} (U_{i+1}^{n+1} - U_i^{n+1}) - \ell_{i-{\frac{1}{2}}} (U_{i}^{n+1} - U_{i-1}^{n+1})](U_i^{n+1}-U_i^{n-1}) \\
- \frac{1}{2} \sum_{i=1}^{N_{\max}}[\ell_{i+{\frac{1}{2}}} (U_{i+1}^{n-1} - U_i^{n-1}) - \ell_{i-{\frac{1}{2}}} (U_{i}^{n-1} - U_{i-1}^{n-1})](U_i^{n+1}-U_i^{n-1}) \\
= - \frac{1}{2}\sum_{i=1}^{N_{\max}}\ell_{i+{\frac{1}{2}}} (U_{i+1}^{n+1} - U_i^{n+1}) (U_i^{n+1}-U_i^{n-1}) + \frac{1}{2}\sum_{i=1}^{N_{\max}}\ell_{i-{\frac{1}{2}}} (U_{i}^{n+1} - U_{i-1}^{n+1}) (U_i^{n+1}-U_i^{n-1}) \\
- \frac{1}{2}\sum_{i=1}^{N_{\max}}\ell_{i+{\frac{1}{2}}} (U_{i+1}^{n-1} - U_i^{n-1}) (U_i^{n+1}-U_i^{n-1}) + \frac{1}{2}\sum_{i=1}^{N_{\max}}\ell_{i-{\frac{1}{2}}} (U_{i}^{n-1} - U_{i-1}^{n-1}) (U_i^{n+1}-U_i^{n-1}).
\end{array}
\end{equation}
By translation of index $i$ for the second and fourth term of the right hand-side of the above equation, we obtain 
\begin{equation*}
\begin{array}{lll}
 - \frac{1}{2}\sum_{i=1}^{N_{\max}}[\ell_{i+{\frac{1}{2}}} (U_{i+1}^{n+1} - U_i^{n+1}) - \ell_{i-{\frac{1}{2}}} (U_{i}^{n+1} - U_{i-1}^{n+1})](U_i^{n+1}-U_i^{n-1}) \\
- \frac{1}{2} \sum_{i=1}^{N_{\max}}[\ell_{i+{\frac{1}{2}}} (U_{i+1}^{n-1} - U_i^{n-1}) - \ell_{i-{\frac{1}{2}}} (U_{i}^{n-1} - U_{i-1}^{n-1})](U_i^{n+1}-U_i^{n-1}) \\
=  - \frac{1}{2}\sum_{i=1}^{N_{\max}}\ell_{i+{\frac{1}{2}}} (U_{i+1}^{n+1} - U_i^{n+1}) (U_i^{n+1}-U_i^{n-1}) + \frac{1}{2}\sum_{i=0}^{N_{\max}-1}\ell_{i+{\frac{1}{2}}} (U_{i+1}^{n+1} - U_{i}^{n+1}) (U_{i+1}^{n+1}-U_{i+1}^{n-1}) \\
- \frac{1}{2}\sum_{i=1}^{N_{\max}}\ell_{i+{\frac{1}{2}}} (U_{i+1}^{n-1} - U_i^{n-1}) (U_i^{n+1}-U_i^{n-1}) + \frac{1}{2}\sum_{i=0}^{N_{\max}-1}\ell_{i+{\frac{1}{2}}} (U_{i+1}^{n-1} - U_{i}^{n-1}) (U_{i+1}^{n+1}-U_{i+1}^{n-1}).
 \end{array}
\end{equation*}
Taking into consideration that $U_0=U_{N_{\max}+1}=0$, it follows
\begin{equation*}
\begin{array}{lll}
 - \frac{1}{2}\sum_{i=1}^{N_{\max}}[\ell_{i+{\frac{1}{2}}} (U_{i+1}^{n+1} - U_i^{n+1}) - \ell_{i-{\frac{1}{2}}} (U_{i}^{n+1} - U_{i-1}^{n+1})](U_i^{n+1}-U_i^{n-1}) \\
- \frac{1}{2} \sum_{i=1}^{N_{\max}}[\ell_{i+{\frac{1}{2}}} (U_{i+1}^{n-1} - U_i^{n-1}) - \ell_{i-{\frac{1}{2}}} (U_{i}^{n-1} - U_{i-1}^{n-1})](U_i^{n+1}-U_i^{n-1}) \\
=  \frac{1}{2}\sum_{i=0}^{N_{\max}}\ell_{i+{\frac{1}{2}}} (U_{i+1}^{n+1} - U_i^{n+1}) (U_{i+1}^{n+1} -U_i^{n+1} -U_{i+1}^{n-1} +U_i^{n-1}) \\
+\frac{1}{2}\sum_{i=1}^{N_{\max}}\ell_{i+{\frac{1}{2}}} (U_{i+1}^{n-1} - U_i^{n-1}) (U_{i+1}^{n+1} -U_i^{n+1} -U_{i+1}^{n-1} +U_i^{n-1}) .
\end{array}
\end{equation*}
A direct calculation gives
{\small{\begin{equation*}
\begin{array}{lll}
 - \frac{1}{2}\sum_{i=1}^{N_{\max}}[\ell_{i+{\frac{1}{2}}} (U_{i+1}^{n+1} - U_i^{n+1}) - \ell_{i-{\frac{1}{2}}} (U_{i}^{n+1} - U_{i-1}^{n+1})](U_i^{n+1}-U_i^{n-1}) \\
- \frac{1}{2} \sum_{i=1}^{N_{\max}}[\ell_{i+{\frac{1}{2}}} (U_{i+1}^{n-1} - U_i^{n-1}) - \ell_{i-{\frac{1}{2}}} (U_{i}^{n-1} - U_{i-1}^{n-1})](U_i^{n+1}-U_i^{n-1}) \\
= \frac{1}{2}\sum_{i=0}^{N_{\max}}\ell_{i+{\frac{1}{2}}} (U_{i+1}^{n+1} - U_i^{n+1} + U_{i+1}^{n-1} - U_i^{n-1}) (U_{i+1}^{n+1} -U_i^{n+1} -U_{i+1}^{n-1} +U_i^{n-1})\\
=\frac{1}{2}\sum_{i=0}^{N_{\max}}\ell_{i+{\frac{1}{2}}} (U_{i+1}^{n+1} - U_i^{n+1})^2 - 
\frac{1}{2}\sum_{i=0}^{N_{\max}}\ell_{i+{\frac{1}{2}}} (U_{i+1}^{n-1} - U_i^{n-1})^2 \\
=\frac{1}{2}\sum_{i=0}^{N_{\max}}\ell_{i+{\frac{1}{2}}} (U_{i+1}^{n+1} - U_i^{n+1})^2 + \frac{1}{2}\sum_{i=0}^{N_{\max}}\ell_{i+{\frac{1}{2}}} (U_{i+1}^{n} - U_i^{n})^2 - \frac{1}{2}\sum_{i=0}^{N_{\max}}\ell_{i+{\frac{1}{2}}} (U_{i+1}^{n} - U_i^{n})^2 - \frac{1}{2}\sum_{i=0}^{N_{\max}}\ell_{i+{\frac{1}{2}}} (U_{i+1}^{n-1} - U_i^{n-1})^2 \\
= 2(E_p(U^n)-E_p(U^{n-1})).
\end{array}
\end{equation*} }}
The left hand-side of \eqref{DISCRETE-IMPLICIT}$\times (U_i^{n+1}-U_i^{n-1})$ represents the discrete time derivative of the total discrete energy defined by \eqref{discrete-energy-implicit}. It remains to show that this energy is conservative when $\delta=0$ and dissipative when $\delta >0$. Therefore, we should study the right hand-side of Equation \eqref{DISCRETE-IMPLICIT}$\times (U_i^{n+1}-U_i^{n-1})$. In fact, we get exactly the same result as in \eqref{terms dissipation}. \\[0.1in]
Thus, setting $\mathcal{E}^n = E_k(U^n) + E_p(U^n)$, we get
\begin{equation*}
\mathcal{E}^{n} - \mathcal{E}^ {n-1} = -  \delta \Delta t \; \textcolor{black}{h} \sum_{i=N_{\alpha}+1}^{N_{\alpha}+N -1}\bigg( \frac{U^{n+1}_{i +1 } - U^{n+1}_{i } - U^{n-1}_{i +1 } + U^{n-1}_{i }}{2 \Delta t \ h} \bigg)^2.
\end{equation*}
Consequently, the proof of Theorem \ref{theorem implicit dissipation} is complete and the discrete total energy \eqref{discrete-energy-implicit} is non-increasing over time. 
\end{proof} 
%%%%%%%%%%%%%%%%%%%%%%%%%%%%%%%%%%%%%%%%%%%%%%%%%%%%%%%%%%%%%%%%%%%%%%%%%%%%%%%%%%%%%%%%%%%%%%%%%%%%%STABLITY%%%%%%%%%%%%%%%%%%%%%%%%%%%%%%%%%%%%%%%%%%%%%%%%%%%%%%%%%%%%%%%%%%%%%%%%%%%%%%%%%%%%%%%%%%%%%%%%%%%%%%%%%%
\noindent Before we start figuring out the stability estimates, we will write Equation \eqref{DISCRETE-IMPLICIT} for $n=0$. After substituting \eqref{artificialSOLUTION}, a direct calculation gives 
{\small{\begin{equation} \label{DISCRETE-IMPLICIT-n=0}
\begin{array}{lll}
2 h_i \frac{U_i^1 - U_i^0}{\Delta t^2} - \frac{2 h_i}{\Delta t} \Psi_i - [\ell_{i+ \frac{1}{2}} (U_{i+1}^1 - U_i^1) - \ell_{i- \frac{1}{2}} (U_{i}^1 - U_{i-1}^1) ] + \Delta t [\ell_{i+ \frac{1}{2}} (\Psi_{i+1} - \Psi_i) - \ell_{i- \frac{1}{2}} (\Psi_{i} - \Psi_{i-1}) ] \\
=\left\{\begin{array}{lllll}
0,& i = 1, \ldots, N_{\alpha},\\
 \delta \frac{\Psi_{N_{\alpha}+2} - \Psi_{N_{\alpha}+1}}{h},& i = N_{\alpha}+1,\\
 \delta  \frac{\Psi_{i+1} - \Psi_i} {h} - \delta  \frac{\Psi_{i} - \Psi_{i-1}}{h}, & i = N_{\alpha}+2, \ldots, N_{\alpha}+N-1,\\
- \delta \frac{\Psi_{N_{\alpha}+N} - \Psi_{N_{\alpha}+N-1}}{h}, & i = N_{\alpha}+N , \\
0, & i = N_{\alpha}+N+1, \ldots, N_{\max}.
\end{array}\right. 
\end{array}
\end{equation}}}
%We denote by $U_{\mathcal{T}}^1$ the numerical solution of the first time step of the numerical scheme \eqref{DISCRETE-IMPLICIT-n=0}; {\it{i.e.}}, $U_{\mathcal{T}}^1= (u^1_{i = 1, \cdots, N_{\alpha}}, v^1_{i = N_{\alpha}+1, \cdots, N_{\alpha}+N}, w^1_{i = N_{\alpha}+N+1, \cdots, N_{\max}} )$ represents the solution of this scheme for $n=1$. \\[0.1in]
%Now, denote the discrete norm in $\mathbb{L}^2$ by
%\begin{equation*}
%|| U_{\mathcal{T}} || = \bigg( \sum_{i=0}^{N_{\max}} (U_i)^2 h_i   \bigg)^{1/2}.
%\end{equation*}
%Also, we denote the discrete norm in $\mathbb{H}^1_L$ by
%{\begin{equation*}
%\begin{array}{l}
%|| U_{\mathcal{T}} ||_{1, \mathcal{T}} = \bigg( \sum_{i=0}^{N_{\max}} \ell_{i+ \frac{1}{2}} (U_{i+1}^n - U_i^n)^2  \bigg)^{1/2}, \quad \textrm{with} \ U_0 = U_{N_{\max}+1}=0.
%\end{array}
%\end{equation*}
%%%%%%%%%%%%%%%%%%%%%%%%%%%%%%%%%%%%%%%%%%%%%%%%SECTION STABILITY ESTIMATES %%%%%%%%%%%%%%%%%
\section{Stability estimates}  \label{Stability section}

\noindent Stability usually refers to numerical schemes producing bounded solution errors based on the approximation scheme being used. In this section, we derive the desired stability estimates for the numerical solution of the implicit scheme \eqref{DISCRETE-IMPLICIT}. 

At this point, we have to define the discrete norm we are using in order to obtain the different stability estimates as well as the convergence
of the implicit solution towards the weak solution defined by Definition~\ref{weak-solution}
\begin{defi}[\textbf{Discrete norms}] \label{discrete-norms}~\\
For $U \in \mathbb{L}^2$, we define $U_{\mathcal{T}}$ by the equation \eqref{UT}. The discrete $\mathbb{L}^2$ norm is defined by :
\begin{equation}\label{L2norm}
|| U_{\mathcal{T}} || = \bigg( \sum_{i=0}^{N_{\max}}  h_i U_i^2  \bigg)^{1/2}.
\end{equation}
For $U \in \mathbb{H}^1_L$, the discrete $\mathbb{H}^1$ semi-norm is defined by :
\begin{equation}\label{H1norm}
|| U_{\mathcal{T}} ||_{1, \mathcal{T}}  = \bigg( \sum_{i=0}^{N_{\max}} \ell_{i+ \frac{1}{2}} (U_{i+1} - U_i)^2  \bigg)^{1/2}
\end{equation}
where we have set the Dirichlet boundary condition as : $U_0 = U_{N_{\max}+1}=0$.
\end{defi}
Within the notations introduced above, we obtain the following discrete norms inequality, which play a crucial role in the proof of the stability estimates and the convergence of the numerical scheme.
\begin{Lemma}[\textbf{Discrete norms inequality}]~
\begin{enumerate}
\item[1.] For all $U \in \mathbb{L}^2$, we have:
\begin{equation}\label{L2estimate}
|| U_{\mathcal{T}} || \leq || U||_{L^2(0,L)}
\end{equation}
\item[2.] There exists a constant $K>0$, independent of the space discretization mesh size such that for all $U \in \mathbb{H}^1_L$, we have:
\begin{equation}\label{H1estimate}
|| U_{\mathcal{T}} ||_{1, \mathcal{T}}  \leq K || U_x||_{L^2(0,L)}
\end{equation}
\end{enumerate}
\end{Lemma}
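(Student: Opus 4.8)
The plan is to prove both inequalities by returning to the definition $U_i = \frac{1}{h_i}\int_{K_i} U\,dx$ of the discrete unknowns in \eqref{u}, and to control each summand by the integral of $U$ (resp.\ $U_x$) over a small neighbourhood of the corresponding control volume, so that the resulting constants do not depend on $\mathrm{size}(\mathcal{T})$.

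For the $\mathbb{L}^2$ estimate \eqref{L2estimate}, I would simply apply the Cauchy--Schwarz (Jensen) inequality on each control volume. Since $U_i$ is the mean of $U$ over $K_i$, one has
\[
h_i U_i^2 = \frac{1}{h_i}\bigg(\int_{K_i} U\,dx\bigg)^2 \leq \int_{K_i} U^2\,dx .
\]
Summing over the control volumes, which form a partition of $(0,L)$, yields $\|U_{\mathcal{T}}\|^2 \leq \|U\|_{L^2(0,L)}^2$. This part is routine; the boundary index $i=0$ contributes nothing.

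The substantive part is the $\mathbb{H}^1$ estimate \eqref{H1estimate}. First I would note that a continuous, piecewise-$H^1$ field $U\in\mathbb{H}^1_L$ is globally $H^1(0,L)$, so that $U_x\in L^2(0,L)$ and $U(y)-U(x)=\int_x^y U_x(s)\,ds$. The key step is to rewrite, for an interior edge $x_{i+\frac{1}{2}}$, the difference of means as the double average
\[
U_{i+1}-U_i = \frac{1}{h_i h_{i+1}}\int_{K_i}\int_{K_{i+1}}\big(U(y)-U(x)\big)\,dy\,dx,
\]
and then to apply Cauchy--Schwarz twice. Writing $D_i=(x_{i-\frac{1}{2}},x_{i+\frac{3}{2}})\supseteq[x,y]$ and using $\big(\int_x^y U_x\big)^2\leq (y-x)\int_{D_i}U_x^2\,ds$ together with the elementary moment identity $\int_{K_i}\int_{K_{i+1}}(y-x)\,dy\,dx = h_i h_{i+1}(x_{i+1}-x_i)=h_i h_{i+1}\,h_{i+\frac{1}{2}}$, valid because $x_i,x_{i+1}$ are the centres of $K_i,K_{i+1}$, one obtains the clean bound $(U_{i+1}-U_i)^2\leq h_{i+\frac{1}{2}}\int_{D_i}U_x^2\,ds$. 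Multiplying by $\ell_{i+\frac{1}{2}}$ and using $\ell_{i+\frac{1}{2}}\,h_{i+\frac{1}{2}}=c_i^2$ at interior points and the two-sided bounds \eqref{C1-C2}, \eqref{C2-C3} at the interfaces $\alpha,\beta$, gives $\ell_{i+\frac{1}{2}}(U_{i+1}-U_i)^2\leq C_{\max}^2\int_{D_i}U_x^2\,ds$ with $C_{\max}^2=\max(C_1^2,C_2^2,C_3^2)$. The two boundary terms $i=0$ and $i=N_{\max}$ are handled the same way after using $U(0)=U(L)=0$ to supply the missing neighbour; since $x_1$ is the midpoint of $K_1$, this produces exactly $\ell_{\frac{1}{2}}U_1^2\leq C_{\max}^2\int_{K_1}U_x^2\,ds$, and symmetrically at $L$. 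Summing over $i$ and observing that each control volume lies in exactly two of the overlapping windows $D_i$, the total is bounded by $2C_{\max}^2\|U_x\|_{L^2(0,L)}^2$, so \eqref{H1estimate} holds with the mesh-independent constant $K=\sqrt{2}\,C_{\max}$.

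The main obstacle is precisely keeping $K$ independent of $\mathrm{size}(\mathcal{T})$. This forces one to retain the geometric factor $h_{i+\frac{1}{2}}$ exactly, which is why the moment identity yielding precisely $h_{i+\frac{1}{2}}$ (rather than a cruder bound such as $h_i+h_{i+1}$) is essential, and why the non-uniform interface values of $\ell_{i+\frac{1}{2}}$ must be controlled through the sharp estimates \eqref{C1-C2}--\eqref{C2-C3} rather than through their explicit formulas.
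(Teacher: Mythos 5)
Your proposal is correct, and Part 1 coincides with the paper's proof verbatim (Cauchy--Schwarz on each cell mean). For Part 2 the overall strategy is also the paper's --- extract the factor $\ell_{i+\frac{1}{2}}h_{i+\frac{1}{2}}\leq \max(C_1^2,C_2^2,C_3^2)$ from \eqref{C1-C2}--\eqref{C2-C3} and then bound $\sum_i (U_{i+1}-U_i)^2/h_{i+\frac{1}{2}}$ by $\|U_x\|^2_{L^2(0,L)}$ --- but your implementation of the second step is genuinely different. The paper assumes a uniform mesh, writes $U_{i+1}-U_i=\frac{1}{h}\int_{K_i}\bigl(U(x+h)-U(x)\bigr)dx$, applies Cauchy--Schwarz, and then evaluates $\frac{1}{h}\int_0^L\int_x^{x+h}U_x^2(s)\,ds\,dx=\int_0^L U_x^2(s)\,ds$ exactly via Fubini with indicator functions; the non-uniform case is only asserted to be ``adapted by taking each term separately,'' and the boundary edges $i=0$, $i=N_{\max}$ (where $h_{\frac{1}{2}}=h_\alpha/2$, $h_{N_{\max}+\frac{1}{2}}=h_\beta/2$ and there is no neighbouring cell) are not treated explicitly. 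You instead use the double-average representation over $K_i\times K_{i+1}$, the moment identity $\int_{K_i}\int_{K_{i+1}}(y-x)\,dy\,dx=h_ih_{i+1}h_{i+\frac{1}{2}}$ (legitimate, since the $x_i$ are declared to be cell centres), and an overlap count on the windows $D_i$, with an explicit treatment of the boundary cells via $U(0)=U(L)=0$. What each approach buys: the paper's translation/Fubini computation is exact and yields the slightly sharper constant $K=\max(C_1,C_2,C_3)$, but as written it covers only the uniform case; your argument works directly on the actual piecewise-uniform mesh of Section \ref{Section Mesh}, including the interface spacings $h_{N_\alpha+\frac{1}{2}}=(h_\alpha+h)/2$ and $h_{N_\alpha+N+\frac{1}{2}}=(h+h_\beta)/2$ and the boundary half-spacings, at the cost of the factor $2$ from the overlapping windows, giving $K=\sqrt{2}\max(C_1,C_2,C_3)$. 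Your preliminary observation that the continuity constraints in $\mathbb{H}^1_L$ glue the three pieces into a global $H^1(0,L)$ function, so that $U(y)-U(x)=\int_x^yU_x(s)\,ds$ is licit across the interfaces, is also a point the paper uses silently; making it explicit is an improvement rather than a deviation.
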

\begin{proof}~
\begin{enumerate}
\item[1.] We have
\[
|| U_{\mathcal{T}} ||^2 = \sum_{i=0}^{N_{\max}}  h_i U_i^2 = \sum_{i=0}^{N_{\max}} \frac{1}{h_i} \left(\int_{K_i} U(x) dx\right)^2
\]
Since $U_0 = 0$, by Cauchy-Schwartz inequality we thus obtain
\[
\sum_{i=0}^{N_{\max}} \frac{1}{h_i} \left(\int_{K_i} U(x) dx\right)^2 \leq \sum_{i=0}^{N_{\max}} \frac{1}{h_i}\int_{K_i} 1 dx\int_{K_i} U^2(x) dx= \sum_{i=0}^{N_{\max}} \int_{K_i} U^2(x) dx
= \int_0^L U^2(x) dx = || U||^2_{L^2(0,L)}
\]

\item[2.] For the sake of simplicity in the following algebraic computations, we will suppose  the space discretision is uniform that is : 
\[
\forall i =1,N_{\max} \,,\, h_i = h_{i+\frac{1}{2}} = h \ .
\]
The non-uniform case is adapted by taking each term separately.
Let $U \in \mathbb{H}^1_L$. We have
\[
|| U_{\mathcal{T}} ||_{1, \mathcal{T}}^2  = \sum_{i=0}^{N_{\max}} \ell_{i+ \frac{1}{2}} (U_{i+1} - U_i)^2 =
\sum_{i=0}^{N_{\max}} \big(\ell_{i+ \frac{1}{2}} \times h_{i+\frac{1}{2}}\big)
\frac{\left(U_{i+1} - U_i\right)^2}{h_{i+\frac{1}{2}}} 
\]
From the inequalities \eqref{C1-C2} and \eqref{C2-C3} and from the definition of $\ell_{i+ \frac{1}{2}}$ we have : 
\[
|| U_{\mathcal{T}} ||_{1, \mathcal{T}}^2  \leq \max(C_1^2,C_2^2,C_3^2)
\sum_{i=0}^{N_{\max}} \frac{\left(U_{i+1} - U_i\right)^2}{h_{i+\frac{1}{2}}}  
\]
Let $i=1\,,N_{\max}$. We have:
\begin{align*}
(U_{i+1} - U_i)^2 &= \frac{1}{h^2}\left(\int_{K_{i+1}}U(x) dx  - \int_{K_{i}}U(x)dx\right)^2 \\
&=\frac{1}{h^2}\left(\int_{K_{i}}U(x+h) - U(x)dx\right)^2 \\
&=\frac{1}{h^2}\left(\int_{K_i} \int_x^{x+h} U_x(s)ds dx\right)^2 \; .
\end{align*}
Applying Cauchy-Schwartz inequality to get :
\[
(U_{i+1} - U_i)^2 \leq \int_{K_i} \int_x^{x+h} U^2_x(s)ds dx \; .
\] 
We thus obtain:
\begin{align*}
\sum_{i=0}^{N_{\max}} \frac{\left(U_{i+1} - U_i\right)^2}{h} \leq \sum_{i=0}^{N_{\max}}\frac{1}{h}\int_{K_i} \int_x^{x+h} U^2_x(s)ds dx 
\leq \frac{1}{h} \int_{0}^{L}\int_x^{x+h} U^2_x(s)ds dx
\end{align*}
But we have : 
\begin{align*}
\frac{1}{h} \int_{0}^{L}\int_x^{x+h} U^2_x(s)ds dx &= \frac{1}{h} \int_{0}^{L}\int_0^L \mathds{1}_{[x,x+h]}(s)U_x^2(s) ds dx\\
& =\frac{1}{h} \int_{0}^{L}\int_0^L \mathds{1}_{[s-h,s]}(x)U_x^2(s) ds dx \\
& =\frac{1}{h} \int_{0}^{L}\left(\int_0^L \mathds{1}_{[s-h,s]}(x) dx\right)U_x^2(s) ds \\
& =\int_{0}^{L}U_x^2(s) ds
\end{align*}
Thus we finally get :
\[
|| U_{\mathcal{T}} ||_{1, \mathcal{T}}^2  \leq \max(C_1^2,C_2^2,C_3^2) \int_{0}^{L}U_x^2(s) ds
\]
This inequality ends the proof.
\end{enumerate}
\end{proof}
\begin{Theorem}\label{Stability theorem for first term}
Assume that Hypothesis \eqref{H'} and Condition \eqref{TA} hold. Then, the discrete $\mathbb{L}^2$ and $\mathbb{H}^1_L$ norms of the numerical solution of the first time step $U_{\mathcal{T}}^1$ of the implicit numerical scheme \eqref{DISCRETE-IMPLICIT-n=0} are bounded. Moreover, the approximation of the first time derivation $\partial^1 U_{\mathcal{T}}^1$ is also bounded in $\mathbb{L}^2$. More precisely, there exists a constant $K$ independent of the time and space
 discretization parameters such that the following estimate is fulfilled : 
\begin{equation}\label{Initial-Estimate}
|| U_{\mathcal{T}}^1|| +  || \partial^1 U_{\mathcal{T}}^1|| + || U_{\mathcal{T}}^1||_{1, \mathcal{T}} \leq K
\end{equation}
\end{Theorem}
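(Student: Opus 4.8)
The plan is to mimic the energy method used for the dissipation theorems: multiply the $n=0$ scheme \eqref{DISCRETE-IMPLICIT-n=0} by the discrete velocity $(U_i^1 - U_i^0)$ and sum over $i = 1, \ldots, N_{\max}$. The inertial term $2h_i \frac{U_i^1-U_i^0}{\Delta t^2}$ produces, after multiplication and summation, exactly $2 || \partial^1 U_{\mathcal{T}}^1 ||^2$, since $|| \partial^1 U_{\mathcal{T}}^1 ||^2 = \sum_i h_i \big(\frac{U_i^1-U_i^0}{\Delta t}\big)^2$. The stiffness term $-[\ell_{i+\frac12}(U_{i+1}^1-U_i^1) - \ell_{i-\frac12}(U_i^1-U_{i-1}^1)]$, after the same index translation (summation by parts) performed in the proof of Theorem \ref{theorem explicit dissipation} and using the Dirichlet conditions $U_0 = U_{N_{\max}+1} = 0$, yields $|| U_{\mathcal{T}}^1 ||_{1,\mathcal{T}}^2 - \sum_{i=0}^{N_{\max}} \ell_{i+\frac12}(U_{i+1}^1 - U_i^1)(U_{i+1}^0 - U_i^0)$. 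Thus the left-hand side already produces the two positive quantities I wish to control, $2|| \partial^1 U_{\mathcal{T}}^1 ||^2$ and $|| U_{\mathcal{T}}^1 ||_{1,\mathcal{T}}^2$, plus a cross term in $U^1$ and $U^0 = \Phi_{\mathcal{T}}$.

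The remaining terms all involve the data $\Phi_{\mathcal{T}}$, $\Psi_{\mathcal{T}}$ and the time step $\Delta t$. First I would treat the inertial data term $-\frac{2h_i}{\Delta t}\Psi_i$, which gives $-2 \sum_i h_i \Psi_i \frac{U_i^1-U_i^0}{\Delta t}$, bounded by $2 ||\Psi_{\mathcal{T}}|| \, ||\partial^1 U_{\mathcal{T}}^1 ||$ through the discrete Cauchy--Schwarz inequality. The $\Delta t$-weighted term $\Delta t[\ell_{i+\frac12}(\Psi_{i+1}-\Psi_i) - \ell_{i-\frac12}(\Psi_i-\Psi_{i-1})]$ and the whole right-hand side of \eqref{DISCRETE-IMPLICIT-n=0} (the $\delta$-terms supported on $N_\alpha+1 \le i \le N_\alpha+N$) are again second-order discrete differences; after the same summation by parts they become $\Delta t$- (resp.\ $\delta$-) weighted pairings of $(\Psi_{i+1}-\Psi_i)$ against $(U_{i+1}^1-U_i^1)-(U_{i+1}^0-U_i^0)$, which Cauchy--Schwarz bounds by $\Delta t \, ||\Psi_{\mathcal{T}}||_{1,\mathcal{T}}\big( ||U_{\mathcal{T}}^1||_{1,\mathcal{T}} + ||\Phi_{\mathcal{T}}||_{1,\mathcal{T}}\big)$ and a similar $\delta$-expression, where the factor $1/h$ coming from the $\delta$-terms is absorbed by the lower bound $\ell_{i+\frac12} h \ge \min(C_j^2)$. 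Every data norm appearing is then dominated, via the discrete inequalities \eqref{L2estimate}--\eqref{H1estimate}, by $||\Phi||_{L^2}, ||\Phi_x||_{L^2}, ||\Psi||_{L^2}, ||\Psi_x||_{L^2}$, all finite under Hypothesis \eqref{H'}, while each occurrence of $\Delta t$ is bounded by $\tau_0$ through Condition \eqref{TA}.

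Collecting everything, the identity reads $2|| \partial^1 U_{\mathcal{T}}^1 ||^2 + || U_{\mathcal{T}}^1 ||_{1,\mathcal{T}}^2 = (\text{cross and data terms})$. I would then apply Young's inequality to every product on the right in which $|| \partial^1 U_{\mathcal{T}}^1 ||$ or $|| U_{\mathcal{T}}^1 ||_{1,\mathcal{T}}$ occurs, choosing the splitting so that a fixed fraction of each positive quantity is moved back to the left-hand side and absorbed; the surviving right-hand side then depends only on the data and on $\tau_0$, giving a constant $K$ independent of the mesh and proving $|| \partial^1 U_{\mathcal{T}}^1 || + || U_{\mathcal{T}}^1 ||_{1,\mathcal{T}} \le K$. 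Finally, the missing $\mathbb{L}^2$ bound follows from $U_{\mathcal{T}}^1 = \Phi_{\mathcal{T}} + \Delta t\, \partial^1 U_{\mathcal{T}}^1$, whence $|| U_{\mathcal{T}}^1 || \le ||\Phi_{\mathcal{T}}|| + \Delta t\, || \partial^1 U_{\mathcal{T}}^1 || \le ||\Phi||_{L^2} + \tau_0 K$, again bounded.

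The main obstacle is bookkeeping rather than conceptual: one must check that after summation by parts the $\delta$- and $\Delta t$-terms genuinely reorganize into discrete $\mathbb{H}^1$ pairings, so that the factors $1/h$ do not blow up, and then calibrate the Young splittings so that the reabsorbed fractions of $|| \partial^1 U_{\mathcal{T}}^1 ||^2$ and $|| U_{\mathcal{T}}^1 ||_{1,\mathcal{T}}^2$ stay strictly below the coefficients $2$ and $1$ on the left. The delicate point is that the product $\Delta t\, ||U_{\mathcal{T}}^1||_{1,\mathcal{T}}$ has to be absorbed without any smallness assumption on $\Delta t$; this is precisely where the boundedness $\Delta t^2 \le \tau_0^2$ from \eqref{TA} must be invoked, the $\Delta t$ being paired with $|| U_{\mathcal{T}}^1 ||_{1,\mathcal{T}}$ and the leftover $\Delta t^2 ||\Psi_{\mathcal{T}}||_{1,\mathcal{T}}^2$ folded into the constant $K$.
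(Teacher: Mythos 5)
Your proposal is correct, and its core coincides with the paper's own Lemma \ref{Lem2}: the same multiplier $(U_i^1-U_i^0)$ applied to \eqref{DISCRETE-IMPLICIT-n=0}, the same summation by parts using $U_0=U_{N_{\max}+1}=0$ producing the identity with $2\Vert\partial^1 U^1_{\mathcal{T}}\Vert^2 + \Vert U^1_{\mathcal{T}}\Vert^2_{1,\mathcal{T}}$ on the left plus the cross term against $\Phi_{\mathcal{T}}$, the same treatment of the $1/h$ factors in the $\delta$-terms (the paper uses the identity $\frac{1}{h}=\frac{\ell_{i+\frac{1}{2}}}{C_2^2}$ on the interior damped edges, which is exactly your lower bound $\ell_{i+\frac{1}{2}}h \ge \min(C_j^2)$ since the $\delta$-sum never touches the interface edges), and the same Young calibration: the paper takes $\varepsilon=2$ and ends with the coefficient $\frac{1}{3}-\frac{1}{2\varepsilon}=\frac{1}{12}$ in front of $\Vert U^1_{\mathcal{T}}\Vert^2_{1,\mathcal{T}}$ in \eqref{RRR(1.2)}, confirming your observation that the absorbed fractions must stay strictly below the left-hand coefficients, and that the $\Delta t$ is paired with the $\Psi$ side so the leftover $\Delta t^2\Vert\Psi_{\mathcal{T}}\Vert^2_{1,\mathcal{T}}$ is folded into $K$ via \eqref{TA}. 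Where you genuinely diverge is the $\mathbb{L}^2$ bound on $U^1_{\mathcal{T}}$: the paper devotes a separate energy estimate to it (Lemma \ref{Lem1}, with the different multiplier $2\Delta t^2 U_i^1$ and a second full summation-by-parts computation), whereas you obtain it in one line from $U^1_{\mathcal{T}} = \Phi_{\mathcal{T}} + \Delta t\,\partial^1 U^1_{\mathcal{T}}$, the triangle inequality, $\Vert\Phi_{\mathcal{T}}\Vert \le \Vert\Phi\Vert_{L^2(0,L)}$ from \eqref{L2estimate}, and $\Delta t \le \tau_0$ from \eqref{TA}. This shortcut is valid and strictly more economical: under Hypothesis \eqref{H'} the data norms it consumes are exactly those already needed by the main estimate, so nothing is lost, and the paper's Lemma \ref{Lem1} becomes logically redundant for the theorem as stated (one could equivalently invoke the discrete Poincar\'e inequality on $\Vert U^1_{\mathcal{T}}\Vert_{1,\mathcal{T}}$, as the paper itself does for the later time steps in Theorem \ref{Theorem Stability n>1}). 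The only thing the paper's two-lemma structure buys is an $\mathbb{L}^2$ estimate established independently of the $\mathbb{H}^1$ one, which has expository but no mathematical necessity here.
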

\noindent The proof of Theorem \ref{Stability theorem for first term} is divided into two Lemmas.
\begin{Lemma} \label{Lem1}
Under Hypothesis \eqref{H'} and Condition \eqref{TA}, the numerical solution of the first time step $U_{\mathcal{T}}^1$ of the implicit numerical scheme \eqref{DISCRETE-IMPLICIT-n=0} is bounded in $\mathbb{L}^2$. More precisely, there exists a constant $K_1$ independent of the time and space discretization parameters such that $\left\Vert U_{\mathcal{T}}^1 \right\Vert \leq K_1$.  
\end{Lemma}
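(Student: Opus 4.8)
The plan is to run a discrete energy argument on Equation~\eqref{DISCRETE-IMPLICIT-n=0}, testing it against $U^1$ itself so that the discrete $\mathbb{L}^2$ norm \eqref{L2norm} of $U^1_{\mathcal{T}}$ appears directly on the left. First I would multiply \eqref{DISCRETE-IMPLICIT-n=0} by $\frac{\Delta t^2}{2}$ and move every term not containing $U^1$ to the right, obtaining the condensed form
\[
h_i U_i^1 - \frac{\Delta t^2}{2}\big[\ell_{i+\frac12}(U^1_{i+1}-U^1_i) - \ell_{i-\frac12}(U^1_i - U^1_{i-1})\big] = G_i ,
\]
where $G_i = h_i U_i^0 + \Delta t\, h_i \Psi_i - \frac{\Delta t^3}{2}\big[\ell_{i+\frac12}(\Psi_{i+1}-\Psi_i)-\ell_{i-\frac12}(\Psi_i-\Psi_{i-1})\big] + \frac{\Delta t^2}{2}\,rhs_i$ collects the initial data and the Kelvin--Voigt contribution. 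Multiplying by $U_i^1$, summing over $i=1,\ldots,N_{\max}$, and performing the discrete summation by parts with the Dirichlet values $U_0^1=U_{N_{\max}+1}^1=0$ (exactly the manipulation already used in the proof of Theorem~\ref{theorem implicit dissipation}) turns the left-hand side into
\[
\| U^1_{\mathcal{T}} \|^2 + \frac{\Delta t^2}{2}\, \| U^1_{\mathcal{T}} \|_{1,\mathcal{T}}^2 = \sum_{i=1}^{N_{\max}} G_i\, U_i^1 .
\]

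Next I would estimate $\sum_i G_i U_i^1$ term by term and absorb. The two zeroth-order contributions are controlled by the discrete Cauchy--Schwarz inequality, $\sum_i h_i U_i^0 U_i^1 \le \| U^0_{\mathcal{T}} \|\,\| U^1_{\mathcal{T}} \|$ and $\Delta t \sum_i h_i \Psi_i U_i^1 \le \tau_0\, \| \Psi_{\mathcal{T}} \|\,\| U^1_{\mathcal{T}} \|$, where Condition~\eqref{TA} supplies $\Delta t \le \tau_0$. For the $\Delta t^3$ diffusion term and the $rhs_i$ term I would first apply a discrete summation by parts so that each becomes a bilinear pairing of the increments $(\Psi_{i+1}-\Psi_i)$ and $(U^1_{i+1}-U^1_i)$; the diffusion term is then bounded at once by $\frac{\Delta t^3}{2}\| \Psi_{\mathcal{T}} \|_{1,\mathcal{T}}\,\| U^1_{\mathcal{T}} \|_{1,\mathcal{T}}$ via Cauchy--Schwarz for the weighted $\ell_{i+\frac12}$ inner product, while for the $rhs_i$ term the crucial point is that on the viscoelastic block $\ell_{i+\frac12}=C_2^2/h$, so the explicit $1/h$ factors carried by $rhs_i$ cancel against that weight and the pairing is again bounded by $\tfrac{\delta}{C_2^2}\,\frac{\Delta t^2}{2}\| \Psi_{\mathcal{T}} \|_{1,\mathcal{T}}\,\| U^1_{\mathcal{T}} \|_{1,\mathcal{T}}$, uniformly in the mesh. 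A Young inequality with a $\Delta t$-adapted weight then splits each pairing into a piece $\le \frac{\Delta t^2}{8}\| U^1_{\mathcal{T}} \|_{1,\mathcal{T}}^2$, absorbed by the left-hand side, and a remainder proportional to $\| \Psi_{\mathcal{T}} \|_{1,\mathcal{T}}^2$ with a $\Delta t$-power coefficient; the two zeroth-order terms are split so as to leave $\tfrac12\| U^1_{\mathcal{T}} \|^2$ on the left.

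After absorption this yields an estimate of the form $\tfrac12\| U^1_{\mathcal{T}} \|^2 \le \| U^0_{\mathcal{T}} \|^2 + \tau_0^2\| \Psi_{\mathcal{T}} \|^2 + c\, \| \Psi_{\mathcal{T}} \|_{1,\mathcal{T}}^2$, with $c$ depending only on $\tau_0$, $\delta$ and the $C_j$, and it remains to bound the right-hand side independently of the mesh and the time step. This is where Hypothesis~\eqref{H'} and the discrete norm inequalities enter: $\| U^0_{\mathcal{T}} \| = \| \Phi_{\mathcal{T}} \| \le \| \Phi \|_{L^2(0,L)}$ and $\| \Psi_{\mathcal{T}} \| \le \| \Psi \|_{L^2(0,L)}$ by \eqref{L2estimate}, while $\| \Psi_{\mathcal{T}} \|_{1,\mathcal{T}} \le K \| \Psi_x \|_{L^2(0,L)}$ by \eqref{H1estimate}, all finite since $\Phi \in \mathbb{H}^1_L$ and $\Psi \in \mathbb{H}^1$. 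Taking $K_1$ to be the square root of twice this bound gives $\| U^1_{\mathcal{T}} \| \le K_1$. The main obstacle I anticipate is bookkeeping rather than conceptual: matching the $1/h$ powers in $rhs_i$ against the $\ell_{i+\frac12}$ weights so that the estimate is genuinely uniform in $\mathrm{size}(\mathcal{T})$, and dealing with the boundary control volumes when applying \eqref{H1estimate} to $\Psi_{\mathcal{T}}$, since $\Psi$ lies in $\mathbb{H}^1$ but need not satisfy the homogeneous end conditions built into the semi-norm \eqref{H1norm}.
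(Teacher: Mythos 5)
Your proposal is correct and follows essentially the same route as the paper: the paper multiplies \eqref{DISCRETE-IMPLICIT-n=0} by $2\Delta t^2 U_i^1$, sums, performs the same index translations with $U_0^1=U_{N_{\max}+1}^1=0$ to reach the identity \eqref{RR(1.3)} (your energy identity with the $\frac{\Delta t^2}{2}\Vert U^1_{\mathcal{T}}\Vert^2_{1,\mathcal{T}}$ term kept negative on the right rather than positive on the left), then applies Young's inequality with the same cancellation $\frac{1}{h}=\frac{\ell_{i+\frac12}}{C_2^2}$ on the Kelvin--Voigt block, and concludes via \eqref{L2estimate}, \eqref{H1estimate}, Hypothesis \eqref{H'} and Condition \eqref{TA}, exactly as you do. The boundary subtlety you flag at the end (that $\Psi\in\mathbb{H}^1$ need not vanish at the endpoints as the semi-norm \eqref{H1norm} presupposes) is real but is glossed over in the paper's proof as well, so it does not distinguish your argument from theirs.
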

\begin{proof}
We split the proof of Lemma \ref{Lem1} into two steps. \\[0.1in]
\textbf{Step 1.}
Multiply the left and right hand-side of Equation \eqref{DISCRETE-IMPLICIT-n=0} by $2 \Delta t^2 U_i^1$ and sum over $i$ to obtain
{\small{\begin{equation*}
\begin{array}{lll}
4 \sum_{i=1}^{N_{\max}}  h_i (U_{i} ^{1})^2 - 4  \sum_{i=1}^{N_{\max}} h_i U_i^0 U_i^1 - 4 \Delta t  \sum_{i=1}^{N_{\max}} h_{i} \Psi_i U_i^1 - 2 \Delta t^2 \sum_{i=1}^{N_{\max}} \ell_{i+{\frac{1}{2}}}(U_{i+1}^1 - U_i^1)U_i^1 + 2 \Delta t^2\sum_{i=1}^{N_{\max}} \ell_{i-{\frac{1}{2}}}(U_{i}^1 - U_{i-1}^1)U_i^1 \\
+ 2 (\Delta t)^3 \sum_{i=1}^{N_{\max}} \ell_{i+{\frac{1}{2}}}(\Psi_{i+1} - \Psi_i)U_i^1 - 2 (\Delta t)^3\sum_{i=1}^{N_{\max}}  \ell_{i-{\frac{1}{2}}}(\Psi_{i} - \Psi_{i-1})U_i^1 \\
=\left\{\begin{array}{lllll}
0,& i = 1, \ldots, N_{\alpha},\\
 2 \Delta t^2 \delta \frac{(\Psi_{N_{\alpha}+2} - \Psi_{N_{\alpha}+1})U_{N_{\alpha}+1}^1}{h},& i = N_{\alpha}+1,\\
  2 \Delta t^2 \delta \sum_{i=N_{\alpha}+2}^{N_{\alpha}+N-1} \frac{(\Psi_{i+1} - \Psi_i)U_i^1} {h}  -  2 \Delta t^2 \delta  \sum_{i=N_{\alpha}+2}^{N_{\alpha}+N-1} \frac{(\Psi_{i} - \Psi_{i-1})U_i^1}{h} , & i = N_{\alpha}+2, \ldots, N_{\alpha}+N-1,\\
-  2 \Delta t^2 \delta \frac{(\Psi_{N_{\alpha}+N} - \Psi_{N_{\alpha}+N-1})U_{N_{\alpha}+N}^1}{h} , & i = N_{\alpha}+N , \\
0, & i = N_{\alpha}+N+1, \ldots, N_{\max}.
\end{array}\right. 
\end{array}
\end{equation*}}}
Substituting \eqref{initialSOLUTION} in the left hand-side of the above equation and applying the translation of index $i$ for the fifth and seventh term of the left hand-side and for the second term$_{\vert_{i = N_{\alpha}+2, \ldots, N_{\alpha}+N-1}}$ of the right hand-side, we obtain
{\small{\begin{equation*}
\begin{array}{lll}
4 \sum_{i=1}^{N_{\max}}  h_i (U_{i} ^{1})^2 - 4  \sum_{i=1}^{N_{\max}} h_i \Phi_i U_i^1 - 4 \Delta t  \sum_{i=1}^{N_{\max}} h_{i} \Psi_i U_i^1 - 2 \Delta t^2 \sum_{i=1}^{N_{\max}} \ell_{i+{\frac{1}{2}}}(U_{i+1}^1 - U_i^1)U_i^1 + 2 \Delta t^2\sum_{i=0}^{N_{\max}-1} \ell_{i+{\frac{1}{2}}}(U_{i+1}^1 - U_{i}^1)U_{i+1}^1 \\
+ 2 (\Delta t)^3 \sum_{i=1}^{N_{\max}} \ell_{i+{\frac{1}{2}}}(\Psi_{i+1} - \Psi_i)U_i^1 - 2 (\Delta t)^3\sum_{i=0}^{N_{\max}-1}  \ell_{i+{\frac{1}{2}}}(\Psi_{i+1} - \Psi_{i})U_{i+1}^1 \\
=  2 \Delta t^2 \delta \frac{(\Psi_{N_{\alpha}+2} - \Psi_{N_{\alpha}+1})U_{N_{\alpha}+1}^1}{h} + 2 \Delta t^2 \delta \sum_{i=N_{\alpha}+2}^{N_{\alpha}+N-1} \frac{(\Psi_{i+1} - \Psi_i)U_i^1} {h}  -  2 \Delta t^2 \delta  \sum_{i=N_{\alpha}+1}^{N_{\alpha}+N-2} \frac{(\Psi_{i+1} - \Psi_{i})U_{i+1}^1}{h} \\
-  2 \Delta t^2 \delta \frac{(\Psi_{N_{\alpha}+N} - \Psi_{N_{\alpha}+N-1})U_{N_{\alpha}+N}^1}{h}.
\end{array}
\end{equation*}}}
Using the fact that $U_0 = U_{N_{\max}+1} =0$, we get
{\small{\begin{equation} \label{RR(1.3)}
\begin{array}{lll}
4 \sum_{i=1}^{N_{\max}}h_i  (U_{i} ^{1})^2 
& =  4  \sum_{i=1}^{N_{\max}}h_i \Phi_i U_i^1 + 4 \Delta t  \sum_{i=1}^{N_{\max}} h_{i} \Psi_i U_i^1 - 2 \Delta t^2 \sum_{i=0}^{N_{\max}} \ell_{i+{\frac{1}{2}}}(U_{i+1}^1 - U_i^1)^2  + 2 (\Delta t)^3 \sum_{i=0}^{N_{\max}} \ell_{i+{\frac{1}{2}}}(\Psi_{i+1} - \Psi_i)(U_{i+1}^1-U_i^1) \\
& - 2 \Delta t^2 \delta \sum_{i=N_{\alpha}+1}^{N_{\alpha}+N-1} \frac{(\Psi_{i+1} - \Psi_i)(U_{i+1}^1-U_i^1)} {h} 
\end{array}
\end{equation}}}
\noindent \textbf{Step 2. Estimation of the terms of the right hand-side of \eqref{RR(1.3)}}: \\[0.1in]
\textbf{Estimation of the first term.} Applying Young's inequality, we obtain
\begin{equation*}
\begin{array}{lll}
4  \sum_{i= 1}^{N_{\max}} h_{i} \Phi_i U_i^1 
\leq 8  \sum_{i= 1}^{N_{\max}}  h_{i} ( \Phi_i)^2 + \frac{1}{2}  \sum_{i= 1}^{N_{\max}}  h_{i} ( U^1_i)^2 .
\end{array}
\end{equation*} 
\textbf{Estimation of the second term.} We apply similarly Young's inequality to get
\begin{equation*}
\begin{array}{lll}
4 \Delta t\sum_{i= 1}^{N_{\max}}  h_{i}  \Psi_i U_i^1
\leq 8  \Delta t^2  \sum_{i= 1}^{N_{\max}} h_{i} ( \Psi_i)^2 + \frac{1}{2}  \sum_{i= 1}^{N_{\max}} h_{i} ( U^1_i)^2.
\end{array}
\end{equation*} 
\textbf{Estimation of the fourth and fifth terms.} Like above, we apply Young's inequality to get
\begin{equation*}
\begin{array}{llll}
2 (\Delta t)^3 \sum_{i=0}^{N_{\max}} \ell_{i+{\frac{1}{2}}}(\Psi_{i+1} - \Psi_i)(U_{i+1}^1-U_i^1)- 2 \Delta t^2 \delta \sum_{i=N_{\alpha}+1}^{N_{\alpha}+N-1} \frac{(\Psi_{i+1} - \Psi_i)(U_{i+1}^1-U_i^1)} {h} \\
 \leq (\Delta t)^4 \sum_{i=0}^{N_{\max}} \ell_{i+ \frac{1}{2}}(\Psi_{i+1} - \Psi_i)^2 +  \Delta t^2 \sum_{i=0}^{N_{\max}} \ell_{i+{\frac{1}{2}}}(U^1_{i+1} - U^1_i)^2 \\
   +  \frac{\delta^2 \Delta t^2}{C_2^2} \sum_{i=N_{\alpha}+1}^{N_{\alpha}+N-1} \frac{ (\Psi_{i+1} - \Psi_i)^2}{h} 
  +  \Delta t^2 C_2^2 \sum_{i=N_{\alpha}+1}^{N_{\alpha}+N-1} \frac{(U^1_{i+1} - U^1_i)^2}{h}.
\end{array}
\end{equation*}  
Taking into consideration that $\frac{1}{h} = \frac{\ell_{i+ \frac{1}{2}}}{C_2^2}; \ i=N_{\alpha}+1, \ldots, N_{\alpha}+N-1$, we obtain 
\begin{equation*}
\begin{array}{llll}
2 (\Delta t)^3 \sum_{i=0}^{N_{\max}} \ell_{i+{\frac{1}{2}}}(\Psi_{i+1} - \Psi_i)(U_{i+1}^1-U_i^1)- 2 \Delta t^2 \delta \sum_{i=N_{\alpha}+1}^{N_{\alpha}+N-1} \frac{(\Psi_{i+1} - \Psi_i)(U_{i+1}^1-U_i^1)} {h} \\  
   \leq \Delta t^2 \bigg(  \Delta t^2 + \frac{\delta^2}{C_2^4} \bigg) \sum_{i=0}^{N_{\max}} \ell_{i+ \frac{1}{2}}(\Psi_{i+1} - \Psi_i)^2 + 2 \Delta t^2 \sum_{i=0}^{N_{\max}} \ell_{i+{\frac{1}{2}}}(U^1_{i+1} - U^1_i)^2 .
\end{array}
\end{equation*}
$\diamond$ Applying the estimates to \eqref{RR(1.3)}, we get
\begin{equation}\label{RR(1.4)}
\begin{array}{lll}
3  \sum_{i=1}^{N_{\max}} h_i(U_{i} ^{1})^2 
\leq 8   \sum_{i= 1}^{N_{\max}} h_i( \Phi_i)^2 
+ 8  \Delta t^2\sum_{i= 1}^{N_{\max}} h_i( \Psi_i)^2  
 + \Delta t^2 \bigg(  \Delta t^2 + \frac{\delta^2}{C_2^4} \bigg) \sum_{i=0}^{N_{\max}} \ell_{i + \frac{1}{2}}(\Psi_{i+1} - \Psi_i)^2
\end{array}
\end{equation}
Hence, we deduce the following estimation
\begin{equation*}
|| U_{\mathcal{T}}^1 ||^2  \leq \frac{8}{3} || \Phi_{\mathcal{T}} ||^2 + \frac{8 \Delta t^2 }{3} || \Psi_{\mathcal{T}} || ^2 + \frac{\Delta t^2}{3} \bigg(  \Delta t^2 + \frac{\delta^2}{C_2^4} \bigg) || \Psi_{\mathcal{T}} ||_{1, \mathcal{T}} ^2.
\end{equation*}
Using estimates \eqref{L2estimate} and \eqref{H1estimate} we get :
\begin{equation*}
|| U_{\mathcal{T}}^1 ||^2  \leq \frac{8}{3} || \Phi||^2 + \frac{8 \Delta t^2 }{3} || \Psi || ^2 + K \frac{\Delta t^2}{3} \bigg(  \Delta t^2 + \frac{\delta^2}{C_2^4} \bigg) ||\Psi_x||^2.
\end{equation*}
Due to Hypothesis \eqref{H'} and time assumption \eqref{TA}, there exists $K_{1}$ which depends only on the initial data of the problem and on the constant $\tau_0$ and independent of the discretization parameters such that :
\begin{equation}\label{U in L2}
|| U_{\mathcal{T}} ^1||^2  \leq K_{1} \ .
\end{equation}
The proof of Lemma \ref{Lem1} is thus complete.
\end{proof}
%%%%%%%%%%%%%%%%%%%%%%%%%%%%%%%%%%%%%%%%%%%%%
\begin{Lemma}\label{Lem2}
Under Hypothesis \eqref{H'} and Condition \eqref{TA}, the numerical solution of the first time step $U_{\mathcal{T}}^1$ of the implicit numerical scheme \eqref{DISCRETE-IMPLICIT-n=0} is bounded in $\mathbb{H}^1_L$ and the approximation of the first time derivation is bounded in $\mathbb{L}^2$; more precisely there exists 
a constant $K_{2}$ independent of the time and space discretization parameters such that 
\[
||\partial^1 U^1 _{\mathcal{T}} || ^2 + || U_{\mathcal{T}}^1 ||^2_{1, \mathcal{T}} \leq K_{2}
\]
\end{Lemma}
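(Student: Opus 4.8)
The plan is to run the standard discrete energy computation, but now with the \emph{velocity} multiplier rather than the displacement multiplier used in Lemma~\ref{Lem1}. Concretely, I would multiply Equation~\eqref{DISCRETE-IMPLICIT-n=0} by $(U_i^1 - U_i^0) = \Delta t\,\partial^1 U_i^1$ and sum over $i = 1,\ldots,N_{\max}$. The inertial term $2h_i (U_i^1-U_i^0)/\Delta t^2$ then produces $2\|\partial^1 U_{\mathcal{T}}^1\|^2$, so the kinetic contribution appears directly. For the discrete Laplacian $-[\ell_{i+\frac12}(U_{i+1}^1-U_i^1) - \ell_{i-\frac12}(U_i^1-U_{i-1}^1)]$ I would use the same translation of index (discrete integration by parts) as in the dissipation proofs, together with the Dirichlet conditions $U_0^1 = U_{N_{\max}+1}^1 = 0$. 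This turns that term into $\|U_{\mathcal{T}}^1\|_{1,\mathcal{T}}^2$ minus the cross term $\sum_{i=0}^{N_{\max}} \ell_{i+\frac12}(U_{i+1}^1-U_i^1)(U_{i+1}^0-U_i^0)$, so the $\mathbb{H}^1$ semi-norm also appears with a favorable sign. The identity I expect to obtain is, schematically, $2\|\partial^1 U_{\mathcal{T}}^1\|^2 + \|U_{\mathcal{T}}^1\|_{1,\mathcal{T}}^2$ equal to the cross term plus the $\Psi$-source terms.

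Next I would estimate each term on the right. The cross term is handled by Cauchy--Schwarz and Young as $\sum_{i}\ell_{i+\frac12}(U_{i+1}^1-U_i^1)(U_{i+1}^0-U_i^0) \leq \tfrac12\|U_{\mathcal{T}}^1\|_{1,\mathcal{T}}^2 + \tfrac12\|\Phi_{\mathcal{T}}\|_{1,\mathcal{T}}^2$, using $U^0 = \Phi_{\mathcal{T}}$; the first half is absorbed into the left-hand side and the second is controlled by \eqref{H1estimate}. The explicit source $\sum_i \tfrac{2h_i}{\Delta t}\Psi_i(U_i^1-U_i^0) = 2\sum_i h_i \Psi_i\,\partial^1 U_i^1$ is bounded by $\|\partial^1 U_{\mathcal{T}}^1\|^2 + \|\Psi_{\mathcal{T}}\|^2$ and again partially absorbed. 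The remaining $\Psi$-flux term $\Delta t\sum_i[\ell_{i+\frac12}(\Psi_{i+1}-\Psi_i) - \ell_{i-\frac12}(\Psi_i-\Psi_{i-1})](U_i^1-U_i^0)$ and the damping right-hand side of \eqref{DISCRETE-IMPLICIT-n=0} are transformed by the same index translation; on the viscoelastic region I would use $\tfrac1h = \ell_{i+\frac12}/C_2^2$ (as in Lemma~\ref{Lem1}) so that each factor is expressed through $\ell_{i+\frac12}$, and then Young's inequality gives absorbable multiples of $\|U_{\mathcal{T}}^1\|_{1,\mathcal{T}}^2$ plus terms in $\|\Psi_{\mathcal{T}}\|_{1,\mathcal{T}}^2$ and $\|\Phi_{\mathcal{T}}\|_{1,\mathcal{T}}^2$ with harmless powers of $\Delta t$.

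Finally, choosing the Young constants so the total coefficient of $\|U_{\mathcal{T}}^1\|_{1,\mathcal{T}}^2$ and of $\|\partial^1 U_{\mathcal{T}}^1\|^2$ on the right stays strictly below those on the left, I would absorb them and reach $\|\partial^1 U_{\mathcal{T}}^1\|^2 + \|U_{\mathcal{T}}^1\|_{1,\mathcal{T}}^2 \leq C\big(\|\Phi_{\mathcal{T}}\|_{1,\mathcal{T}}^2 + \|\Psi_{\mathcal{T}}\|^2 + \|\Psi_{\mathcal{T}}\|_{1,\mathcal{T}}^2\big)$ with $C$ depending only on $C_2,\delta,\tau_0$. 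Passing from discrete to continuous norms via \eqref{L2estimate} and \eqref{H1estimate} bounds the right-hand side by $\|\Phi_x\|_{L^2(0,L)}^2 + \|\Psi\|_{L^2(0,L)}^2 + \|\Psi_x\|_{L^2(0,L)}^2$, all finite by Hypothesis~\eqref{H'}, while Condition~\eqref{TA} caps the $\Delta t$ factors by $\tau_0$; this yields the constant $K_2$ independent of the discretization.

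The main obstacle will be the bookkeeping in the discrete integration by parts at the two interfaces $x=\alpha$ and $x=\beta$ and across the damping band, where the index ranges of the various sums do not coincide and boundary contributions must be tracked carefully, exactly the kind of index translations already performed in \eqref{terms dissipation}. The second delicate point is the \emph{coercivity}/absorption step: one must verify that the accumulated multiples of $\|U_{\mathcal{T}}^1\|_{1,\mathcal{T}}^2$ coming from the cross term and from the $\Psi$-flux and damping terms sum to strictly less than $1$, so that the semi-norm can genuinely be moved to the left. The factor $2$ in front of the kinetic term and the freedom in the Young parameters provide the room needed, and Lemma~\ref{Lem1} is available should any residual $\|U_{\mathcal{T}}^1\|$ term survive.
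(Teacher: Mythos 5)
Your proposal follows essentially the same route as the paper's proof: multiply \eqref{DISCRETE-IMPLICIT-n=0} by $(U_i^1-U_i^0)$, sum over $i$, perform the discrete integration by parts with the Dirichlet conditions $U_0=U_{N_{\max}+1}=0$ to isolate $2\|\partial^1 U_{\mathcal{T}}^1\|^2+\|U_{\mathcal{T}}^1\|_{1,\mathcal{T}}^2$ against the cross term and the $\Psi$-source/damping terms, use $\tfrac1h=\ell_{i+\frac12}/C_2^2$ on the viscoelastic band, absorb via Young's inequality, and conclude with \eqref{L2estimate}, \eqref{H1estimate}, Hypothesis \eqref{H'} and Condition \eqref{TA}. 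The only difference is in the bookkeeping of Young weights (the paper uses $\tfrac16$ and $\tfrac32$ on the cross term and $\varepsilon=2$ elsewhere, leaving the coefficient $\tfrac13-\tfrac{1}{2\varepsilon}=\tfrac{1}{12}$ on the semi-norm), and you correctly flag that this tuning is the delicate step and that enough freedom exists to carry it out.
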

\begin{proof}
The proof of Lemma \ref{Lem2} is divided into two steps.\\[0.1in]
\textbf{Step 1.} Multiply Equation \eqref{DISCRETE-IMPLICIT-n=0} by $(U_i^1 - U_i^0)$ and sum over $i$ to obtain 
{\small{\begin{equation*} 
\begin{array}{lll}
2\sum_{i=1}^{N_{\max}} h_i \frac{(U_i^1 - U_i^0)^2}{\Delta t^2} - \frac{2}{\Delta t} \sum_{i=1}^{N_{\max}} h_i \Psi_i (U_i^1 - U_i^0) - \sum_{i=1}^{N_{\max}} \ell_{i+ \frac{1}{2}} (U_{i+1}^1 - U_i^1)(U_i^1 - U_i^0) + \sum_{i=1}^{N_{\max}} \ell_{i- \frac{1}{2}} (U_{i}^1 - U_{i-1}^1)(U_i^1 - U_i^0) \\
 + \Delta t \sum_{i=1}^{N_{\max}} \ell_{i+ \frac{1}{2}} (\Psi_{i+1} - \Psi_i)(U_i^1 - U_i^0) - \Delta t \sum_{i=1}^{N_{\max}} \ell_{i- \frac{1}{2}} (\Psi_{i} - \Psi_{i-1})(U_i^1 - U_i^0)  \\
= \delta \frac{(\Psi_{N_{\alpha}+2} - \Psi_{N_{\alpha}+1})(U_{N_{\alpha}+1}^1 - U_{N_{\alpha}+1}^0)}{h} + \delta \sum_{i=N_{\alpha}+2}^{N_{\alpha}+N-1} \frac{(\Psi_{i+1} - \Psi_i)(U_i^1 - U_i^0)} {h} \\
- \delta  \sum_{i=N_{\alpha}+2}^{N_{\alpha}+N-1} \frac{(\Psi_{i} - \Psi_{i-1})(U_i^1 - U_i^0)}{h} - \delta \frac{(\Psi_{N_{\alpha}+N} - \Psi_{N_{\alpha}+N-1})(U_{N_{\alpha}+N}^1 - U_{N_{\alpha}+N}^0)}{h} .
\end{array}
\end{equation*}}}
By translation of index $i$ for the fourth and sixth terms of the left hand-side of the above equation and the third term of the right hand-side of the above equation, we get
{\small{\begin{equation*} 
\begin{array}{lll}
2\sum_{i=1}^{N_{\max}} h_i \frac{(U_i^1 - U_i^0)^2}{\Delta t^2} - \frac{2}{\Delta t} \sum_{i=1}^{N_{\max}} h_i \Psi_i (U_i^1 - U_i^0) - \sum_{i=1}^{N_{\max}} \ell_{i+ \frac{1}{2}} (U_{i+1}^1 - U_i^1)(U_i^1 - U_i^0) + \sum_{i=0}^{N_{\max}-1} \ell_{i+ \frac{1}{2}} (U_{i+1}^1 - U_{i}^1)(U_{i+1}^1 - U_{i+1}^0) \\
 + \Delta t \sum_{i=1}^{N_{\max}} \ell_{i+ \frac{1}{2}} (\Psi_{i+1} - \Psi_i)(U_i^1 - U_i^0) - \Delta t \sum_{i=0}^{N_{\max}-1} \ell_{i+ \frac{1}{2}} (\Psi_{i+1} - \Psi_{i})(U_{i+1}^1 - U_{i+1}^0)  \\
= \delta \frac{(\Psi_{N_{\alpha}+2} - \Psi_{N_{\alpha}+1})(U_{N_{\alpha}+1}^1 - U_{N_{\alpha}+1}^0)}{h} + \delta \sum_{i=N_{\alpha}+2}^{N_{\alpha}+N-1} \frac{(\Psi_{i+1} - \Psi_i)(U_i^1 - U_i^0)} {h} \\
- \delta  \sum_{i=N_{\alpha}+1}^{N_{\alpha}+N-2} \frac{(\Psi_{i+1} - \Psi_{i})(U_{i+1}^1 - U_{i+1}^0)}{h} - \delta \frac{(\Psi_{N_{\alpha}+N} - \Psi_{N_{\alpha}+N-1})(U_{N_{\alpha}+N}^1 - U_{N_{\alpha}+N}^0)}{h} .
\end{array}
\end{equation*}}}
Using the fact that $U_{0}=U_{N_{\max}+1}=0$, it follows
\begin{equation*} 
\begin{array}{lll}
2\sum_{i=1}^{N_{\max}} h_i \frac{(U_i^1 - U_i^0)^2}{\Delta t^2} - \frac{2}{\Delta t} \sum_{i=1}^{N_{\max}} h_i \Psi_i (U_i^1 - U_i^0) + \sum_{i=0}^{N_{\max}} \ell_{i+ \frac{1}{2}} (U_{i+1}^1 - U_{i}^1)[(U_{i+1}^1 - U_{i}^1) - (U_{i+1}^0 - U_i^0)] \\
 - \Delta t \sum_{i=0}^{N_{\max}} \ell_{i+ \frac{1}{2}} (\Psi_{i+1} - \Psi_i)[(U_{i+1}^1 - U_{i}^1) - (U_{i+1}^0 - U_i^0)] \\
= - \delta  \sum_{i=N_{\alpha}+1}^{N_{\alpha}+N-1} \frac{(\Psi_{i+1} - \Psi_{i})[(U_{i+1}^1 - U_{i}^1) - (U_{i+1}^0 - U_i^0)]}{h},
\end{array}
\end{equation*}
to obtain finally
\begin{equation}\label{RRR(1.1)}
\begin{array}{lll}
2\sum_{i=1}^{N_{\max}} h_i \frac{(U_i^1 - U_i^0)^2}{\Delta t^2} + \sum_{i=0}^{N_{\max}} \ell_{i+ \frac{1}{2}} (U_{i+1}^1 - U_{i}^1)^2  \\
= \frac{2}{\Delta t} \sum_{i=1}^{N_{\max}} h_i \Psi_i (U_i^1 - U_i^0) + \sum_{i=0}^{N_{\max}} \ell_{i+ \frac{1}{2}} (U_{i+1}^1 - U_{i}^1)(U_{i+1}^0 - U_i^0) + \Delta t \sum_{i=0}^{N_{\max}} \ell_{i+ \frac{1}{2}} (\Psi_{i+1} - \Psi_i)[(U_{i+1}^1 - U_{i}^1) - (U_{i+1}^0 - U_i^0)] \\
- \delta  \sum_{i=N_{\alpha}+1}^{N_{\alpha}+N-1} \frac{(\Psi_{i+1} - \Psi_{i})[(U_{i+1}^1 - U_{i}^1) - (U_{i+1}^0 - U_i^0)]}{h}.
\end{array}
\end{equation}
\textbf{Step 2. Estimation of the terms of the right hand-side of \eqref{RRR(1.1)}}: \\[0.1in]
\textbf{Estimation of the first term.} Apply Young's inequality to obtain
\begin{equation*}
\begin{array}{lll}
\frac{2}{\Delta t} \sum_{i=1}^{N_{\max}} h_i \Psi_i (U_i^1 - U_i^0)
\leq  \sum_{i=1}^{N_{\max}} h_i \frac{(U_i^1 - U_i^0)^2}{\Delta t^2} +  \sum_{i=1}^{N_{\max}} h_i (\Psi_i)^2 .
\end{array}
\end{equation*}
\textbf{Estimation of the second term.} Also, apply Young's inequality to get
\begin{equation*}
\begin{array}{lll}
\sum_{i=0}^{N_{\max}} \ell_{i+ \frac{1}{2}} (U_{i+1}^1 - U_{i}^1)(U_{i+1}^0 - U_i^0)
\leq \frac{1}{6} \sum_{i=0}^{N_{\max}} \ell_{i+ \frac{1}{2}} (U_{i+1}^1 - U_i^1)^2 + \frac{3 }{2} \sum_{i=0}^{N_{\max}} \ell_{i+ \frac{1}{2}} (U_{i+1}^0 - U_i^0)^2.
\end{array}
\end{equation*}
\textbf{Estimation of the last two terms.} Setting $\varepsilon>0$, we apply Young's inequality as well to obtain
\begin{equation*}
\begin{array}{lll}
\Delta t \sum_{i=0}^{N_{\max}} \ell_{i+ \frac{1}{2}} (\Psi_{i+1} - \Psi_i)[(U_{i+1}^1 - U_{i}^1) - (U_{i+1}^0 - U_i^0)] - \delta  \sum_{i=N_{\alpha}+1}^{N_{\alpha}+N-1} \frac{(\Psi_{i+1} - \Psi_{i})[(U_{i+1}^1 - U_{i}^1) - (U_{i+1}^0 - U_i^0)]}{h} \\
\leq  (\Delta  t)^2 \sum_{i=0}^{N_{\max}} \ell_{i+ \frac{1}{2}} (\Psi_{i+1} - \Psi_i)^2 +   \frac{1}{2} \sum_{i=0}^{N_{\max}} \ell_{i+ \frac{1}{2}} (U^1_{i+1} - U^1_i)^2+ \frac{1}{2} \sum_{i=0}^{N_{\max}} \ell_{i+ \frac{1}{2}} (U^0_{i+1} - U^0_i)^2 \\
+ \frac{\delta^2 \varepsilon}{ C_2^4}  \sum_{i=N_{\alpha}+1}^{N_{\alpha}+N-1} \ell_{i+ \frac{1}{2}} (\Psi_{i+1} - \Psi_{i})^2 + \frac{1}{2 \varepsilon}   \sum_{i=N_{\alpha}+1}^{N_{\alpha}+N-1} \ell_{i+ \frac{1}{2}} (U^1_{i+1} - U^1_i)^2 + \frac{ 1}{2 \varepsilon}   \sum_{i=N_{\alpha}+1}^{N_{\alpha}+N-1} \ell_{i+ \frac{1}{2}} (U^0_{i+1} - U^0_i)^2 \\
\leq \bigg[ \Delta t^2 + \frac{\delta^2 \varepsilon}{ C_2^4}\bigg] \sum_{i=0}^{N_{\max}} \ell_{i+ \frac{1}{2}} (\Psi_{i+1} - \Psi_i)^2 + \frac{1}{2} (1+ \varepsilon^{-1}) \sum_{i=0}^{N_{\max}} \ell_{i+ \frac{1}{2}} (U^1_{i+1} - U^1_i)^2 +  \frac{1}{2} (1+ \varepsilon^{-1}) \sum_{i=0}^{N_{\max}} \ell_{i+ \frac{1}{2}} (U^0_{i+1} - U^0_i)^2,
\end{array}
\end{equation*}
 \textcolor{black}{since} $\frac{1}{h} = \frac{\ell_{i+ \frac{1}{2}}}{C_2^2}.$ \\[0.1in]
%$\diamond$ 
Inserting the above estimates in \eqref{RRR(1.1)}, we deduce 
\begin{equation}\label{RRR(1.2)}
\begin{array}{lll}
 \sum_{i=1}^{N_{\max}} h_i \frac{(U_i^1 - U_i^0)^2}{\Delta t^2} + \bigg( \frac{1}{3} - \frac{1}{2 \varepsilon}  \bigg) \sum_{i=0}^{N_{\max}} \ell_{i+ \frac{1}{2}} (U_{i+1}^1 - U_{i}^1)^2 \\
\leq \sum_{i=1}^{N_{\max}} h_i (\Psi_i)^2 + \bigg( 2+ \frac{1}{2 \varepsilon}  \bigg) \sum_{i=0}^{N_{\max}} \ell_{i+ \frac{1}{2}} (U_{i+1}^0 - U_{i}^0)^2 + \bigg[ \Delta t^2 + \frac{\delta^2 \varepsilon}{ C_2^4}\bigg] \sum_{i=0}^{N_{\max}} \ell_{i+ \frac{1}{2}} (\Psi_{i+1} - \Psi_i)^2.
\end{array}
\end{equation}
Setting $\varepsilon = 2$, we deduce that 
\begin{equation*}
||\partial^1 U^1 _{\mathcal{T}} || ^2 + || U_{\mathcal{T}}^1 ||^2_{1, \mathcal{T}} \leq 12 || \Psi_{\mathcal{T}}||^2 + 27  || \Phi _{\mathcal{T}} ||^2_{1, \mathcal{T}} + 12 \bigg[ \Delta t^2 + \frac{2 \delta^2 }{ C_2^4}\bigg]   || \Psi _{\mathcal{T}} ||^2_{1, \mathcal{T}}
\end{equation*}

We conclude as done in the previous lemma : using estimates \eqref{L2estimate} and \eqref{H1estimate}, there exists $K_{2}$ which depends only  
on the initial data of the problem and on $\tau_0$ 
\begin{equation} \label{U in H1}
||\partial^1 U^1 _{\mathcal{T}} || ^2 + || U_{\mathcal{T}}^1 ||^2_{1, \mathcal{T}} \leq K_{2} \ .
\end{equation}
Therefore, the proof of Lemma \ref{Lem2} is thus complete.
\end{proof}
\begin{proof}[\textbf{Proof of Theorem \ref{Stability theorem for first term}}]  Combining estimates \eqref{U in L2} and \eqref{U in H1} concludes the boundedness of the first time step $U^1_{\mathcal{T}}$ in spaces $\mathbb{H}^1_L$ and $\mathbb{L}^2$ as well as the boundedness of its first time derivation approximation  in $\mathbb{L}^2$, that is there exists a constant $K$ independent of the time and space
 discretization parameters such that the following estimation is fulfilled : 
\begin{equation*}
|| U_{\mathcal{T}}^1|| +  || \partial^1 U_{\mathcal{T}}^1|| + || U_{\mathcal{T}}^1||_{1, \mathcal{T}} \leq K
\end{equation*}
\end{proof}
Now, we will move to the stability estimates of the discretized numerical scheme \eqref{DISCRETE-IMPLICIT} for $n\geq 1$. To this end, we aim to prove the following theorem: 
\begin{Theorem} \label{Theorem Stability n>1}
Assume that Hypothesis \eqref{H'} and Condition \eqref{TA} hold. Then, the numerical solution $U_{\mathcal{T},\Delta t}$ of the numerical scheme \eqref{DISCRETE-IMPLICIT} is bounded in $L^{\infty}([0,T]; (0,L)) \cap L^{\infty}([0,T]; \mathbb{H}^1_L)$ and its time derivative approximation is bounded in $L^{\infty}([0,T]; \mathbb{L}^2)$ that is there exists a constant $K$ independent of the time and space discretization parameters such that the following estimation is fulfilled :
\begin{equation} \label{stability-estimate}
\forall n= 1, \cdots, \mathcal{N} -1 \,,\, || \partial^1 U_{\mathcal{T}}^n ||^2 + || U_{\mathcal{T}}^n||^2 + || U_{\mathcal{T}}^n||^2_{1, \mathcal{T}} \leq K 
\end{equation}
\end{Theorem}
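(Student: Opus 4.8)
The plan is to propagate the control of the first time step (Theorem \ref{Stability theorem for first term}) to all later steps using the monotonicity of the discrete energy \eqref{discrete-energy-implicit} proved in Theorem \ref{theorem implicit dissipation}. Writing $\mathcal{E}^n = E_k(U^n) + E_p(U^n)$ and noting that both pieces are sums of non-negative terms (since $h_i>0$ and $\ell_{i+\frac12}>0$), I would record the identifications
\[
\| \partial^1 U_{\mathcal{T}}^{n+1}\|^2 = 2\,E_k(U^n), \qquad
\| U_{\mathcal{T}}^{n+1}\|_{1,\mathcal{T}}^2 + \| U_{\mathcal{T}}^{n}\|_{1,\mathcal{T}}^2 = 4\,E_p(U^n),
\]
so that each of $\| \partial^1 U_{\mathcal{T}}^{n+1}\|^2$ and $\| U_{\mathcal{T}}^{n}\|_{1,\mathcal{T}}^2$ is bounded by $4\,\mathcal{E}^n$. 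Theorem \ref{theorem implicit dissipation} then yields $\mathcal{E}^n \le \mathcal{E}^{n-1}\le \cdots \le \mathcal{E}^0$, so the whole estimate reduces to bounding $\mathcal{E}^0$ by a constant independent of the mesh size and of $\Delta t$.

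First I would bound $\mathcal{E}^0 = \tfrac12\|\partial^1 U_{\mathcal{T}}^1\|^2 + \tfrac14\|U_{\mathcal{T}}^1\|_{1,\mathcal{T}}^2 + \tfrac14\|U_{\mathcal{T}}^0\|_{1,\mathcal{T}}^2$. The first two terms are exactly the quantities controlled by Theorem \ref{Stability theorem for first term}, while for the last term $\|U_{\mathcal{T}}^0\|_{1,\mathcal{T}} = \|\Phi_{\mathcal{T}}\|_{1,\mathcal{T}} \le K\|\Phi_x\|_{L^2(0,L)}$ by \eqref{H1estimate}, which is finite under Hypothesis \eqref{H'}. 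Hence $\mathcal{E}^0 \le K$ uniformly in the discretization, and combined with $\mathcal{E}^n\le\mathcal{E}^0$ this immediately delivers the bounds on $\|\partial^1 U_{\mathcal{T}}^n\|$ and on $\|U_{\mathcal{T}}^n\|_{1,\mathcal{T}}$ for every $n=1,\dots,\mathcal{N}-1$.

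The hard part will be the $\mathbb{L}^2$ bound on $U_{\mathcal{T}}^n$ itself, since the discrete energy is \emph{blind} to the $\mathbb{L}^2$ norm of the solution: it controls only the discrete $\mathbb{H}^1$ semi-norm and the discrete velocity. To recover it I would use the telescoping identity $U_i^n - U_i^0 = \Delta t \sum_{k=1}^n (\partial^1 U^k)_i$ together with the triangle inequality for the discrete $\mathbb{L}^2$ norm,
\[
\| U_{\mathcal{T}}^n\| \le \|U_{\mathcal{T}}^0\| + \Delta t \sum_{k=1}^{n} \|\partial^1 U_{\mathcal{T}}^k\| ,
\]
and then bound each velocity term by $\|\partial^1 U_{\mathcal{T}}^k\|^2 = 2\,E_k(U^{k-1}) \le 2\,\mathcal{E}^0$. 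Since $n\,\Delta t \le \mathcal{N}\,\Delta t = T$, the sum is at most $T\sqrt{2\,\mathcal{E}^0}$, and $\|U_{\mathcal{T}}^0\| = \|\Phi_{\mathcal{T}}\| \le \|\Phi\|_{L^2(0,L)}$ by \eqref{L2estimate} is finite under \eqref{H'}. It is exactly this step that forces the finite time horizon into the argument, so the resulting constant $K$ depends on $T$ while remaining independent of the mesh size and of $\Delta t$. Collecting the three estimates gives \eqref{stability-estimate}.
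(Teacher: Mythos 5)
Your proof is correct, and for the velocity and discrete $\mathbb{H}^1$ bounds it runs on the same engine as the paper's: the paper re-multiplies \eqref{DISCRETE-IMPLICIT} by $U_i^{n+1}-U_i^{n-1}$ and re-derives the energy computation of Subsection \ref{S(IE)} to get its inequality \eqref{RRR(n>1)}, which is exactly the monotonicity $\mathcal{E}^n\leq\mathcal{E}^{n-1}$ with the non-positive dissipation term dropped, then iterates down to the first time step; you achieve the same thing more transparently by citing Theorem \ref{theorem implicit dissipation} directly together with the (correct) identifications $2E_k(U^n)=\Vert\partial^1 U^{n+1}_{\mathcal{T}}\Vert^2$ and $4E_p(U^n)=\Vert U^{n+1}_{\mathcal{T}}\Vert^2_{1,\mathcal{T}}+\Vert U^{n}_{\mathcal{T}}\Vert^2_{1,\mathcal{T}}$, and your bound on $\mathcal{E}^0$ via Theorem \ref{Stability theorem for first term} and \eqref{H1estimate} matches the paper's use of \eqref{U in L2} and \eqref{U in H1}. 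The genuine divergence is the $\mathbb{L}^2$ bound on $U^n_{\mathcal{T}}$ itself: the paper invokes the discrete Poincar\'e inequality of \cite[Lemma 9.1]{ERG2000}, $\Vert U^n_{\mathcal{T}}\Vert^2\leq c_0\Vert U^n_{\mathcal{T}}\Vert^2_{1,\mathcal{T}}$, legitimate here since $U^n_0=U^n_{N_{\max}+1}=0$ (strictly, one should also note that $\ell_{i+\frac{1}{2}}h_{i+\frac{1}{2}}\geq\min(C_1^2,C_2^2,C_3^2)>0$ by \eqref{C1-C2}--\eqref{C2-C3}, so the weighted semi-norm \eqref{H1norm} dominates the standard discrete $H^1_0$ semi-norm to which the cited lemma applies), whereas you recover it by the telescoping identity $U^n_i=U^0_i+\Delta t\sum_{k=1}^{n}\partial^1 U^k_i$ and the uniform velocity bound $\Vert\partial^1 U^k_{\mathcal{T}}\Vert\leq\sqrt{2\mathcal{E}^0}$. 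Both routes are valid: the Poincar\'e argument yields a constant depending on the domain length $L$ but uniform in $T$ and reuses the $\mathbb{H}^1$ bound already in hand, while your telescoping argument is self-contained (no external lemma needed) at the price of a constant growing linearly in $T$ --- harmless for \eqref{stability-estimate}, since $T$ is fixed data rather than a discretization parameter, so the theorem holds as stated either way.
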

\begin{proof}
For the proof of Theorem \ref{Theorem Stability n>1}, we multiply \eqref{DISCRETE-IMPLICIT} by $U_{i}^{n+1} - U_{i}^{n-1}$ for $i = 1 , \cdots, N_{\max}$. Applying exactly the same steps in calculating the discrete energy in Subsection \ref{S(IE)}; {\it{i.e.}} the steps before writing the terms as discrete derivatives with respect to time, we can use the results obtained to get directly
\begin{equation*}
\begin{array}{lll}
\sum_{i=1}^{N_{\max}} h_i \bigg(\frac{U_{i}^{n+1}-U_{i}^n}{\Delta t}\bigg)^2  + \frac{1}{2}\sum_{i=0}^{N_{\max}}\ell_{i+{\frac{1}{2}}} (U_{i+1}^{n+1} - U_i^{n+1})^2  

\\
 = \sum_{i=1}^{N_{\max}} h_i \bigg(\frac{U_{i}^{n}-U_{i}^{n-1}}{\Delta t}\bigg)^{2} + \frac{1}{2}\sum_{i=0}^{N_{\max}}\ell_{i+{\frac{1}{2}}} (U_{i+1}^{n-1} - U_i^{n-1})^2 - \frac{\delta \; \textcolor{black}{h} }{2 \Delta t} \sum_{i=N_{\alpha}+1}^{N_{\alpha}+N -1}\bigg( \frac{U^{n+1}_{i +1 } - U^{n+1}_{i }}{h} - \frac{U^{n-1}_{i +1 } - U^{n-1}_{i }}{h} \bigg)^2.
\end{array}
\end{equation*}
Using the fact that $- \frac{\delta \; \textcolor{black}{h} }{2 \Delta t} \sum_{i=N_{\alpha}+1}^{N_{\alpha}+N -1}\bigg( \frac{U^{n+1}_{i +1 } - U^{n+1}_{i }}{h} - \frac{U^{n-1}_{i +1 } - U^{n-1}_{i }}{h} \bigg)^2 \leq 0$,
we obtain directly
\begin{equation}\label{RRR(n>1)}
\begin{array}{lll}

\sum_{i=1}^{N_{\max}} h_i \bigg(\frac{U_{i}^{n+1}-U_{i}^n}{\Delta t}\bigg)^2  + \frac{1}{2}\sum_{i=0}^{N_{\max}}\ell_{i+{\frac{1}{2}}} (U_{i+1}^{n+1} - U_i^{n+1})^2

\leq \sum_{i=1}^{N_{\max}} h_i \bigg(\frac{U_{i}^{n}-U_{i}^{n-1}}{\Delta t}\bigg)^{2} + \frac{1}{2}\sum_{i=0}^{N_{\max}}\ell_{i+{\frac{1}{2}}} (U_{i+1}^{n-1} - U_i^{n-1})^2.
\end{array}
\end{equation}
For $n=1$, we get from \eqref{RRR(n>1)}
\begin{equation}\label{bdd  in time and space for n=1}
\begin{array}{lll}
2 || \partial^1 U_{\mathcal{T}}^2 ||^2 +  || U_{\mathcal{T}}^2||^2_{1, \mathcal{T}} \leq 2  || \partial^1 U_{\mathcal{T}}^1 ||^2 +  || \Phi_{\mathcal{T}}||^2_{1, \mathcal{T}} .
\end{array}
\end{equation}
From the Discrete Poincare inequality \cite[Lemma 9.1]{ERG2000}, we have $|| U_{\mathcal{T}}^2||^2 \leq c_0  || U_{\mathcal{T}}^2||^2_{1, \mathcal{T}}$, where
the constant $c_0$ is independent of the discretization parameters.. Thus, we obtain from \eqref{bdd  in time and space for n=1}
\begin{equation*}
\begin{array}{lll}
|| U_{\mathcal{T}}^2||^2 \leq 2 c_0  || \partial^1 U_{\mathcal{T}}^1 ||^2 +   c_0  || \Phi_{\mathcal{T}}||^2_{1, \mathcal{T}} .
\end{array}
\end{equation*}
Again, we argue as in the end of the proof of the two preceding lemmas : using estimates \eqref{L2estimate} and \eqref{H1estimate}, there exists $K$ which depends only  on the initial data of the problem and on $\tau_0$  such that
\begin{equation*}
|| U_{\mathcal{T}}^2||^2 + || \partial^1 U_{\mathcal{T}}^2 ||^2 +  || U_{\mathcal{T}}^2||^2_{1, \mathcal{T}} \leq K 
\end{equation*}
We argue similarly for $n = 2 , \cdots, \mathcal{N}-1$, to finally obtain: 
\begin{equation*}
 || \partial^1 U_{\mathcal{T}}^n ||^2 + || U_{\mathcal{T}}^n||^2 + || U_{\mathcal{T}}^n||^2_{1, \mathcal{T}} \leq K; \quad n= 1, \cdots, \mathcal{N} -1
\end{equation*}
The proof of Theorem \ref{Theorem Stability n>1} is thus complete.
\end{proof}

%%%%%%%%%%%%%%%%%%%%%%%%%%%%%%%%%%%%%%%%%%%%%%%%%%%%%%%%%%%%%%%%%%%%%%%%%%%%%%%%%%5

\section{Convergence} \label{Convergence section}
\noindent In this section, we want to prove the convergence of the numerical solution of Problem \eqref{DISCRETE-IMPLICIT}.
The strategy in this section is based on the stability estimate \eqref{stability-estimate} obtained in Section \ref{Stability section}, and passing through the limit, we obtain the convergence to the weak solution. 

To this end, we will first write the continuous weak formulation problem \eqref{continuous weak variational problem} in its discrete form.

As we will use a density argument, we set function $P \in C^{\infty} ([0,T]; C^{\infty}_0 (0,L))$ such that $P(x,T)=0$ and let size($\mathcal{T}$) be small enough. Define $P_{i}^n = P(x_i,t_n)$ for $i = 0 , \cdots, N_{\max}$. 
We recall that the reconstructed solution is defined by  $U_{\mathcal{T},\Delta t}$ constant over the control volumes $K_i$ at time $t \in [t_n,t_{n+1})$ as
defined by equation \eqref{UTN}. 

Now, we are ready to get the discrete variational problem, we will pass through several steps. \\[0.1 in]
\textbf{Step 1.} Multiply \eqref{DISCRETE-IMPLICIT-n=0} by $\frac{\Delta t}{2}  P_{i}^0$ and sum over $i$ to obtain
{\small{\begin{equation*}
\begin{array}{lll}
 \Delta t \sum_{i=1}^{N_{\max}} h_i \frac{U_i^1 - U_i^0}{\Delta t^2} P_{i}^0 - \sum_{i=1}^{N_{\max}} h_i \Psi_i P_{i}^0 - \frac{\Delta t}{2} \sum_{i=1}^{N_{\max}} \ell_{i+ \frac{1}{2}} (U_{i+1}^1 - U_i^1)P_{i}^0 + \frac{\Delta t}{2} \sum_{i=1}^{N_{\max}} \ell_{i- \frac{1}{2}} (U_{i}^1 - U_{i-1}^1) P_{i}^0 \\
 + \frac{\Delta t^2}{2} \sum_{i=1}^{N_{\max}}  \ell_{i+ \frac{1}{2}} (\Psi_{i+1} - \Psi_i)P_{i}^0 -  \frac{\Delta t^2}{2} \sum_{i=1}^{N_{\max}}\ell_{i- \frac{1}{2}} (\Psi_{i} - \Psi_{i-1})P_{i}^0 \\
=   \frac{\delta \Delta t}{2} \frac{\Psi_{N_{\alpha}+2} - \Psi_{N_{\alpha}+1}}{h} P_{N_{\alpha}+1}^0  +  \frac{\delta \Delta t}{2} \sum_{i= N_{\alpha}+2}^{N_{\alpha}+N-1} \frac{\Psi_{i+1} - \Psi_i} {h} P_{i}^0 -\frac{\delta \Delta t}{2} \sum_{i= N_{\alpha}+2}^{N_{\alpha}+N-1} \frac{\Psi_{i} - \Psi_{i-1}}{h} P_{i}^0  - \frac{\delta \Delta t}{2} \frac{\Psi_{N_{\alpha}+N} - \Psi_{N_{\alpha}+N-1}}{h} P_{N_{\alpha}+N}^0.
\end{array}
\end{equation*}
}}
By translation of index $i$ for the fourth and sixth terms of the left hand-side and the third term of the right hand-side of the above equation, and using the fact that $P_0 = P_{N_{\max}+1}=0$, a direct calculation gives
{\small{\begin{equation*}
\begin{array}{lll}
  \Delta t \sum_{i=1}^{N_{\max}} h_i \frac{U_i^1 - U_i^0}{\Delta t^2} P_{i}^0 - \sum_{i=1}^{N_{\max}} h_i \Psi_i P_{i}^0 - \frac{\Delta t}{2} \sum_{i=1}^{N_{\max}} \ell_{i+ \frac{1}{2}} (U_{i+1}^1 - U_i^1)P_{i}^0 + \frac{\Delta t}{2} \sum_{i=0}^{N_{\max}-1} \ell_{i+ \frac{1}{2}} (U_{i+1}^1 - U_{i}^1) P_{i+1}^0 \\
 + \frac{\Delta t^2}{2} \sum_{i=1}^{N_{\max}}  \ell_{i+ \frac{1}{2}} (\Psi_{i+1} - \Psi_i)P_{i}^0 -  \frac{\Delta t^2}{2} \sum_{i=0}^{N_{\max}-1}\ell_{i+ \frac{1}{2}} (\Psi_{i+1} - \Psi_{i})P_{i+1}^0 \\
=   \frac{\delta \Delta t}{2} \frac{\Psi_{N_{\alpha}+2} - \Psi_{N_{\alpha}+1}}{h} P_{N_{\alpha}+1}^0  +  \frac{\delta \Delta t}{2} \sum_{i= N_{\alpha}+2}^{N_{\alpha}+N-1} \frac{\Psi_{i+1} - \Psi_i} {h} P_{i}^0 -\frac{\delta \Delta t}{2} \sum_{i= N_{\alpha}+1}^{N_{\alpha}+N-2} \frac{\Psi_{i+1} - \Psi_{i}}{h} P_{i+1}^0  - \frac{\delta \Delta t}{2} \frac{\Psi_{N_{\alpha}+N} - \Psi_{N_{\alpha}+N-1}}{h} P_{N_{\alpha}+N}^0.
\end{array}
\end{equation*} }}
Consequently, 
{\small{\begin{equation*}
\begin{array}{lll}
  \Delta t \sum_{i=1}^{N_{\max}} h_i \frac{U_i^1 - U_i^0}{\Delta t^2} P_{i}^0 - \sum_{i=1}^{N_{\max}} h_i \Psi_i P_{i}^0  + \frac{\Delta t}{2} \sum_{i=0}^{N_{\max}} \ell_{i+ \frac{1}{2}} (U_{i+1}^1 - U_{i}^1)( P_{i+1}^0 -P_{i}^0) -  \frac{\Delta t^2}{2} \sum_{i=0}^{N_{\max}}\ell_{i+ \frac{1}{2}} (\Psi_{i+1} - \Psi_{i})(P_{i+1}^0 -P_{i}^0) \\
=  \frac{\delta \Delta t}{2} \sum_{i= N_{\alpha}+1}^{N_{\alpha}+N-1} \frac{(\Psi_{i+1} - \Psi_i)(P_{i}^0 - P_{i+1}^0)} {h_i} .
\end{array}
\end{equation*} }}
Now, we modify the above equation as follows:
{\small{\begin{equation*}
\begin{array}{lll}
  \Delta t \sum_{i=1}^{N_{\max}} h_i \frac{U_i^1 - U_i^0}{\Delta t^2} P_{i}^0 - \sum_{i=1}^{N_{\max}} h_i \Psi_i P_{i}^0  + \frac{\Delta t}{2} \sum_{i=0}^{N_{\max}} \ell_{i+ \frac{1}{2}} (U_{i+1}^1 - U_{i}^1)( P_{i+1}^0 -P_{i}^0 + P_{i+1}^1 - P_{i+1}^1 - P_{i}^1 + P_{i}^1)\\
   -  \frac{\Delta t^2}{2} \sum_{i=0}^{N_{\max}}\ell_{i+ \frac{1}{2}} (\Psi_{i+1} - \Psi_{i})(P_{i+1}^0 -P_{i}^0)
=  \frac{\delta \Delta t}{2} \sum_{i= N_{\alpha}+1}^{N_{\alpha}+N-1} \frac{(\Psi_{i+1} - \Psi_i)(P_{i}^0 - P_{i+1}^0)} {h_i},
\end{array}
\end{equation*} }}
to get finally after small rearrangement 
{\small{\begin{equation} \label{eq-implicit-total-n=0}
\begin{array}{lll}
  \Delta t \sum_{i=1}^{N_{\max}} h_i \frac{U_i^1 - U_i^0}{\Delta t^2} P_{i}^0 - \sum_{i=1}^{N_{\max}} h_i \Psi_i P_{i}^0  + \frac{\Delta t}{2} \sum_{i=0}^{N_{\max}} \ell_{i+ \frac{1}{2}} (U_{i+1}^1 - U_{i}^1)( P_{i+1}^1  - P_{i}^1 )  + \frac{\delta \Delta t}{2} \sum_{i= N_{\alpha}+1}^{N_{\alpha}+N-1} \frac{(\Psi_{i+1} - \Psi_i)(P_{i+1}^0 - P_{i}^0)} {h} \\
  =   \frac{\Delta t^2}{2} \sum_{i=0}^{N_{\max}}\ell_{i+ \frac{1}{2}} (\Psi_{i+1} - \Psi_{i})(P_{i+1}^0 -P_{i}^0) 
  + \frac{\Delta t}{2} \sum_{i=0}^{N_{\max}} \ell_{i+ \frac{1}{2}} (U_{i+1}^1 - U_{i}^1)(P_{i+1}^1 - P_{i}^1 - P_{i+1}^0 + P_{i}^0)

\end{array}
\end{equation} }}
\textbf{Step 2.} Now, we continue for $n\geq 1$ and we multiply \eqref{DISCRETE-IMPLICIT} by $\Delta t P_{i}^n$.  Using the same techniques of translation of index $i$, we obtain as a result
{\small{\begin{equation*}
\begin{array}{lll}
\Delta t \sum_{i=1}^{N_{\max}} h_i \frac{U_i^{n+1} - U_i^n}{\Delta t^2} P_{i}^n -    \Delta t \sum_{i=1}^{N_{\max}} \frac{U_i^n - U_i^{n-1}}{\Delta t^2} P_{i}^n + \frac{\Delta t}{2}  \sum_{i=0}^{N_{\max}} \ell_{i+\frac{1}{2}} (U_{i+1}^{n+1} - U_i^{n+1})(P_{i+1}^ n- P_{i}^n) \\
+ \frac{\Delta t}{2}  \sum_{i=0}^{N_{\max}} \ell_{i+\frac{1}{2}} (U_{i+1}^{n-1} - U_i^{n-1})(P_{i+1}^ n- P_{i}^n)
+ \frac{\delta }{2} \sum_{i=N_{\alpha}+1}^{N_{\alpha}+N-1}\frac{(U_{i+1}^{n+1} - U_i^{n+1})(P_{i+1}^n - P_{i}^n)}{h} \\
 -  \frac{\delta }{2} \sum_{i=N_{\alpha}+1}^{N_{\alpha}+N-1}\frac{ (U_{i+1}^{n-1} - U_i^{n-1})(P_{i+1}^n - P_{i}^n)}{h} = 0.
\end{array}
\end{equation*}}}
\textbf{Step 3.} Summing the above equation for $n = 1, \cdots, \mathcal{N}-1$, we obtain
{\small{\begin{equation} \label{implicit,n>1}
\begin{array}{lll}
\sum_{n=1}^{\mathcal{N}-1}\Delta t \sum_{i=1}^{N_{\max}} h_i \frac{U_i^{n+1} - U_i^n}{\Delta t^2} P_{i}^n -   \sum_{n=1}^{\mathcal{N}-1} \Delta t \sum_{i=1}^{N_{\max}} \frac{U_i^n - U_i^{n-1}}{\Delta t^2} P_{i}^n + \frac{1}{2} \sum_{n=1}^{\mathcal{N}-1} \Delta t \sum_{i=0}^{N_{\max}} \ell_{i+\frac{1}{2}} (U_{i+1}^{n+1} - U_i^{n+1})(P_{i+1}^ n- P_{i}^n) \\
+ \frac{1}{2} \sum_{n=1}^{\mathcal{N}-1} \Delta t  \sum_{i=0}^{N_{\max}} \ell_{i+\frac{1}{2}} (U_{i+1}^{n-1} - U_i^{n-1})(P_{i+1}^ n- P_{i}^n)
+ \delta \sum_{n=1}^{\mathcal{N}-1} \Delta t \sum_{i=N_{\alpha}+1}^{N_{\alpha}+N-1} \frac{(U_{i+1}^{n+1} - U_i^{n+1})(P_{i+1}^n - P_{i}^n)}{2 h\Delta t} \\
 -  \delta \sum_{n=1}^{\mathcal{N}-1} \Delta t \sum_{i=N_{\alpha}+1}^{N_{\alpha}+N-1} \frac{(U_{i+1}^{n-1} - U_i^{n-1})(P_{i+1}^n - P_{i}^n)}{2 h\Delta t} = 0.
\end{array}
\end{equation}}}
\textbf{Working on the first two terms of Equation \eqref{implicit,n>1}}: Using the fact that $P_i^\mathcal{N} = 0$, we translate the index $n$ of the first term to get 
\begin{equation} \label{M1}
\begin{array}{lll}
 S_{1} = \sum_{n=1}^{\mathcal{N}-1}\Delta t \sum_{i=1}^{N_{\max}} h_i \frac{U_i^{n+1} - U_i^n}{\Delta t^2} P_{i}^n -   \sum_{n=1}^{\mathcal{N}-1} \Delta t \sum_{i=1}^{N_{\max}} \frac{U_i^n - U_i^{n-1}}{\Delta t^2} P_{i}^n
  \\
=  \sum_{n=2}^{\mathcal{N}}  \Delta t \sum_{i=1}^{N_{\max}} h_i \frac{U_i^{n} - U_i^{n-1}}{\Delta t^2} P_{i}^{n-1} -   \sum_{n=2}^{\mathcal{N}} \Delta t \sum_{i=1}^{N_{\max}}  h_i  \frac{U_i^n - U_i^{n-1}}{\Delta t^2} P_{i}^n -  \Delta t \sum_{i=1}^{N_{\max}} h_{i} \frac{U_i^1 - U_i^{0}}{\Delta t^2} P_{i}^1.
\end{array}
\end{equation}
\textbf{Working on the second two terms of Equation \eqref{implicit,n>1}}: Using the fact that $P_i^\mathcal{N} = 0$, we do the following modification
%{\small{
\begin{equation*} 
\begin{array}{lll}
S_{2} &=&\frac{1}{2} \sum_{n=1}^{\mathcal{N}-1} \Delta t \sum_{i=0}^{N_{\max}} \ell_{i+\frac{1}{2}} (U_{i+1}^{n+1} - U_i^{n+1})(P_{i+1}^ n- P_{i}^n)
+ \frac{1}{2} \sum_{n=1}^{\mathcal{N}-1} \Delta t  \sum_{i=0}^{N_{\max}} \ell_{i+\frac{1}{2}} (U_{i+1}^{n-1} - U_i^{n-1})(P_{i+1}^ n- P_{i}^n)\\
&=& \frac{1}{2} \sum_{n=1}^{\mathcal{N}-1} \Delta t \sum_{i=0}^{N_{\max}} \ell_{i+\frac{1}{2}} (U_{i+1}^{n+1} - U_i^{n+1})(P_{i+1}^ n- P_{i}^n + P_{i+1}^{n+1} - P_{i+1}^{n+1} +P_{i}^{n+1} - P_{i}^{n+1} ) \\
&&+ \frac{1}{2} \sum_{n=1}^{\mathcal{N}-1} \Delta t  \sum_{i=0}^{N_{\max}} \ell_{i+\frac{1}{2}} (U_{i+1}^{n-1} - U_i^{n-1})(P_{i+1}^ n- P_{i}^n + P_{i+1}^{n-1} - P_{i+1}^{n-1} +P_{i}^{n-1} - P_{i}^{n-1} )\\
&=& \frac{1}{2} \sum_{n=1}^{\mathcal{N}-1} \Delta t \sum_{i=0}^{N_{\max}} \ell_{i+\frac{1}{2}} (U_{i+1}^{n+1} - U_i^{n+1})( P_{i+1}^{n+1} - P_{i}^{n+1} )\\
&&+ \frac{1}{2} \sum_{n=1}^{\mathcal{N}-1} \Delta t  \sum_{i=0}^{N_{\max}} \ell_{i+\frac{1}{2}} (U_{i+1}^{n-1} - U_i^{n-1})(P_{i+1}^{n-1} -  P_{i}^{n-1} ) \\
&&+ \frac{1}{2} \sum_{n=1}^{\mathcal{N}-1} \Delta t \sum_{i=0}^{N_{\max}} \ell_{i+\frac{1}{2}} (U_{i+1}^{n+1} - U_i^{n+1})(P_{i+1}^ n- P_{i}^n  - P_{i+1}^{n+1} +P_{i}^{n+1} ) \\
&&+ \frac{1}{2} \sum_{n=1}^{\mathcal{N}-1} \Delta t  \sum_{i=0}^{N_{\max}} \ell_{i+\frac{1}{2}} (U_{i+1}^{n-1} - U_i^{n-1})(P_{i+1}^ n- P_{i}^n  - P_{i+1}^{n-1} +P_{i}^{n-1}).
\end{array}
\end{equation*}
%}}
By translation of index $n$ for the first two terms of the right hand-side of the above equation, we obtain
{\small{\begin{equation} \label{M2}
\begin{array}{lll}
S_{2} =& \frac{1}{2} \sum_{n=2}^{\mathcal{N}} \Delta t \sum_{i=0}^{N_{\max}} \ell_{i+\frac{1}{2}} (U_{i+1}^{n} - U_i^{n})( P_{i+1}^{n} - P_{i}^{n} )\\
&+ \frac{1}{2} \sum_{n=0}^{\mathcal{N}-2} \Delta t  \sum_{i=0}^{N_{\max}} \ell_{i+\frac{1}{2}} (U_{i+1}^{n} - U_i^{n})(P_{i+1}^{n} -  P_{i}^{n} ) \\
&+ \frac{1}{2} \sum_{n=1}^{\mathcal{N}-1} \Delta t \sum_{i=0}^{N_{\max}} \ell_{i+\frac{1}{2}} (U_{i+1}^{n+1} - U_i^{n+1})(P_{i+1}^ n- P_{i}^n  - P_{i+1}^{n+1} +P_{i}^{n+1} ) \\
&+ \frac{1}{2} \sum_{n=1}^{\mathcal{N}-1} \Delta t  \sum_{i=0}^{N_{\max}} \ell_{i+\frac{1}{2}} (U_{i+1}^{n-1} - U_i^{n-1})(P_{i+1}^ n- P_{i}^n  - P_{i+1}^{n-1} +P_{i}^{n-1})\\
S_{2}= &\sum_{n=2}^{\mathcal{N}} \Delta t \sum_{i=0}^{N_{\max}} \ell_{i+\frac{1}{2}} (U_{i+1}^{n} - U_i^{n})( P_{i+1}^{n} - P_{i}^{n} )  \\
&+ \frac{1}{2} \sum_{n=0}^{1} \Delta t  \sum_{i=0}^{N_{\max}} \ell_{i+\frac{1}{2}} (U_{i+1}^{n} - U_i^{n})(P_{i+1}^{n} -  P_{i}^{n} ) \\
&- \frac{ \Delta t}{2} \sum_{i=0}^{N_{\max}}  \ell_{i+\frac{1}{2}} (U_{i+1}^{\mathcal{N}-1} - U_i^{\mathcal{N}-1})(P_{i+1}^{\mathcal{N}-1} -  P_{i}^{\mathcal{N}-1}) \\
&+ \frac{1}{2} \sum_{n=1}^{\mathcal{N}-1} \Delta t \sum_{i=0}^{N_{\max}} \ell_{i+\frac{1}{2}} (U_{i+1}^{n+1} - U_i^{n+1})(P_{i+1}^ n- P_{i}^n  - P_{i+1}^{n+1} +P_{i}^{n+1} ) \\
&+ \frac{1}{2} \sum_{n=1}^{\mathcal{N}-1} \Delta t  \sum_{i=0}^{N_{\max}} \ell_{i+\frac{1}{2}} (U_{i+1}^{n-1} - U_i^{n-1})(P_{i+1}^ n- P_{i}^n  - P_{i+1}^{n-1} +P_{i}^{n-1}).
\end{array}
\end{equation}}}
Substituting \eqref{M1} and \eqref{M2} in \eqref{implicit,n>1}, we obtain 
\begin{equation}\label{total-implicit-n>1}
\begin{array}{lll}
-  \sum_{n=2}^{\mathcal{N}}  \Delta t \sum_{i=1}^{N_{\max}} h_{i} \frac{(U_i^{n} - U_i^{n-1})(P_{i}^n - P_{i}^{n-1})}{\Delta t^2} -  \Delta t \sum_{i=1}^{N_{\max}} h_{i} \frac{U_i^1 - U_i^{0}}{\Delta t^2} P_{i}^1  \\
+ \sum_{n=2}^{\mathcal{N}} \Delta t  \sum_{i=0}^{N_{\max}} \ell_{i+\frac{1}{2}} (U_{i+1}^{n} - U_i^{n})(P_{i+1}^ {n}- P_{i}^{n}) + \frac{1}{2} \sum_{n=0}^{1} \Delta t  \sum_{i=0}^{N_{\max}} \ell_{i+\frac{1}{2}} (U_{i+1}^{n} - U_i^{n})(P_{i+1}^ {n}- P_{i}^{n}) \\
+ \delta \sum_{n=1}^{\mathcal{N}-1} \Delta t \sum_{i=N_{\alpha}+1}^{N_{\alpha}+N-1} \frac{(U_{i+1}^{n+1} - U_i^{n+1} - U_{i+1}^{n-1} + U_i^{n-1})(P_{i+1}^n - P_{i}^n)}{2 h \Delta t}  = R_1,
\end{array}
\end{equation}
where 
\begin{equation*}
\begin{array}{lll}
R_1 &=&  \frac{ \Delta t}{2} \sum_{i=0}^{N_{\max}}  \ell_{i+\frac{1}{2}} (U_{i+1}^{\mathcal{N}-1} - U_i^{\mathcal{N}-1})(P_{i+1}^{\mathcal{N}-1} -  P_{i}^{\mathcal{N}-1}) \\
&&+ \frac{1}{2} \sum_{n=1}^{\mathcal{N}-1} \Delta t \sum_{i=0}^{N_{\max}} \ell_{i+\frac{1}{2}} (U_{i+1}^{n+1} - U_i^{n+1})(P_{i+1}^{n+1}  - P_{i}^{n+1} - P_{i+1}^ n + P_{i}^n  ) \\
&&+ \frac{1}{2} \sum_{n=1}^{\mathcal{N}-1} \Delta t  \sum_{i=0}^{N_{\max}} \ell_{i+\frac{1}{2}} (U_{i+1}^{n-1} - U_i^{n-1})( P_{i+1}^{n-1}  - P_{i}^{n-1} - P_{i+1}^ n + P_{i}^n ).
\end{array}
\end{equation*}
Combining \eqref{eq-implicit-total-n=0} and \eqref{total-implicit-n>1} , we obtain
\begin{equation}\label{total-implicit-n>=1}
\begin{array}{lll}
-  \sum_{n=1}^{\mathcal{N}}  \Delta t \sum_{i=1}^{N_{\max}} h_{i} \frac{(U_i^{n} - U_i^{n-1})(P_{i}^n - P_{i}^{n-1})}{\Delta t^2}   -   \sum_{i=1}^{N_{\max}} h_i \Psi_i P_{i}^0 
+  \sum_{n=1}^{\mathcal{N}} \Delta t  \sum_{i=0}^{N_{\max}}\ell_{i+\frac{1}{2}} (U_{i+1}^{n} - U_i^{n})(P_{i+1}^ {n}- P_{i}^{n})
\\
+ \delta \sum_{n=1}^{\mathcal{N}-1} \Delta t \sum_{i=N_{\alpha}+1}^{N_{\alpha}+N-1} \frac{(U_{i+1}^{n+1} - U_i^{n+1} - U_{i+1}^{n-1} + U_i^{n-1})(P_{i+1}^n - P_{i}^n)}{2 h \Delta t} = R,
\end{array}
\end{equation}
where 
\begin{equation*}
\begin{array}{lll}
R &=& \frac{ \Delta t}{2} \sum_{i=0}^{N_{\max}}  \ell_{i+\frac{1}{2}} (U_{i+1}^{\mathcal{N}-1} - U_i^{\mathcal{N}-1})(P_{i+1}^{\mathcal{N}-1} -  P_{i}^{\mathcal{N}-1}) \\
&&+ \frac{1}{2} \sum_{n=0}^{\mathcal{N}-1} \Delta t \sum_{i=0}^{N_{\max}} \ell_{i+\frac{1}{2}} (U_{i+1}^{n+1} - U_i^{n+1})(P_{i+1}^{n+1}  - P_{i}^{n+1} - P_{i+1}^ n + P_{i}^n  ) \\
&&+ \frac{1}{2} \sum_{n=1}^{\mathcal{N}-1} \Delta t  \sum_{i=0}^{N_{\max}} \ell_{i+\frac{1}{2}} (U_{i+1}^{n-1} - U_i^{n-1})( P_{i+1}^{n-1}  - P_{i}^{n-1} - P_{i+1}^ n + P_{i}^n )\\
&&+ \frac{\Delta t}{2} \sum_{i=0}^{N_{\max}} \ell_{i+ \frac{1}{2}} (\Delta t \Psi_{i+1} - \Delta t \Psi_i - U_{i+1}^0 + U_i^0)(P_{i+1}^0 - P_{i}^0) - \frac{\delta \Delta t}{2} \sum_{i=N_{\alpha}+1}^{N_{\alpha}+N-1} \frac{(\Psi_{i+1} - \Psi_i)(P_{i+1}^0 - P_{i}^0)}{h}.

\end{array}
\end{equation*}
Thus, we deduce from \eqref{total-implicit-n>=1}
\begin{equation*}
\begin{array}{l}
-  \int_0^T \int_0^{L} \partial^1 U_{\mathcal{T}, \Delta t} \partial^1 P_{\mathcal{T}, \Delta t} dx dt 
+  \int_0^T \sum_{i=0}^{N_{\max}} \ell_{i+\frac{1}{2}}(U_{i+1, \Delta t} - U_{i, \Delta t})(P_{i+1, \Delta t} - P_{i, \Delta t}) dt
\\
+ \delta \int_0 ^ T \sum_{i=N_{\alpha}+1}^{N_{\alpha}+N-1} \frac{[\partial^{1/2}(U_{i+1, \Delta t} - U_{i, \Delta t})](P_{i+1, \Delta t} - P_{i, \Delta t})}{h} dt  -  \int_0^{L} \Psi_{\mathcal{T}} P_{\mathcal{T}}^0 dx = R.
\end{array}
\end{equation*}
The above equation can be reformulated as the following
\begin{align}
-  \int_0^T \int_0^{\alpha} \partial^1 u_{\mathcal{T}, \Delta t} \partial^1 P_{\mathcal{T}, \Delta t} dx dt 
&-  \int_0^T \int_{\alpha}^{\beta} \partial^1 v_{\mathcal{T}, \Delta t} \partial^1 P_{\mathcal{T}, \Delta t} dx dt
-  \int_0^T \int_{\beta}^L \partial^1 w_{\mathcal{T}, \Delta t} \partial^1 P_{\mathcal{T}, \Delta t} dx dt \notag \\
& +  \int_0^T \sum_{i=0}^{N_{\alpha}-1} \ell_{i+\frac{1}{2}} (u_{i+1, \Delta t} - u_{i, \Delta t})(P_{i+1, \Delta t} - P_{i, \Delta t}) dt \notag\\
& +  \int_0^T \frac{2 C_1^2 C_2^2}{C_1^2 h + C_2^2 h_{\alpha}} (v_{N_{\alpha}+1, \Delta t} - u_{N_{\alpha}, \Delta t})(P_{N_{\alpha}+1, \Delta t} - P_{N_{\alpha}, \Delta t}) dt\notag \\
& +  \int_0^T \sum_{i=N_{\alpha}+1}^{N_{\alpha}+N-1} \ell_{i+\frac{1}{2}} (v_{i+1, \Delta t} - v_{i, \Delta t})(P_{i+1, \Delta t} - P_{i, \Delta t}) dt \notag\\
& +  \int_0^T \frac{2 C_3^2 C_2^2}{C_3^2 h + C_2^2 h_{\beta}} (w_{N_{\alpha}+N+1, \Delta t} - v_{N_{\alpha}+N, \Delta t})(P_{N_{\alpha}+N+1, \Delta t} - P_{N_{\alpha}+N, \Delta t}) dt \label{discrete weak formulation}\\
&+   \int_0^T \sum_{i=N_{\alpha}+N+1}^{N_{\max}} \ell_{i+\frac{1}{2}} (w_{i+1, \Delta t} - w_{i, \Delta t})(P_{i+1, \Delta t} - P_{i, \Delta t}) dt \notag\\
&+ \delta \int_0 ^ T \sum_{i=N_{\alpha}+1}^{N_{\alpha}+N-1}  \frac{[\partial^{1/2}(v_{i+1, \Delta t} - v_{i, \Delta t})](P_{i+1, \Delta t} - P_{i, \Delta t})}{h} dt \notag\\
&-  \int_0^{\alpha} \psi_{\mathcal{T}} P_{\mathcal{T}}^0 dx -  \int_{\alpha}^{\beta} \zeta_{\mathcal{T}} P_{\mathcal{T}}^0 dx -   \int_{\beta}^L \theta_{\mathcal{T}} P_{\mathcal{T}}^0 dx = R \notag
\end{align}
\begin{Theorem} \label{Convergence theorem}
Assume that Hypothesis \eqref{H'} holds. For $m \in \N$, let $\mathcal{T}_m$ be the admissible mesh described in Section \eqref{Section Mesh} and $\Delta t_m$ be the time step satisfying condition \eqref{TA} .  Let $(u_{\mathcal{T}_m, \Delta t_m}, v_{\mathcal{T}_m, \Delta t_m}, w_{\mathcal{T}_m, \Delta t_m})$ be the discrete solution of \eqref{DISCRETE-IMPLICIT-n=0} and \eqref{DISCRETE-IMPLICIT}. Assume that size$(\mathcal{T}_m) \to 0$ and $\Delta t_m \to 0$ as $m \to \infty$. Then there exists a subsequence denoted also by  $(u_{\mathcal{T}_m, \Delta t_m}, v_{\mathcal{T}_m, \Delta t_m}, w_{\mathcal{T}_m, \Delta t_m})$ that converges weakly$^{\star}$ to the weak solution $(u,v,w) \in L^{\infty}([0,T];L^{2}(0,\alpha)) \times L^{\infty}([0,T];L^{2}(\alpha,\beta)) \times L^{\infty}([0,T];L^{2}(\beta,L))$ of problem \eqref{continuous weak variational problem}.
\end{Theorem}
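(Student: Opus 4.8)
The plan is to argue by compactness and passage to the limit, following the classical finite volume convergence strategy of Eymard--Gallou\"et--Herbin \cite{ERG2000}. The uniform stability estimate \eqref{stability-estimate} of Theorem \ref{Theorem Stability n>1}, together with the first-step bound \eqref{Initial-Estimate} of Theorem \ref{Stability theorem for first term}, provides all the a priori control that is needed: the reconstructed solution $U_{\mathcal{T}_m,\Delta t_m}$ is bounded in $L^{\infty}([0,T];\mathbb{L}^2)$, its discrete time derivative $\partial^1 U_{\mathcal{T}_m,\Delta t_m}$ is bounded in $L^{\infty}([0,T];\mathbb{L}^2)$, and the discrete $\mathbb{H}^1$ semi-norm is bounded uniformly in $t$. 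I would extract weakly$^{\star}$ convergent subsequences from these bounded families, pass to the limit in the discrete weak formulation \eqref{discrete weak formulation}, recover the continuous weak formulation \eqref{continuous weak variational problem}, and finally invoke the uniqueness of the weak solution established in \cite{GhNAWE} to identify the limit.

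\textbf{Compactness.} First I would apply the Banach--Alaoglu theorem to extract a subsequence (not relabelled) such that $U_{\mathcal{T}_m,\Delta t_m} \rightharpoonup (u,v,w)$ and $\partial^1 U_{\mathcal{T}_m,\Delta t_m} \rightharpoonup \Theta$ weakly$^{\star}$ in $L^{\infty}([0,T];\mathbb{L}^2)$, the limit belonging to $L^{\infty}([0,T];L^2(0,\alpha))\times L^{\infty}([0,T];L^2(\alpha,\beta))\times L^{\infty}([0,T];L^2(\beta,L))$ by weak$^{\star}$ lower semicontinuity of the norm. Next I would introduce the piecewise constant reconstruction of the discrete space gradient, namely the function equal to $(U_{i+1}^n-U_i^n)/h_{i+\frac12}$ on each dual cell $\times\,[t_n,t_{n+1})$; estimate \eqref{H1estimate} makes it bounded in $L^2((0,L)\times(0,T))$, so up to a further subsequence it converges weakly to some $\chi$. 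A standard argument then identifies $\chi$ with the distributional derivative of the weak$^{\star}$ limit, i.e. $\chi=u_x$ on $(0,\alpha)$, $\chi=v_x$ on $(\alpha,\beta)$, $\chi=w_x$ on $(\beta,L)$; this also shows $(u,v,w)\in L^{\infty}([0,T];\mathbb{H}^1_L)$ and, comparing with the time-derivative limit, $\Theta=(u_t,v_t,w_t)$. Finally, the dissipation identity of Theorem \ref{theorem implicit dissipation}, summed over $n$, bounds the centred-in-time quantity $\partial^{1/2}$ of the gradient on $(\alpha,\beta)$ in a discrete $L^2$ norm, yielding a weak limit that I would identify with $v_{xt}$.

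\textbf{Passage to the limit.} Since $P\in C^{\infty}([0,T];C^{\infty}_0(0,L))$, the reconstructions $P_{\mathcal{T}_m,\Delta t_m}$, $\partial^1 P_{\mathcal{T}_m,\Delta t_m}$ and the discrete gradient of $P$ converge strongly to $P$, $P_t$, $P_x$ with $O(\mathrm{size}(\mathcal{T}_m)+\Delta t_m)$ consistency errors. Pairing these strong limits against the weak limits above, I would pass to the limit term by term in \eqref{discrete weak formulation}: the time term tends to $-\int_0^T\!\int u_t P_t$, the bulk gradient sums tend to $C_1^2\int\!\int u_x P_x$, $C_2^2\int\!\int v_x P_x$ and $C_3^2\int\!\int w_x P_x$, the Kelvin--Voigt sum tends to $\delta\int_0^T\!\int_\alpha^\beta v_{xt}P_x$, and the initial-data sums tend to $\int\psi P(\cdot,0)+\int\zeta P(\cdot,0)+\int\theta P(\cdot,0)$. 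The two interface contributions carrying the coefficients $\tfrac{2C_1^2C_2^2}{C_1^2 h+C_2^2 h_\alpha}$ and $\tfrac{2C_2^2C_3^2}{C_3^2 h+C_2^2 h_\beta}$ are single terms: using \eqref{C1-C2}, \eqref{C2-C3} and the uniform $\mathbb{H}^1$ bound to control the solution increment, while the increment $P_{N_\alpha+1}-P_{N_\alpha}$ (resp. $P_{N_\alpha+N+1}-P_{N_\alpha+N}$) of the smooth test function is $O(\mathrm{size}(\mathcal{T}_m))$, they vanish in the limit, consistently with the transmission conditions already being encoded in the flux reconstruction \eqref{conservativity of flux}--\eqref{eq-interface-beta}.

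\textbf{Vanishing of the remainder and conclusion.} The last step is to show $R\to0$. Every summand of $R$ contains a second-order finite difference of the smooth function $P$ --- a cross difference $P_{i+1}^{n\pm1}-P_i^{n\pm1}-P_{i+1}^n+P_i^n$ of size $O(h\,\Delta t)$, or a boundary-in-time slice at $n=\mathcal{N}-1$ weighted by a single factor $\Delta t$ --- multiplied by the uniformly bounded discrete gradients; after summation in $n$ (contributing a factor $O(T/\Delta t)$) these terms are $O(\mathrm{size}(\mathcal{T}_m))+O(\Delta t_m)$ and hence tend to zero. The limiting identity is then exactly \eqref{continuous weak variational problem} for every such $P$, and a density argument in the test space shows $(u,v,w)$ is a weak solution in the sense of Definition \ref{weak-solution}. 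By the uniqueness proved in \cite{GhNAWE}, the limit is the weak solution, which finishes the proof. The main obstacle is the identification step: proving that the weak $L^2$ limit $\chi$ of the reconstructed discrete gradients is genuinely the distributional derivative of the weak$^{\star}$ limit $(u,v,w)$, and likewise that the centred-in-time gradient limit is $v_{xt}$, all while handling the non-smooth speed $C$ and the two interface fluxes; this is precisely where the flux reconstruction \eqref{conservativity of flux}--\eqref{eq-interface-beta} and the semi-norm comparison \eqref{H1estimate} must be used with care.
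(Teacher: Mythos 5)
Your skeleton --- uniform bounds from Theorems \ref{Stability theorem for first term} and \ref{Theorem Stability n>1}, Banach--Alaoglu, passage to the limit in \eqref{discrete weak formulation}, $R \to 0$, then density --- matches the paper's. But at the decisive step your route diverges from the paper's and, as written, contains a genuine gap: you defer precisely the point you yourself call ``the main obstacle''. You never prove that the weak $L^2$ limit $\chi$ of the piecewise-constant gradient reconstruction is the distributional derivative of the weak$^{\star}$ limit, that the limit lies in $\mathbb{H}^1_L$ (in particular that the traces match at $\alpha$ and $\beta$, where the coefficient is discontinuous and $\ell_{N_{\alpha}+\frac{1}{2}}$, $\ell_{N_{\alpha}+N+\frac{1}{2}}$ are harmonic-type averages), that $\Theta=(u_t,v_t,w_t)$, or that the centred-difference gradient limit is $v_{xt}$. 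These identifications are the substance of the convergence proof here, and asserting them as ``standard'' does not discharge them in the presence of the non-smooth speed and the two interface fluxes. The paper's proof is engineered to avoid this identification entirely: in its Step 3 it introduces auxiliary terms $\tilde{T}_1,\dots,\tilde{T}_4$ in which the space derivatives are transferred onto the smooth test function (through $P_{xx}$ and the boundary values $P_x(\alpha)$, $P_x(\beta)$), so that only convergence of $u_{\mathcal{T}_m,\Delta t_m}$, $v_{\mathcal{T}_m,\Delta t_m}$, $w_{\mathcal{T}_m,\Delta t_m}$ themselves in $L^{\infty}([0,T];\mathbb{L}^2)$ (supplied by \cite{EHM}) and the continuity of the trace operator (to control $v_{N_{\alpha}+1}$ and $v_{N_{\alpha}+N}$ and send them to $u(\alpha)=v(\alpha)$ and $v(\beta)$) are required; a discrete summation by parts then yields the exact algebraic identities $\tilde{T}_{11}+\tilde{T}_{21}+\tilde{T}_{31}=T_1+T_2+T_3$ and $\tilde{T}_{12}+\tilde{T}_{22}+\tilde{T}_{32}=0$, and similarly $T_4$ is recovered using $\partial^{1/2} v_{\mathcal{T}_m,\Delta t_m}\rightharpoonup^{\star} v_t$. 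Until you supply the identification lemmas your argument is a plan, not a proof.

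Two further points. First, your claim that the two interface terms vanish in the limit is quantitatively defensible ($\ell_{N_{\alpha}+\frac{1}{2}}=O(1/h)$, jump bounded by $(K/\ell_{N_{\alpha}+\frac{1}{2}})^{1/2}=O(\sqrt{h})$ from the discrete $\mathbb{H}^1$ bound, test increment $O(h)$, so the term is $O(\sqrt{h})$), but it is a different bookkeeping from the paper's, where these terms do not vanish separately but are absorbed, together with $\tilde{T}_{12},\tilde{T}_{22},\tilde{T}_{32}$, into the trace contributions $C_1^2\int_0^T u(\alpha)P_x(\alpha)\,dt$ and its analogues; if you discard them, the full integrals $C_k^2\int_0^T\!\!\int \cdot_x\,P_x$ must be produced by the bulk sums alone, which leans entirely on the unproven gradient identification. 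Second, you never verify item 2 of Definition \ref{weak-solution}, the attainment of the initial displacement $(u(\cdot,0),v(\cdot,0),w(\cdot,0))=(\varphi,\eta,\gamma)$: weak$^{\star}$ convergence in $L^{\infty}([0,T];\mathbb{L}^2)$ carries no information about the value at $t=0$, and the paper closes this by citing \cite{EHM} to obtain a limit in $\mathcal{C}^0([0,T];\mathbb{L}^2)$ with the correct initial datum. Your use of the dissipation identity of Theorem \ref{theorem implicit dissipation}, summed over $n$, to bound the centred time-difference of the gradient on $(\alpha,\beta)$ in discrete $L^2$ is a legitimate observation the paper does not exploit, but it only becomes useful once the identification machinery is actually in place.
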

\begin{proof} \textbf{From now on, for the sake of simplicity, we will denote $\Delta t$ for $\Delta t_m$ and $h$ for $h_m$}.

From Theorems \ref{Stability theorem for first term} and \ref{Theorem Stability n>1}, we obtain that  $(u_{\mathcal{T}_m, \Delta t_m}, v_{\mathcal{T}_m, \Delta t_m}, w_{\mathcal{T}_m, \Delta t_m})$ is bounded in \linebreak
 $L^{\infty}([0,T];L^{2}(0,\alpha))~\times~L^{\infty}([0,T];L^{2}(\alpha,\beta))~\times~L^{\infty}([0,T];L^{2}(\beta,L))$.

Then there exists a subsequence, still denoted by  $(u_{\mathcal{T}_m, \Delta t_m}, v_{\mathcal{T}_m, \Delta t_m}, w_{\mathcal{T}_m, \Delta t_m})$, such that
$$
(u_{\mathcal{T}_m, \Delta t_m}, v_{\mathcal{T}_m, \Delta t_m}, w_{\mathcal{T}_m, \Delta t_m}) \rightharpoonup^{\star} (u,v,w)  \textrm{ in }
 L^{\infty}([0,T];L^{2}(0,\alpha)) \times L^{\infty}([0,T];L^{2}(\alpha,\beta)) \times L^{\infty}([0,T];L^{2}(\beta,L))
$$
as $m \to \infty$. 

It is left to show that $(u,v,w)$ is the weak solution of problem \eqref{continuous weak variational problem}. 

Thanks to \cite{EHM}, there exists $(u,v,w) \in \mathcal{C}^0([0,T]; \mathbb{L}^2)$ with $(u (.,0), v(.,0), w(.,0)) = (\varphi(.), \eta(.), \gamma(.))$ and a subsequence denoted again by  $(u_{\mathcal{T}_m, \Delta t_m}, v_{\mathcal{T}_m, \Delta t_m}, w_{\mathcal{T}_m, \Delta t_m})$ converging to $(u,v,w)$ in $L^{\infty}([0,T]; \mathbb{L}^2)$. Now, the proof of Theorem \ref{Convergence theorem} is divided into steps. \\[0.1in]
\textbf{Step 1.} 
Before we study the convergence of the numerical solution to the weak one, we have to show that $R \to 0$ as $m \to \infty$. The first term of $R$ can be estimated as the following using Holder's inequality
\begin{equation*}
\begin{array}{lll}
\frac{ \Delta t}{2} \sum_{i=0}^{N_{\max}}  \ell_{i+\frac{1}{2}} (U_{i+1}^{\mathcal{N}-1} - U_i^{\mathcal{N}-1})(P_{i+1}^{\mathcal{N}-1} -  P_{i}^{\mathcal{N}-1}) &\leq \frac{\Delta t}{2} \bigg[ \sum_{i=0}^{N_{\max}} \ell_{i+\frac{1}{2}}(U_{i+1}^{\mathcal{N}-1} - U_i^{\mathcal{N}-1})^2   \bigg]^{1/2} \bigg[ \sum_{i=0}^{N_{\max}} \ell_{i+\frac{1}{2}}(P_{i+1}^{\mathcal{N}-1} - P_{i}^{\mathcal{N}-1})^2  \bigg]^{1/2} \\
& =  \frac{ \Delta t}{2} || U^{\mathcal{N}-1}_{\mathcal{T}_m}||_{1, \mathcal{T}} || P^{\mathcal{N}-1}_{\mathcal{T}_m}||_{1, \mathcal{T}} \to 0 \quad \textrm{as} \ m \to \infty \ (\Delta t \to 0).
\end{array}
\end{equation*}
% REVISED VERSION
Considering the second term of $R$, we firstly write:
\begin{equation*}
\begin{array}{ll}
\frac{1}{2} \sum_{n=0}^{\mathcal{N}-1} &\Delta t \sum_{i=0}^{N_{\max}} \ell_{i+\frac{1}{2}} (U_{i+1}^{n+1} - U_i^{n+1})(P_{i+1}^{n+1}  - P_{i}^{n+1} - P_{i+1}^ n + P_{i}^n  )\\
&=\frac{\Delta t}{2} \sum_{n=1}^{\mathcal{N}-1} \Delta t \sum_{i=0}^{N_{\max}} \ell_{i+ \frac{1}{2}} (U_{i+1}^{n+1} - U_i^{n+1})  \frac{(P_{i+1}^{n+1}  - P_{i}^{n+1} - P_{i+1}^ n + P_{i}^n)}{\Delta t} \\
\end{array}
\end{equation*}
Using Holder's inequality leads to:
\begin{equation*}
\begin{array}{ll}
\frac{1}{2} \sum_{n=0}^{\mathcal{N}-1} &\Delta t \sum_{i=0}^{N_{\max}} \ell_{i+\frac{1}{2}} (U_{i+1}^{n+1} - U_i^{n+1})(P_{i+1}^{n+1}  - P_{i}^{n+1} - P_{i+1}^ n + P_{i}^n  )\\
&\leq \frac{\Delta t }{2} \sum_{n=1}^{\mathcal{N}-1} \Delta t \bigg[ \sum_{i=0}^{N_{\max}} \ell_{i+ \frac{1}{2}} (U_{i+1}^{n+1} - U_i^{n+1})^2 \bigg]^{1/2}  \bigg[ \sum_{i=0}^{N_{\max}} \ell_{i+ \frac{1}{2}} \frac{(P_{i+1}^ {n+1}- P_{i}^{n+1} - P_{i+1}^{n}  + P_{i}^{n} )^2}{\Delta t^2} \bigg]^{1/2} \\
&= \frac{\Delta t}{2}  \int_0^T || U_{\mathcal{T}_m, \Delta t_m} ||_{1, \mathcal{T}} || \partial^1 P_{\mathcal{T}_m, \Delta t_m} ||_{1, \mathcal{T}} dt \to 0 \quad \textrm{as} \ m \to \infty \ (\Delta t \to 0).
\end{array}
\end{equation*}
% END hREVISED VERSION
We continue in a similar way
\begin{equation*}
\begin{array}{lll}
 \frac{\Delta t}{2} \sum_{i=0}^{N_{\max}} \ell_{i+ \frac{1}{2}} (\Delta t \Psi_{i+1} - \Delta t \Psi_i - U_{i+1}^0 + U_i^0)(P_{i+1}^0 - P_{i}^0)  \\
= \frac{\Delta t^2}{2} \sum_{i=0}^{N_{\max}} \ell_{i+\frac{1}{2}} (\Psi_{i+1} -  \Psi_i) (P_{i+1}^0 - P_{i}^0)  - \frac{ \Delta t}{2} \sum_{i=0}^{N_{\max}} \ell_{i+\frac{1}{2}} (U_{i+1}^0 -  U_i^0) (P_{i+1}^0 - P_{i}^0)\\
\leq \frac{\Delta t^2}{2}  \bigg[ \sum_{i=0}^{N_{\max}}  \ell_{i+\frac{1}{2}}( \Psi_{i+1} - \Psi_i)^2 \bigg]^{1/2}  \bigg[ \sum_{i=0}^{N_{\max}}  \ell_{i+\frac{1}{2}}(P_{i+1}^ 0- P_{i}^0 )^2 \bigg]^{1/2}  \\
+ \frac{\Delta t }{2} \bigg[ \sum_{i=0}^{N_{\max}} \ell_{i+\frac{1}{2}} ( U_{i+1}^0 - U_i^0)^2 \bigg]^{1/2}  \bigg[ \sum_{i=0}^{N_{\max}}\ell_{i+\frac{1}{2}}  (P_{i+1}^ 0- P_{i}^0 )^2 \bigg]^{1/2} \\
=  \frac{\Delta t^2 }{2}  || \Psi_{\mathcal{T}_m}||_{1, \mathcal{T}} || P^{0}_{\mathcal{T}_m}||_{1, \mathcal{T}} +
\frac{ \Delta t}{2} || U^0_{\mathcal{T}_m}||_{1, \mathcal{T}} || P^{0}_{\mathcal{T}_m}||_{1, \mathcal{T}} \to 0 \quad \textrm{as} \ m \to \infty \ (\Delta t \to 0).
\end{array}
\end{equation*}
We perform the same technique for the other terms and we deduce that $R \to 0$ as $m \to \infty$.\\[0.1in]
\textbf{Step 2.} Now, we move to show the convergence of the numerical solution to the weak solution of problem \eqref{continuous weak variational problem}.

Due to the regularity of the function $P$, $\partial^1 P_{\mathcal{T}_m, \Delta t_m} \to P_t$ strongly in $L^2([0,T]; (L^{2}0,\alpha))$. However, the stability estimate \eqref{stability-estimate} shows that $ \partial^1 u_{\mathcal{T}_m, \Delta t_m} $ is bounded in $L^{\infty}([0,T]; L^2(0,\alpha))$ and thus $\partial^1 u_{\mathcal{T}_m, \Delta t_m} \rightharpoonup^{\star} u_t$ in $L^{\infty}([0,T]; L^2(0,\alpha))$. We obtain
\begin{equation*}
-  \int_0^T \int_0^{\alpha} \partial^1 u_{\mathcal{T}_m, \Delta t_m} \partial^1 P_{\mathcal{T}_m, \Delta t_m} dx dt \rightarrow -  \int_0^T \int_0^{\alpha} u_t P_t dx dt \quad \textrm{as} \ m \to \infty.
\end{equation*}
Similarly,
\begin{eqnarray}
-  \int_0^T \int_{\alpha}^{\beta} \partial^1 v_{\mathcal{T}_m, \Delta t_m} \partial^1 P_{\mathcal{T}_m, \Delta t_m} dx dt &\rightarrow& -  \int_0^T \int_{\alpha}^{\beta} v_t P_t dx dt  \quad \textrm{as} \ m \to \infty, \nonumber \\
-  \int_0^T \int_{\beta}^L \partial^1 w_{\mathcal{T}_m, \Delta t_m} \partial^1 P_{\mathcal{T}_m, \Delta t_m} dx dt &\rightarrow& -  \int_0^T \int_{\beta}^L w_t P_t dx dt  \quad \textrm{as} \ m \to \infty. \nonumber
\end{eqnarray}
Also, using the fact that for all $\psi \in H^1(0,\alpha)$, $\psi_{\mathcal{T}_m} \to \psi \in L^2(0,\alpha)$, we deduce that 
\begin{equation*}
-  \int_0^{\alpha} \psi_{\mathcal{T}_m} P_{\mathcal{T}_m}^0 dx \to - \int_0^{\alpha} \psi(x) P(x,0) dx \quad \textrm{as} \ m \to \infty,
\end{equation*}  
for function $P$ regular enough. Similarly, 
\begin{eqnarray}
-  \int_{\alpha}^{\beta} \zeta_{\mathcal{T}_m} P_{\mathcal{T}_m}^0 dx &\to& -  \int_{\alpha}^{\beta} \zeta(x) P(x,0) dx \quad \textrm{as} \ m \to \infty, \nonumber \\
-   \int_{\beta}^L \theta_{\mathcal{T}_m} P_{\mathcal{T}_m}^0 dx  &\to& -   \int_{\beta}^L \theta(x) P(x,0) dx \quad \textrm{as} \ m \to \infty. \nonumber
\end{eqnarray}
\textbf{Step 3.} Let
\begin{eqnarray}
T_1 &=&  \int_0^T \sum_{i=0}^{N_{\alpha}-1} \ell_{i+\frac{1}{2}} (u_{i+1, \Delta t} - u_{i, \Delta t})(P_{i+1, \Delta t} - P_{i, \Delta t}) dt \nonumber \\
&+&  \int_0^T \frac{2 C_1^2 C_2^2}{C_1^2 h + C_2^2 h_{\alpha}} (v_{N_{\alpha}+1, \Delta t} - u_{N_{\alpha}, \Delta t})(P_{N_{\alpha}+1, \Delta t} - P_{N_{\alpha}, \Delta t}) dt, \nonumber \\
T_2&=&  \int_0^T \sum_{i=N_{\alpha}+1}^{N_{\alpha}+N-1} \ell_{i+\frac{1}{2}} (v_{i+1, \Delta t} - v_{i, \Delta t})(P_{i+1, \Delta t} - P_{i, \Delta t}) dt \nonumber \\
&+&  \int_0^T \frac{2 C_3^2 C_2^2}{C_3^2 h + C_2^2 h_{\beta}} (w_{N_{\alpha}+N+1, \Delta t} - v_{N_{\alpha}+N, \Delta t})(P_{N_{\alpha}+N+1, \Delta t} - P_{N_{\alpha}+N, \Delta t}) dt, \nonumber \\
T_3 &=&  \int_0^T \sum_{i=N_{\alpha}+N+1}^{N_{\max}} \ell_{i+\frac{1}{2}} (w_{i+1, \Delta t} - w_{i, \Delta t})(P_{i+1, \Delta t} - P_{i, \Delta t}) dt
, \nonumber \\
T_4 &=& \delta \int_0 ^ T \sum_{i=N_{\alpha}+1}^{N_{\alpha}+N-1}  \frac{[\partial^{1/2}(v_{i+1, \Delta t_m} - v_{i, \Delta t_m})](P_{i+1, \Delta t_m} - P_{i, \Delta t_m})}{h}  dt. \nonumber
\end{eqnarray}
Let us first treat the term $T_1$. For this sake, consider the following auxiliary term
\begin{equation*}
\tilde{T}_1 = - C_1^2 \int_0^T \int_{0}^{\alpha}  u_{\mathcal{T}_m, \Delta t_m} \ P_{xx}(x,t) dxdt + C_1^2 \int_0^T v_{N_{\alpha}+1, \Delta t_m} P_x(\alpha,t) dt.
\end{equation*}
Set the following trace operator
\begin{eqnarray}
{\textrm{tr}}: H^1(\alpha, \beta) &\longrightarrow& L^2( (\alpha, \beta)), \nonumber 
\end{eqnarray}
and define the following space
\begin{eqnarray}
H^{1/2} ((\alpha, \beta )) &=& \{ v \in L^2((\alpha, \beta)) \ | \ \exists \tilde{v} \in H^1(\alpha, \beta): v = \textrm{tr}(\tilde{v}) \}. \nonumber 
\end{eqnarray}
Regarding that $x_{N_{\alpha}+1} = \alpha + \frac{h}{2}$, we get $x_{N_{\alpha}+1} \to \alpha$ as $h \to 0$. However, from the stability results, $v_{\mathcal{T}_m}$ is bounded in $H^1(\alpha,\beta)$ and hence, there exists a subsequence, denoted again by $v_{\mathcal{T}_m}$ converging to $v$ in $H^1 (\alpha, \beta)$ as $m \to \infty$. Thus, due to the continuity of the trace operator, ${\textrm{tr}} (v_{\mathcal{T}_m}) \to {\textrm{tr}} (v)$ in $H^{1/2} (\alpha, \beta)$, and therefore $v_{N_{\alpha}+1} \to v(\alpha)= u(\alpha)$. So, we can say that $$v_{N_{\alpha}+1}(t) \to u(\alpha,t)$$
\noindent almost everywhere in time as size$(\mathcal{T}) \to 0$. Consequently, $v_{{N_{\alpha}+1}, \Delta t_m} \to u(\alpha,t)$ as $m \to \infty$ for all $t \in [0,T]$ due to the continuity of the functions in time.\\[0.1in]
Also, as $ u_{\mathcal{T}_m, \Delta t_m} \to u $ in $L^{\infty}([0,T]; L^2(0, \alpha))$, we obtain
\begin{equation*}\label{convergence of T1 tilde}
\tilde{T}_1 \to - C_1^2 \int_0^T \int_{0}^{\alpha} u P_{xx} dxdt + C_1^2 \int_0^T u(\alpha,t) P_x(\alpha,t) dt = C_1^2 \int_0^T \int_{0}^{\alpha} u_x(x,t)P_x(x,t)dxdt.
\end{equation*}
Now, we reformulate $\tilde{T}_1$ as the following:
\begin{equation*} \label{formulation T1 tilde}
\begin{array}{lll}
\tilde{T}_1 &= -  \sum_{n=1}^{\mathcal{N}} \Delta t \sum_{i=0}^{N_{\alpha}} c_i^2 u_i^n \int_{K_{i}} P_{xx}(x,t_n) dx + \sum_{n=1}^{\mathcal{N}} \Delta t \ c_{N_{\alpha}}^2 v_{N_{\alpha}+1}^n P_x(\alpha, t_n) 
\\
&- C_1^2 \int_0^T \int_0^{\alpha}  u_{\mathcal{T}_m, \Delta t_m} (P_{xx}(x,t) - P_{xx} (x,t_n))dxdt - C_1^2 \int_0^T v_{N_{\alpha}+1, \Delta t _m} (P_x(\alpha, t_n) - P_x(\alpha, t)) dt \\
 &=  \tilde{T}_{11} + \tilde{T}_{12} + \tilde{T}_{13} \ ,
\end{array}
\end{equation*}
where 
\begin{align*}
	\tilde{T}_{11} &=-  \sum_{n=1}^{\mathcal{N}} \Delta t \sum_{i=1}^{N_{\alpha}} u_i^n \int_{K_{i}} c_i^2 \ P_{xx}(x,t_n) dx  \\
	\tilde{T}_{12} &=\sum_{n=1}^{\mathcal{N}} \Delta t \ c_{N_{\alpha}}^2 v_{N_{\alpha}+1}^n P_x(\alpha, t_n) \\
	\tilde{T}_{13} &=- C_1^2 \int_0^T \int_0^{\alpha}  u_{\mathcal{T}_m, \Delta t_m} (P_{xx}(x,t) - P_{xx} (x,t_n))dxdt - C_1^2 \int_0^T v_{N_{\alpha}+1, \Delta t _m} (P_x(\alpha, t_n) - P_x(\alpha, t)) dt \\
\end{align*}
The continuity of the trace operator implies that $\vert v_{N_{\alpha}+1, \Delta t_m} \vert \leq c \vert \vert v_{\mathcal{T}_m , \Delta t_m}   \vert \vert $ where $c$ is 
a positive constant independent of the discretization parameters. 

Concerning that $u_{\mathcal{T}_m, \Delta t_m} $ is bounded in $L^{\infty}([0,T]; L^2(0, \alpha))$,  $v_{\mathcal{T}_m, \Delta t_m} $ is bounded in $L^{\infty}([0,T]; L^2(\alpha, \beta))$ and the function $P$ is smooth enough, then $\tilde{T}_{13} \to 0$ as $m \to \infty$. 

For a pedagogical purpose, we will detail carefully the computations for the treatment of the two terms $\tilde{T}_{11}$ and $\tilde{T}_{12}$ and we will refer to these arguments to treat the other terms in the sequel.
 
\begin{equation*}
\begin{array}{lll}
\tilde{T}_{11} &= -  \sum_{n=1}^{\mathcal{N}} \Delta t \sum_{i=1}^{N_{\alpha}} u_i^n \int_{K_{i}} c_i^2 \ P_{xx}(x,t_n) dx  \\
& = -  \sum_{n=1}^{\mathcal{N}} \Delta t \sum_{i=1}^{N_{\alpha}} u_i^n C_1^2  (P_{x}(x_{i+\frac{1}{2}},t_n) - P_{x}(x_{i-\frac{1}{2}},t_n) )\\
& = -  \sum_{n=1}^{\mathcal{N}} \Delta t \sum_{i=1}^{N_{\alpha}} u_i^n C_1^2  P_{x}(x_{i+\frac{1}{2}},t_n) + \sum_{n=1}^{\mathcal{N}} \Delta t \sum_{i=1}^{N_{\alpha}} u_i^n C_1^2  P_{x}(x_{i-\frac{1}{2}},t_n) \\
\end{array}
\end{equation*} 
Since $u_0^n = 0$, using an index translation leads to :
\begin{equation*}
\begin{array}{lll}
\tilde{T}_{11} & =  -   \sum_{n=1}^{\mathcal{N}} \Delta t \sum_{i=1}^{N_{\alpha}} u_i^n C_1^2  P_{x}(x_{i+\frac{1}{2}},t_n) + \sum_{n=1}^{\mathcal{N}} \Delta t \sum_{i=1}^{N_{\alpha}} u_{i+1}^n C_1^2  P_{x}(x_{i+\frac{1}{2}},t_n)\\
& =  \sum_{n=1}^{\mathcal{N}} \Delta t \sum_{i=1}^{N_{\alpha}} (u_{i+1}^n - u_i^n) C_1^2  P_{x}(x_{i+\frac{1}{2}},t_n) - \sum_{n=1}^{\mathcal{N}} \Delta t u_{N_\alpha}^n C_1^2  P_{x}(x_{N_\alpha+\frac{1}{2}},t_n)
\end{array}
\end{equation*} 
Using equation \eqref{li} which defines $\ell_{i+\frac{1}{2}}$, we may replace $C_1^2$ by $\ell_{i+\frac{1}{2}} h_{i+\frac{1}{2}}$ in the first term to obtain : 
	\begin{equation*}
		\begin{array}{lll}
		\tilde{T}_{11} &  =  \sum_{n=1}^{\mathcal{N}} \Delta t \sum_{i=1}^{N_{\alpha}} (u_{i+1}^n - u_i^n)  \ell_{i+\frac{1}{2}} h_{i+\frac{1}{2}} P_{x}(x_{i+\frac{1}{2}},t_n) - \sum_{n=1}^{\mathcal{N}} \Delta t u_{N_\alpha}^n C_1^2  P_{x}(x_{N_\alpha+\frac{1}{2}},t_n)
	\end{array}			
	\end{equation*}
Since $P_i^n = P(x_i,t_n)$ and since $P$ is regular, a Taylor expansion in the neighborhood of $x_{i+\frac{1}{2}}$ leads to 
\[
\ell_{i+\frac{1}{2}} \left(P_{i+1}^n - P_i^n\right) = \ell_{i+\frac{1}{2}} h_{i+\frac{1}{2}} P_{x}(x_{i+\frac{1}{2}},t_n) + \mathcal{O}(\mbox{size}(\mathcal{T})) .
\]
Using equation \eqref{eq-interface-alpha} leads to 
\[
C_1^2  P_{x}(x_{N_\alpha+\frac{1}{2}},t_n) = \ell_{N_\alpha+\frac{1}{2}}\left(P_{N_\alpha+1}^n - P_{N_\alpha}^n\right) + \mathcal{O}(\mbox{size}(\mathcal{T})) .
\]
Since 
%$\sum_{i=1}^{\mathcal{N_\alpha}} \mathcal{O}(\mbox{size}(\mathcal{T})) = \alpha \mathcal{O}(\mbox{size}(\mathcal{T}))  = \mathcal{O}(\mbox{size}(\mathcal{T})) $ and 
$\sum_{n=1}^{\mathcal{N}} \Delta t  \mathcal{O}(\mbox{size}(\mathcal{T})= T \mathcal{O}(\mbox{size}(\mathcal{T})) = \mathcal{O}(\mbox{size}(\mathcal{T}))$ we obtain finally
\begin{equation}\label{T11 tilde}
\tilde{T}_{11} =  \sum_{n=1}^{\mathcal{N}} \Delta t \sum_{i=0}^{N_{\alpha}-1}  \ell_{i+\frac{1}{2}}(u_{i+1}^n - u_i^n)(P_{i+1}^n - P_{i}^n) -  
\sum_{n=1}^{\mathcal{N}} \Delta t \ u_{N_{\alpha}}^n \ell_{N_\alpha+\frac{1}{2}}(P_{N_{\alpha}+1}^n - P_{N_{\alpha}}^n) + \mathcal{O}(\mbox{size}(\mathcal{T}))  \ .
\end{equation}
% AVEC la defintion de $l$
%\[
%\tilde{T}_{11} =  \sum_{n=1}^{\mathcal{N}} \Delta t \sum_{i=0}^{N_{\alpha}-1}  \ell_{i+\frac{1}{2}}(u_{i+1}^n - u_i^n)(P_{i+1}^n - P_{i}^n) - \frac{2 C_1^2 C_2^2}{C_1^2 h + C_2^2 h_{\alpha}} \sum_{n=1}^{\mathcal{N}} \Delta t \ u_{N_{\alpha}}^n (P_{N_{\alpha}+1}^n - P_{N_{\alpha}}^n) + \mathcal{O}(\mbox{size}(\mathcal{T}))  \ .
%\]
%Using translation of index $i$ for the second term of the above equation, we obtain 
%\begin{equation*} \label{T11 tilde}
%\begin{array}{lll}
%\tilde{T}_{11} &=  \sum_{n=1}^{\mathcal{N}} \Delta t \sum_{i=0}^{N_{\alpha}-1}  \ell_{i+\frac{1}{2}}(u_{i+1}^n - u_i^n)(P_{i+1}^n - P_{i}^n) - \frac{2 C_1^2 C_2^2}{C_1^2 h + C_2^2 h_{\alpha}} \sum_{n=1}^{\mathcal{N}} \Delta t \ u_{N_{\alpha}}^n (P_{N_{\alpha}+1}^n - P_{N_{\alpha}}^n).
%\end{array}
%\end{equation*} 
We also get: 
\begin{equation*}
\begin{array}{ll}
\tilde{T}_{12} &= \sum_{n=1}^{\mathcal{N}} \Delta t \ c_{N_{\alpha}}^2 v_{N_{\alpha}+1}^n P_x(\alpha, t_n)  = \sum_{n=1}^{\mathcal{N}} \Delta t \ C_1^2 v_{N_{\alpha}+1}^n P_x(\alpha, t_n) \\
&=  \sum_{n=1}^{\mathcal{N}} \Delta t \ C_1^2 v_{N_{\alpha}+1}^n P_x(x_{N_\alpha+\frac{1}{2}}, t_n) \\
\end{array}
\end{equation*}
Using again equation \eqref{eq-interface-alpha} leads to:
\begin{equation}\label{T12 tilde}
\tilde{T}_{12} = \sum_{n=1}^{\mathcal{N}} \Delta t \ \ell_{N_{\alpha}+\frac{1}{2}} v_{N_{\alpha}+1}^n (P_{N_{\alpha}+1}^n - P_{N_{\alpha}}^n) + \mathcal{O}(\mbox{size}(\mathcal{T}))  \ .
\end{equation}
Similarly, to treat the term $T_2$, consider the following auxiliary term
\begin{equation*}
\tilde{T}_2 = - C_2^2 \int_0^T \int_{\alpha}^{\beta}  v_{\mathcal{T}_m, \Delta t_m} \ P_{xx}(x,t) dxdt + C_2^2 \int_0^T v_{N_{\alpha}+N, \Delta t_m} P_x(\beta,t) dt - 
C_2^2 \int_0^T v_{N_{\alpha}+1, \Delta t_m} P_x(\alpha,t) dt.
\end{equation*}
Regarding that $x_{N_{\alpha}+N} = \beta - \frac{h}{2}$ , we get $x_{N_{\alpha}+N} \to \beta$ as $h \to 0$. However, from the stability results, $v_{\mathcal{T}_m}$ is bounded in $H^1(\alpha,\beta)$ and hence, there exists a subsequence, denoted again by $v_{\mathcal{T}_m}$ converging to $v$ in $H^1 (\alpha, \beta)$ as $m \to \infty$. Thus, due to the continuity of the trace operator, ${\textrm{tr}} (v_{\mathcal{T}_m}) \to {\textrm{tr}} (v)$ in $H^{1/2} (\alpha, \beta)$, and therefore $v_{N_{\alpha}+N} \to v(\beta)$ . So, we can say that $$v_{N_{\alpha}+N}(t) \to v(\beta,t)$$
\noindent almost everywhere in time as size$(\mathcal{T}) \to 0$. Consequently, $v_{{N_{\alpha}+N}, \Delta t_m} \to v(\beta,t)$ as $m \to \infty$ for all $t \in [0,T]$ due to the continuity of the functions in time. \\[0.1in]
As $ v_{\mathcal{T}_m, \Delta t_m} \to v$ in $L^{\infty}([0,T]; L^2(\alpha, \beta))$, we obtain
\begin{equation*}\label{convergence of T2 tilde}
\tilde{T}_2 \to - C_2^2 \int_0^T \int_{\alpha}^{\beta} v(x,t)P_{xx}(x,t)dxdt  + C_2^2 \int_0^T v(\beta,t) P_x(\beta,t) - C_2^2 \int_0^T v(\alpha,t) P_x(\alpha,t) = C_2^2 \int_0^T \int_{\alpha}^{\beta} v_x(x,t)P_x(x,t)dxdt .
\end{equation*}
Again, we reformulate $\tilde{T}_2$ as the following: 
\begin{equation*}
\begin{array}{lll}
\tilde{T}_2 &= -  \sum_{n=1}^{\mathcal{N}} \Delta t \sum_{i=N_{\alpha}+1}^{N_{\alpha}+N} c_i^2 v_i^n \int_{K_{i}} P_{xx}(x,t_n) dx + \sum_{n=1}^{\mathcal{N}} \Delta t \ c_{N_{\alpha}+N}^2 v_{N_{\alpha}+N}^n P_x(\beta, t_n)  - \sum_{n=1}^{\mathcal{N}} \Delta t \ c_{N_{\alpha}+1}^2 v_{N_{\alpha}+1}^n P_x(\alpha, t_n) 
\\
&- C_2^2 \int_0^T \int_{\alpha}^{\beta}  v_{\mathcal{T}_m, \Delta t_m} (P_{xx}(x,t) - P_{xx} (x,t_n))dxdt - C_2^2 \int_0^T v_{N_{\alpha}+N, \Delta t _m} (P_x(\beta, t_n) - P_x(\beta, t)) dt \\
&+ C_2^2 \int_0^T v_{N_{\alpha}+1, \Delta t _m} (P_x(\alpha, t_n) - P_x(\alpha, t)) dt \\
 &=  \tilde{T}_{21} + \tilde{T}_{22} + \tilde{T}_{23} \ ,
\end{array}
\end{equation*}
where 
\begin{align*}
\tilde{T}_{21} &= -  \sum_{n=1}^{\mathcal{N}} \Delta t \sum_{i=N_{\alpha}+1}^{N_{\alpha}+N} c_i^2 v_i^n \int_{K_{i}} P_{xx}(x,t_n) dx \\
\tilde{T}_{22} &= \sum_{n=1}^{\mathcal{N}} \Delta t \ c_{N_{\alpha}+N}^2 v_{N_{\alpha}+N}^n P_x(\beta, t_n)  - \sum_{n=1}^{\mathcal{N}} \Delta t \ c_{N_{\alpha}+1}^2 v_{N_{\alpha}+1}^n P_x(\alpha, t_n)  \\
\tilde{T}_{23} &=- C_2^2 \int_0^T \int_{\alpha}^{\beta}  v_{\mathcal{T}_m, \Delta t_m} (P_{xx}(x,t) - P_{xx} (x,t_n))dxdt - C_2^2 \int_0^T v_{N_{\alpha}+N, \Delta t _m} (P_x(\beta, t_n) - P_x(\beta, t)) dt \\
&+ C_2^2 \int_0^T v_{N_{\alpha}+1, \Delta t _m} (P_x(\alpha, t_n) - P_x(\alpha, t)) dt
\end{align*}
The continuity of the trace operator implies that $\vert v_{N_{\alpha}+N, \Delta t_m} \vert \leq c \vert \vert v_{\mathcal{T}_m , \Delta t_m}   \vert \vert $ where $c$ is a positive constant independent of 
the discretization parameters.

Concerning that $v_{\mathcal{T}_m, \Delta t_m} $ is bounded in $L^{\infty}([0,T]; L^2(\alpha, \beta))$ and the function $P$ is smooth enough, then $\tilde{T}_{23} \to 0$ as $m \to \infty$. 

Using the same argument and computations as done for the term $\tilde{T}_{11}$ and $\tilde{T}_{12}$ , we get  
\begin{equation} \label{tilde T21}
\begin{array}{lll}
\tilde{T}_{21} &=  \sum_{n=1}^{\mathcal{N}} \Delta t \sum_{i=N_{\alpha}+1}^{N_{\alpha}+N-1}  \ell_{i+\frac{1}{2}}(v_{i+1}^n - v_i^n)(P_{i+1}^n - P_{i}^n) +  \sum_{n=1}^{\mathcal{N}} \Delta t \ v_{N_{\alpha}+1}^n \ell_{N_\alpha + \frac{1}{2}} (P_{N_{\alpha}+1}^n - P_{N_{\alpha}}^n) \\
& -  \sum_{n=1}^{\mathcal{N}} \Delta t \ v_{N_{\alpha}+N}^n \ell_{N_\alpha + N + \frac{1}{2}} (P_{N_{\alpha}+N+1}^n - P_{N_{\alpha}+N}^n)  + \mathcal{O}(\mbox{size}(\mathcal{T}))  \ .
\end{array}
\end{equation} 
and 
\begin{equation}\label{T22 tilde}
\begin{array}{lll}
\tilde{T}_{22}  = & \sum_{n=1}^{\mathcal{N}} \Delta t \ \ell_{N_{\alpha}+N+\frac{1}{2}} v_{N_{\alpha}+N}^n (P_{N_{\alpha}+N+1}^n - P_{N_{\alpha}+N}^n)  \\
& - \sum_{n=1}^{\mathcal{N}} \Delta t \ \ell_{N_{\alpha}+\frac{1}{2}} v_{N_{\alpha}+1}^n (P_{N_{\alpha}+1}^n - P_{N_{\alpha}}^n) +  \mathcal{O}(\mbox{size}(\mathcal{T}))  \ .
\end{array}
\end{equation}
Similarly, to treat the term $T_3$, we introduce the auxiliary term 
\begin{equation*}
\tilde{T}_3 = - C_3^2 \int_0^T \int_{\beta}^L  w_{\mathcal{T}_m, \Delta t_m} \ P_{xx}(x,t) dxdt  - C_3^2 \int_0^T v_{N_{\alpha}+N, \Delta t_m} P_x(\beta,t) dt.
\end{equation*}
Applying the trace operator as previous and taking into consideration that $ w_{\mathcal{T}_m, \Delta t_m} \to w$ in $L^{\infty}([0,T]; L^2(\beta, L))$, we get
\begin{equation*}\label{convergence of T3 tilde}
\tilde{T}_3 \to - C_3^2 \int_0^T \int_{\beta}^L w(x,t) P_{xx}(x,t)dxdt - C_3^2 \int_0^T w(\beta) P_x(\beta,t) = C_3^2 \int_0^T \int_{\beta}^L w_x(x,t) P_x(x,t) dxdt .
\end{equation*}
Similarly, we reformulate $\tilde{T}_3$ as the following: 
\begin{equation*}
\begin{array}{lll}
\tilde{T}_3 &= -  \sum_{n=1}^{\mathcal{N}} \Delta t \sum_{i=N_{\alpha}+N+1}^{N_{\max}} c_i^2 w_i^n \int_{K_{i}} P_{xx}(x,t_n) dx  - \sum_{n=1}^{\mathcal{N}} \Delta t \ c_{N_{\alpha}+N+1}^2 v_{N_{\alpha}+N}^n P_x(\beta, t_n) \\
&+ C_3^2 \int_0^T v_{N_{\alpha}+N, \Delta t _m} (P_x(\beta, t_n) - P_x(\beta, t)) dt
 - C_3^2 \int_0^T \int_{\beta}^L  w_{\mathcal{T}_m, \Delta t_m} (P_{xx}(x,t) - P_{xx} (x,t_n))dxdt \\
 &=  \tilde{T}_{31} + \tilde{T}_{32} + \tilde{T}_{33} \, 
\end{array}
\end{equation*}
where 
\begin{align*}
\tilde{T}_{31} &=-  \sum_{n=1}^{\mathcal{N}} \Delta t \sum_{i=N_{\alpha}+N+1}^{N_{\max}} c_i^2 w_i^n \int_{K_{i}} P_{xx}(x,t_n) dx \\
\tilde{T}_{32} &=- \sum_{n=1}^{\mathcal{N}} \Delta t \ c_{N_{\alpha}+N+1}^2 v_{N_{\alpha}+N}^n P_x(\beta, t_n) \\
\tilde{T}_{33} &= C_3^2 \int_0^T v_{N_{\alpha}+N, \Delta t _m} (P_x(\beta, t_n) - P_x(\beta, t)) dt
- C_3^2 \int_0^T \int_{\beta}^L  w_{\mathcal{T}_m, \Delta t_m} (P_{xx}(x,t) - P_{xx} (x,t_n))dxdt  \ .
\end{align*}
Since $w_{\mathcal{T}_m, \Delta t_m} $ is bounded in $L^{\infty}([0,T]; L^2(\beta,L))$ and the function $P$ is smooth enough, and making the same discussion for the boundedness of the term $v_{N_{\alpha}+N, \Delta t _m}$, we deduce that $\tilde{T}_{33} \to 0$ as $m \to \infty$. 

Using the same argument and computations as done for the terms $\tilde{T}_{11}$ and $\tilde{T}_{12}$ , we get  
\begin{equation} \label{tilde T31}
\begin{array}{lll}
\tilde{T}_{31} &= \sum_{n=1}^{\mathcal{N}} \Delta t \sum_{N_{\alpha}+N+1}^{N_{\max}}  \ell_{i+\frac{1}{2}}(w_{i+1}^n - w_i^n)(P_{i+1}^n - P_{i}^n) \\
&+   \sum_{n=1}^{\mathcal{N}} \Delta t \ w_{N_{\alpha}+N+1}^n  \ell_{N_{\alpha}+N+\frac{1}{2}}(P_{N_{\alpha}+N+1}^n - P_{N_{\alpha}+N}^n) + \mathcal{O}(\mbox{size}(\mathcal{T})).
\end{array}
\end{equation} 
and 
\begin{equation}\label{T32 tilde}
\begin{array}{lll}
\tilde{T}_{32} &=  - \sum_{n=1}^{\mathcal{N}} \Delta t \ \ell_{N_{\alpha}+N+\frac{1}{2}} v_{N_{\alpha}+N}^n (P_{N_{\alpha}+N+1}^n - P_{N_{\alpha}+N}^n) + \mathcal{O}(\mbox{size}(\mathcal{T})).
\end{array}
\end{equation}
Consequently, 
\begin{equation*}
\tilde{T}_{11} + \tilde{T}_{21} + \tilde{T}_{31} =  T_1 + T_2 + T_3 + \mathcal{O}(\mbox{size}(\mathcal{T}))  \quad {\textrm{and}} \quad \tilde{T}_{12} + \tilde{T}_{22} + \tilde{T}_{32} =  0 + \mathcal{O}(\mbox{size}(\mathcal{T}))
\end{equation*}
and therefore
\begin{equation*}
T_1 + T_2 + T_3 \to C_1^2 \int_0^T \int_{0}^{\alpha} u_x P_x dxdt + C_2^2 \int_0^T \int_{\alpha}^{\beta} v_x P_x dxdt + C_3^2 \int_0^T \int_{\beta}^L w_x P_x dxdt \quad \textrm{as} \ m \to \infty.
\end{equation*}
Finally, to treat the term $T_4$, consider the following term
\begin{equation*}
\tilde{T}_4 = - \delta \int_0^T \int_{\alpha}^{\beta}  \partial^{1/2}v_{\mathcal{T}_m, \Delta t_m} \ P_{xx}(x,t) dxdt  - \delta \int_0^T  \partial^{1/2}v_{N_{\alpha}+1, \Delta t_m} \ P_{x}(\alpha,t) dt+ \delta \int_0^T \partial^{1/2} v_{N_{\alpha}+N, \Delta t_m} P_x(\beta,t) dt \ .
\end{equation*}
Applying the trace operator and the fact that $ \partial^{1/2} v_{\mathcal{T}_m, \Delta t_m} \rightharpoonup^* v_t$ in $ L^{\infty}([0,T]; L^2(\alpha, \beta))$ (due to the stability results), we obtain
\begin{equation*}\label{convergence of T4 tilde}
\tilde{T}_4 \rightharpoonup^* - \delta \int_0^T \int_{\alpha}^{\beta} v_t P_{xx} dxdt + \delta \int_0^T v_t(\beta) P_x(\beta,t) dt - \delta \int_0^T v_t(\alpha) P_x(\alpha,t) dt  = \delta \int_0^T \int_{\alpha}^{\beta} v_{tx} P_x dxdt.
\end{equation*}
Also, we reformulate $\tilde{T}_4$ as the following:
\begin{equation*}
\begin{array}{lll}
\tilde{T}_4 &= - \delta \sum_{n=1}^{\mathcal{N}} \Delta t \sum_{i=N_{\alpha}+1}^{N_{\alpha}+N-1}  \partial^{1/2} v_i^n \int_{K_{i}} P_{xx}(x,t_n) dx  - \delta \sum_{n=1}^{\mathcal{N}} \Delta t   \partial^{1/2} v_{N_{\alpha}+1}^n  P_{x}(\alpha,t_n) + \delta \sum_{n=1}^{\mathcal{N}} \Delta t   \partial^{1/2} v_{N_{\alpha}+N}^n  P_{x}(\beta,t_n)  
\\
&+ \delta \int_0^T \int_{\alpha}^{\beta} \partial^{1/2} v_{\mathcal{T}_m, \Delta t_m} (P_{xx}(x,t_n) - P_{xx} (x,t))dxdt + \delta \int_0^T  \partial^{1/2} v_{N_{\alpha}+1, \Delta t_m} (P_{x}(\alpha,t_n) - P_{x} (\alpha,t))dt \\
& - \delta \int_0^T  \partial^{1/2} v_{N_{\alpha}+N, \Delta t_m} (P_{x}(\beta,t_n) - P_{x} (\beta,t))dt
\\
 &=  \tilde{T}_{41} + \tilde{T}_{42} + \tilde{T}_{43} \ ,
\end{array}
\end{equation*}
where
\begin{align*}
\tilde{T}_{41}  &= - \delta \sum_{n=1}^{\mathcal{N}}\Delta t \sum_{i=N_{\alpha}+1}^{N_{\alpha}+N-1} \partial^{1/2} v_i^n \int_{K_{i}} P_{xx}(x,t_n) dx \\
\tilde{T}_{42} & = - \delta \sum_{n=1}^{\mathcal{N}} \Delta t   \partial^{1/2} v_{N_{\alpha}+1}^n  P_{x}(\alpha,t_n) + \delta \sum_{n=1}^{\mathcal{N}} \Delta t   \partial^{1/2} v_{N_{\alpha}+N}^n  P_{x}(\beta,t_n) \\
\tilde{T}_{43} & = \delta \int_0^T \int_{\alpha}^{\beta} \partial^{1/2} v_{\mathcal{T}_m, \Delta t_m} (P_{xx}(x,t_n) - P_{xx} (x,t))dxdt + \delta \int_0^T  \partial^{1/2} v_{N_{\alpha}+1, \Delta t_m} (P_{x}(\alpha,t_n) - P_{x} (\alpha,t))dt \\
& - \delta \int_0^T  \partial^{1/2} v_{N_{\alpha}+N, \Delta t_m} (P_{x}(\beta,t_n) - P_{x} (\beta,t))dt
\end{align*}
Since $\partial^{1/2} v_{\mathcal{T}_m, \Delta t_m} $ is bounded in $L^{\infty}([0,T]; L^2( \alpha,\beta))$, function $P$ is smooth enough and using the fact that $\partial^{1/2} v_{N_{\alpha}+1}^n$ and $\partial^{1/2} v_{N_{\alpha}+N}^n$ are bounded due to the continuity of the trace operator,  then $\tilde{T}_{43} \to 0$ as $m \to \infty$. 

Using the centered time discretization definition, we can write $\tilde{T}_{41}$ as 
\begin{equation*}
\begin{array}{lll}
\tilde{T}_{41} &= - \delta \sum_{n=1}^{\mathcal{N}}\Delta t \sum_{i=N_{\alpha}+1}^{N_{\alpha}+N-1} \partial^{1/2} v_i^n \left(P_{x}(x_{i+\frac{1}{2}},t_n) - P_{x}(x_{i-\frac{1}{2}},t_n)\right)\\
\end{array}
\end{equation*} 
Since $P_i^n = P(x_i,t_n)$ and since $P$ is regular, a Taylor expansion in the neighborhood of $x_{i+\frac{1}{2}}$ and $x_{i-\frac{1}{2}}$ leads to 
\begin{equation*}
\begin{array}{lll}
\tilde{T}_{41} &=  - \delta \sum_{n=1}^{\mathcal{N}}\Delta t \sum_{i=N_{\alpha}+1}^{N_{\alpha}+N-1}  \frac{v_i^{n+1}-v_i^{n-1}}{2 \Delta t} \bigg(  
\frac{P_{i+1}^n - P_{i}^n}{h_{i+ \frac{1}{2}}} - \frac{P_{i}^n - P_{i-1}^n}{h_{i- \frac{1}{2}}}  \bigg) + \mathcal{O}(\mbox{size}(\mathcal{T})) \ .
\end{array}
\end{equation*} 
Using translation of index $i$ for the second term of the above equation, we obtain 
\begin{equation*}\label{T41 tilde}
\begin{array}{lll}
\tilde{T}_{41} &= \delta \sum_{n=1}^{\mathcal{N}} \Delta t \sum_{i=N_{\alpha}+1}^{N_{\alpha}+N-1} \frac{(v_{i+1}^{n+1} - v_i^{n+1} - v_{i+1}^{n-1} + v_i^{n-1} )(P_{i+1}^n - P_{i}^n)}{2 \Delta t \ {h_{i+ \frac{1}{2}}}} \\
&+ \delta \sum_{n=1}^{\mathcal{N}} \Delta t \  \frac{(v_{N_{\alpha}+1}^{n+1} - v_{N_{\alpha}+1}^{n-1} )(P_{N_{\alpha}+1}^n - P_{N_{\alpha}}^n)}{2 \Delta t \ h_{N_{\alpha}+ \frac{1}{2}}} \\
& - \delta  \sum_{n=1}^{\mathcal{N}} \Delta t \ \frac{(v_{N_{\alpha}+N}^{n+1} - v_{N_{\alpha}+N}^{n-1} )(P_{N_{\alpha}+N}^n - P_{N_{\alpha}+N-1}^n)}{2 \Delta t \ h_{N_{\alpha}+N+\frac{1}{2}}}
+ \mathcal{O}(\mbox{size}(\mathcal{T})) \ . \\
& = T_4 +  \delta  \sum_{n=1}^{\mathcal{N}}\Delta t \ \frac{(v_{N_{\alpha}+1}^{n+1} - v_{N_{\alpha}+1}^{n-1} )(P_{N_{\alpha}+1}^n - P_{N_{\alpha}}^n)}{2 \Delta t \ h_{N_{\alpha}+\frac{1}{2}}} \\
& - \delta \sum_{n=1}^{\mathcal{N}}\Delta t \ \frac{(v_{N_{\alpha}+N}^{n+1} - v_{N_{\alpha}+N}^{n-1} )(P_{N_{\alpha}+N}^n - P_{N_{\alpha}+N-1}^n)}{2 \Delta t \ h_{N_{\alpha}+N+\frac{1}{2}}}
+ \mathcal{O}(\mbox{size}(\mathcal{T})) \ .
\end{array}
\end{equation*}
In addition, using again the centered time discretization definition and since $P$ is regular, a Taylor expansion in the neighborhood of $x_{N_\alpha+\frac{1}{2}} = \alpha $ and $x_{N_\alpha+N + \frac{1}{2}} = \beta$ leads to
\begin{equation*}
\begin{array}{lll}
\tilde{T}_{42} & =  -  \delta  \sum_{n=1}^{\mathcal{N}}\Delta t \ \frac{(v_{N_{\alpha}+1}^{n+1} - v_{N_{\alpha}+1}^{n-1} )(P_{N_{\alpha}+1}^n - P_{N_{\alpha}}^n)}{2 \Delta t \ h_{N_{\alpha}+\frac{1}{2}}} \\
& + \delta \sum_{n=1}^{\mathcal{N}}\Delta t \ \frac{(v_{N_{\alpha}+N}^{n+1} - v_{N_{\alpha}+N}^{n-1} )(P_{N_{\alpha}+N}^n - P_{N_{\alpha}+N-1}^n)}{2 \Delta t \ h_{N_{\alpha}+N+\frac{1}{2}}} + \mathcal{O}(\mbox{size}(\mathcal{T})) \ .
\end{array}
\end{equation*} 
Consequently as $\tilde{T}_{4} = T_4 +  \mathcal{O}(\mbox{size}(\mathcal{T}))$, we get 
\begin{equation*}
T_4 \rightarrow\delta \int_0^T \int_{\alpha}^{\beta} v_{tx} P_x dxdt \quad \textrm{as} \ m \to \infty.
\end{equation*}
Therefore, for the limit solution $U=(u,v,w)$, the variational problem \eqref{continuous weak variational problem} holds for any  arbitrary function $P \in C^{\infty}([0,T]; C_0^{\infty}(0,L))$ regarding that $P(T,x) = 0$.

 By density argument, Problem \eqref{continuous weak variational problem} holds for $(p,q,z) \in L^2([0,T]; \mathbb{H}^1_L)$ with $(p_t, q_t,z_t) \in L^2 ([0,T]; \mathbb{L}^2)$ such that $p(T,x)= q (T,x) = z(T,x)=0$. The proof of Theorem \eqref{Convergence theorem} is thus complete.
\end{proof}
%%%%%%%%%%%%%%%%%%%%%%%%%%%%%%%%%%%%%%%%%%%%%%%%%%%%%%%%%%%%%
\section{Numerical Experiments: Validation of the theoretical results} \label{Numerical Experiments}
\noindent In this section, we present some examples to illustrate graphically the theoretical results obtained in \cite{GhNAWE}  where the authors studied the asymptotic behavior of the energy of the continuous case of system \eqref{Eq(2.1)}-\eqref{Eq(2.5)} and obtained an exponential decay when the three speeds are equal and an optimal energy decay rate of type $t^{-4}$ when they are different. 

In every numerical experiment, we test the explicit scheme \eqref{Discrete Explicit} and suppose $L=3, \ \alpha=1, \ \beta=2$ and the final time $T = 10\;000$. The discretization is given by $N_{\alpha}=20, \ N= 10, \ N_{\beta}=20$ and $\mathcal{N}= 400\;000$. Consequently, $h_{\alpha}= 0.05, \ h = 0.1, \ h_{\beta} = 0.05$ and the time step is chosen as $\Delta t = \frac{T}{\mathcal{N}} = \textrm{CFL} \times \Delta x$. We will study the asymptotic behavior for the following initial conditions of the form 
$$U_0(x) = \frac{4}{L^2} x (L-x), \ \Psi(x) = -\frac{4}{L^2} x (L-x).$$
\begin{Remark}
We have also implemented and tested the implicit scheme with the same parameters and the same discretization. The decay of the solution is exactly the same that is the exponential stability is observed with exactly the same rate
and also the polynomial stability. As in this research project, we are interested in the decay rate of the solution of a non regular damped wave equation, we present the explicit scheme. 

In a forgoing work, we will concentrate our attention in the numerical verification of the convergence with respect to the spatial mesh size and the
time step, the computation of a numerical order of discretization in space and also in the comparison between the two schemes.
\end{Remark}
\subsection{Equal speed of propagation}
\noindent  Let $C^2_1 = C_2^2 = C_3^2 = 1$.\\[0.1in]
\noindent \textbf{Case 1. No damping: conservation of the total energy.} When $\delta=0$, Figure \ref{fig 9} shows that the total energy is conserved along time. Thus, this numerical test shows that in the absence of the damping term, the total energy is completely conserved. Consequently, the numerical scheme \eqref{Discrete Explicit} does not produce any numerical dissipation and therefore the numerical behavior observed is only due to the considered model. 
%\begin{Remark}
Indeed, in the case of different propagation speed,  numerical tests show also conservation of the energy in the absence of the damping, {\it{i.e.}}, when $\delta=0$ (see for instance Figure \ref{fig 9}). 
%\end{Remark}

\noindent \textbf{Case 2. Exponential stability.} When $\delta=1$, Figure \ref{fig 4}-a shows that the total energy goes to zero as $t \to \infty$, and thus we have dissipation. First, for a graph in $\ln$-$\ln$ scale, there is a an energy decay of order $t^{-\alpha}$, whereas, for a graph of $\ln$-time scale, we obtain an energy decay of order $e^{- \omega t}$. By simple linear regression (least squares), Figure \ref{fig 4}-c shows that the polynomial decay of the energy is very fast ($\alpha \simeq 54.8760$) and thus we deduce that the energy decays faster than polynomial. In fact, we can observe from Figure \ref{fig 4}-b an exponential decay, as the graph of $-\ln (E(t))$ versus time $t$ plots a straight line. By simple linear regression (least squares), we obtain the rate of decay numerically ($\omega \simeq 0.0008$) and it is found to be very small. However, the final time profile confirms that $U$ is small but it shows that high frequencies are not completely controlled (see Figure \ref{fig 4}-d). \\[0.1in]
\subsection{Different speed of propagation}
\noindent  In this part, we aim to verify the theoretical results obtained in \cite{GhNAWE}. To this end, we consider several cases and we  obtain the decay rates numerically by simple linear regression (least squares). \\[0.1in]
\noindent \textbf{Case 1. ($C_1 > C_3 > C_2$) Polynomial stability.} Set $C_1^2=9, \ C_2^2 = 1, \ C_3^2 = 4$ and $\delta=1$. Since the final time is large, we show two figures for the energy, the first one is for $t=0$ to $t=100$ and the second one represents the energy from $T/2$ till $T$. Indeed, Figure \ref{fig 5} shows that the total energy is decreasing and goes finally to zero, and thus we look for an exponential or polynomial decay. The graph of $-\ln(E(t))$ versus $t$ plots a curve of the decay of the energy with a very small coefficient ($\omega = 0.0001$) which shows that the energy decays slower than exponential (see Figure \ref{fig 5}-c). However, Figure \ref{fig 5}-d plots the graph of $-\ln(E(t))$ versus $\ln(t)$ which permits to show that $E(t)$ tends to zero as $1/t^{\alpha}$  with $\alpha \simeq 4.17$, since the curve is asymptotically a straight line. Finally, the final time profile confirms that $U$ is small but also shows that high frequencies are not completely controlled (see Figure \ref{fig 5}-e). \\[0.1in]
\noindent \textbf{Case 2. ($C_2 > C_1 > C_3$) Polynomial stability.} Set $C_1^2=2, \ C_2^2 = 4, \ C_3^2 = 0.25$ and $\delta=1$. Similarly, since the final time is large, we show two figures for the energy, the first one is for $t=0$ to $t=100$ and the second one represents the energy from $T/2$ till $T$. Under the same discussion, Figure \ref{fig 6} shows that $E(t)$ tends to zero as $1/t^{\alpha}$  with $\alpha \simeq 4.4608$. Also, the final time profile confirms that $U$ is small, although the high frequencies are not completely controlled, especially at the beginning of the experiment. \\[0.1in]
\noindent \textbf{Case 3. ($C_3 > C_2 > C_1$) Polynomial stability.} Set $C_1^2=2, \ C_2^2 = 4, \ C_3^2 = 6$ and $\delta=1$. Figure \ref{fig 7} shows that the total energy goes finally to zero. Therefore, to explore the speed of convergence to zero, we plot similarly the graph of $-\ln(E(t))$ versus time and the graph of $-\ln(E(t))$ versus $\ln(t)$. Figure \ref{fig 7}-c permits to show that the  energy decays slower than exponential ($\omega=0.0005$), and hence we look for a polynomial decay. As a result, $E(t)$ tends to zero as $1/t^{\alpha}$  with $\alpha \simeq 3.3990$ (see Figure \ref{fig 7}-d where the graph is asymptotically a straight line). Finally, the final time profile confirms that $U$ is small but it shows that high frequencies are not completely controlled. \\[0.1in]
\noindent \textbf{Case 4. ($C_1 = C_3$) Polynomial stability.} Set $C_1^2 = C_3^2 =2, \ C_2^2 = 4$ and $\delta=1$. Since Figure \ref{fig 8} shows that the total energy goes finally to zero, then we intent to study the speed of convergence to zero. Using the same argument,  Figure \ref{fig 8}-c permits to show that the  energy decays slower than exponential ($\omega=0.0006$), while Figure \ref{fig 8}-d states that $E(t)$ tends to zero as $1/t^{\alpha}$  with $\alpha \simeq 4.4350$. Finally, the final time profile confirms that although $U$ is small, high frequencies are not completely controlled. 
\begin{Remark}
When the propagation speeds are not equal, we obtain a polynomial decay of the energy with slight different rates. However, in some cases, we get a numerical polynomial convergence better than $t^{-4},$ but it will probably be $t^{-4}$ if we increase the time. In this test, we do not perform very long simulation to confirm, for reason of computation time.
\end{Remark}
%%%%%%%%%%%%%%%%%%%%%%%%%%%%%%%%%%%%%%%%%%%%%%%%%%%%%%%%%%%%
\begin{Remark}
Under the same speed of propagation ($C_1^2 = C_2^2 = C_3^2=1$), the exponential decay rate depends on the size of the domain where the Kelvin-Voigt damping is acting. In fact, let $\alpha =0.1, \ \beta = 2.9$ and choose $N_{\alpha} = N_{\beta} =4, \ N= 100$ so that $h_{\alpha} = h_{\beta} = 0.025, \ h=0.028$. Also, set $T=100$ and $\mathcal{N} = 4000$ such that $\Delta t= \frac{T}{\mathcal{N}}= \textrm{CFL} \times \Delta x$. Figure \ref{fig 10} shows that $E(t)$ decays exponentially to zero with $\omega =  0.4312$, and thus, the exponential decreasing rate increases as we add more viscoelastic material. Moreover, the final time profile shows that the high frequency oscillations are exponentially dissipated after a while. 
\end{Remark}
%%%%%%%%%%%%%%%%%%%%%%%%%%%%%%%%
\section{Conclusions}
As announced, we have constructed a finite volume (explicit or implicit) space discretisation for the transmission problem of a 1-D wave equation with non smooth wave speed an localized Kelvin-Voigt damping. We have proved that the discrete solution converges to the continuous weak solution when
the discretisation step size goes to zero. Moreover, from the numerical experiments, we have also proved that the decay rate of the discrete solution
towards the null solution when damping acts and the speeds are equal or different is  in compliance with the theoretical study performed by two 
of the authors in \cite{GhNAWE}.

In the future, the investigation of the order of the space discretisation could also be performed. We expect a first order discretisation in 
space since the elliptic part is of first order.
It could be done by a careful numerical analysis but also by performing numerical experiments.

As a complete study of the stability of N-D transmission problem in viscoelasticity with localized Kelvin-Voigt damping under different types of geometric conditions has been performed by two
of the authors in an open subset of $\R^N$ \cite{Nasser-Noun-Wehbe-2020}, the authors worked actually on the numerical approximation by a  finite volume method 
of a such transmission problem in 2D to confirm again the theoretical result on the type of decay of the solution : exponential or polynomial.

\section*{Acknowledgments}
The authors wish to express their gratitude to the anonymous reviewer for his meticulous review and perceptive comments, which were instrumental in enhancing the clarity and rigor of this article.

\textbf{Conflicts of interest : } The authors have no conflicts of interest to declare that are relevant to the content of this article.

\bibliographystyle{siam}
%\bibliography{References}

\newpage
%%%%%%%%%%%%%%%%%%%%%%%%%%%%%%%%%%%%%%%%%%%%%%%%%%%%
\begin{figure}[!tbp]
  \centering
  \begin{minipage}[b]{0.45\textwidth}
    \includegraphics[width=\textwidth]{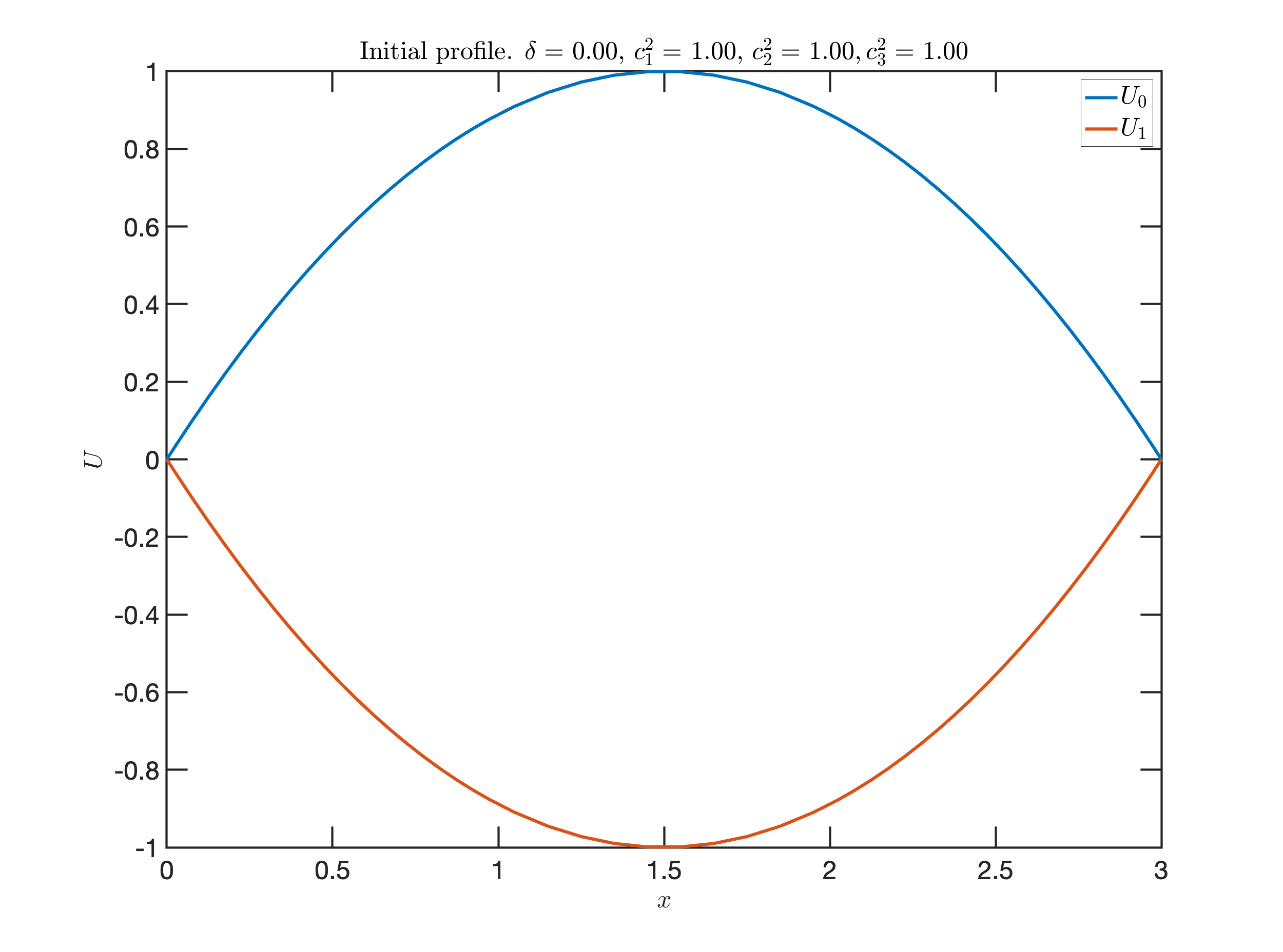}
    \subcaption{Initial profile} \label{fig 2}
  \end{minipage}
  \hfill
  \begin{minipage}[b]{0.45\textwidth}
    \includegraphics[width=\textwidth]{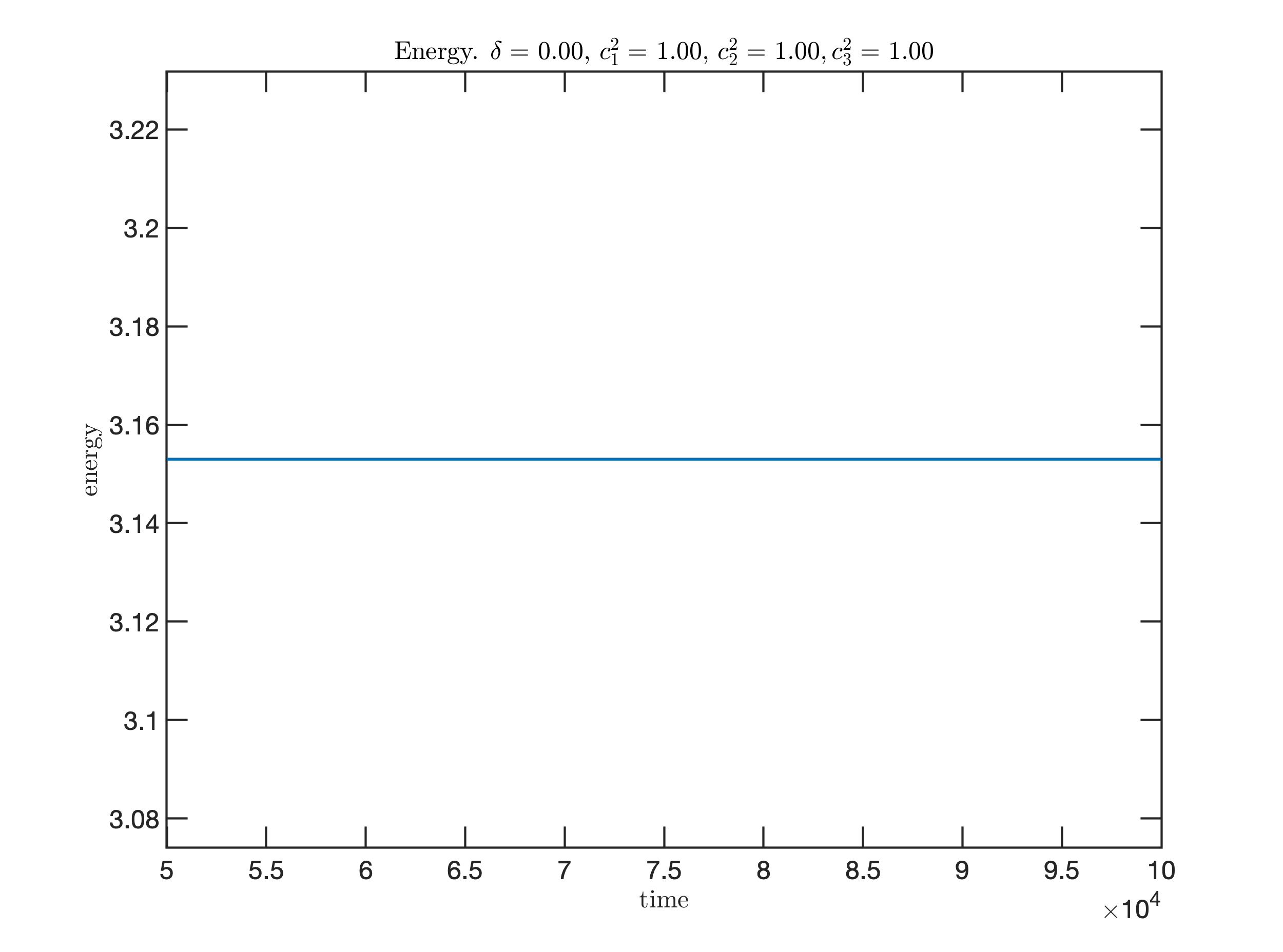}
    \subcaption{No damping/equal speed} \label{fig 3}
  \end{minipage}
  \hfill
  \begin{minipage}[b]{0.55\textwidth}
\includegraphics[width=\textwidth]{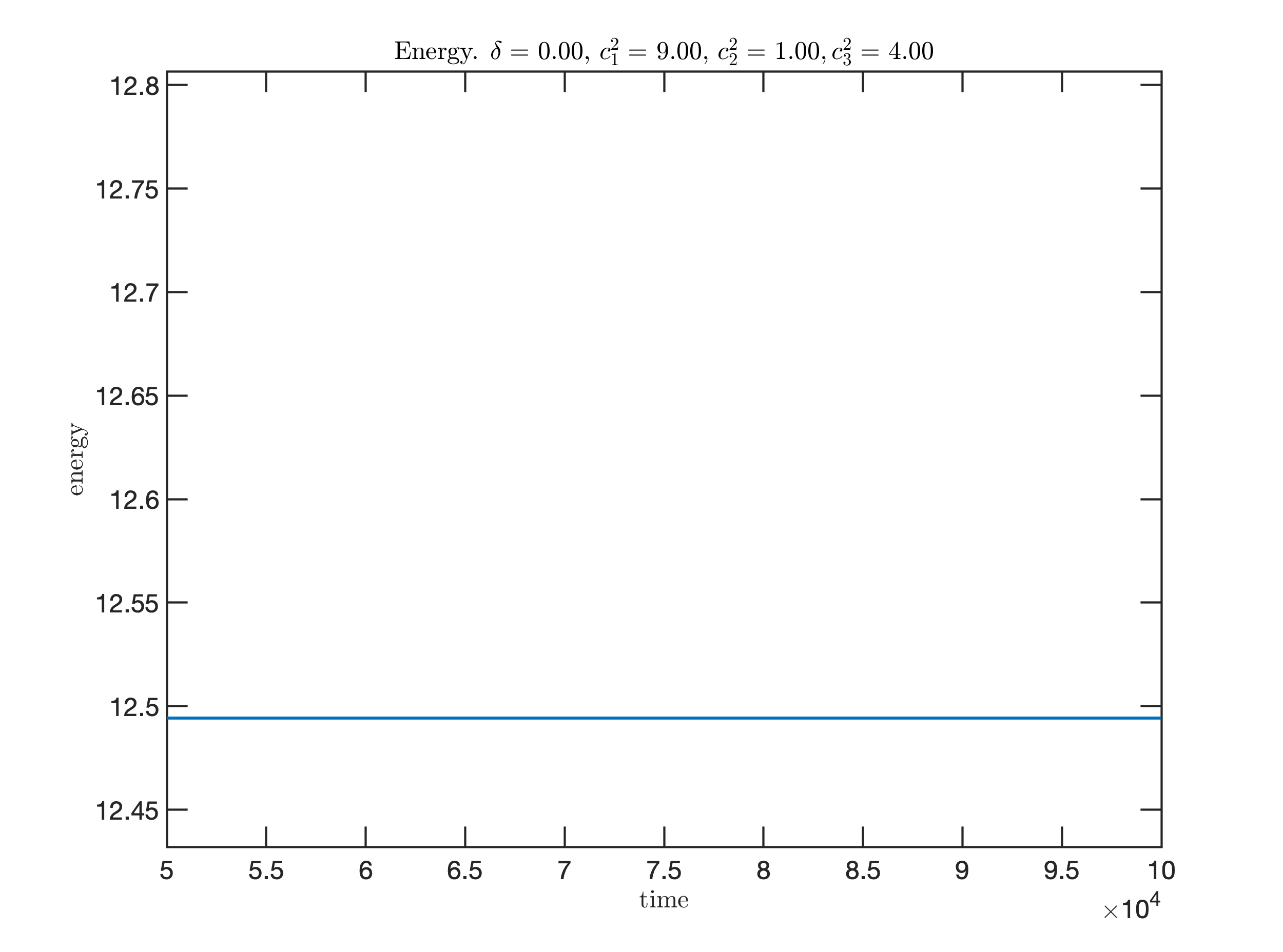}
    \subcaption{No damping/different speed}
    \end{minipage}    
    \caption{No damping}  \label{fig 9}
\end{figure}
%%%%%%%%%%%%%%%%%%%%%%%%%%%%%%%%%%%%%%%%%%%%%%
\begin{figure}[!tbp]
  \centering
  \begin{minipage}[b]{0.45\textwidth}
    \includegraphics[width=\textwidth]{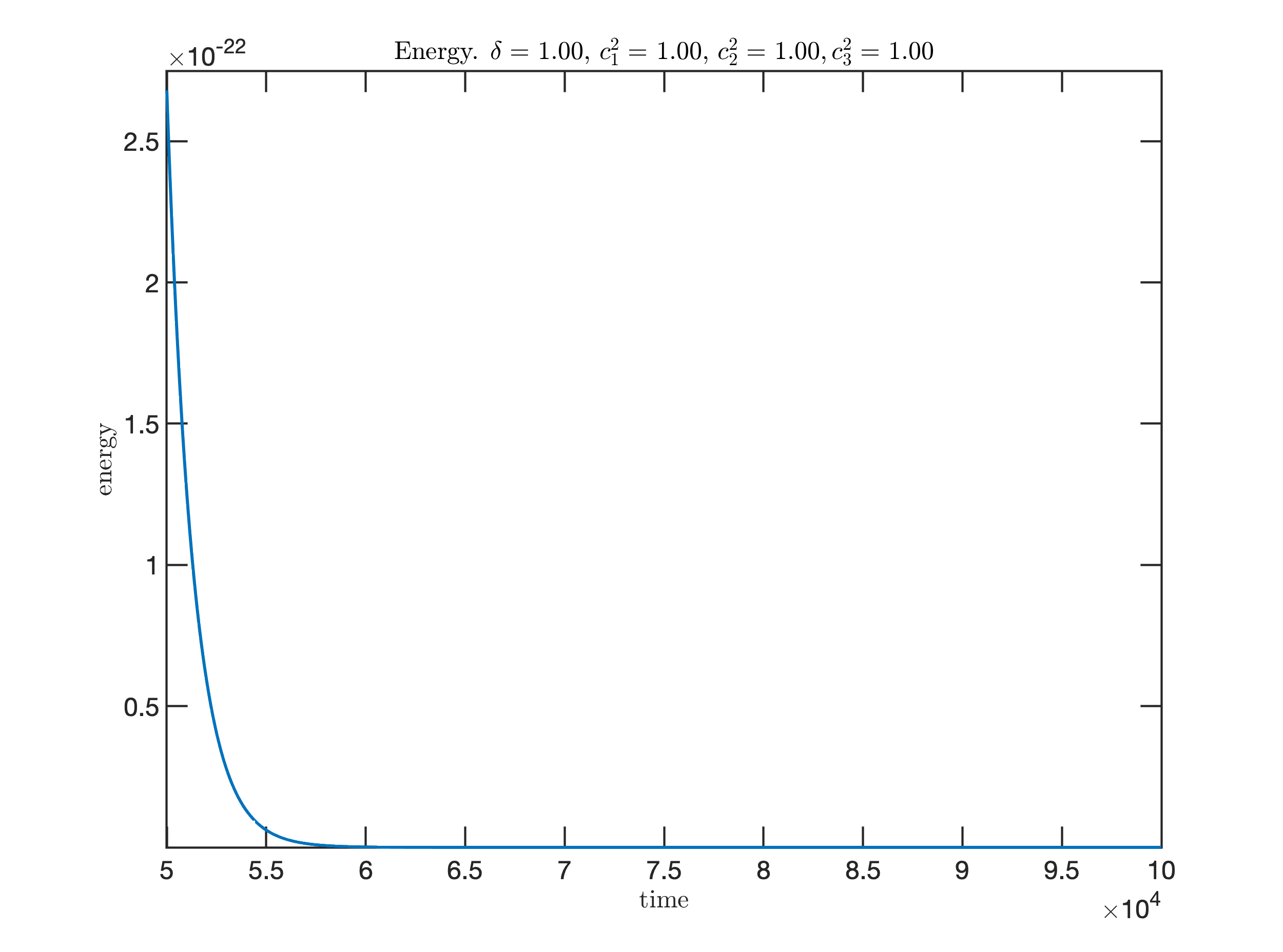}
    \subcaption{Energy} 
  \end{minipage}
  \hfill
  \begin{minipage}[b]{0.45\textwidth}
    \includegraphics[width=\textwidth]{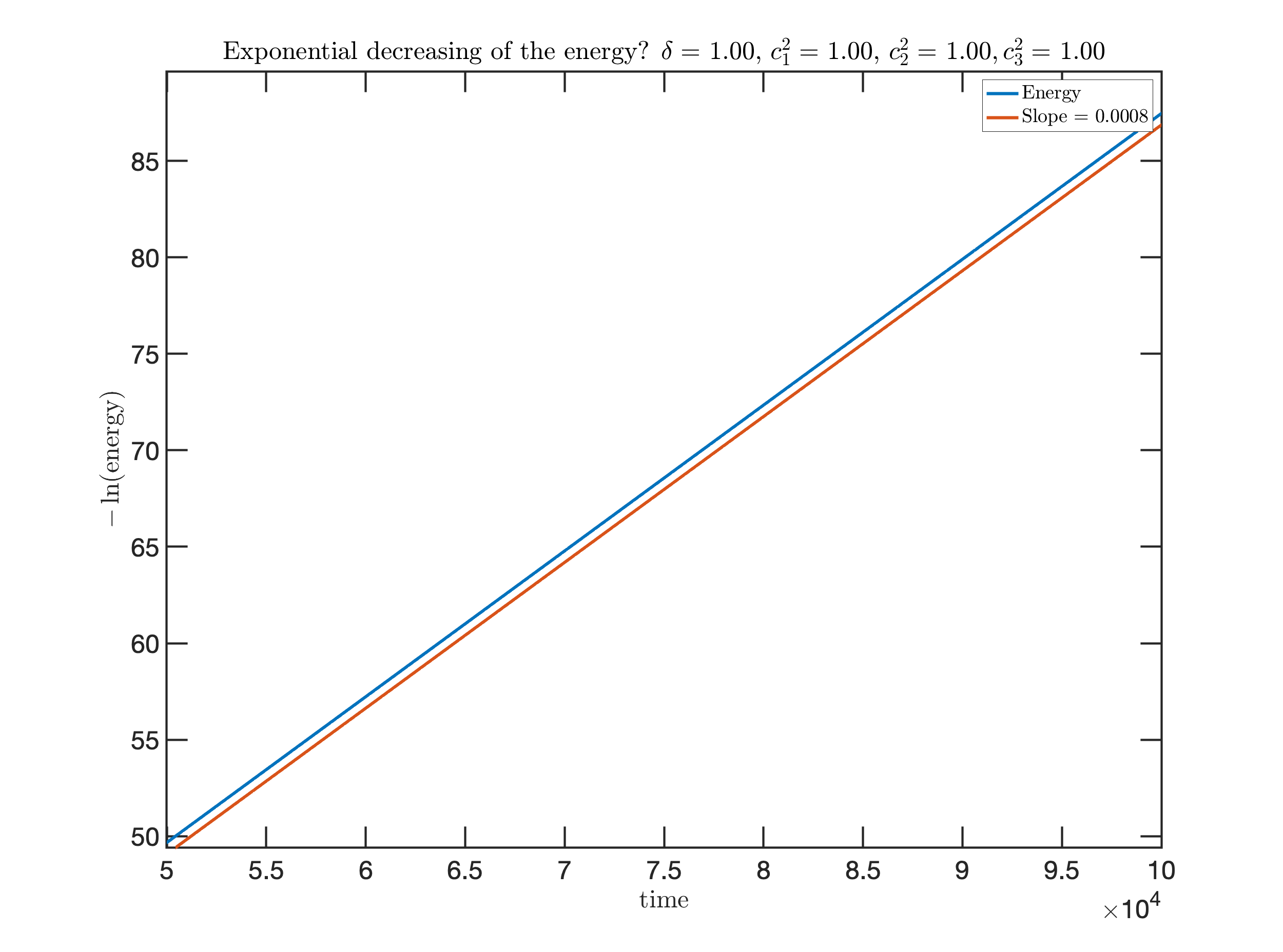}
    \subcaption{Exponential decay} 
  \end{minipage}
  \hfill
  \begin{minipage}[b]{0.45\textwidth}
    \includegraphics[width=\textwidth]{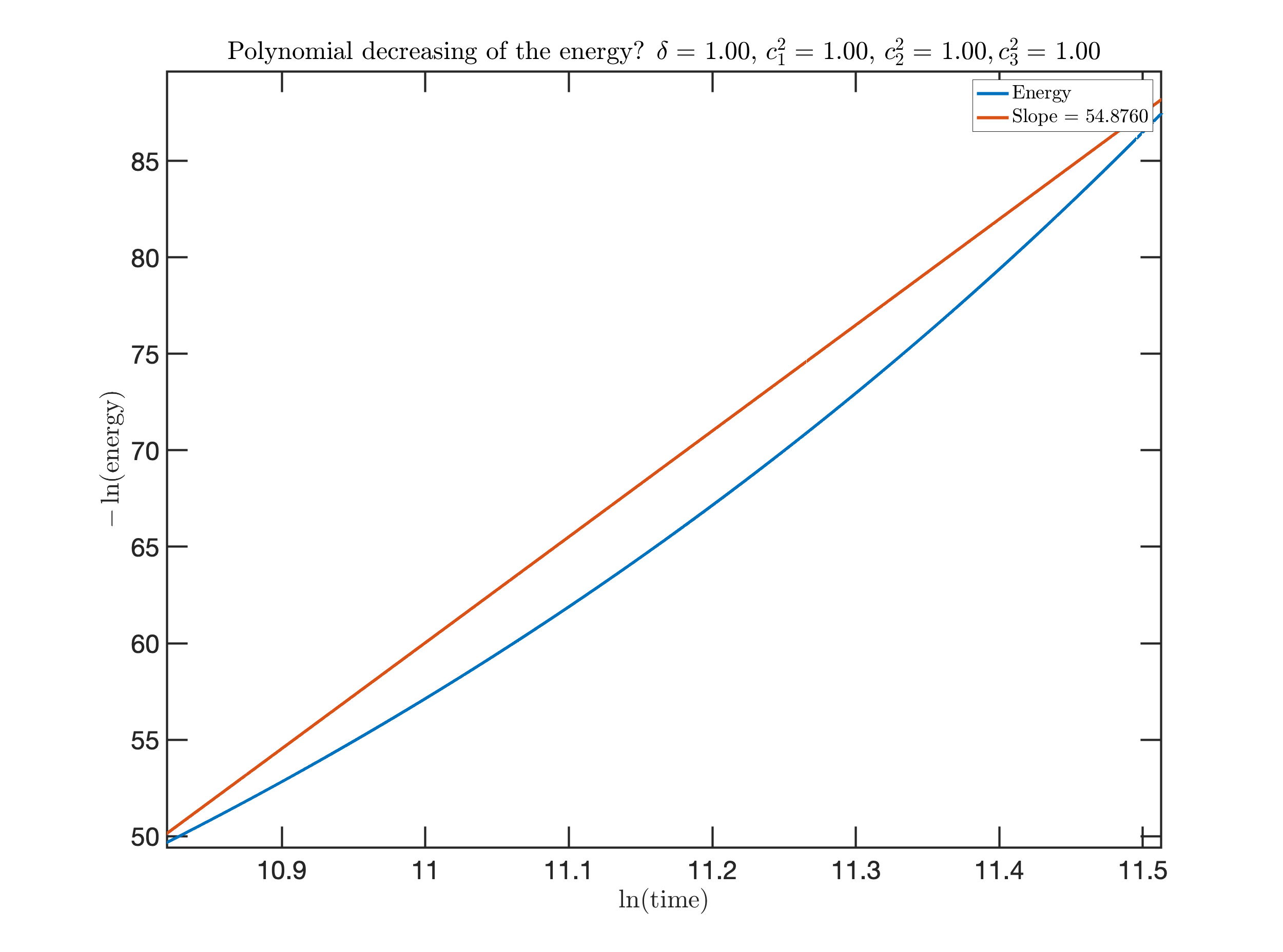}
    \subcaption{Polynomial decay} 
  \end{minipage}
  \hfill
  \begin{minipage}[b]{0.45\textwidth}
    \includegraphics[width=\textwidth]{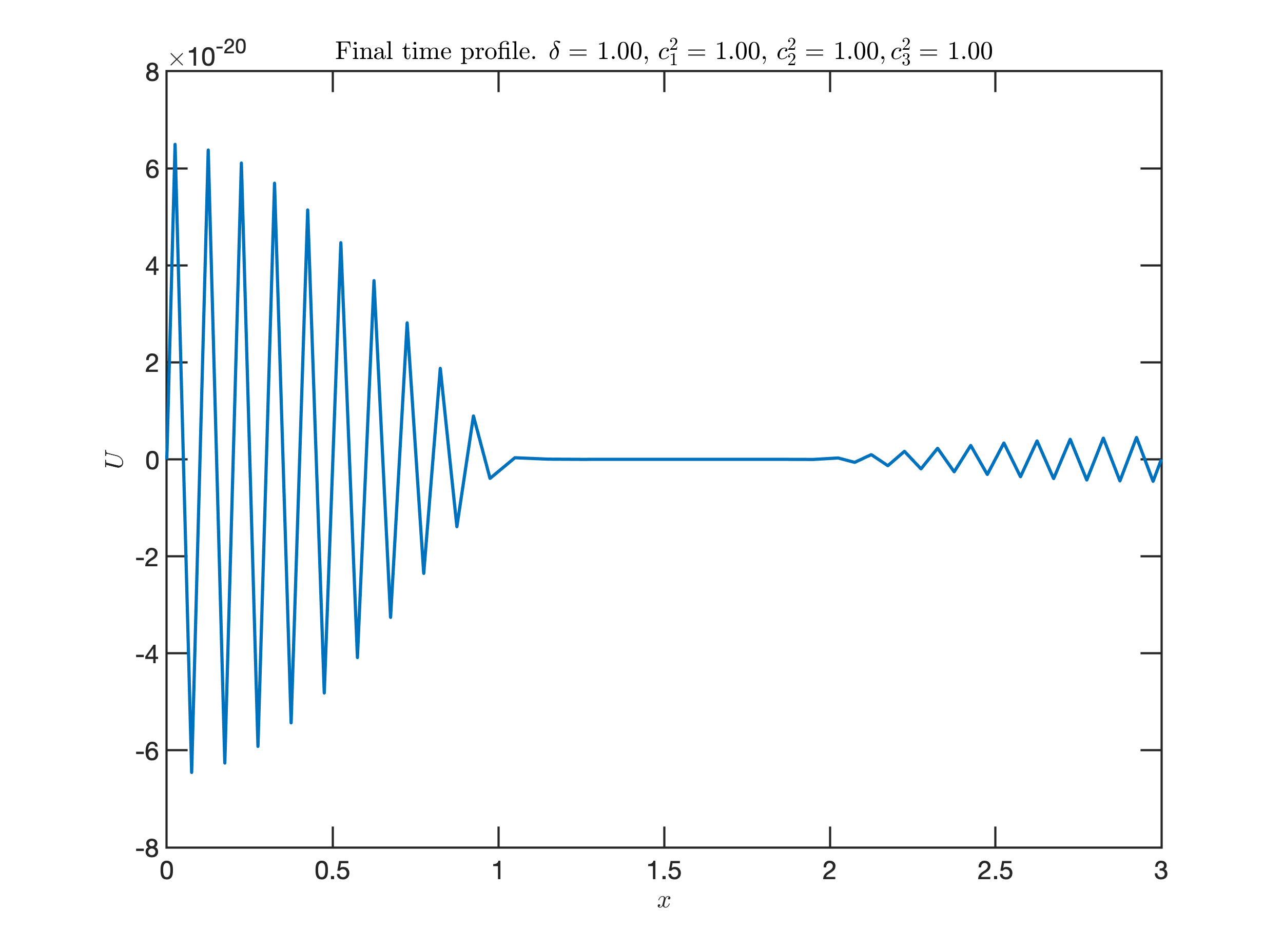}
    \subcaption{Final time profile} 
  \end{minipage}
  \caption{Long time behavior when $C_1^2 = C_2^2 = C_3^2=1$} \label{fig 4}
\end{figure}
%%%%%%%%%%%%%%%%%%%%%%%%%%%%%%%%%%%%%%%%%%%%%%%%%%%%%%%%%%%%%
\begin{figure}[!tbp]
  \centering
  \begin{minipage}[b]{0.45\textwidth}
    \includegraphics[width=\textwidth]{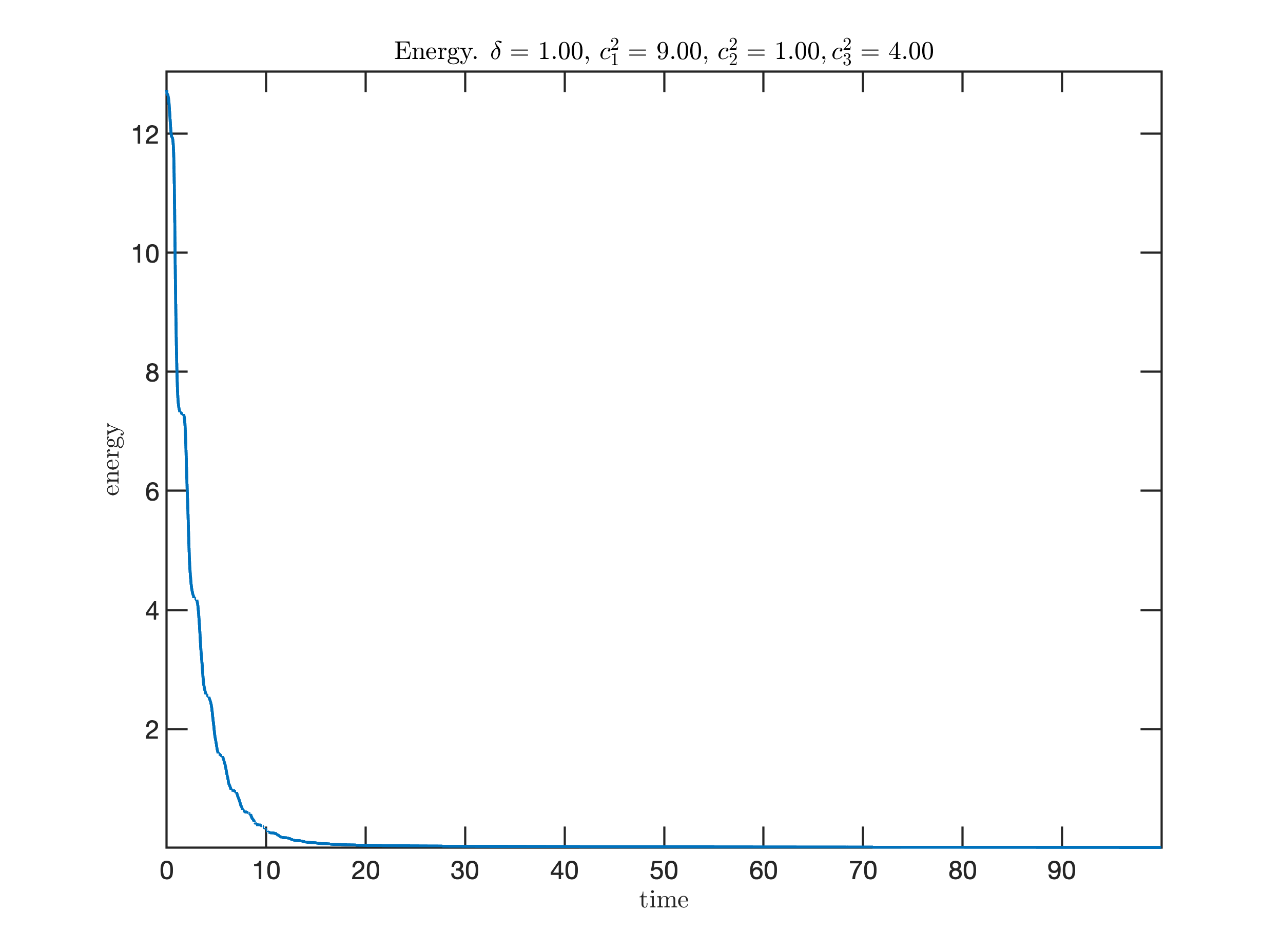}
    \subcaption{Energy ($t=0$ till $t=100$)} 
  \end{minipage}
  \hfill
  \begin{minipage}[b]{0.45\textwidth}
    \includegraphics[width=\textwidth]{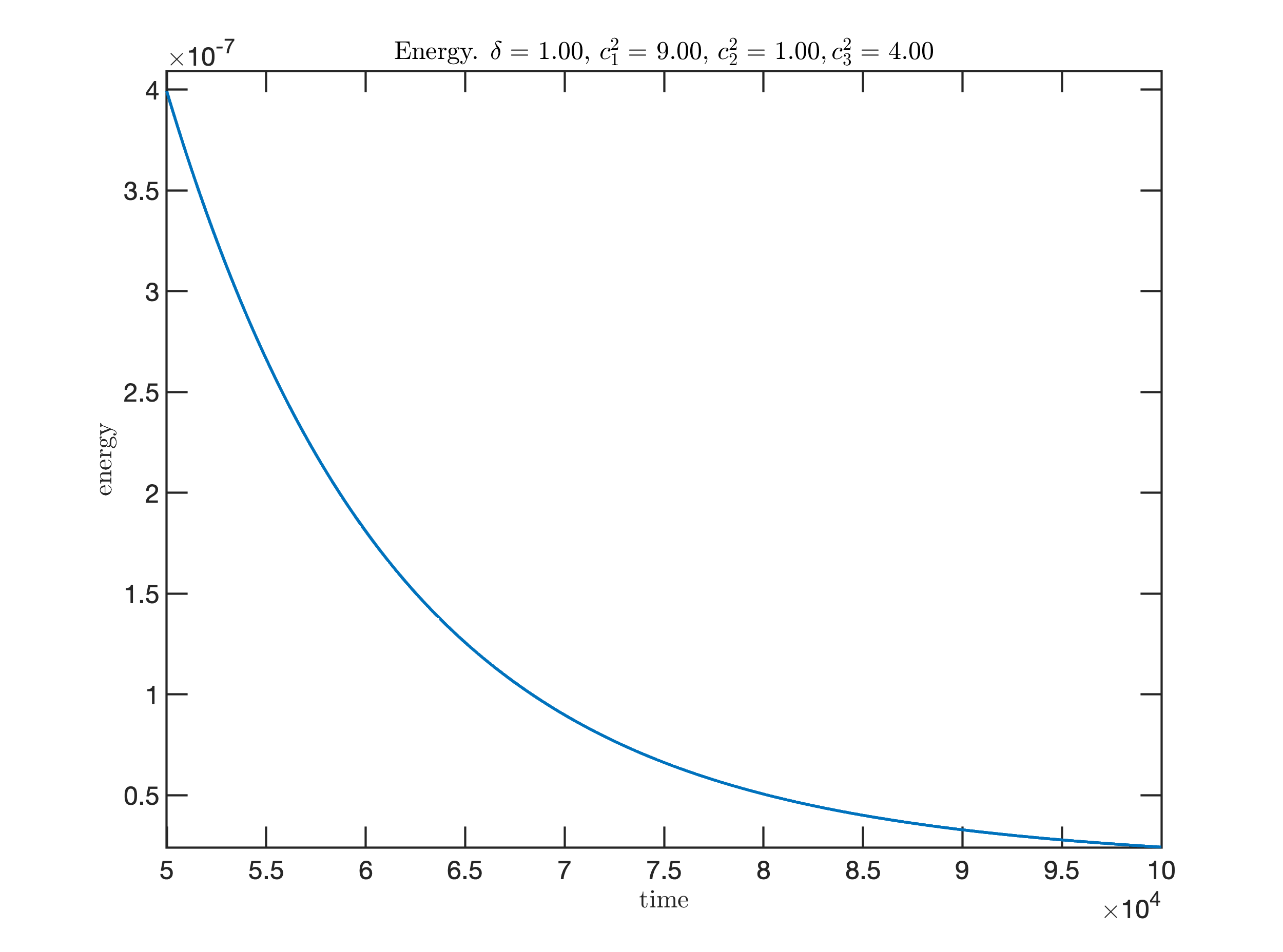}
    \subcaption{ Energy ($T/2$ till $T$)} 
  \end{minipage}
  \hfill
  \begin{minipage}[b]{0.45\textwidth}
    \includegraphics[width=\textwidth]{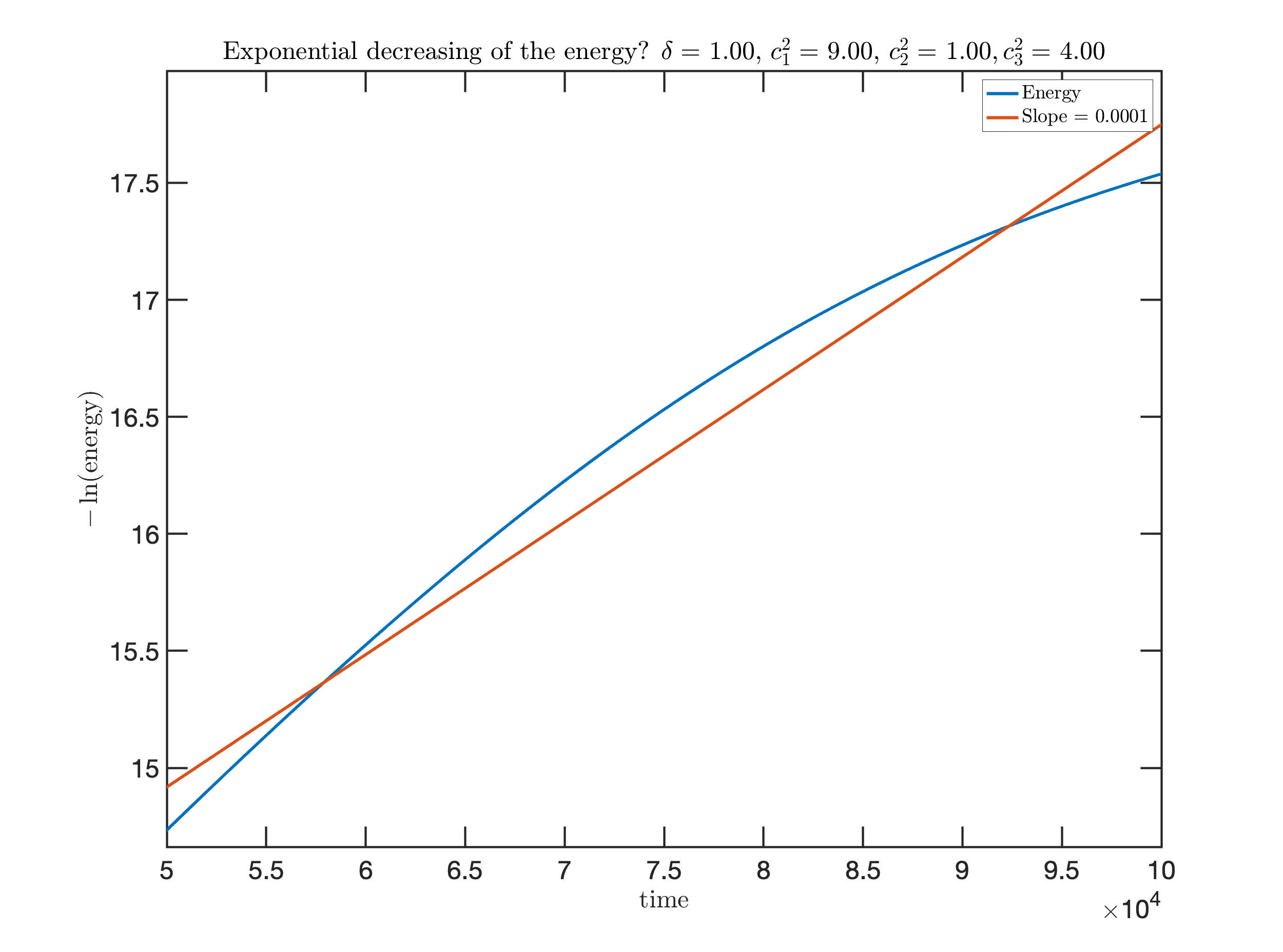}
    \subcaption{Exponential decay} 
  \end{minipage}
  \hfill
  \begin{minipage}[b]{0.45\textwidth}
    \includegraphics[width=\textwidth]{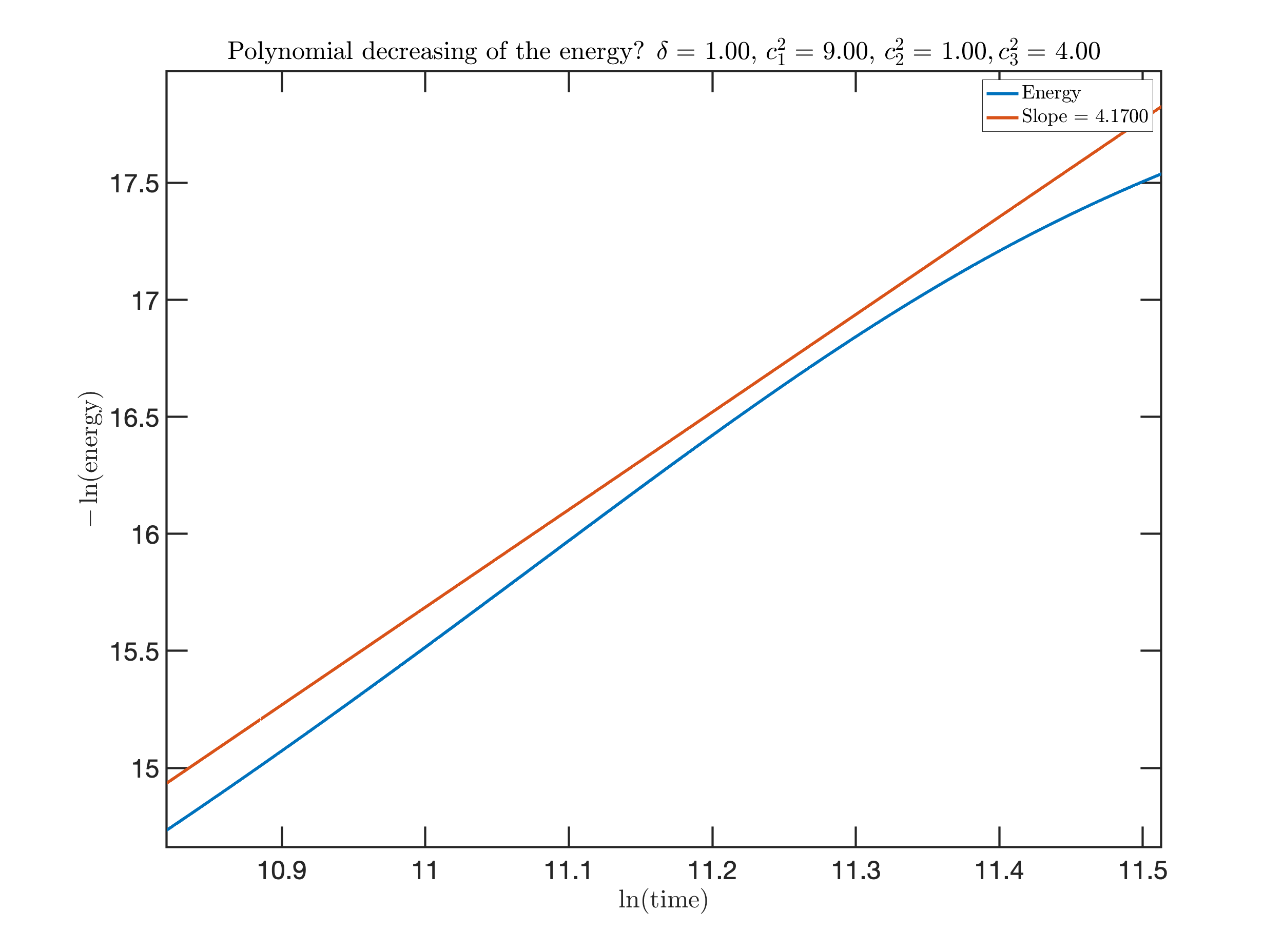}
    \subcaption{Polynomial decay} 
  \end{minipage}
  \hfill
  \begin{minipage}[b]{0.45\textwidth}
    \includegraphics[width=\textwidth]{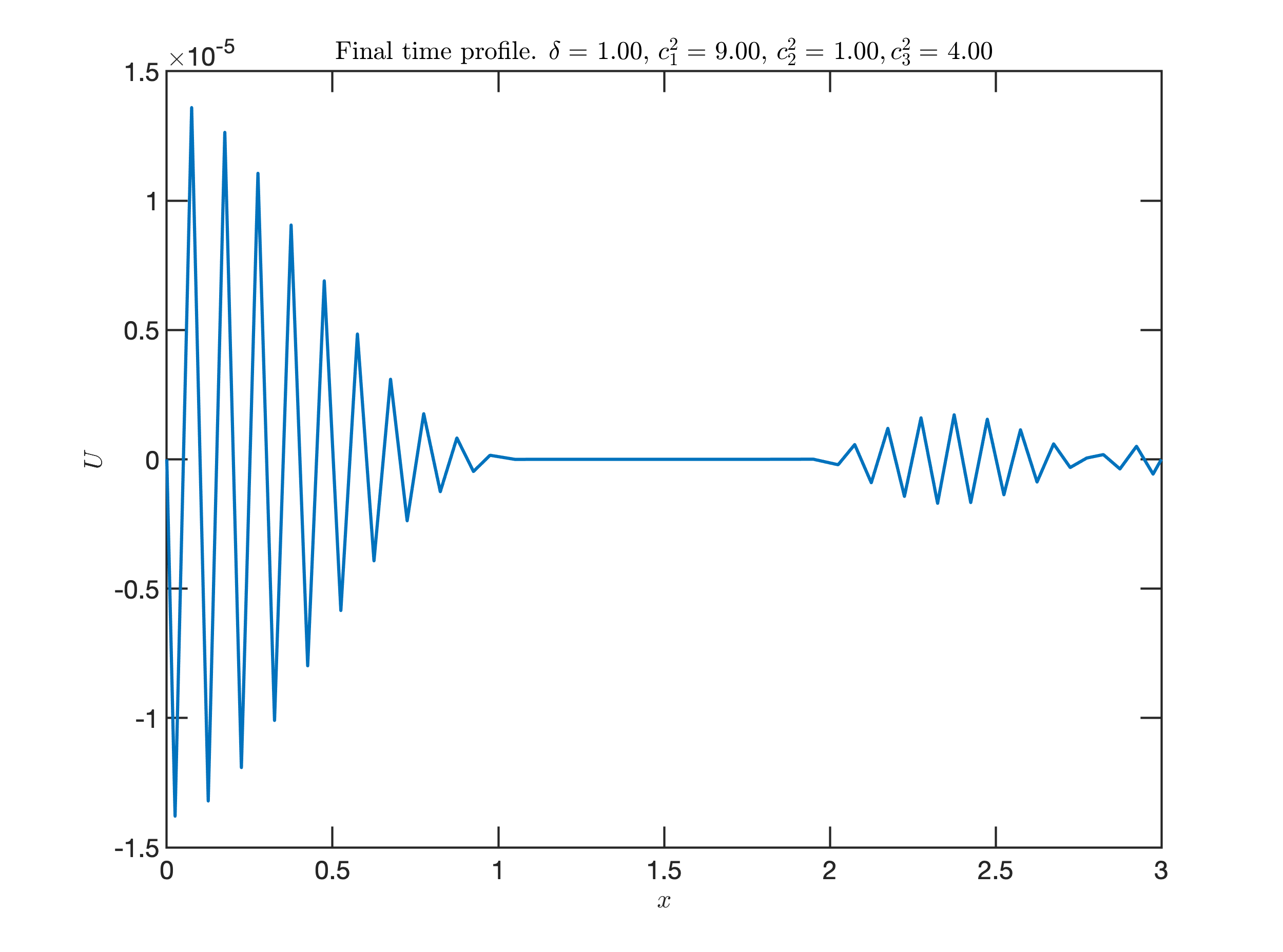}
    \subcaption{Final time profile} 
  \end{minipage}
  \caption{Long time behavior when $C_1^2=9, \  C_2^2 =1 , \  C_3^2=4$} \label{fig 5}
\end{figure}
%%%%%%%%%%%%%%%%%%%%%%%%%%%%%%%%%%%%%%%%%%%%%%%%%%%%%%%%%%%%%%%%
\begin{figure}[!tbp]
  \centering
  \begin{minipage}[b]{0.45\textwidth}
    \includegraphics[width=\textwidth]{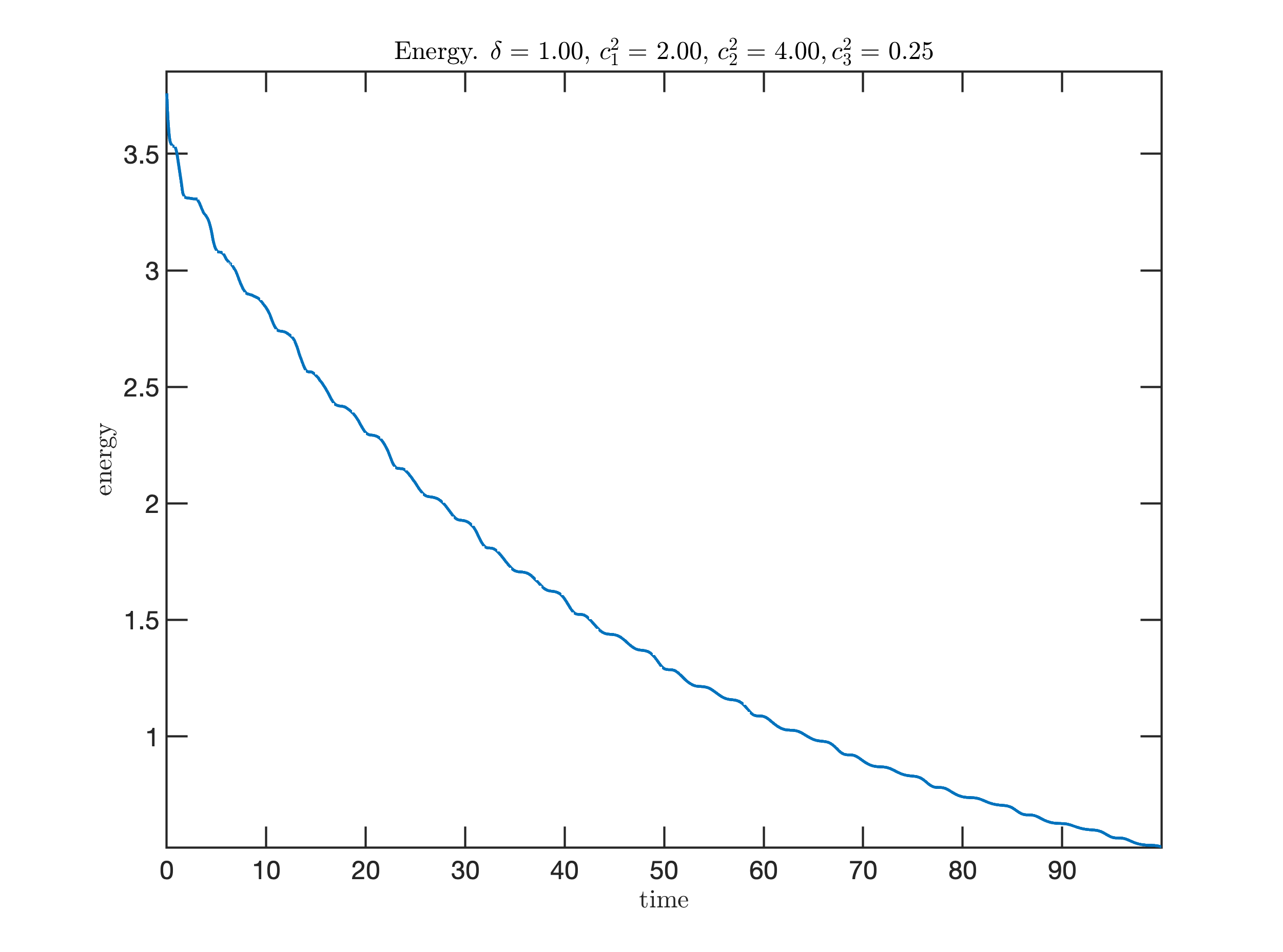}
    \subcaption{Energy ($t=0$ till $t=100$)} 
  \end{minipage}
  \hfill
  \begin{minipage}[b]{0.45\textwidth}
    \includegraphics[width=\textwidth]{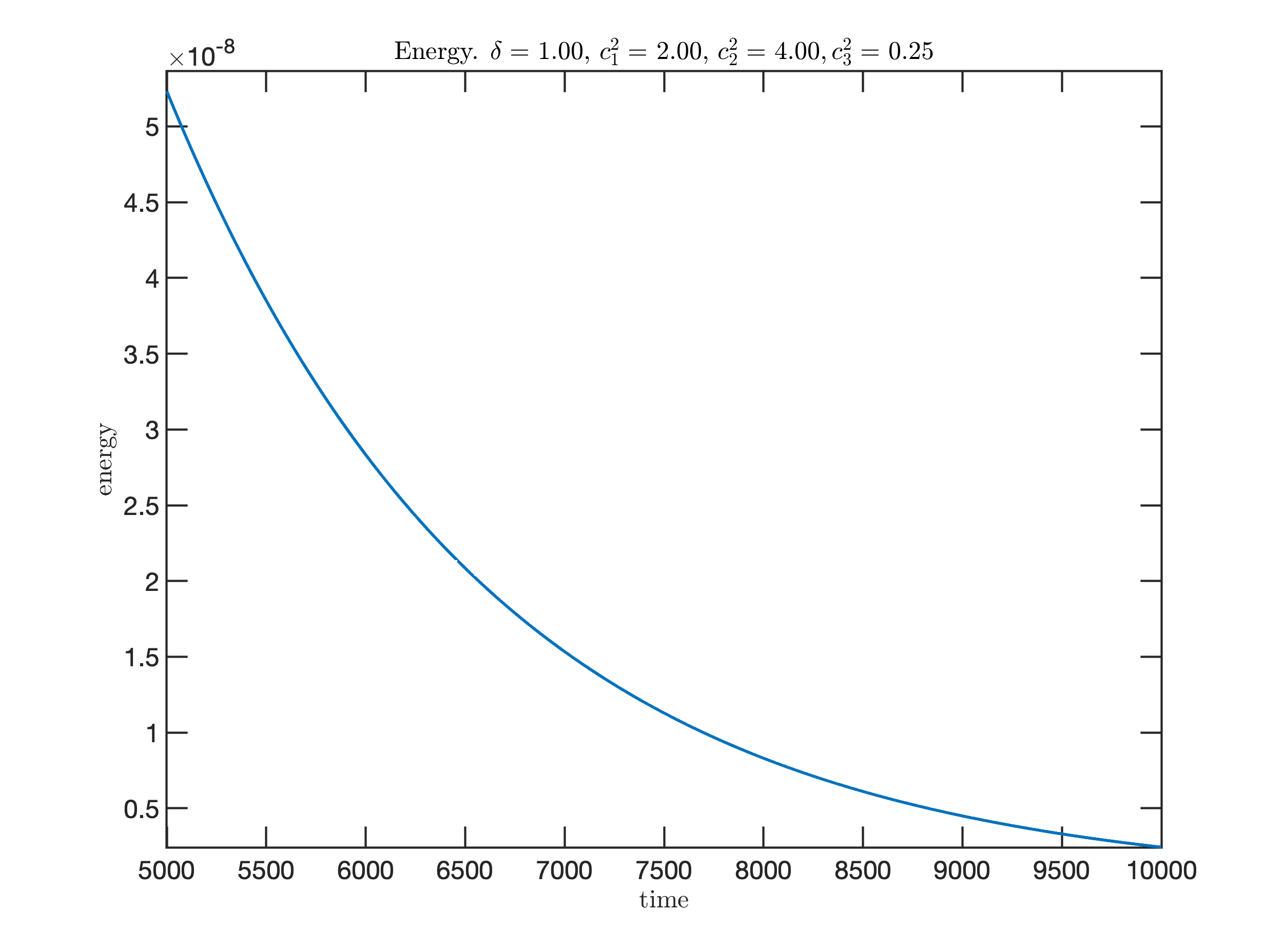}
    \subcaption{ Energy ($T/2$ till $T$)} 
  \end{minipage}
  \hfill
  \begin{minipage}[b]{0.45\textwidth}
    \includegraphics[width=\textwidth]{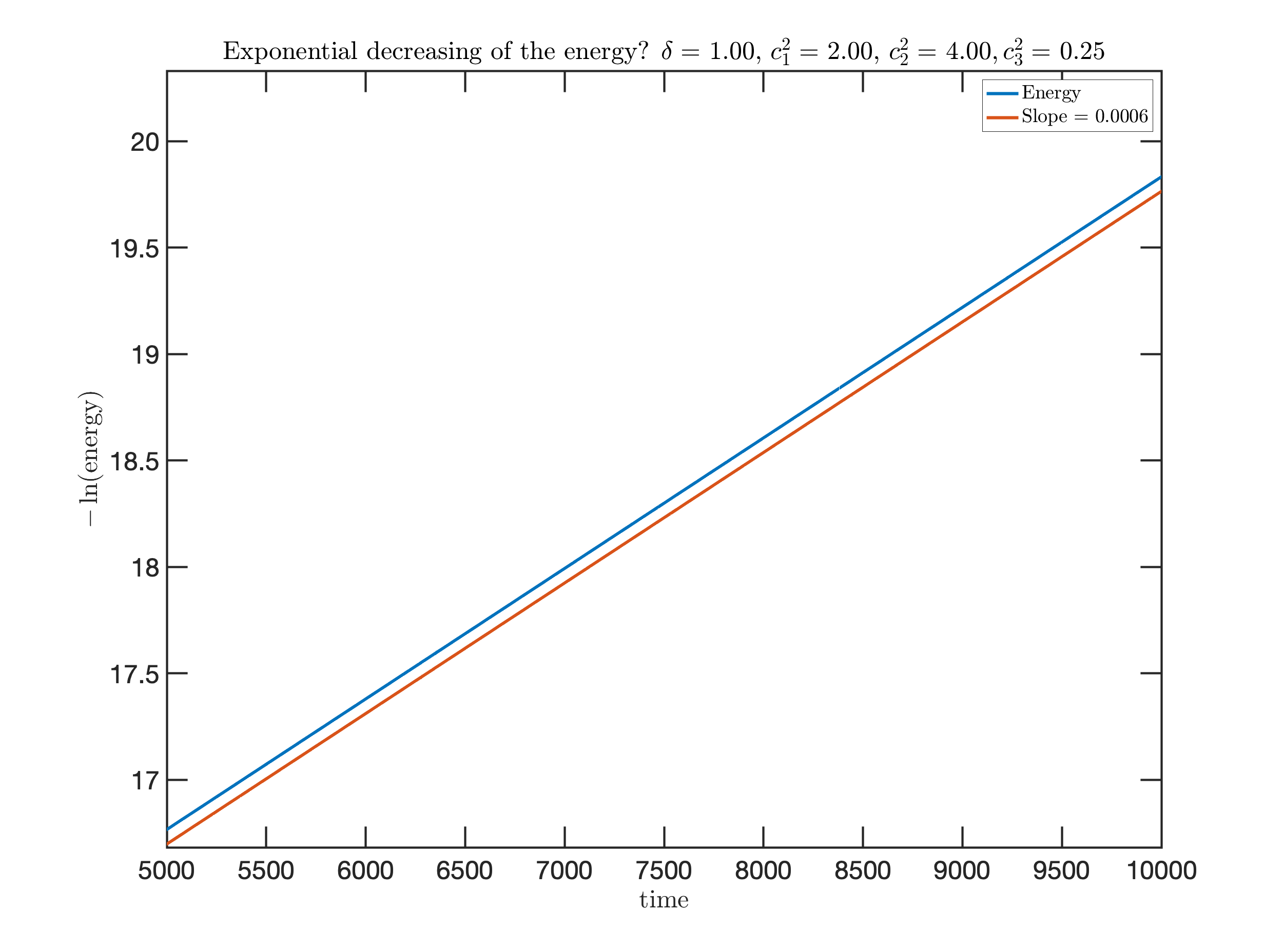}
    \subcaption{Exponential decay} 
  \end{minipage}
  \hfill
  \begin{minipage}[b]{0.45\textwidth}
    \includegraphics[width=\textwidth]{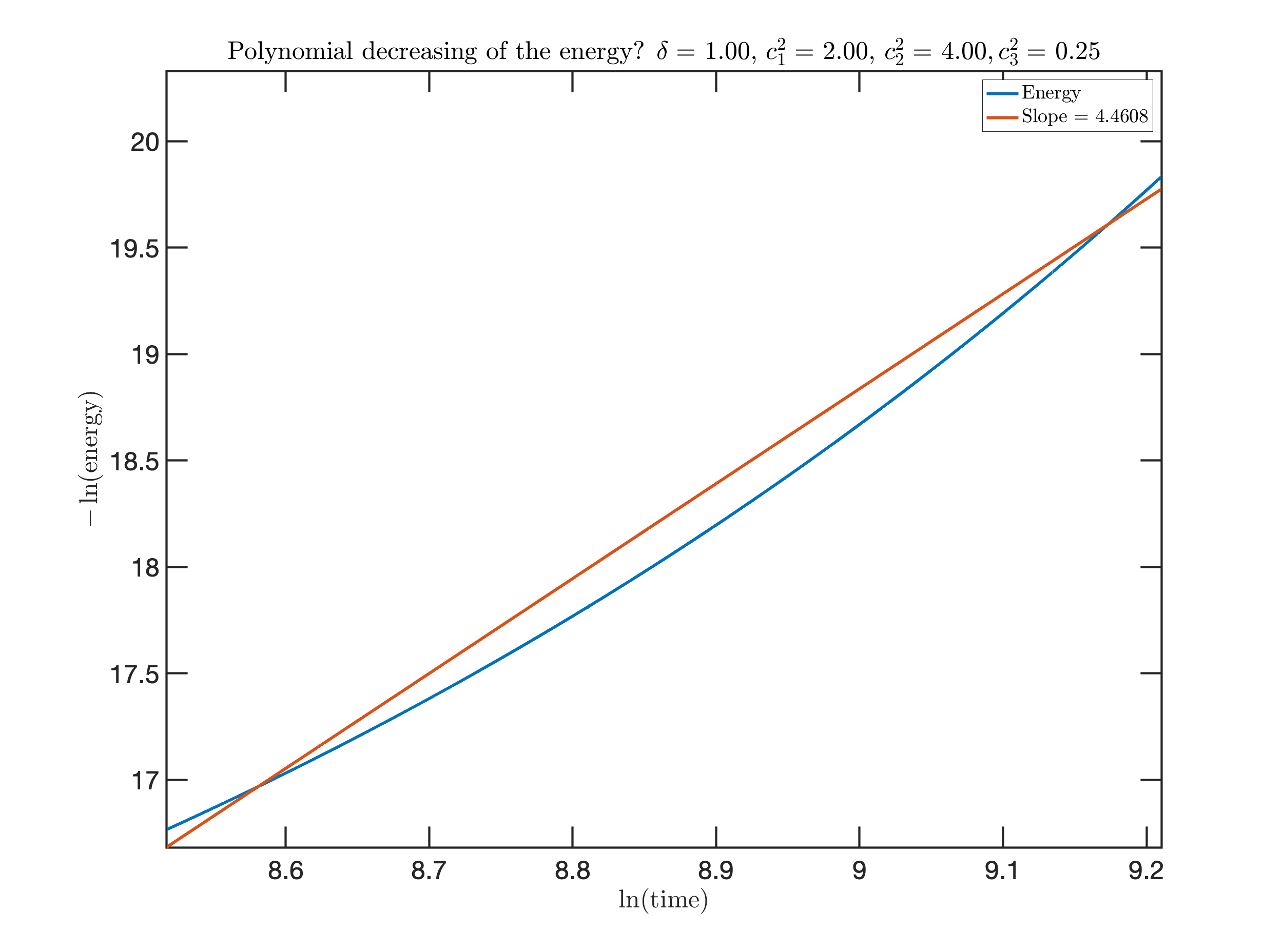}
    \subcaption{Polynomial decay} 
  \end{minipage}
  \hfill
  \begin{minipage}[b]{0.45\textwidth}
    \includegraphics[width=\textwidth]{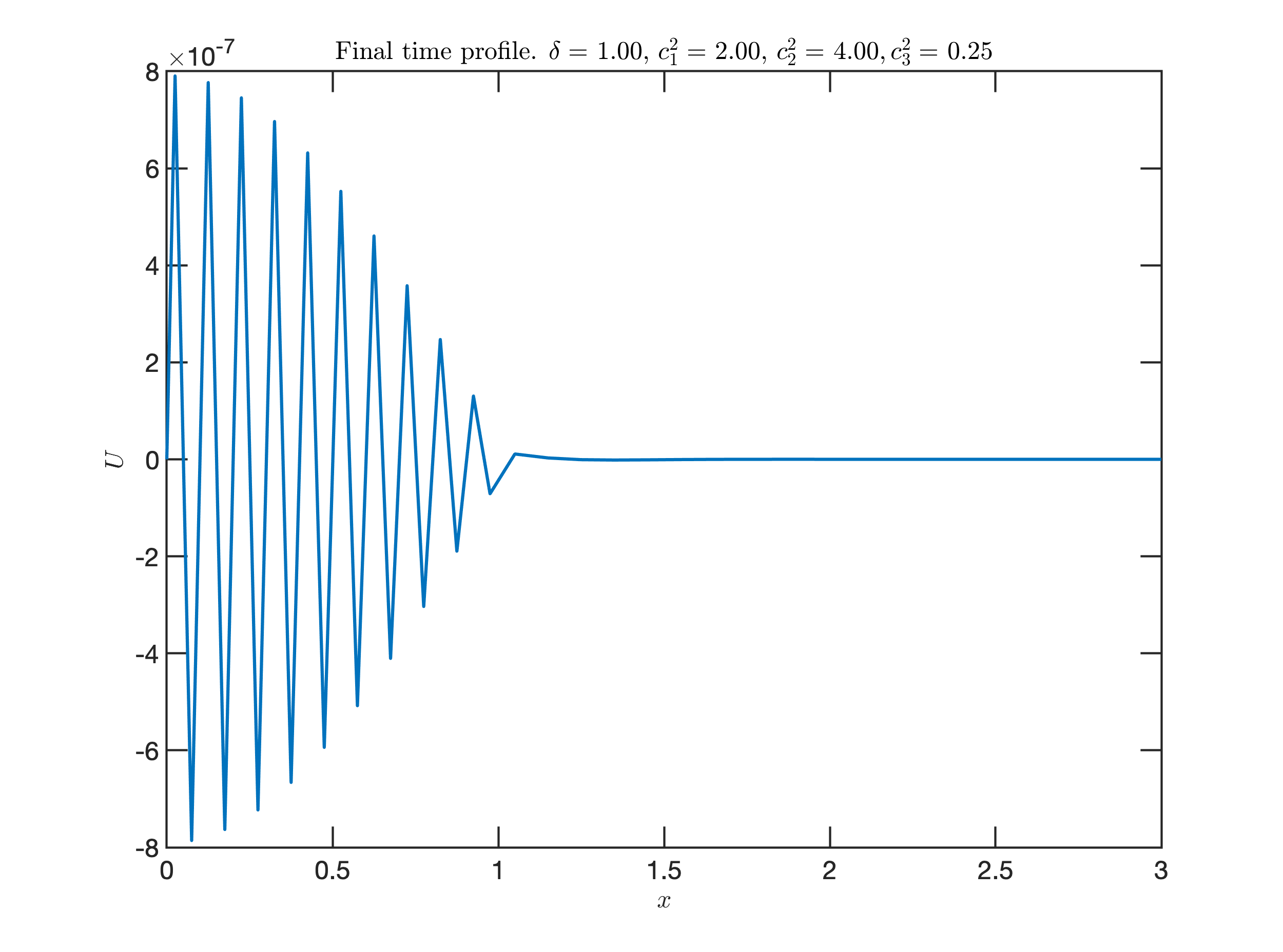}
    \subcaption{ Final time profile} 
  \end{minipage}
  \caption{Long time behavior when $C_1^2=2, \  C_2^2 =4 , \  C_3^2=0.25$} \label{fig 6}
\end{figure}
%%%%%%%%%%%%%%%%%%%%%%%%%%%%%%%%%%%%%%%%%%%%%%%%%%%%%%%%%%%%%%%%
\begin{figure}[!tbp]
  \centering
  \begin{minipage}[b]{0.45\textwidth}
    \includegraphics[width=\textwidth]{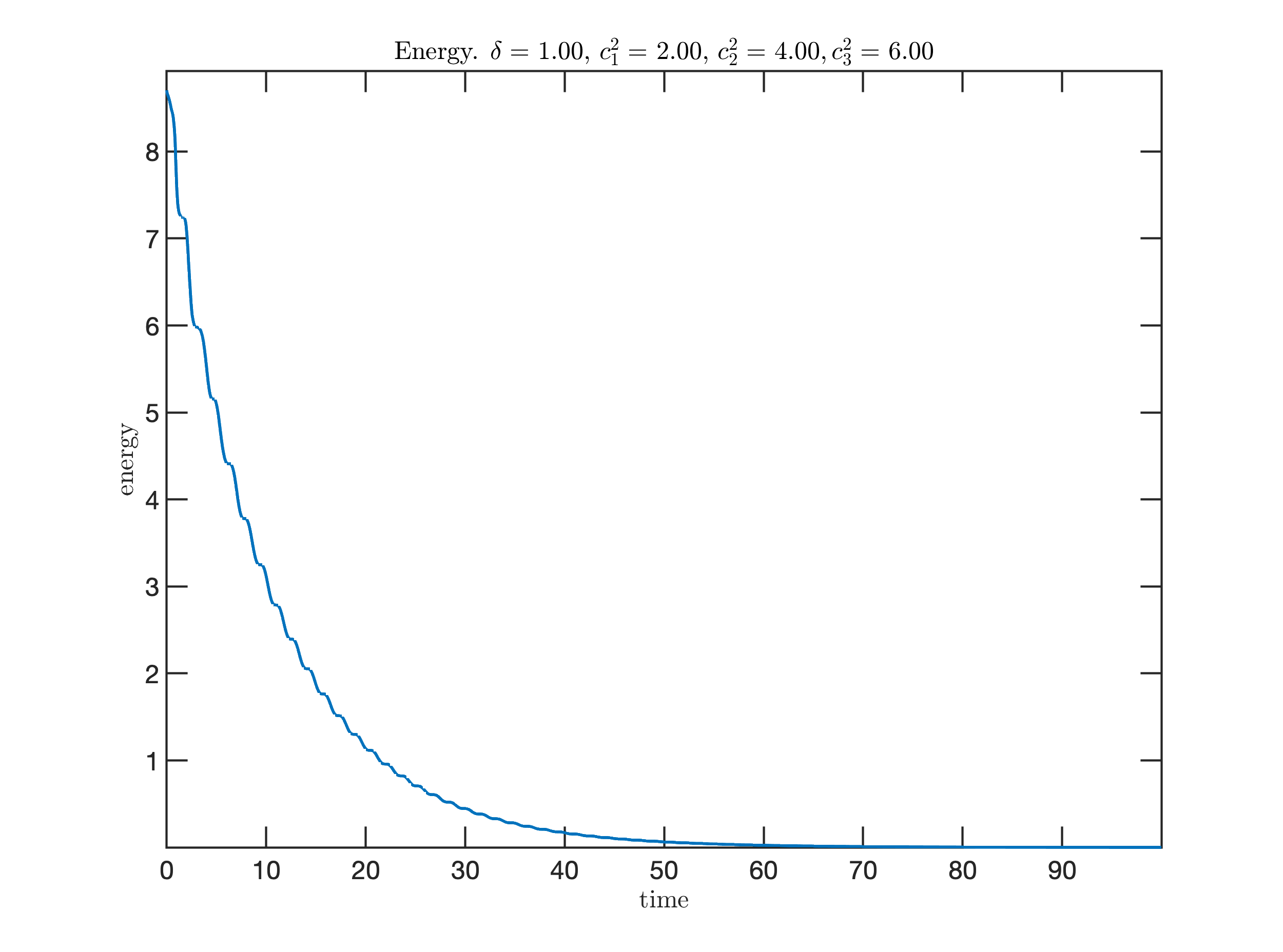}
    \subcaption{Energy ($t=0$ till $t=100$)} 
  \end{minipage}
  \hfill
  \begin{minipage}[b]{0.45\textwidth}
    \includegraphics[width=\textwidth]{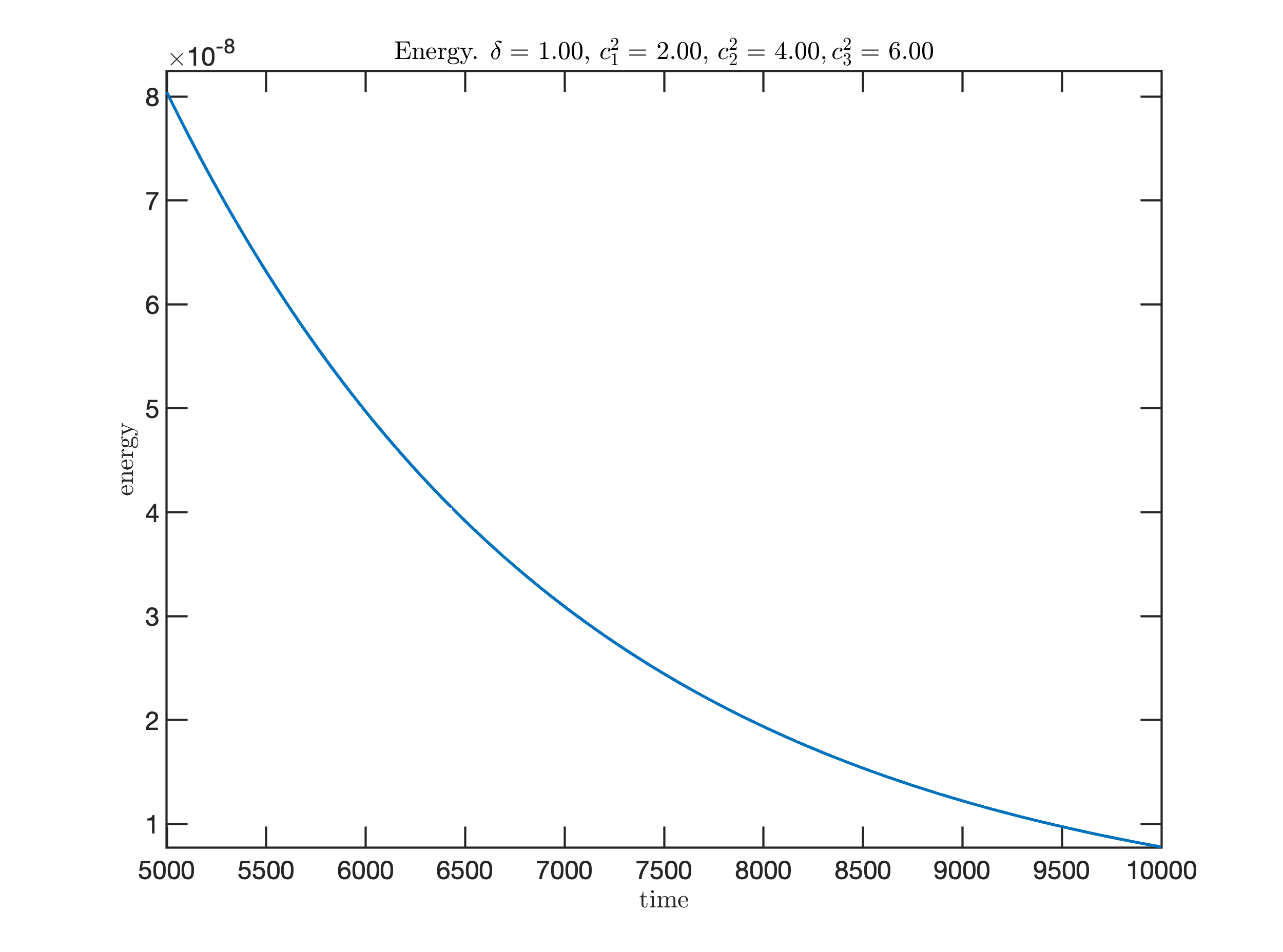}
    \subcaption{ Energy ($T/2$ till $T$)} 
  \end{minipage}
  \hfill
  \begin{minipage}[b]{0.45\textwidth}
    \includegraphics[width=\textwidth]{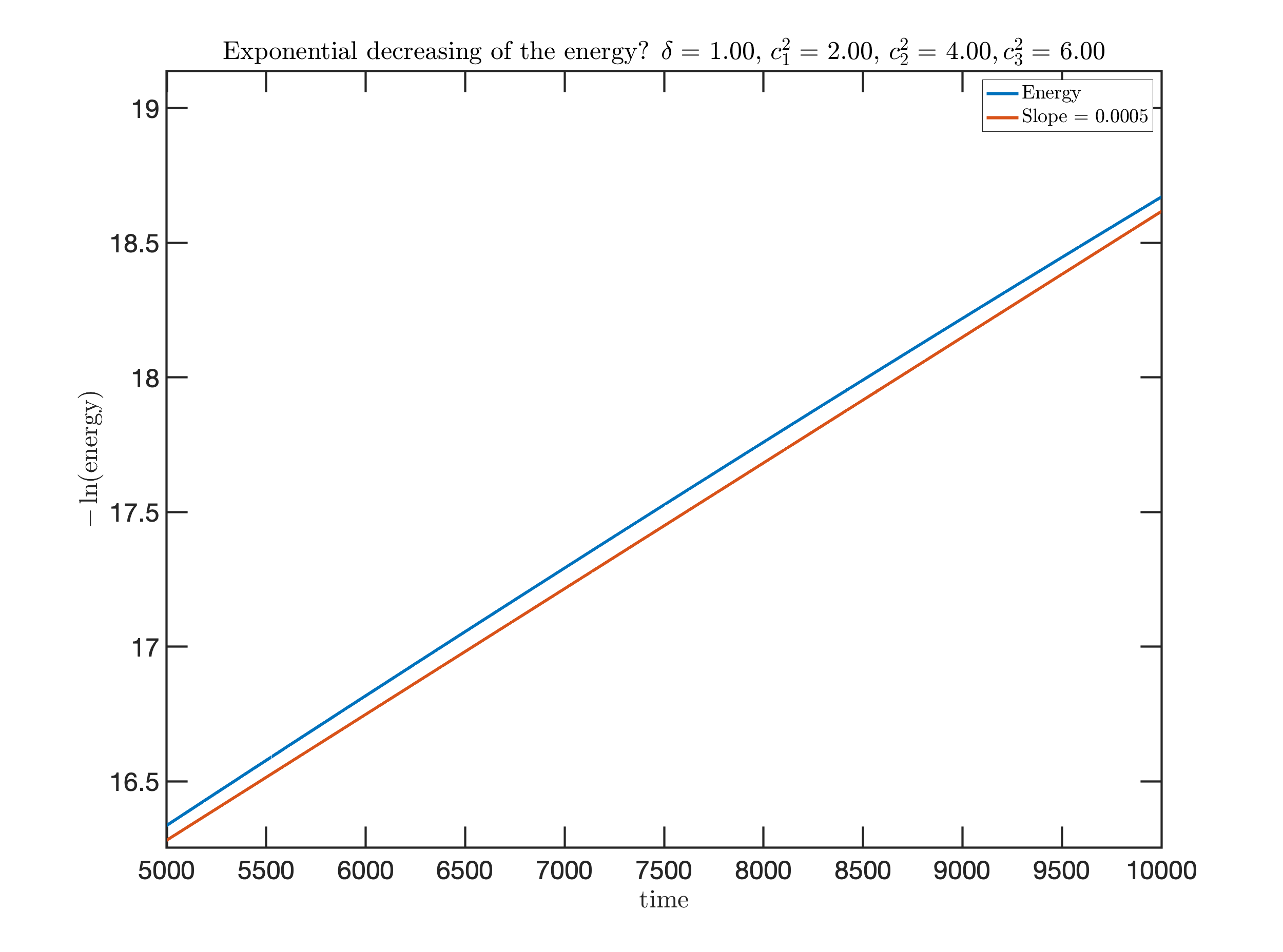}
    \subcaption{Exponential decay} 
  \end{minipage}
  \hfill
  \begin{minipage}[b]{0.45\textwidth}
    \includegraphics[width=\textwidth]{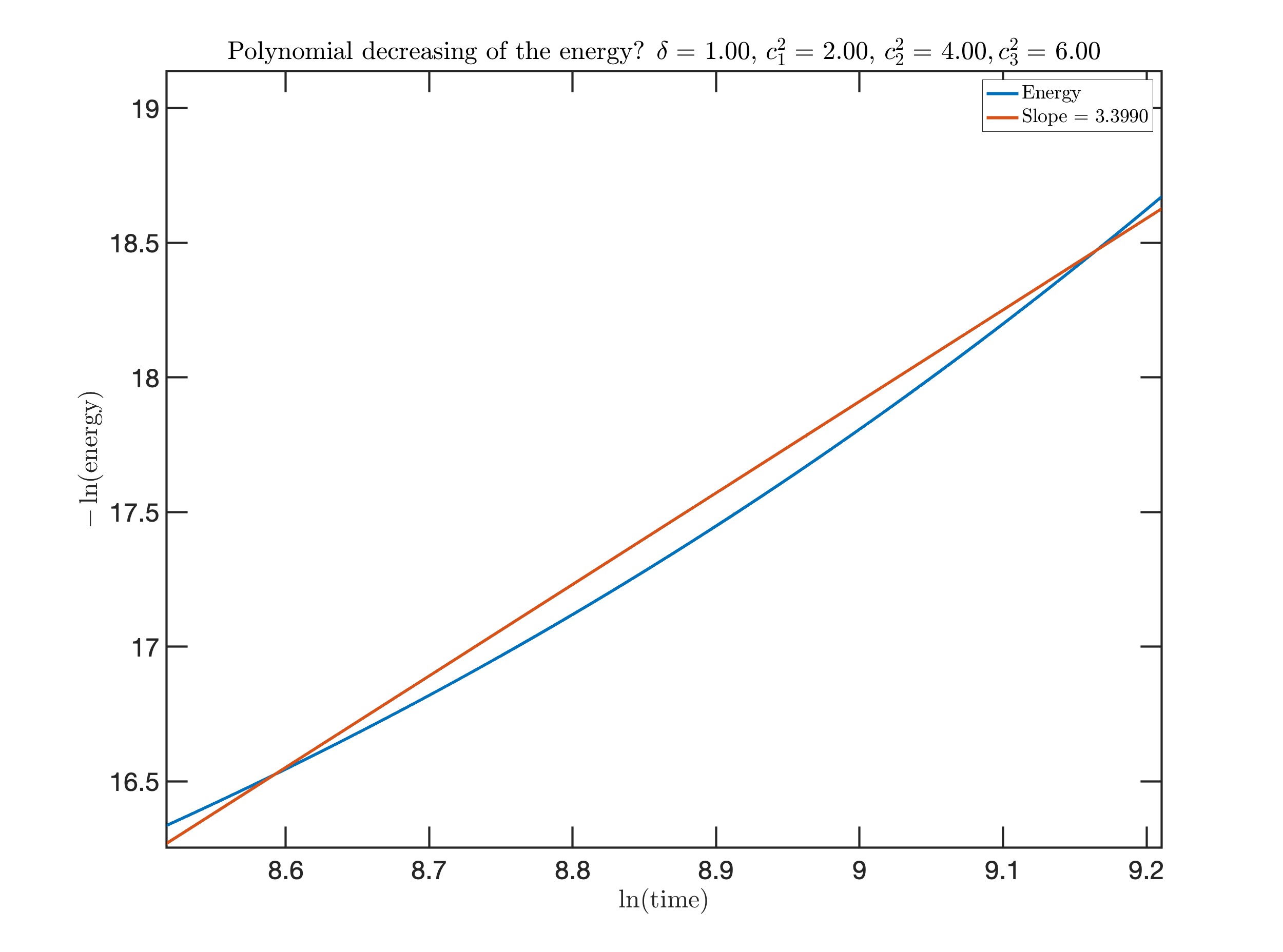}
    \subcaption{Polynomial decay} 
  \end{minipage}
  \hfill
  \begin{minipage}[b]{0.45\textwidth}
    \includegraphics[width=\textwidth]{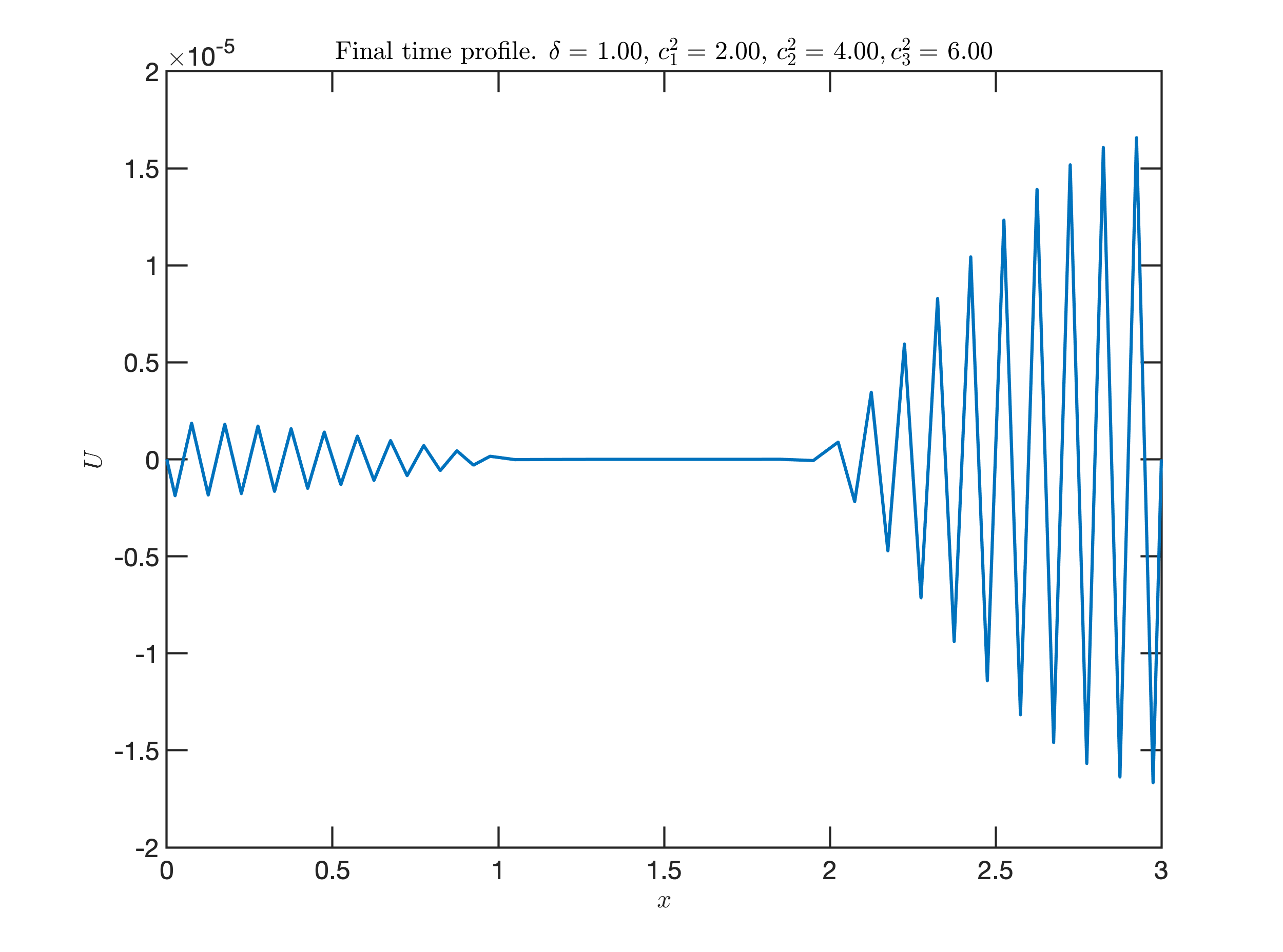}
    \subcaption{ Final time profile} 
  \end{minipage}
  \caption{Long time behavior when $C_1^2=2, \  C_2^2 =4 , \  C_3^2=6$} \label{fig 7}
\end{figure}
%%%%%%%%%%%%%%%%%%%%%%%%%%%%%%%%%%%%%%%%%%%%%%%%%%%%%%%%%%%%%%%%
\begin{figure}[!tbp]
  \centering
  \begin{minipage}[b]{0.45\textwidth}
    \includegraphics[width=\textwidth]{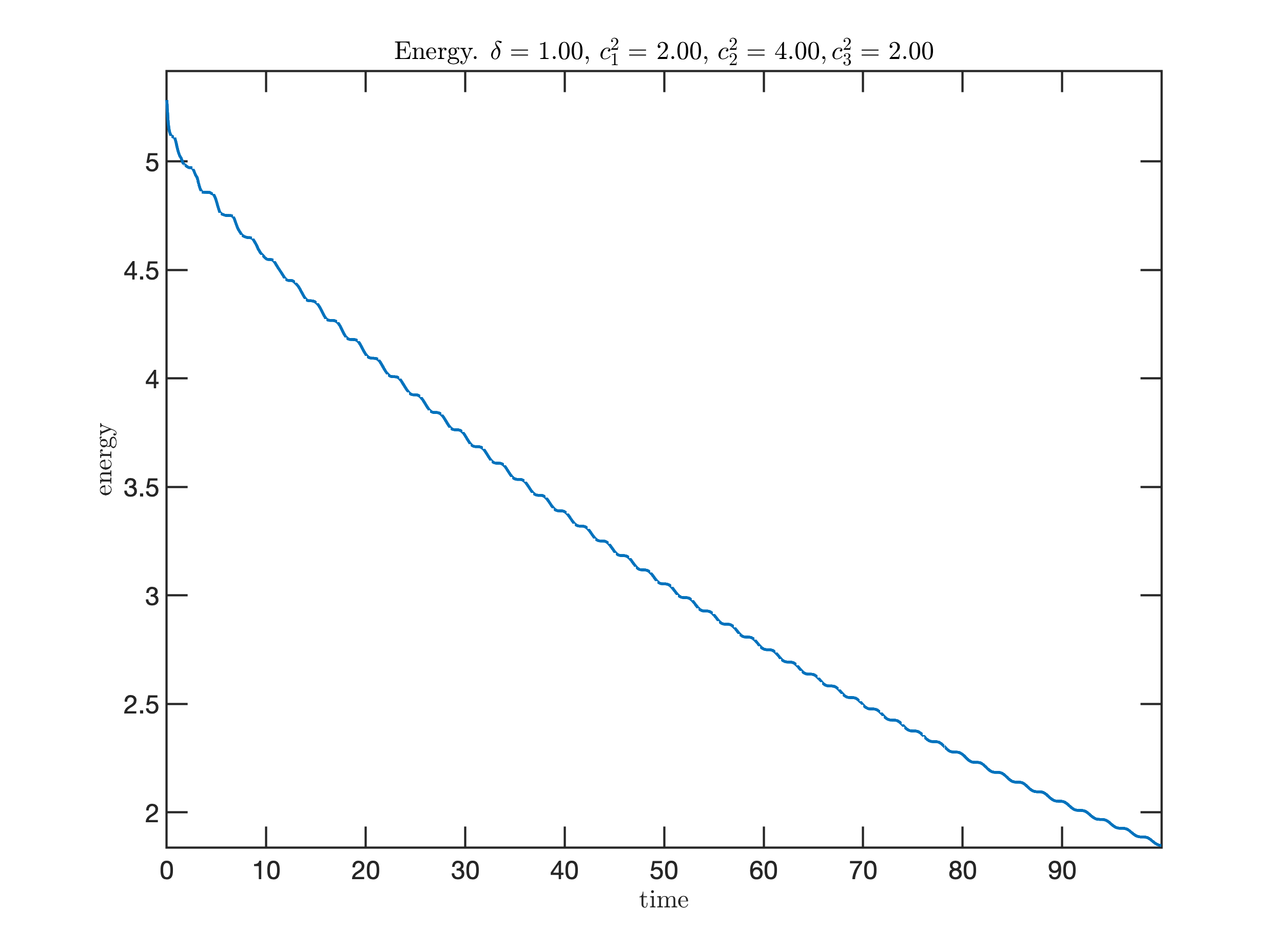}
    \subcaption{Energy ($t=0$ till $t=100$)} 
  \end{minipage}
  \hfill
  \begin{minipage}[b]{0.45\textwidth}
    \includegraphics[width=\textwidth]{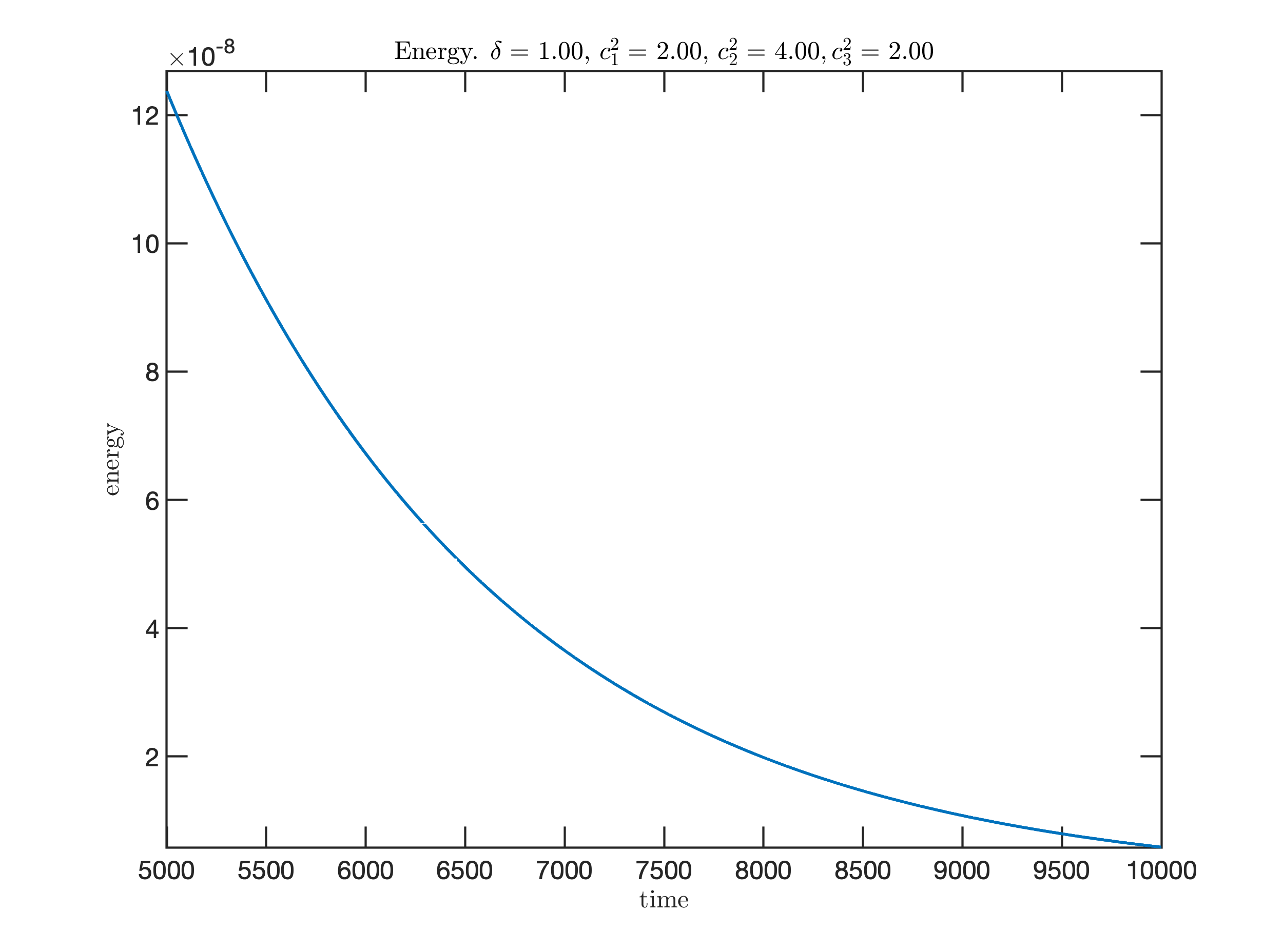}
    \subcaption{ Energy ($T/2$ till $T$)} 
  \end{minipage}
  \hfill
  \begin{minipage}[b]{0.45\textwidth}
    \includegraphics[width=\textwidth]{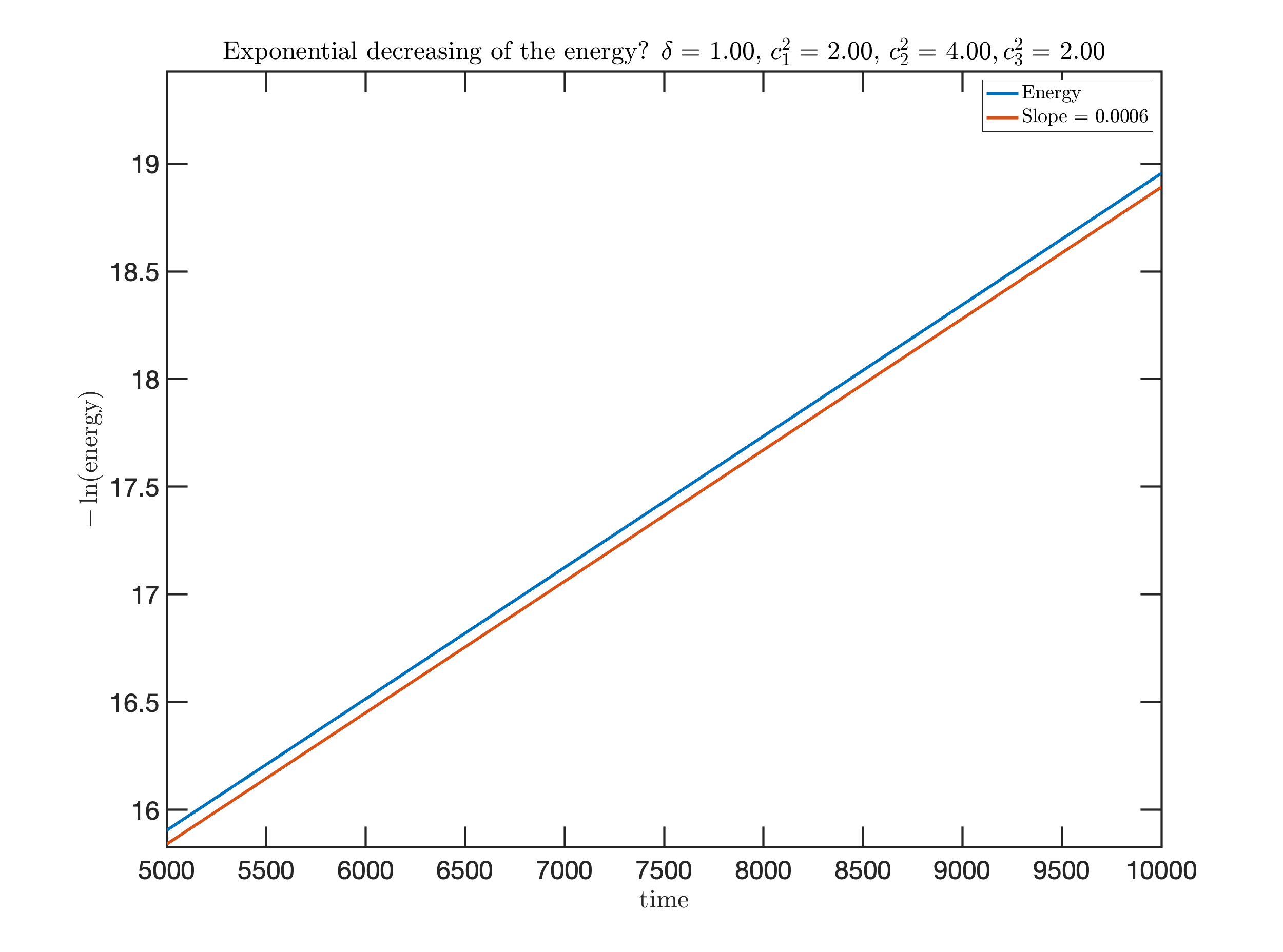}
    \subcaption{Exponential decay} 
  \end{minipage}
  \hfill
  \begin{minipage}[b]{0.45\textwidth}
    \includegraphics[width=\textwidth]{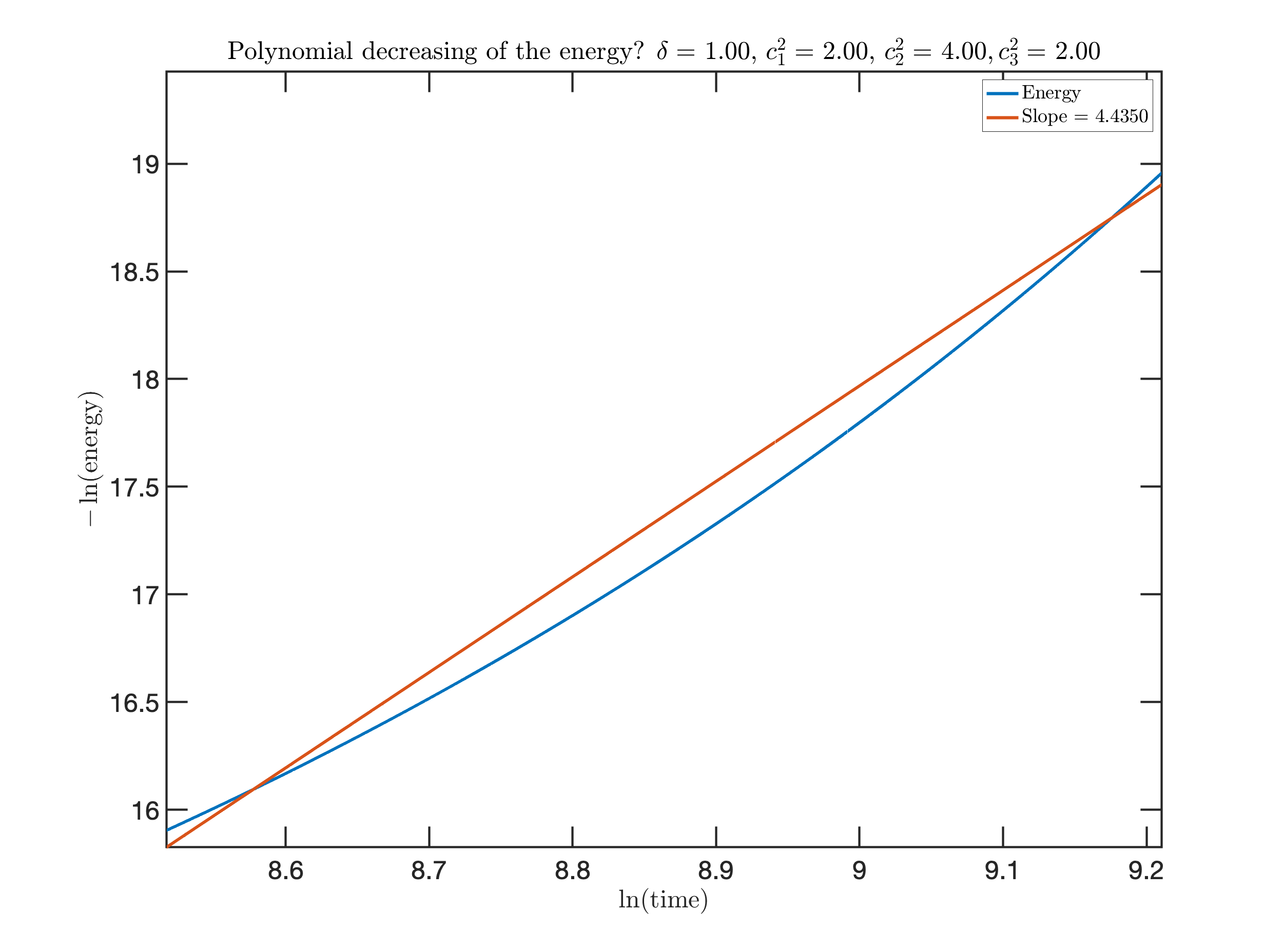}
    \subcaption{Polynomial decay} 
  \end{minipage}
  \hfill
  \begin{minipage}[b]{0.45\textwidth}
    \includegraphics[width=\textwidth]{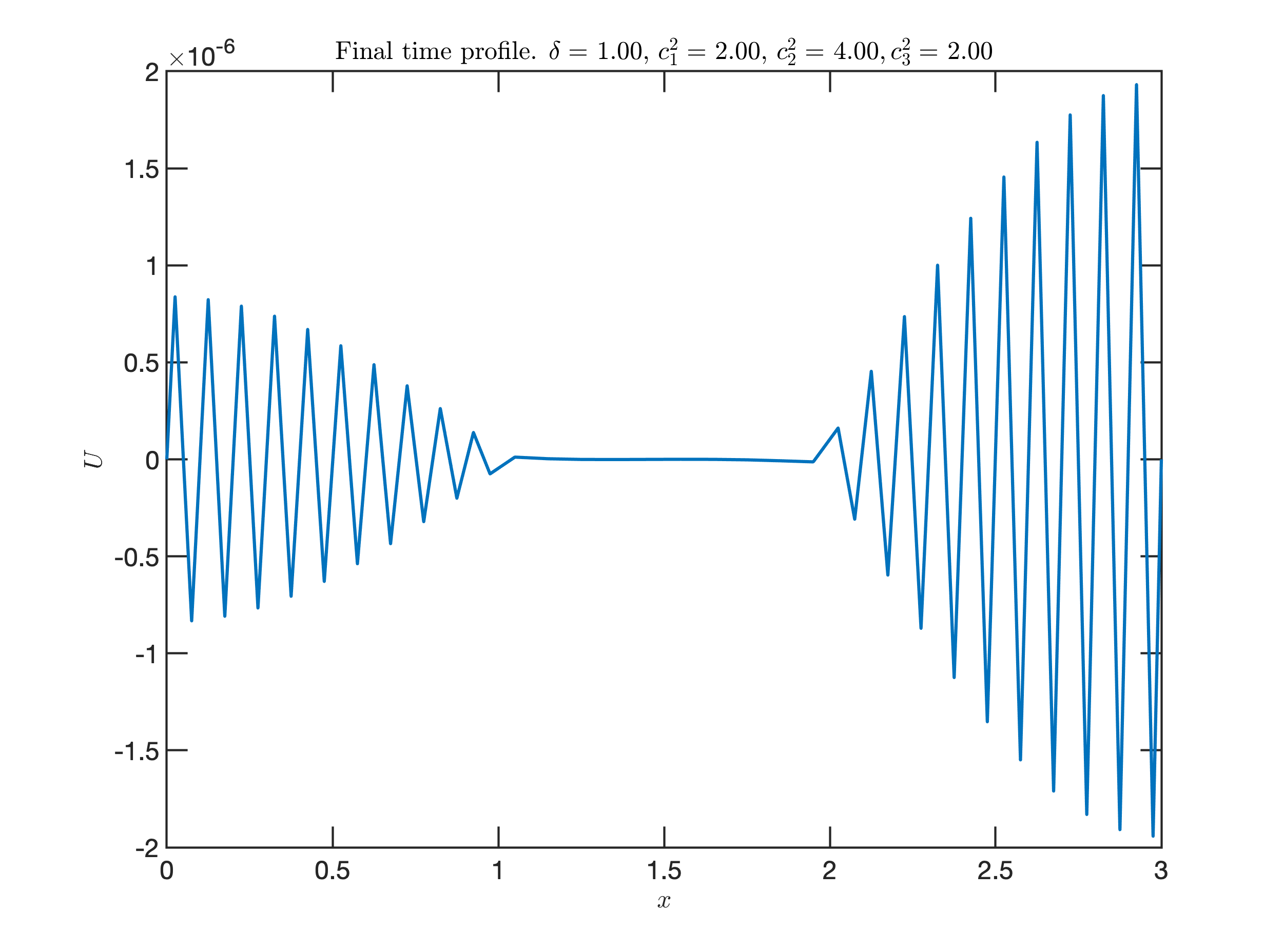}
    \subcaption{Final time profile} 
  \end{minipage}
  \caption{Long time behavior when $C_1^2=C_3^2=2, \  C_2^2 =4$ } \label{fig 8}
\end{figure}
%%%%%%%%%%%%%%%%%%%%%%%%%%%%%%%%%

%%%%%%%%%%%%%%%%%%%%%%%%%%%%%%%%%%%%%%%%%%%%%%%%%%%%%%%%%%%%%
\begin{figure}[!tbp]
  \centering
  \begin{minipage}[b]{0.45\textwidth}
    \includegraphics[width=\textwidth]{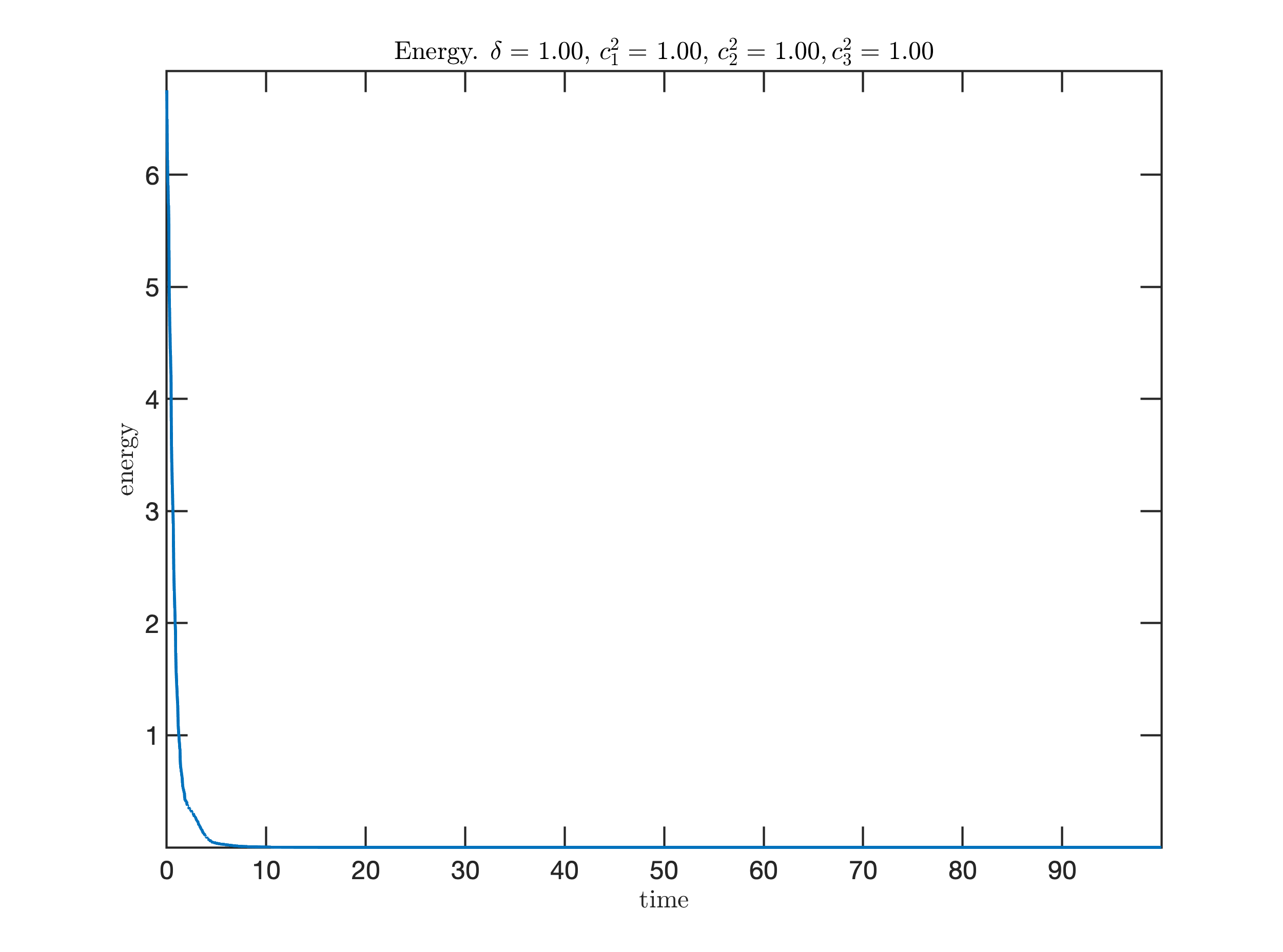}
    \subcaption{Energy ($t=0$ till $t=100$)} 
  \end{minipage}
  \hfill
  \begin{minipage}[b]{0.45\textwidth}
    \includegraphics[width=\textwidth]{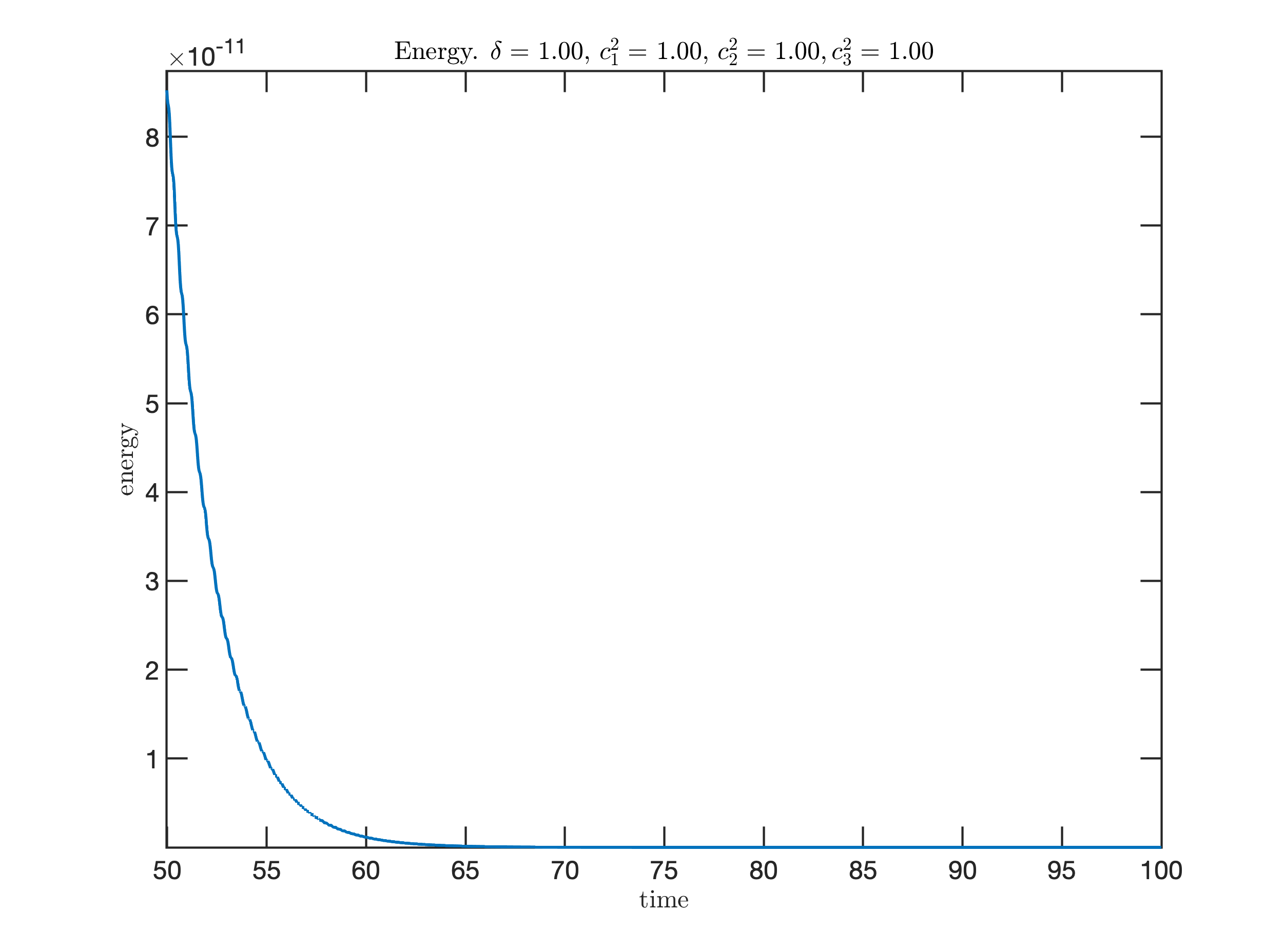}
    \subcaption{ Energy ($T/2$ till $T$)} 
  \end{minipage}
  \hfill
  \begin{minipage}[b]{0.45\textwidth}
    \includegraphics[width=\textwidth]{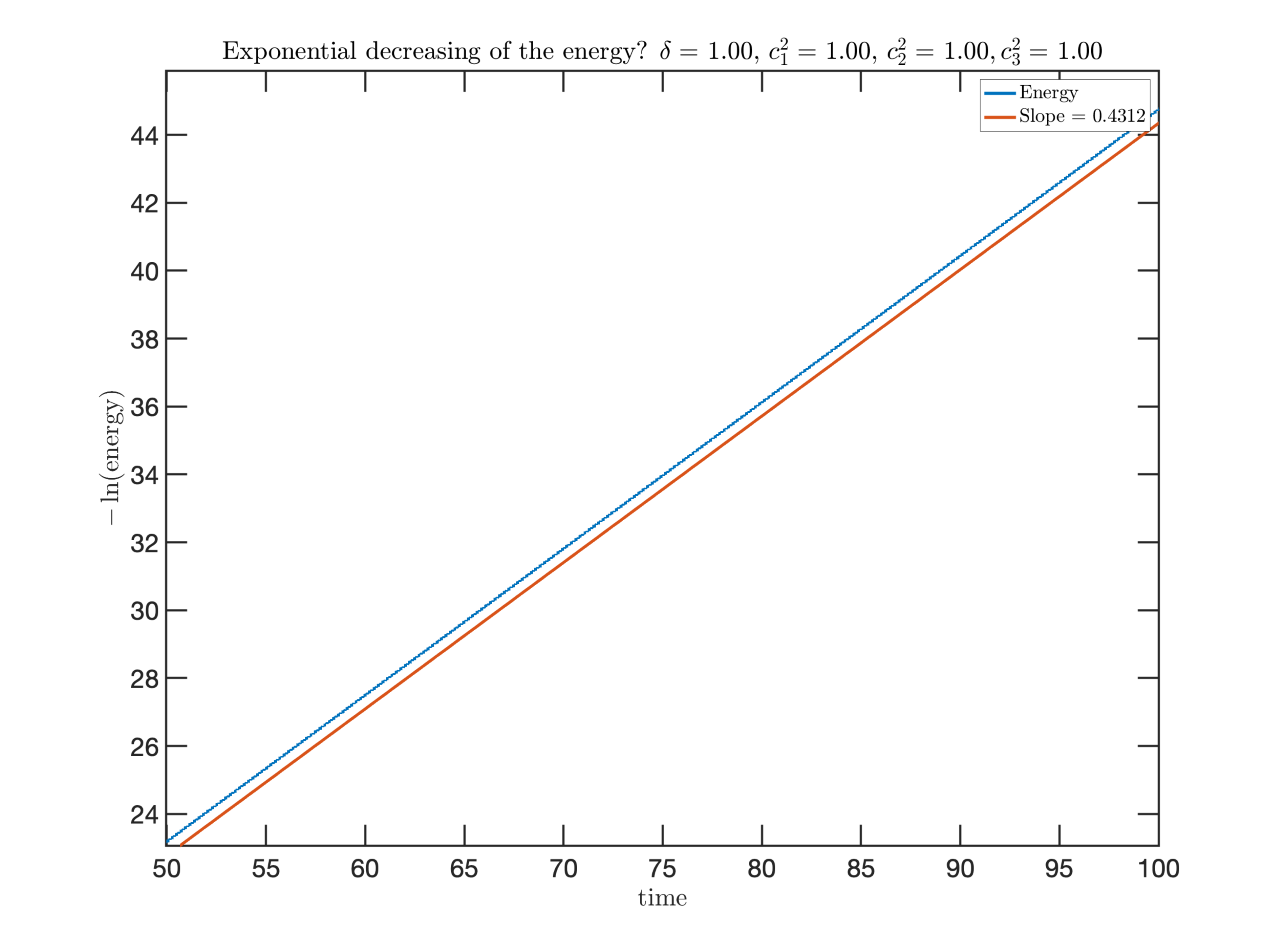}
    \subcaption{Exponential decay} 
  \end{minipage}
  \hfill
  \begin{minipage}[b]{0.45\textwidth}
    \includegraphics[width=\textwidth]{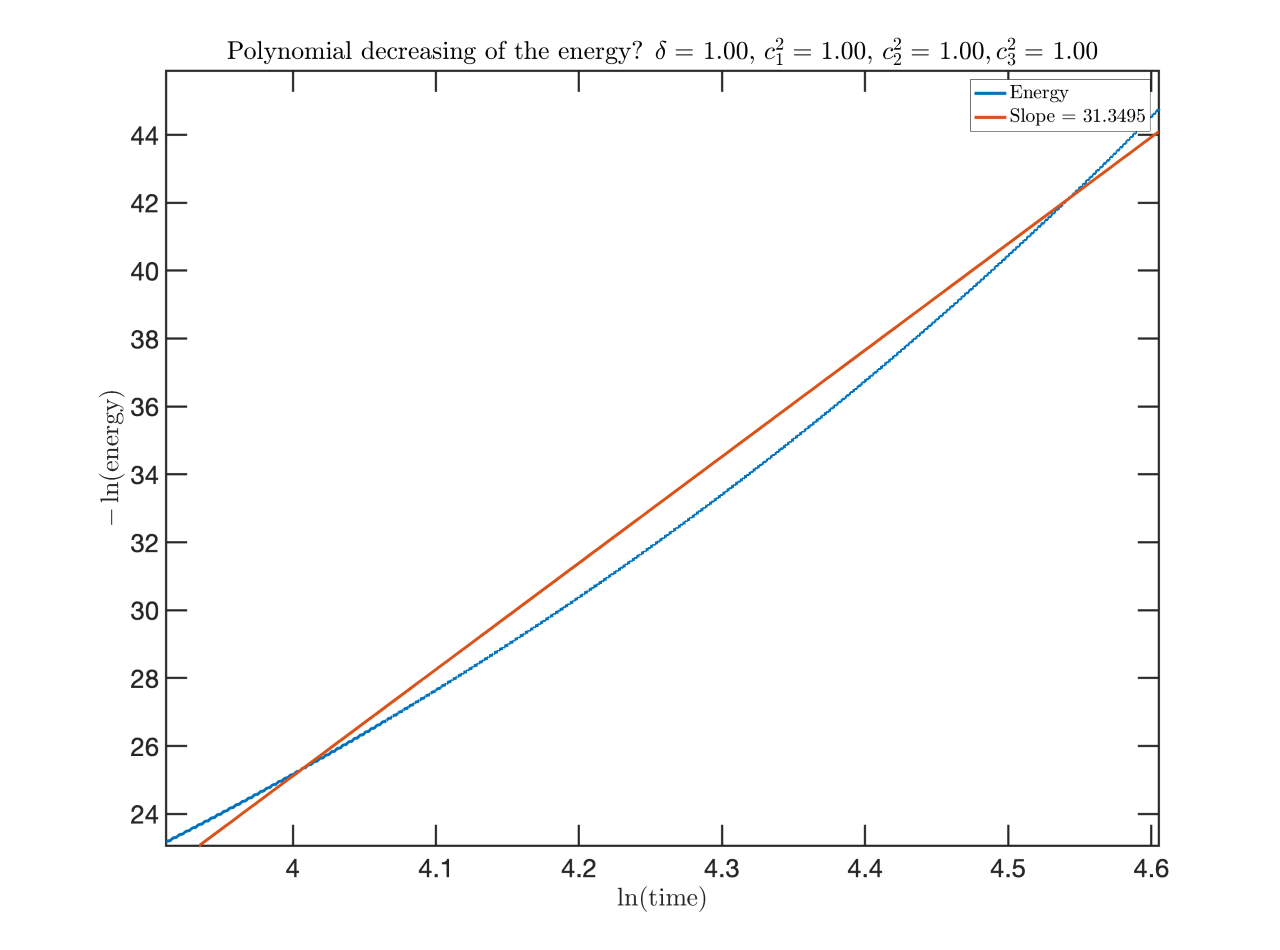}
    \subcaption{Polynomial decay} 
  \end{minipage}
  \hfill
  \begin{minipage}[b]{0.45\textwidth}
    \includegraphics[width=\textwidth]{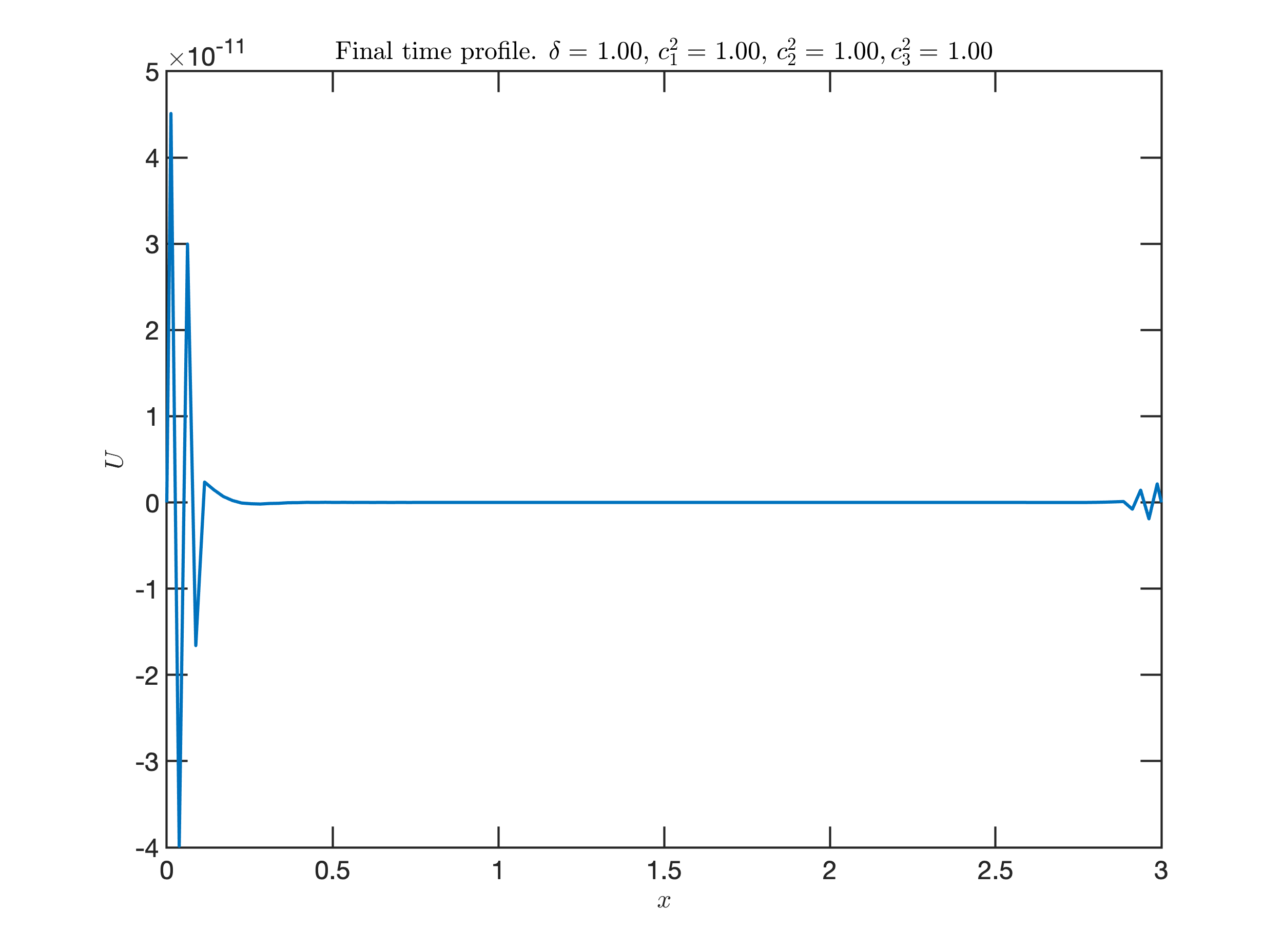}
    \subcaption{Final time profile} 
  \end{minipage}
  \caption{Exponential decay under same propagation speed and more viscoelastic material} \label{fig 10}
\end{figure}
%%%%%%%%%%%%%%%%%%%%%%%%%%%%%%%%%

\end{document}